\documentclass[a4paper,11pt]{ETHthesis}
\usepackage[innercaption]{sidecap}
\usepackage{amsmath, amsfonts, amssymb, amsthm}
\usepackage[utf8]{inputenc}
\usepackage[english]{babel}
\usepackage{graphicx}
\usepackage{hyperref}
\usepackage{enumerate}
\usepackage{makeidx}
\usepackage{pgf}
\usepackage{tikz}
\usetikzlibrary{arrows,decorations.pathmorphing,backgrounds,positioning,fit}
\usepackage{pgffor}
\makeindex 

\usepackage{fancyhdr}
\pagestyle{fancy}
\lhead{}
\lfoot{}
\cfoot{\thepage}
\rfoot{}

\def\N{\mathbb{N}}
\def\P{\mathbb{P}}

\def\Xn{(X_{n})}
\def\Zn{(Z_{n})}
\def\Z{\mathbb{Z}}

\def\lgr{\mathbb{Z}_{2}\wr \mathsf{G}}
\def\lgrp{\mathbb{Z}_{2}\wr \Gamma}
\def\0{(\textbf{0},o)}

\newcommand{\entr}{\mathsf{h}}
\newcommand{\eps}{\varepsilon}

\newcommand{\R}{\mathbb{R}}
\newcommand{\C}{\mathcal{C}}
\newcommand\Si{\mathbf{\Sigma}}

\newcommand{\diam}{\operatorname{diam}}
\newcommand{\supp}{\operatorname{supp}}
\newcommand{\AUT}{\operatorname{AUT}}
\newcommand{\AFF}{\mathbb{AFF}}

\newcommand{\abs}[1]{\lvert#1\rvert}

\theoremstyle{plain}
\newtheorem{theorem}{Theorem}[section]
\newtheorem{lemma}[theorem]{Lemma}
\newtheorem{proposition}[theorem]{Proposition}
\newtheorem{corollary}[theorem]{Corollary}
\newtheorem{conjecture}[theorem]{Conjecture}
\newtheorem{definition}[theorem]{Definition}

\newtheorem{assumption}[theorem]{Assumption}
\newtheorem*{thm1}{Theorem \ref{thm_conv_lrw_general_graphs}}
\newtheorem*{thm2}{Theorem \ref{PoissonTheorem}}
\newtheorem*{thm3}{Summary of Theorems \ref{thm:conv_LRW_fixed_end}, \ref{thm:poisson_lrw_fixed_end} 
and \ref{thm:poisson_fixed_end_zero_drift}}
\newtheorem*{thm4}{Theorem \ref{thm:A}}
\newtheorem*{cor4}{Corollary \ref{cor:B}}

\theoremstyle{definition}
\newtheorem{example}[theorem]{Example}
\newtheorem{remark}[theorem]{Remark}

\setlength{\parindent}{0mm}
\setlength{\parskip}{1.5ex}


\begin{document}
\pagenumbering{roman}
{\sffamily%
\title{\huge{Lamplighter Random Walks and Entropy-Sensitivity of Languages}}
\author{\huge{Ecaterina Sava}}
\date{Graz, Oktober 2010}
\maketitle
}
\thispagestyle{empty}
\newpage{}
\thispagestyle{empty}

\clearpage
\mbox{}
\newpage
\thispagestyle{empty}


\begin{center}
{\Large\scshape Statutory Declaration}
\end{center}
I declare that I have authored this thesis independently, that I have not used
other than the declared sources/resources, and that I have explicitely marked
all material which has been quotes either literally or by content from the used
sources.

\vspace*{5cm}

\noindent
\begin{minipage}[h]{0.4\linewidth}
  \begin{center}
    \dotfill\\
    Date
  \end{center}
\end{minipage}
\hspace*{0.1\linewidth}
\begin{minipage}[h]{0.5\linewidth}
  \begin{center}
    \dotfill\\
    (Signature)
  \end{center}
\end{minipage}

\tableofcontents
\clearpage
\mbox{}
\newpage
\pagenumbering{arabic}

\cleardoublepage
\phantomsection
\addcontentsline{toc}{chapter}{Introduction}
\chapter*{Introduction}\label{introduction}

The main purpose of this thesis is to study the interplay between
geometric properties of infinite graphs and analytic and probabilistic 
objects such as transition operators, harmonic functions and random walks 
on these graphs.

Suppose we are given a random walk $\Xn$ on a graph $\mathsf{\mathsf{G}}$. There are many questions
regarding its behaviour, as the discrete parameter $n$ goes to infinity.
Standard questions of this kind are: will the random walk visit some given 
vertex infinitely many times, or will it leave any bounded set after a finite
time with probability one? In the first case the random walk is called \textit{recurrent}
and in the second \textit{transient}.

For a transient random walk, there are several problems one is interested in: 
for instance to study its \textit{convergence} (in a sense to be specified),
to describe the \textit{bounded harmonic functions} for the random walk, to describe
its \textit{Poisson boundary}, or to study the parameter of exponential decay of the
transition probabilities of the random walk (\textit{spectral radius}). 

In the \textit{first part} of the thesis we deal with similar problems in the context of  
random walks on the so-called \textit{lamplighter graphs}, which are
\emph{wreath products} of graphs. Random walks on such graphs will be called \emph{lamplighter
random walks}. All walks we consider will be transient and 
irreducible throughout this part. The convergence and the Poisson boundary of 
\textit{lamplighter random walks} is studied for different underlying graphs, 
and the used methods are mostly of a geometrical nature. Most of the results 
presented here are published in {\sc Sava}~\cite{SavaPoisson2010}.

In the \textit{second part} of the thesis we consider Markov chains
on directed, labelled graphs. With such graphs we associate in a natural way 
a class of infinite languages (sets of labels of paths in the graph) and we study
the \textit{growth sensitivity} (or \textit{entropy sensitivity}) of these languages 
using Markov chains. The growth sensitivity of a language is a well-known and studied 
topic in group theory and symbolic dynamics.  
Under suitable general assumptions on the graphs, we prove that the
associated languages are growth sensitive, by using Markov chains with 
forbidden transitions. This part of the thesis is mainly based on the 
paper by {\sc Huss, Sava and Woess}~\cite{HussSavaWoess2010}.
\addcontentsline{toc}{section}{Overview}
\section*{Overview}

In \textit{Chapter 1} a general introduction to the theory of 
Markov chains, graphs, random walks over graphs and groups, and 
their properties like nearest neighbour type, transience/recurrence, 
spectral radius, rate of escape, is given.
For a Markov chain with transition matrix $P$ over an infinite state space $\mathsf{G}$
we shall either use the notation $(\mathsf{G},P)$ or the sequence $\Xn$
of $\mathsf{G}$-valued random variables, depending on the context. 

\subsection*{Part I}

In \textit{Chapter 2} we introduce a class of random walks
on wreath products $\Z_2\wr \mathsf{G}$ of graphs $\Z_2$ and $\mathsf{G}$, where $\mathsf{G}$ is an
infinite, connected, transitive graph and $\Z_2:=\Z/2\Z$ represents
a finite set with two elements, which encodes the state of
lamps. The intuitive interpretation is as follows. One considers
the base graph $\mathsf{G}$, and at each vertex of $\mathsf{G}$ there is a lamp 
which can be switched on or off. If one defines the set 
\begin{equation*}
\mathcal{C}=\{\eta:\mathsf{G}\to\Z_2,\ \eta \text{ finitely supported}\} 
\end{equation*}
of finitely supported lamp configurations, then the \textit{wreath product $\lgr$} is the graph
with vertex set $\mathcal{C}\times \mathsf{G}$, and adjacency relation
\begin{equation*}
(\eta, x) \sim (\eta', x') :\Leftrightarrow 
\begin{cases}
x\sim x' & \mbox{in} \ \mathsf{G}\  \mbox{and} \  \eta =\eta', \\
x=x' &\mbox{in} \ \mathsf{G} \ \mbox{and}\ \eta \bigtriangleup \eta'=\{x\}.
\end{cases}
\end{equation*}
The space $\lgr$ is called the \textit{lamplighter graph},
and consists of pairs $(\eta,x)$ where $\eta$ is a finitely
supported configuration of lamps and $x$ a vertex in $\mathsf{G}$.
Let $P$ be a transition matrix on $\lgr$ whose entries are of the form 
\begin{equation*}
p\big((\eta,x),(\eta',x')\big), 
\end{equation*}
and $\Zn$ the random walk with transitions in $P$. 
This walk will be called the \textit{lamplighter random walk (LRW)}, and it
can be written as $Z_n=(\eta_n, X_n)$, where $\eta_n$ is the random configuration
of lamps and $X_n$ is the random position in $\mathsf{G}$ at time $n$. 
Then $\Xn$ defines a random walk on $\mathsf{G}$, called the \textit{base random walk},
whose transition matrix $P_{\mathsf{G}}$ is given by the projection of $P$ on the
base graph $\mathsf{G}$. We shall assume that $\Xn$ is transient, and this
implies transience of the lamplighter random walk $\Zn$.

The first part of the thesis is devoted to the description of the behaviour 
of $\Zn$, as $n$ goes to infinity, and the Poisson boundary is the main object
of study in this part. As we shall later see, the behaviour at infinity depends 
on the base random walk $\Xn$ and on the geometry of the transitive base graph $\mathsf{G}$.

Let $\AUT(\mathsf{G})$ be the group of all automorphisms of the graph 
$\mathsf{G}$, and $\Gamma$ a closed subgroup of $\AUT(\mathsf{G})$. 
Also, let $\partial \mathsf{G}$ be a general boundary at infinity of $\mathsf{G}$, 
such that the action of $\Gamma$ on $\mathsf{G}$ extends to this boundary, 
and such that $\Xn$ converges to
$\partial \mathsf{G}$. Then, in \textit{Chapter 3}
the convergence of $\Zn$ to a ``natural'' boundary $\Pi$ of the lamplighter
graph $\lgr$ is proved. The boundary $\Pi$ is defined as
\begin{equation*}
\Pi =\bigcup_{\mathfrak{u} \in \partial \mathsf{G}}\mathcal{C}_{\mathfrak{u}}\times \{\mathfrak{u}\}, 
\end{equation*}
where $\mathcal{C}_{\mathfrak{u}}$ consists of all
configurations $\zeta$ which are either finitely supported,
or infinitely supported with $\mathfrak{u}$ being the
only accumulation point of $\supp(\zeta)$. 
Note that $\supp(\zeta)$ is the subset of $\mathsf{G}$
where the lamps are on. In this settings, we can prove the following.

\begin{thm1}
Let $\Zn$ be an irreducible and homogeneous random walk with finite first 
moment on $\lgr$. Then there exists a $\Pi$-valued random variable 
$Z_{\infty}$, such that $Z_n\to Z_{\infty}$ almost surely
and the distribution of $Z_{\infty}$ is a continuous measure on $\Pi$.
\end{thm1}

We remark that the limit random variable $Z_{\infty}$ is
a pair of the form $(\eta_{\infty},X_{\infty})$, where
$\eta_{\infty}$ is the limit configuration of lamps, which
is not necessary finitely supported, and $X_{\infty}$
is the limit of the base random walk $\Xn$ on $\mathsf{G}$.

\textit{Chapter 4} starts with preliminaries and definitions on
the Poisson boundary of a random walk. Most of them are due 
to {\sc Kaimanovich and Vershik}~\cite{KaimanovichVershik1983},
and {\sc Kaimanovich}~\cite{Kaimanovich2000}. In the most general formulation,
it represents the space of ergodic components of the time shift
in the path space. It is a measure theoretical space, which
gives a representation of the bounded harmonic functions
for the respective random walk in terms of the \textit{Poisson formula}.

The Poisson boundary can also be described using
purely geometric approaches: for instance the \textit{Strip Criterion}
and the \textit{Ray Criterion}, developed by {\sc Kaimanovich}~\cite{Kaimanovich2000}. 
Based on the Strip Criterion (Theorem \ref{thm:strip_crit}), we describe 
the so-called \textit{Half Space Method}
for lamplighter random walks. It requires the existence of a strip 
$\mathfrak{s}(\mathfrak{u},\mathfrak{v})\subset \mathsf{G}$, with 
$\mathfrak{u},\mathfrak{v}\in\partial \mathsf{G}$, which has the properties 
requested by the Strip Criterion. If
such a strip $\mathfrak{s}$ exists for the base random walk 
$\Xn$ on $\mathsf{G}$, then the \textit{Half Space Method}
explains how to construct a ``bigger'' strip $S$
as a subset of $\lgr$, which satisfies again the conditions of 
the Strip Criterion. This method requires that the state space 
$\mathsf{G}$ can be split into ``half spaces'' $\mathsf{G}_+$ and $\mathsf{G}_-$. 

Under suitable assumptions on the transitive base graph $\mathsf{G}$
and on the random walk $\Xn$ on it, we can prove the following 
for lamplighter random walks $\Zn$ on $\lgr$, with $Z_n=(\eta_n,X_n)$.

\vspace{1cm}

\begin{thm2}
Let $\Zn$ be an irreducible, homogeneous random walk with 
finite first moment on $\lgr$. If $\nu_{\infty}$ is the limit distribution of $\Zn$ on 
$\Pi$, then the measure space $(\Pi,\nu_{\infty})$ is the Poisson boundary of $\Zn$.
\end{thm2}

In the remaining chapters of the first part, the base graph $\mathsf{G}$ is replaced 
by the following: a graph with infinitely many ends in \textit{Chapter 5}, a 
hyperbolic graph in \textit{Chapter 6}, and the Euclidean lattice $\Z^d$ in 
\textit{Chapter 7}. For LRW on $\lgr$, the convergence
and the Poisson boundary will be described as an application
of Theorem \ref{thm_conv_lrw_general_graphs}
and Theorem \ref{PoissonTheorem}.

In \textit{Chapter 5} we let $\mathsf{G}$ be a graph with infinitely many ends
and its boundary $\partial \mathsf{G}$ be the space of its ends. 
Like before,  $\Gamma\subset \AUT(\mathsf{G})$ is a group which acts
transitively on $\mathsf{G}$. Two cases should be distinguished: when $\Gamma$
\emph{fixes no end in $\partial \mathsf{G}$}, and when 
$\Gamma$ \textit{fixes one end} in $\partial\mathsf{G}$, which then 
has to be unique. In the first case, Theorem \ref{thm_conv_lrw_general_graphs} and
Theorem \ref{PoissonTheorem} can be adapted in order to prove the convergence and 
to describe the Poisson boundary of LRW $\Zn$ on $\lgr$. The conditions required in
the \textit{Half Space Method} are satisfied for graphs with infinitely many ends and 
random walks on them. The construction of the ``small'' strip $\mathfrak{s}$ as a 
subset of $\mathsf{G}$ is based on the theory of cuts and structure trees of a graph.

The second case, when $\Gamma$ fixes an end of $\mathsf{G}$, is more interesting.
The graph $\mathsf{G}$ can be viewed as an oriented tree $\mathcal{T}$ with a fixed end $\omega$, 
like below.

\begin{minipage}[b]{0.4\linewidth}
The behaviour at infinity of lamplighter random walks $\Zn$
on $\Z_2\wr\mathcal{T}$, depends on the \emph{modular drift}
$\delta(P_{\mathcal{T}})$ of the base random walk $\Xn$
on the oriented tree $\mathcal{T}$. By $P_{\mathcal{T}}$,
we denote the transition matrix of $\Xn$.
We emphasize that the case $\delta(P_{\mathcal{T}})=0$
is the most difficult and interesting one, which is studied in 
\textit{Section \ref{sec:zero_drift}}. 
\end{minipage}
\hspace{0.5cm}
\begin{minipage}[b]{0.6\linewidth}

\begin{tikzpicture}[scale=0.55]

\draw[dashed](0,0)-- (10,0);
\node (a) at (11,0) {$\partial^*\mathcal{T}$};

\draw[thin,->] (0,0.5)--(0,8.5);

\draw[->] (0.5,0.5)--(7.25,8);
\node (b) at (7.6,8.2) {$\omega$};

\draw (5,5.5)--(9.5,0.5);
\draw[loosely dotted] (0,5.5)--(10,5.5);
\draw[loosely dotted] (0,3)--(10,3);
\draw[loosely dotted] (0,1.5)--(10,1.5);
\draw[loosely dotted] (0,0.75)--(10,0.75);

\draw (2.75,3)--(4.75,0.5);
\draw (7.25,3)--(5.25,0.5);

\draw (1.4,1.5)--(2.15,0.5);
\draw (3.95,1.5)--(3.20,0.5);

\draw (6.05,1.5)--(6.80,0.5);
\draw (8.60,1.5)--(7.85,0.5);

\draw (0.74,0.75)--(0.94,0.5);
\draw (1.95,0.75)--(1.75,0.5);

\draw (3.4,0.75)--(3.60 ,0.5);
\draw (4.56,0.75)--(4.36,0.5);

\draw (5.45,0.75)--(5.65 ,0.5);
\draw (6.60,0.75)--(6.40,0.5);

\draw (8.05,0.75)--(8.25 ,0.5);
\draw (9.26,0.75)--(9.06,0.5);

\node (c) at (2.6,3.3){$o$};

\node (g) at (-0.6,0.75) {$H_2$};
\node (h) at (-0.6,1.5) {$H_1$};
\node (i) at (-0.6,3) {$H_0$};

\end{tikzpicture}

\end{minipage}
For this special case, 
the correspondence with a random walk on $\Z$ is used. If we set
\begin{equation*}
\Pi=\Pi^*\cup \omega^*,
\end{equation*}
where 
\begin{equation*}
 \Pi^*=\bigcup_{\mathfrak{u}\in\partial^*\mathcal{T}}\mathcal{C}_{\mathfrak{u}}\times \{\mathfrak{u}\}
\text{ and }\omega^* =\{ (\zeta,\omega):\zeta\in\mathcal{C}_{\omega}\},
\end{equation*}
then we can prove the following.
\vspace{1cm}
\begin{thm3}
Let $\Zn$ be an irreducible and homogeneous random walk with finite 
first moment on $\mathbb{Z}_2\wr\mathcal{T}$, where $\mathcal{T}$ is an
homogeneous tree and $\Gamma$ a subgroup of $\AUT(\mathcal{T})$, which
acts transitively on $\mathcal{T}$ and fixes one end $\omega\in\partial \mathcal{T}$.
Then
\begin{enumerate}[(a)]
\item If $\delta(P_{\mathcal{T}})>0$, then there exists a $\Pi^*$-valued random variable $Z_{\infty}$,
such that $Z_n\to Z_{\infty}$, almost surely. 
If $\nu_{\infty}$ is the limit distribution on $\Pi^*$, 
then $(\Pi^*,\nu_{\infty})$ is the Poisson boundary of $\Zn$.
\item If $\delta(P_{\mathcal{T}})< 0$, then $(Z_n)$ converges almost surely to some random
variable with values in $\omega^*$, and $(\omega^*,\nu_{\infty})$ is the Poisson boundary
of $\Zn$, where $\nu_{\infty}$ is the limit distribution on $\omega^*$.
\item If $\delta(P_{\mathcal{T}})=0$, then $(Z_n)$ converges almost surely to some random
variable in $\omega^*$. Moreover, if $\Xn$ is a nearest neighbour random walk on $\mathcal{T}$
then $(\omega^*,\nu_{\infty})$ is again the Poisson boundary of $\Zn$.
\end{enumerate}  
\end{thm3}
The convergence part follows basically from Theorem
\ref{thm_conv_lrw_general_graphs}, and the description
of the Poisson boundary in the case $\delta(P_{\mathcal{T}})>0$
(and $\delta(P_{\mathcal{T}})<0$, respectively) is an application of Theorem \ref{PoissonTheorem}.

When \textit{$\delta(P_{\mathcal{T}})=0$}, i.e., when the base walk $\Xn$ has zero 
modular drift on $\mathcal{T}$, the Poisson boundary of LRW $\Zn$ on $\Z_2\wr \mathcal{T}$ is 
described in a completely different manner, and uses the existence of cutpoints 
for the random walk $\Xn$ on $\mathcal{T}$. Moreover, the correspondence between the tail 
$\sigma$-algebra of a random walk and its Poisson boundary, which in most cases 
coincide, will be used. This is the content of Section \ref{sec:zero_drift}.

In \textit{Chapter 6}, the base graph $\mathsf{G}$ will be replaced by a 
hyperbolic graph (in the sense of Gromov), 
and its boundary is the hyperbolic boundary $\partial_h \mathsf{G}$.
We are interested only in the case when the boundary is infinite.
Then we can prove again the convergence of LRW to the boundary 
$\Pi$ in Theorem \ref {thm:conv_rw_hyperbolic} and describe the 
Poisson boundary in Theorem \ref{thm:poisson_lrw_hyperbolic_graphs}.

Finally, for the sake of completeness, in \textit{Chapter 7} 
we show how to apply Theorem \ref{thm_conv_lrw_general_graphs}
and Theorem \ref{PoissonTheorem}  to LRW over Euclidean lattices, that is, 
over base graphs $\mathsf{G}=\Z^d$, with $d\geq 3$.  
The results in this chapter were earlier obtained by {\sc Kaimanovich}
\cite{Kaimanovich2001} (for non-zero drift on $\Z^d$) and for the zero-drift case, 
recently by {\sc Erschler} \cite{Erschler2010}.

In the last chapter of the first part several 
open problems and conjectures regarding lamplighter random walks
are stated.

Concluding the overview of the first part of the thesis, let us remark
that {\sc Kaimanovich and Vershik} \cite{KaimanovichVershik1983}
were the first to show that lamplighter groups and graphs are fascinating 
objects in the study of random walks. By now, there is a considerable amount 
of literature on this topis. The paper of {\sc Kaimanovich} \cite{Kaimanovich1991} 
may serve as a major source for earlier literature.
{\sc Lyons, Pemantle and Peres} \cite{LyonsPemantlePeres} investigated the
rate of escape of inward-biased random walks on lamplighter groups.
The lamplighter group $\Z_2\wr\Z$ is one of the examples for which
the entire spectrum for some random walks is known. 
{\sc Grigorchuk and Zuk} \cite{GrigorchukZuk} computed the complete 
spectrum for the random walk, corresponding to a specific generating set of 
the lamplighter group. {\sc Erschler } \cite{ErschlerDrift} proved that
the rate of escape of symmetric random walks on the wreath product 
$\Z_2\wr A$, where $A$ is a finitely generated group, is zero
if and only if the random walk's projection onto $A$ is recurrent.
{\sc Erschler} \cite{Erschler2010} investigated also the Poisson boundary
of lamplighter random walks on $\Z_2\wr\Z^d$, with $d\geq 5$, such that the projection
on $\Z^d$ has zero drift, and she proved that the Poisson boundary is
isomorphic with the space of limit configurations. 

\subsection*{Part II}

In this part, we prove the entropy sensitivity of languages associated in
a natural way with infinite labelled graphs $X$. The proof is based on 
considering Markov chains with forbidden transitions on $X$, and on 
investigating the spectral radius of such chains.

If $\Si$ is a finite \textit{alphabet} and 
$\Si^{*}$ the set of all finite words over $\Si$, then 
a \index{language}\textit{language} $L$ over $\Si$ is a subset 
of $\Si^{*}$. The \index{language!growth}\textit{growth} 
or \index{language!entropy}\textit{entropy} of $L$ is 
\begin{equation*}
\entr(L) 
= \limsup_{n\to\infty}
\frac{1}{n} \log \bigl|\{w \in L:\: \abs{w} = n\}\bigr|.
\end{equation*}
The quantity $\entr(L)$ measures the parameter of exponential decay of $L$.
For a finite, non-empty set $F\subset\Si^*\setminus \{\epsilon\}$ denote
\begin{equation*}
 L^{F}  = \{w\in L:\:\text{no}\; v\in F\; \text{is a subword of}\; w\},
\end{equation*}
where $\epsilon$ is the empty word.

\textit{Question: } For which class of languages
associated with infinite graphs, is $\entr(L^{F})<\entr(L)$?
If this holds for \textit{any} set $F$ of \textit{forbidden subwords}, 
then the language $L$ is called \textit{growth sensitive} 
(or \textit{entropy sensitive}). 

Let $(X,E,\ell)$ be an infinite graph with vertex set $X$, edge
set $E$ and $l:E\to\Si$ a function which associates to each edge 
$e\in E$ its label $l(e)\in\Si$. With $(X,E,\ell)$ we associate the
following languages
\begin{equation*} 
L_{x,y} =\{\text{the labels we read along all paths from } x \text{ to }y \text{ in }X\},
\end{equation*}
for $x,y\in X$. We write 
$
\entr(X) = \entr(X,E,\ell) = 
\sup_{x,y\in X} \entr(L_{x,y})
$
and call this the \index{graph!labelled graph!entropy}\textit{entropy} of 
our oriented, labelled graph. 
Under general assumptions, we can prove the following results.
\begin{thm4}
Suppose that $(X,E,\ell)$ is uniformly connected and deterministic with
label alphabet $\Si$. Let $F \subset \Si^*\setminus \{\epsilon\}$ be a finite, non-empty set
which is relatively dense in $(X,E,\ell)$. Then 
\begin{equation*}
\sup_{x,y\in V}\entr(L_{x,y}^F) < \entr(\mathsf{G}).
\end{equation*} 
\end{thm4}
\begin{cor4}
If $(X,E,\ell)$ is uniformly connected and fully deterministic, then
$L_{x,y}$ is growth sensitive for all $x,y \in X$.
\end{cor4}

What is interesting here is that the proof of the previous results
is based on Markov chains. We consider
a Markov chain with state space $X$ and transition matrix
$P$, whose entries are induced by the labelled edges of $(X,E,\ell)$. 
We then remark that the entropy $\entr(X)$ is in direct
correspondence with the spectral radius $\rho(P)$ of the respective Markov chain.
Moreover, to the restricted language $L^F$ one can also 
associate a ``restricted'' Markov chain, that is, a Markov chain
which is not allowed to cross edges with labels in $F$.
Then the question of growth sensitivity can be interpreted in terms
of Markov chains and its respective spectral radii on $X$. 

Part II is completed with an example where one can apply the 
results developed previously: applications to pairs of groups and
their Schreier graphs.

\chapter{Basic Facts and Preliminaries}\label{preliminaries}

This chapter is devoted to basic definitions and facts connected with the 
theory of Markov chains and random walks on graphs and groups. Moreover, 
we present here some basic tools which are useful for a better understanding of 
the results we are going to present throughout this work. We shall
follow the notations from {\sc Woess} \cite{WoessBook}.

A Markov chain on a state space $\mathsf{G}$, which is adapted to the 
geometry of $\mathsf{G}$, will be called a random walk throughout this thesis.

\section{Markov Chains}\label{Markov_chains}

A \textit{Markov chain} is 
defined by a finite or countable \textit{state space} $\mathsf{G}$ and 
a \textit{transition matrix} (or \emph{transition operator}) $P=\big(p(x,y)\big)_{x,y\in \mathsf{G}}$. In 
addition, one has to specify an initial position, that is, the position at time $0$. 
The entry $p(x,y)$ of $P$ represents the probability to move from $x$ to $y$ in one 
step. This defines a sequence of $\mathsf{G}$-valued random variables $X_{0},X_{1},\ldots$, 
called \index{Markov chain}\textit{Markov chain}, where $X_{n}$ represents the random 
position in $\mathsf{G}$ at time $n$. One can imagine a Markov chain as a walker moving 
randomly in the state space $\mathsf{G}$, according to the probabilities given by the 
transition matrix $P$.

In order to model the random variables $(X_{n})$, one has to find a 
suitable probability space on which the random position after $n$ steps can be 
described as the $n$-th random variable of a Markov chain. The usual choice of 
the probability space is the \index{Markov chain!trajectory space}\textit{trajectory space} $\Omega=\mathsf{G}^{\Z_+}$, 
equipped with the product $\sigma$-algebra arising from the discrete one on $\mathsf{G}$. 
An element $\omega=(x_{0},x_{1},x_{2},\ldots)$ of $\Omega$ represents a possible 
evolution (trajectory), that is, a possible sequence of points visited one 
after the other by the Markov chain. Then, $X_{n}$ is the $n$-th projection 
from $\Omega$ to $\mathsf{G}$. This describes the Markov chain starting at $x$, when $\Omega$ 
is equipped with the probability measure given via the Kolmogorov extension theorem by
\begin{equation*}
\P_{x}[X_{0}=x_{o},X_{1}=x_{1},\ldots,X_{n}=x_{n}]=\delta_{x}(x_{0})p(x_{o},x_{1})\cdots p(x_{n-1},x_{n}). 
\end{equation*}
The associated expectation is denoted by $\mathbb{E}_{x}$. 
Depending on the context, we shall call a Markov chain the 
pair $(\mathsf{G},P)$ or the sequence of random variables $\Xn$. We write 
\begin{equation*}
p^{(n)}(x,y)=\P_{x}[X_{n}=y], 
\end{equation*}
which represents, on one hand the $(x,y)$-entry 
of the matrix power $P^{n}$, with $P^{0}=I$ ($I$ is the identity matrix over $\mathsf{G}$), and 
on the other hand the $n$-step transition probability, that is, the probability the 
get from $x$ to $y$ in $n$ steps.

\begin{definition}
A Markov chain $(\mathsf{G},P)$ is called \index{Markov chain!irreducible}\emph{irreducible}, 
if for every $x,y\in \mathsf{G}$, there is some $n\in\N$ such that $p^{(n)}(x,y)>0$.
\end{definition}
This means that every state $y\in \mathsf{G}$ can be reached from every other state $x\in \mathsf{G}$ with 
positive probability. Throughout this thesis, we shall always require that the state 
space is infinite and all states communicate, i.e., the Markov chain is irreducible.

\section{Random Walks on Graphs}\label{sec:Random walks on graphs}

\paragraph{Graphs.} A \index{graph}\textit{graph} consists of a finite or countable 
set of vertices (points) $\mathsf{G}$, equipped with a symmetric \textit{adjacency relation} 
$\sim$, which defines the set of edges $E(\mathsf{G})$ (as a subset of $\mathsf{G}\times \mathsf{G}$). We write 
$(x,y)$ for the edge between the pair of neighbours $x,y$. For the sake of simplicity, 
we exclude loops, that is $(x,x)\not\in E(\mathsf{G})$, for all $x\in \mathsf{G}$. 

In order to simplify the notation, instead of writing $\big(\mathsf{G},E(\mathsf{G})\big)$ for a 
graph, we shall write only $\mathsf{G}$. It will be clear from the context whether we are 
considering vertices or edges.

A \textit{path} from $x$ to $y$ in $\mathsf{G}$ is a sequence $[x=x_{0},x_{1},\ldots,x_{n}]$ 
of vertices, such that $x_{i}\sim x_{i-1}$, for all $i=1,2,\ldots,n$. The number 
$n\geq 0$ is the \textit{length} of the path. The graph $\mathsf{G}$ is called 
\index{graph!connected}\textit{connected} if every pair of vertices can 
be joined by a path. The usual \index{graph!graph distance}\textit{graph distance} 
$d(x,y)$ is the minimum among the lenghts of all paths from $x$ to $y$. 
A path is called \textit{simple} if it has no repeated vertex, and \textit{geodesic} 
if its length is $d(x,y)$. The degree $\deg(x)$ of a vertex $x$ is the number 
of its neighbours. 

The graph $\mathsf{G}$ is called \index{graph!locally finite}\textit{locally finite} 
if every vertex has finite degree. We say that $\mathsf{G}$ has 
\index{graph!with bounded geometry}\textit{bounded geometry} if it is connected 
with bounded vertex degrees.

Let $\mathsf{G}$ and $\mathsf{G}'$ be two graphs, and $d$ and $d'$ the discrete graph metric
on them, respectively. We say that $\mathsf{G}$ and $\mathsf{G}'$ are \emph{quasi-isometric}
if there exists a mapping $\varphi:\mathsf{G}\to \mathsf{G}'$, such that
\begin{equation*}
A^{-1}d(x,y)-A^{-1}B\leq d'(\varphi x,\varphi y)\leq A d(x,y)+B,
\end{equation*}
for all $x,y\in \mathsf{G}$, and
\begin{equation*}
 d'(x',\varphi \mathsf{G})\leq B,
\end{equation*}
for every $x'\in \mathsf{G}'$, where $A\geq 1$ and $B\geq 0$.

\paragraph*{Graph Automorphisms.}An \index{graph!automorphism}\textit{automorphism} of a graph $\mathsf{G}$ is a self-isometry 
of $\mathsf{G}$ with respect to the graph distance $d$, that is, a bijection
$\varphi :\mathsf{G}\rightarrow \mathsf{G}$ with
\begin{equation*}
d(x,y)=d(\varphi x,\varphi x),\quad \text{ for all } x,y\in \mathsf{G}. 
\end{equation*}
The set of all automorphisms of a graph $\mathsf{G}$ forms a group denoted by $\AUT(\mathsf{G})$.

The graph $\mathsf{G}$ is called \index{graph!transitive}\textit{(vertex)-transitive} 
if for every pair $x,y$ of vertices in $\mathsf{G}$, there exists a graph automorphism 
$\varphi$ with $\varphi x=y$. If $\mathsf{G}$ is transitive, then all vertices have 
the same degree. If there is a \index{group}subgroup $\Gamma$ of $\AUT(\mathsf{G})$, such that, for 
every $x,y\in \mathsf{G}$, there exists $\gamma\in\Gamma$, with $\gamma x=y$, then 
we say that $\Gamma$ \index{group!transitive action}\textit{acts transitively} on $\mathsf{G}$.
Throughout this thesis, we shall only consider transitive graphs.
\paragraph*{The Graph of a Markov Chain.} Every Markov chain $(\mathsf{G},P)$ with 
state space $\mathsf{G}$ and transition matrix $P=\big(p(x,y)\big)_{x,y\in \mathsf{G}}$ 
defines a graph whose vertex set is the state space $\mathsf{G}$ and the (oriented) 
set of edges is given by 
\begin{equation*}
E(\mathsf{G})=\{(x,y): p(x,y)>0\mbox{ with } x,y\in \mathsf{G} \} 
\end{equation*}
When the transition matrix $P$ is adapted to the structure of the underlying 
graph $\mathsf{G}$, then we shall speak of a \index{random walk}\textit{random walk} 
on $\mathsf{G}$ (instead of a Markov chain).

The \index{random walk!simple}\textit{simple random walk} (SRW) on a locally 
finite graph $\mathsf{G}$ is the Markov chain whose state space is $\mathsf{G}$ and the 
transition probabilities are given by
\begin{equation}
p(x,y)= 
\begin{cases}
\dfrac{1}{\deg(x)},&\mbox{ if }y\sim x\\
0,&\mbox{otherwise}.
\end{cases}
\end{equation}
This is the basic example of a Markov chain adapted to the underlying graph $\mathsf{G}$.
Throughout this thesis, we shall consider more general 
types of adaptedness properties of the transition matrix $P$
 to the underlying structure, and we shall speak of random
 walks (instead of Markov chains). We define here some of
 these properties, which will be frequently used.
\begin{definition}
The random walk $(\mathsf{G},P)$ is of \index{random walk!nearest neighbour}
 \emph{nearest neighbour type}, if $p(x,y)>0$ occurs only when $d(x,y)\leq 1$. 
\end{definition}
\begin{definition}
The random walk $(\mathsf{G},P)$ is said to have \index{random walk!finite moment}\emph{$k$-th finite moment}
 with respect to the usual graph distance $d$ on the graph $\mathsf{G}$, if 
\begin{equation*}
\sum_{x\in \mathsf{G}}d(o,x)^{k}p(o,x)<\infty,\quad \text{ for all } x\in \mathsf{G},
\end{equation*}
for some fixed vertex $o$ in $\mathsf{G}$. 
\end{definition}
Further adaptedness conditions of geometric type will be introduced later on.

\paragraph*{Green Function and Spectral Radius. } Assume that the random walk 
$(\mathsf{G},P)$ is irreducible. The \index{Green function}\textit{Green function} 
associated with $(\mathsf{G},P)$ is given by the power series
\begin{equation*}
G(x,y|z)=\sum_{n=0}^{\infty}p^{(n)}(x,y)z^{n},\quad x,y\in \mathsf{G}, z\in \mathbb{C}. 
\end{equation*}
We write $G(x,y)$ for $G(x,y|1)$. This is the expected number of
visits of $\Xn$ to $y$ when $X_{0}=x$. 

\begin{lemma}
For all $x,y\in \mathsf{G}$ the power series $G(x,y|z)$ has the same finite radius of 
convergence $\mathfrak{r}(P)$ given by
\begin{equation*}
\mathfrak{r}(P):=\Big(\limsup_{n\to\infty}\big(p^{(n)}(x,y)\big)^{1/n}\Big)^{-1}<\infty. 
\end{equation*}
\end{lemma}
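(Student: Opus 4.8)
The plan is to combine the Cauchy--Hadamard formula with the irreducibility of $(\mathsf{G},P)$. Since $p^{(n)}(x,y)\geq 0$, the Cauchy--Hadamard theorem applied to the power series $G(x,y|z)=\sum_{n} p^{(n)}(x,y)z^{n}$ shows immediately that its radius of convergence equals $\bigl(\limsup_{n\to\infty}(p^{(n)}(x,y))^{1/n}\bigr)^{-1}$, which is the asserted formula. It thus remains to establish two things: that the quantity $\ell(x,y):=\limsup_{n}(p^{(n)}(x,y))^{1/n}$ does not depend on the pair $(x,y)$, and that it is strictly positive, so that $\mathfrak{r}(P)$ is finite.

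For the independence of $(x,y)$, I would fix two pairs $(x,y)$ and $(x',y')$. By irreducibility there are integers $k,l\geq 0$ with $p^{(k)}(x',x)>0$ and $p^{(l)}(y,y')>0$. The Chapman--Kolmogorov inequality then gives, for every $n$,
\begin{equation*}
p^{(k+n+l)}(x',y')\;\geq\; p^{(k)}(x',x)\,p^{(n)}(x,y)\,p^{(l)}(y,y').
\end{equation*}
Raising this to the power $1/(k+n+l)$ and letting $n\to\infty$, the two positive constants contribute factors tending to $1$ while the exponent $1/(k+n+l)$ is asymptotically $1/n$; hence $\ell(x',y')\geq \ell(x,y)$. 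Exchanging the roles of the two pairs yields the reverse inequality, so $\ell(x,y)=\ell(x',y')$. Consequently $\ell:=\ell(x,y)$ is a constant independent of the pair, and $\mathfrak{r}(P)=1/\ell$ is the common radius of convergence.

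Finally, to see that $\mathfrak{r}(P)<\infty$, i.e. $\ell>0$, I would use a return loop. For a fixed $o\in\mathsf{G}$, irreducibility provides some $n_{0}\geq 1$ with $p^{(n_{0})}(o,o)>0$ (concatenate a path from $o$ to any $y$ with one back to $o$). Chapman--Kolmogorov then yields $p^{(mn_{0})}(o,o)\geq \bigl(p^{(n_{0})}(o,o)\bigr)^{m}$, so
\begin{equation*}
\bigl(p^{(mn_{0})}(o,o)\bigr)^{1/(mn_{0})}\;\geq\;\bigl(p^{(n_{0})}(o,o)\bigr)^{1/n_{0}}\;>\;0,
\end{equation*}
whence $\ell=\ell(o,o)\geq (p^{(n_{0})}(o,o))^{1/n_{0}}>0$. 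The main subtlety is the limiting argument in the second paragraph: one must verify that multiplying $p^{(n)}(x,y)$ by fixed positive constants and shifting the index by the bounded amount $k+l$ leaves the exponential growth rate unchanged, which is precisely why only the $\limsup$ of the $n$-th roots, rather than the individual probabilities, governs the radius. Since moreover $p^{(n)}(x,y)\leq 1$ forces $\ell\leq 1$, one in fact obtains the sharper bound $\mathfrak{r}(P)\geq 1$.
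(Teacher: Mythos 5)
Your proof is correct and follows essentially the same route as the paper: the key irreducibility inequality $p^{(k+n+l)}(x',y')\geq p^{(k)}(x',x)\,p^{(n)}(x,y)\,p^{(l)}(y,y')$ and the return-loop argument $p^{(mn_0)}(o,o)\geq\bigl(p^{(n_0)}(o,o)\bigr)^m$ for finiteness of $\mathfrak{r}(P)$ are exactly the ones used in the text. The only cosmetic difference is that you extract the common radius via Cauchy--Hadamard and $n$-th roots, whereas the paper compares the Green functions directly as dominated power series; the two formulations are equivalent.
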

\begin{proof}
 The fact that the power series defining the functions $G(x,y|z)$ have all the same radius
of convergence follows from a system of Harnack-type inequalities.  Because of the
irreducibility of $(\mathsf{G},P)$, for all $x_1,x_2,y_1,y_2\in \mathsf{G}$ there exist some 
$k,l\in\mathbb{N}$ such that,
\begin{equation*}
 p^{(k)}(x_1,x_2)>0 \text{ and } p^{(l)}(y_2,y_1)>0.
\end{equation*}
Therefore, for every $n\in\mathbb{N}$, we have
\begin{equation*}
p^{(n+k+l)}(x_1,y_1)\geq p^{(k)}(x_1,x_2)p^{(n)}(x_2,y_2)p^{(l)}(y_2,y_1). 
\end{equation*}
Consequently, for every positive argument of the Green function,
\begin{equation*}
 G(x_1,y_1|z)\geq p^{(k)}(x_1,x_2)p^{(l)}(y_2,y_1)z^{k+l}G(x_2,y_2|z).
\end{equation*}
It follows that the radius of convergence of $G(x_1,y_1|z)$ is at least as big as
that of $G(x_2,y_2|z)$. The fact that $\mathfrak{r}(P)<\infty$ follows 
from the irreducibility of $(\mathsf{G},P)$: let $k\in\mathbb{N}$
such that $p^{(k)}(x,x)=\varepsilon>0$, then $p^{(nk)}(x,x)\geq\varepsilon^n>0$
for every $n\geq 0$.
\end{proof}
Hence, the Green function has the following important property: 
if the random walk on $\mathsf{G}$ is irreducible and $z$ is a real number 
greater than zero, then the power series $G(x,y|z)$ either converges 
or diverges simultaneously for all $x,y\in \mathsf{G}$. For more details, 
see {\sc Woess}~\cite{WoessBook}. 
\begin{definition}
The \index{spectral radius}\emph{spectral radius} of the random walk $(\mathsf{G},P)$ is
\begin{equation*}
 \rho(P)=\limsup_{n\to\infty}p^{(n)}(x,y)^{1/n}\in (0,1].
\end{equation*}
\end{definition}
\begin{definition}
The random walk $(\mathsf{G},P)$ is called \index{random walk!recurrent}\emph{recurrent} 
if $G(x,y)=\infty$ for some ($\Leftrightarrow$ every) $x,y\in \mathsf{G}$. Otherwise, 
is called \index{random walk!transient}\emph{transient}.
\end{definition}

\begin{proposition}Further characterizations of recurrence and transience:
\begin{enumerate}
\item If $\rho(P)<1$, then $(\mathsf{G},P)$ is transient. The converse is not true.
\item If $(\mathsf{G},P)$ is recurrent then
\begin{equation*}
\mathbb{P}_{x}[X_n=y \text{ for infinitely many } n]=1,\quad \text{for all }x,y\in \mathsf{G}.
\end{equation*}
\item If $(\mathsf{G},P)$ is transient, then for every finite $A\subset \mathsf{G}$,
\begin{equation*}
 \mathbb{P}_{x}[X_n\in A\text{ for infinitely many }n]=0,\quad \text{for all }x\in \mathsf{G}.
\end{equation*}
\end{enumerate} 
\end{proposition}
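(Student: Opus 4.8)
The plan is to handle the three parts separately, using the radius-of-convergence lemma above together with the standard first-passage decomposition of the Green function. For the latter I would introduce the first hitting time $\tau_y = \inf\{n \ge 1 : X_n = y\}$, the first-passage probabilities $f^{(n)}(x,y) = \mathbb{P}_x[\tau_y = n]$, the generating function $F(x,y \mid z) = \sum_{n \ge 1} f^{(n)}(x,y)\, z^n$, and its value $F(x,y) := F(x,y \mid 1) = \mathbb{P}_x[\tau_y < \infty]$. Conditioning each path on its first visit to $y$ gives, for $n \ge 1$, the identity $p^{(n)}(x,y) = \sum_{k=1}^n f^{(k)}(x,y)\, p^{(n-k)}(y,y)$, which in generating-function form reads $G(x,y \mid z) = \delta_{xy} + F(x,y \mid z)\, G(y,y \mid z)$. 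Taking $x = y$ and $z = 1$ yields $G(x,x) = 1/\bigl(1 - F(x,x)\bigr)$, so that recurrence, i.e. $G(x,x) = \infty$, is equivalent to $F(x,x) = 1$; this equivalence is the backbone of part (b).

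For part (a) I would invoke the preceding lemma directly: the power series $G(x,x \mid z)$ has finite radius of convergence $\mathfrak{r}(P) = \rho(P)^{-1}$. If $\rho(P) < 1$ then $\mathfrak{r}(P) > 1$, so $z = 1$ lies strictly inside the disc of convergence and $G(x,x) = G(x,x \mid 1) < \infty$; by definition the walk is then transient. (Equivalently, a root test gives $p^{(n)}(x,x) \le \theta^n$ for any fixed $\rho(P) < \theta < 1$ and all large $n$, so $\sum_n p^{(n)}(x,x) < \infty$.) For the failure of the converse I would exhibit the simple random walk on $\mathbb{Z}^3$: it is transient, its return probability being strictly less than one, yet the local limit estimate $p^{(2n)}(0,0) \asymp n^{-3/2}$ decays only polynomially, so $p^{(2n)}(0,0)^{1/(2n)} \to 1$ and hence $\rho(P) = 1$.

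For part (b), recurrence gives $F(x,x) = 1$ by the identity above. The key step is to promote this to $F(x,y) = 1$ for every pair $x,y$. I would argue by contradiction: by irreducibility choose $m$ with $p^{(m)}(y,x) > 0$; if $F(x,y) < 1$, the strong Markov property at time $m$ gives $\mathbb{P}_y\bigl[X_m = x,\ X_n \ne y\ \text{for all } n > m\bigr] = p^{(m)}(y,x)\,\bigl(1 - F(x,y)\bigr) > 0$, so with positive probability the walk started at $y$ visits $y$ only finitely often, contradicting recurrence at $y$. Once $F(x,y) = F(y,y) = 1$ is known, a further application of the strong Markov property to the successive returns to $y$ yields $\mathbb{P}_x\bigl[\#\{n \ge 0 : X_n = y\} \ge k+1\bigr] = F(x,y)\, F(y,y)^k = 1$ for every $k$, whence $y$ is visited infinitely often almost surely.

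For part (c), transience means $G(x,y) < \infty$ for all $x,y$, so for a finite set $A$ one has $\sum_{n} \mathbb{P}_x[X_n \in A] = \sum_{y \in A} G(x,y) < \infty$; the first Borel--Cantelli lemma, which requires no independence, then gives $\mathbb{P}_x[X_n \in A\ \text{i.o.}] = 0$. I expect the main obstacle to be part (b), specifically the promotion of $F(x,x) = 1$ to $F(x,y) = 1$ for all pairs and the careful bookkeeping in the strong Markov arguments; parts (a) and (c) are essentially immediate consequences of the Green-function lemma and of Borel--Cantelli, respectively.
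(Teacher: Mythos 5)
Your proof is correct. Note that the paper itself states this proposition without proof (it is quoted as a standard fact, with the surrounding discussion deferring to {\sc Woess}~\cite{WoessBook}), so there is no in-paper argument to compare against; your route --- part (a) via the radius-of-convergence lemma $\mathfrak{r}(P)=\rho(P)^{-1}$ with $\Z^3$ as the counterexample, part (b) via the first-passage decomposition $G(x,y\mid z)=\delta_{xy}+F(x,y\mid z)\,G(y,y\mid z)$ and the promotion of $F(x,x)=1$ to $F(x,y)=1$ by irreducibility, and part (c) via finiteness of $\sum_{y\in A}G(x,y)$ and Borel--Cantelli --- is the standard one and fits exactly the framework the paper sets up (the Green function, its common radius of convergence, and the definition of recurrence as $G(x,y)=\infty$ for some, equivalently every, pair). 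The only cosmetic point is that your appeal to the ``strong'' Markov property at the deterministic time $m$ in part (b) needs only the ordinary Markov property; the strong version is genuinely used only for the successive-return-times bookkeeping at the end.
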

In other words, a random walk $(\mathsf{G},P)$ is \textit{recurrent} if every 
element of the state space $\mathsf{G}$ is visited infinitely often with 
probability $1$. Equivalently, in the transient case, each element 
is visited only finitely many times with probability $1$. This is the same 
as saying that $\Xn$ leaves finite subset of $\mathsf{G}$ almost surely after a 
finite time. 

\begin{example}
The SRW on $\mathbb{Z}$ is the Markov chain with state space $\mathsf{G}=\mathbb{Z}$
and transition probabilities 
\begin{equation*}
p(x,x+1)=p(x,x-1)=\frac{1}{2},\quad \text{ for all } x\in \Z.
\end{equation*}
The random walk on $\mathbb{Z}$ with drift to the right is the Markov chain
on $\mathsf{G}=\mathbb{Z}$ with 
\begin{equation*}
p(x,x+1)=1-p(x,x-1)=p,\quad \text{ for all } x\in \Z \text{ and } p\in \Big(\frac{1}{2},1\Big).
\end{equation*}
The SRW on $\mathbb{Z}$ is recurrent and the 
random walk with drift is transient.

\begin{minipage}[b]{0.5\linewidth}
\begin{tikzpicture}
\foreach \x in {-2,-1,0,1,2}
	\filldraw [black] (\x,0) circle (2.5pt) node[below] {$\x$};		
\draw[thick] (-2.5,0)--(2.5,0);
\draw[thick,black,->] (0.1,0.5)--(0.95,0.5);
\draw[thick,black,->] (-0.1,0.5)--(-0.95,0.5);
\draw  node at (0.5,0.8) {$1/2$};
\draw  node at (-0.5,0.8) {$1/2$};
\end{tikzpicture}
\quad \quad \quad Simple random walk on $\mathbb{Z}$
\end{minipage}
\hspace{0.75cm}
\begin{minipage}[b]{0.5\linewidth}
\begin{tikzpicture}
\foreach \x in {-2,-1,0,1,2}
	\filldraw [black] (\x,0) circle (2.5pt) node[below] {$\x$};		
		
\draw[thick] (-2.5,0)--(2.5,0);

\draw[thick,black,->] (0.1,0.5)--(0.95,0.5);
\draw[thick,black,->] (-0.1,0.5)--(-0.95,0.5);
\draw  node at (0.5,0.8) {$p$};
\draw  node at (-0.5,0.8) {$1-p$};
\end{tikzpicture}
\quad \quad Random walk on $\mathbb{Z}$ with drift
\end{minipage}
\end{example}

\paragraph*{Rate of Escape}
\begin{proposition}[Rate of Escape, Drift]
If the random walk $(\mathsf{G},P)$ has finite first moment with respect to $d$, then there exists
a finite number $l=l(P)$ such that
\begin{equation*}
 \frac{d(o,X_n)}{n}\to l, \quad \text {almost surely}.
\end{equation*}
The number $l=l(P)$ is called \index{Markov chain!rate of escape}\emph{rate of escape} 
or \index{Markov chain!drift}\emph{drift} of the random walk $\Xn$ with transition matrix $P$.
\end{proposition}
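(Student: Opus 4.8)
The plan is to realize the process $n\mapsto d(o,X_n)$ as a subadditive process and to invoke Kingman's subadditive ergodic theorem. I would work on the trajectory space $\Omega=\mathsf{G}^{\Z_+}$ equipped with $\P_o$, and let $\theta\colon\Omega\to\Omega$ be the time shift $\theta(x_0,x_1,\ldots)=(x_1,x_2,\ldots)$. Because the walk is homogeneous, that is, invariant under the transitive group $\Gamma\subset\AUT(\mathsf{G})$, its increments are stationary and $\theta$ is measure preserving; since the successive steps are conditionally independent and identically distributed, $\theta$ is moreover ergodic.

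First I would set $S_n=d(o,X_n)$ and verify subadditivity. By the triangle inequality,
\[
S_{n+m} = d(o, X_{n+m}) \leq d(o, X_n) + d(X_n, X_{n+m}).
\]
The crucial point is that $d(X_n,X_{n+m})$ has, under $\P_o$, the same law as $S_m\circ\theta^n$: by transitivity there is $\gamma\in\Gamma$ with $\gamma X_n=o$, and the homogeneity of $P$ means that the walk restarted from $X_n$ and then pushed by $\gamma$ back to $o$ is distributed as the original walk. Hence $S_{n+m}\leq S_n+S_m\circ\theta^n$, which is exactly the subadditivity hypothesis of Kingman's theorem.

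Next I would check integrability. The finite first moment assumption reads $\sum_x d(o,x)\,p(o,x)<\infty$, that is, $\E_o[S_1]<\infty$; combined with subadditivity this gives $\E_o[S_n]\leq n\,\E_o[S_1]<\infty$ for every $n$, so the positive parts are integrable as required. Kingman's subadditive ergodic theorem then yields a random variable $l$ with $S_n/n\to l$ almost surely and in $L^1$, where $\E_o[l]=\lim_n \E_o[S_n]/n=\inf_n \E_o[S_n]/n$. Ergodicity of $\theta$ forces $l$ to be almost surely constant, and this constant $l=l(P)$ is finite since $\E_o[S_n]/n\leq \E_o[S_1]<\infty$; this is the asserted rate of escape.

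The main obstacle I expect is the subadditivity step, specifically justifying that $d(X_n,X_{n+m})$ is distributed as $S_m\circ\theta^n$ with the appropriate independence of the past, which is precisely where the homogeneity of the walk under the transitive action of $\Gamma$ enters. On a transitive graph that is not itself a group this requires lifting the walk to $\Gamma$, choosing increments $\gamma_1,\gamma_2,\ldots$ in $\Gamma$ with $X_n=\gamma_1\cdots\gamma_n\,o$ and exploiting the $\Gamma$-invariance of the metric $d$. Once the process is expressed through stationary, ergodic increments, the remaining verification of the hypotheses of Kingman's theorem is routine.
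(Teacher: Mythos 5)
Your proposal is correct and follows essentially the same route as the paper, which simply derives the proposition from Kingman's subadditive ergodic theorem applied to the subadditive sequence $W_n=d(o,X_n)$ on the trajectory space with the time shift. The details you supply (subadditivity via the triangle inequality and homogeneity, integrability from the finite first moment, and constancy of the limit via the i.i.d.\ increments obtained by lifting the walk to $\Gamma$) are exactly the ones the paper leaves implicit.
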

The rate of escape is only of interest for transient random walks, since
for the recurrent ones it is always zero.
The existence of the rate of escape is a 
consequence of \textit{Kingman's subadditive ergodic theorem} which we formulate
now. See {\sc Kingman}~\cite{Kingman1968} for details. 
\begin{theorem}\label{thm:kingman}
Consider the probability space $\Omega=\mathsf{G}^{\Z_+}$ and let $T$
be the time shift on $\Omega$ with $T(x_0,x_1,\ldots)=(x_1,x_2,\ldots)$.
If $(W_n)$ is a subadditive sequence of non-negative real-valued
random variables, that is, for all $k,n\in\Z_+$ 
\begin{equation*}
W_{k+n}\leq W_n+W_k\circ T^n, 
\end{equation*}
holds, and if $W_1$ is integrable, then there
is a $T$-invariant real-valued integrable random variable $W_{\infty}$
such that
\begin{equation*}
 \lim_{n\to\infty}\frac{1}{n}W_n=W_{\infty},\quad \text{ almost surely}.
\end{equation*}
\end{theorem}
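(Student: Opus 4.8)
The plan is to treat Theorem~\ref{thm:kingman} as the classical Kingman subadditive ergodic theorem and to prove it along the lines of Steele and Katznelson--Weiss; throughout I use that the shift $T$ preserves $\P$ (this is the stationary situation in which the theorem is actually applied to the rate of escape, where $\Omega$ is endowed with the law of the increments). Since the $W_n$ are non-negative and $W_1\in L^1$, subadditivity gives $\E[W_n]\le n\,\E[W_1]<\infty$, so every $W_n$ is integrable; moreover the numbers $a_n=\E[W_n]$ are finite and, using $\E[W_k\circ T^n]=\E[W_k]$, subadditive, so Fekete's lemma yields $\tfrac1n a_n\to\gamma:=\inf_n \tfrac1n a_n\ge 0$. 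Non-negativity is what makes all of these integrability points automatic.

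First I would put $f=\limsup_n \tfrac1n W_n$ and $g=\liminf_n \tfrac1n W_n$ and show both are $T$-invariant. The special case $W_{m+1}\le W_1+W_m\circ T$ of subadditivity gives $\tfrac{1}{m+1}W_{m+1}\le \tfrac{1}{m+1}W_1+\tfrac{m}{m+1}\,\tfrac1m W_m\circ T$, and since $\tfrac1m W_1\to 0$ a.e., letting $m\to\infty$ produces $f\le f\circ T$ and $g\le g\circ T$ a.e. Because $T$ is measure-preserving, any non-negative $h$ with $h\le h\circ T$ in fact satisfies $h=h\circ T$ a.e. (apply $\int h\le\int h\circ T=\int h$ to the bounded truncations $\min(h,c)$ and let $c\to\infty$); hence $f$ and $g$ are invariant. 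Fatou's lemma moreover gives $\E[g]\le\gamma<\infty$, so $g\in L^1$.

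The heart of the argument is to show $f\le g$ a.e., for then $f=g=:W_\infty$ is the desired invariant, integrable limit. Fix $\eps>0$ and $M\in\N$ and introduce the \emph{good time} $\tau=\inf\{k\ge1: W_k\le k(g+\eps)\}$, which is finite a.e. because $g=\liminf_n \tfrac1n W_n$; set $B=\{\tau>M\}$, so that $\mathbf 1_B W_1\to 0$ in $L^1$ as $M\to\infty$. Walking along the orbit $\omega,T\omega,\dots,T^{n-1}\omega$ I would cut $[0,n)$ into blocks: at a position $t$ with $T^t\omega\notin B$ and with room to spare I take a block of length $\tau(T^t\omega)\le M$, and otherwise a single step. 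Iterating $W_{a+b}\le W_a+W_b\circ T^a$ along these blocks, and using invariance $g\circ T^t=g$ to bound each good block by $\tau(T^t\omega)\,(g(\omega)+\eps)$, yields the pointwise estimate
\begin{equation*}
W_n\le n\,(g+\eps)+\sum_{m=0}^{n-1}(\mathbf 1_B\,W_1)\circ T^m+\sum_{m=n-M}^{n-1}W_1\circ T^m.
\end{equation*}
Dividing by $n$ and letting $n\to\infty$, Birkhoff's pointwise ergodic theorem turns the middle sum into $\E[\mathbf 1_B W_1\mid\mathcal I]$ and makes the last sum, an average of a window of fixed width $M$, vanish; thus $f\le g+\eps+\E[\mathbf 1_B W_1\mid\mathcal I]$ a.e. Taking expectations, then $M\to\infty$ (so $\E[\mathbf 1_B W_1]\to0$) and $\eps\to0$, gives $\E[f]\le\E[g]$; since $0\le g\le f$ and $\E[g]<\infty$, this forces $f=g$ a.e.

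The main obstacle is the block decomposition: one must organise the good-time cutting so that the overshoot near the right endpoint and the contributions of the exceptional set $B$ are separately controllable, and then invoke the ergodic theorem to dispose of both in the limit. The two standing inputs I am assuming are that $T$ preserves $\P$ (which replaces the bare ``time shift'' of the statement by its stationary version used in the application) and Birkhoff's ergodic theorem; granting these, the non-negativity of the $W_n$ removes the integrability technicalities that would otherwise accompany Fekete's lemma and the truncation steps. Given that the statement is the classical theorem, the alternative is simply to cite {\sc Kingman}~\cite{Kingman1968}, as the surrounding text already anticipates.
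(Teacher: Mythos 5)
The paper does not actually prove Theorem~\ref{thm:kingman}: it states the result and refers to {\sc Kingman}~\cite{Kingman1968}, using it only as a black box (for the rate of escape and for Lemma~\ref{lem:rate_of_escape_rw}). Your proposal therefore goes well beyond the text by supplying a self-contained proof, and the argument you sketch is the standard Steele/Katznelson--Weiss proof: invariance of $f=\limsup\frac1nW_n$ and $g=\liminf\frac1nW_n$ from the one-step subadditive inequality plus measure preservation, integrability of $g$ via Fatou and Fekete, and the good-time block decomposition with the exceptional set $B=\{\tau>M\}$ feeding into Birkhoff's theorem to force $f\le g$. I checked the displayed estimate $W_n\le n(g+\eps)+\sum_{m=0}^{n-1}(\mathbf 1_BW_1)\circ T^m+\sum_{m=n-M}^{n-1}W_1\circ T^m$: it is correct, and the non-negativity of the $W_n$ is used exactly where you say (to absorb the good-block lengths into $n(g+\eps)$ and to make all integrability automatic). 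What the citation buys the paper is brevity; what your proof buys is transparency about the hypotheses, and here you put your finger on the one real delicacy: as literally stated, with $\Omega=\mathsf{G}^{\Z_+}$ the trajectory space and $\P=\P_x$ the law of the chain started at a point, the shift $T$ is \emph{not} measure preserving, and your proof (like every proof of Kingman's theorem) needs $T$-invariance of the underlying measure. In the paper's application this is repaired by working on the space of i.i.d.\ increments $\Gamma^{\N}$ with the product measure, where $W_n=d(o,S_no)$ is subadditive and the shift is genuinely measure preserving; it would be worth stating that reduction explicitly rather than leaving it as a standing assumption, since otherwise the theorem as printed is not quite the statement being proved.
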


\paragraph*{Reversible Markov Chains}
\begin{definition}
A Markov chain $(\mathsf{G},P)$ is called \index{Markov chain!reversible}\emph{reversible} 
if there exists a measure $m:\mathsf{G}\rightarrow (0,\infty)$ such that
\begin{equation*}
m(x)p(x,y)=m(y)p(y,x),\quad\mbox{ for all } x,y\in \mathsf{G}. 
\end{equation*}
\end{definition}
Also, $m$ is called the \textit{reversible measure} for $(\mathsf{G},P)$.
If $(\mathsf{G},P)$ is the simple random walk on $\mathsf{G}$, then $m(x)=\deg(x)$.
\begin{definition}
A function $f\in \ell^{\infty}(\mathsf{G})$ is called \emph{$P$-harmonic} if $Pf=f$ pointwise,
where the Markov operator $P$ acts on functions $f\in \ell^{\infty}(\mathsf{G})$ by
\begin{equation*}
{P} f(x)=\sum_{y\in \mathsf{G}} p(x,y)f(y).
\end{equation*}
We say that $f$ is \emph{$P$-superharmonic} if $Pf\leq f$ pointwise. 
\end{definition}
Reversibility is the same as saying that the transition matrix
$P$ acts on $l^2(\mathsf{G},m)$ as a self-adjoint operator, that is, 
$(Pf_1,f_2)=(f_1,Pf_2)$, for all $f_1,f_2\in l^2(\mathsf{G},m)$, where
the Hilbert space $l^2(\mathsf{G},m)$ consists of functions $f:\mathsf{G}\to\mathbb{R}$ with
\begin{equation*}
 \sum_{x\in \mathsf{G}}\abs{f(x)}^2 m(x)< \infty.
\end{equation*}
The inner product on $l^2(\mathsf{G},m)$ is given by
\begin{equation*}
(f_1,f_2)=\sum_{x\in \mathsf{G}}f_1(x)f_2(x)m(x).
\end{equation*}
There is another useful characterization of the recurrence of a
random walk in terms of superharmonic functions, which we state now.
\begin{theorem}
 $(\mathsf{G},P)$ is recurrent if and only if all non-negative superharmonic
functions are constant.
\end{theorem}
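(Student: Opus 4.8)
The plan is to prove the two implications separately, handling one of them via its contrapositive. Concretely, I would show: (i) if $(\mathsf{G},P)$ is transient then an explicit non-constant non-negative superharmonic function exists, and (ii) if $(\mathsf{G},P)$ is recurrent then every non-negative superharmonic function is constant. Together these give the equivalence.

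For (i), I would fix a reference vertex $o\in\mathsf{G}$ and work with the Green function $f(x):=G(x,o)=\sum_{n\ge0}p^{(n)}(x,o)$. By transience, together with the earlier Lemma guaranteeing that $G(x,y|z)$ converges or diverges simultaneously for all $x,y$ at a given real $z>0$, the function $f$ is finite everywhere and non-negative. I would then compute, interchanging the two non-negative sums by Tonelli,
\[
(Pf)(x)=\sum_{y}p(x,y)\sum_{n\ge0}p^{(n)}(y,o)=\sum_{n\ge0}p^{(n+1)}(x,o)=f(x)-\delta_{x}(o),
\]
so that $Pf\le f$, i.e.\ $f$ is superharmonic. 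To see that $f$ is non-constant I would note that any constant $c$ is harmonic (as $P$ is stochastic, $Pc=c$), whereas $(Pf)(o)=f(o)-1\ne f(o)$; hence $f$ is not constant. This settles the contrapositive of ``all non-negative superharmonic functions constant $\Rightarrow$ recurrent''.

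For (ii), I would exploit that a non-negative superharmonic function becomes a supermartingale along the trajectory. Indeed $\E_{x}[f(X_{n+1})\mid X_0,\dots,X_n]=(Pf)(X_n)\le f(X_n)$, so $(f(X_n))_{n\ge0}$ is a non-negative supermartingale under $\P_{x}$ and hence converges $\P_{x}$-almost surely to a finite limit $L$ by the supermartingale convergence theorem. I would then invoke recurrence and irreducibility, in the form recorded in the Proposition above, to get that $\P_{x}[X_n=z\ \text{for infinitely many }n]=1$ for every $z\in\mathsf{G}$. Consequently, on an almost sure event, each value $f(z)$ is attained infinitely often along the convergent sequence $(f(X_n))$, forcing $f(z)=L$ for every $z$; thus $f$ is constant.

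The hard part, I expect, will be the recurrent direction (ii): the genuinely non-trivial step is passing from ``$f(X_n)$ converges a.s.'' to ``$f$ is constant'', which requires carefully combining the supermartingale convergence with the fact that almost every trajectory visits every state infinitely often. I would also take care that finiteness and non-negativity of $f$ are exactly what is needed, so that $Pf$ is finite and the supermartingale is bounded below, and the convergence theorem applies with no integrability hypothesis beyond $f(x)<\infty$.
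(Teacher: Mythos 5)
Your proof is correct. Note that the paper itself states this theorem without any proof (it is quoted as a known fact, essentially from Woess's book), so there is no in-paper argument to compare against; your two-sided argument --- the Green function $G(\cdot,o)$ as an explicit non-constant non-negative superharmonic function in the transient case, and the non-negative supermartingale convergence of $f(X_n)$ combined with the fact that a recurrent irreducible chain visits every state infinitely often in the recurrent case --- is the standard and complete way to establish it. The only point worth flagging is that the paper's definition of superharmonic restricts to $f\in\ell^\infty(\mathsf{G})$; this causes no trouble, since $G(x,o)\le G(o,o)$ by first-passage decomposition, so your witness in part (i) is in fact bounded.
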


\paragraph{Markov Chains and Reversed Markov Chains.}
If the Markov chain $(\mathsf{G},P)$ is reversible with respect to the measure $m$,
then one can construct the \index{Markov chain!reversed} \textit{reversed Markov 
chain} $(\mathsf{G},\check{P})$ on $\mathsf{G}$, whose transition probabilities are given by
\begin{equation*}
\check{p}(x,y)=p(y,x)\dfrac{m(y)}{m(x)}. 
\end{equation*}
The reversed Markov chain inherits the properties of the original one.

If $\mathsf{G}$ is a transitive graph, $P$ a transition matrix on $\mathsf{G}$ and
$\Gamma\subset \AUT(\mathsf{G})$ is a group which acts transitively on $\mathsf{G}$,
then one can construct a reversible measure $m$ for the Markov chain $(\mathsf{G},P)$.
For doing this, let $o\in \mathsf{G}$ be a fixed reference vertex, 
whose choice is irrelevant by the transitivity assumption. Denote by 
$\Gamma_{x}$ the \index{group!stabilizor}\textit{stabilizer} of $x$ in 
$\Gamma$, that is, 
\begin{equation*}
\Gamma_{x}=\{\gamma\in\Gamma:\gamma x=x\},\quad \text{ for }x\in\mathsf{G},
\end{equation*}
and by $\Gamma_{o}x$ the \index{group!orbit}\textit{orbit} of $x$ 
under the action of $\Gamma_{o}$, i.e.
\begin{equation*}
\Gamma_{o}x=\{\gamma x: \gamma\in\Gamma_{o}\}. 
\end{equation*}
Then it is easy to check that 
\begin{equation*}
 m(x)=\dfrac{|\Gamma_{o}x|}{|\Gamma_{x}o|}
\end{equation*}
is a \index{Markov chain!reversible measure}reversible measure for $(\mathsf{G},P)$.
By $|\cdot |$ we denote the cardinality of the respective set. 
Note that if the group $\Gamma$ is discrete, then the measure 
$m$ is just the counting measure. 

\paragraph{Homogeneous Markov Chains.}
Let $(\mathsf{G},P)$ be an irreducible Markov chain on $\mathsf{G}$. We denote by
\begin{equation*}
 \AUT(\mathsf{G},P)=\{g\in \AUT(\mathsf{G}):\ p(x,y)=p(gx,gy),\ \text{ for all } x,y\in \mathsf{G}\}
\end{equation*}
the group of automorphisms (isometries) of $\mathsf{G}$ which leaves invariant the 
transition probabilities of $P$.

\begin{definition}
A Markov chain is called \index{Markov chain!homogeneous}\emph{homogeneous}
or \index{Markov chain!transitive}\emph{transitive} if the group $\AUT(\mathsf{G},P)$ 
acts on $\mathsf{G}$ transitively. 
\end{definition}
Throughout this thesis we shall consider transient Markov chains which are
irreducible and homogeneous.

\section{Random Walks on Finitely Generated Groups}

Let $\Gamma$ be a discrete group with unit element $e$, with the group 
operation written multiplicatively, unless $\Gamma$ is abelian. Let also 
$\mu$ be a probability measure on $\Gamma$. The 
\index{random walk!on a group}\textit{(right) random walk} 
on $\Gamma$ with law $\mu$, denoted by $(\Gamma,\mu)$, is the Markov chain with 
state space $\Gamma$ and transition probabilities given by 
\begin{equation}\label{eq:trpb_rw}
 p(x,y)=\mu(x^{-1}y),\quad \text{ for all }\ x,y\in\Gamma.
\end{equation}
In order to obtain an equivalent model of the random walk $(\Gamma,\mu)$
as a sequence of random variables $(S_n)$, 
we use the product space $(\Gamma,\mu)^{\N}$. For $n\geq 1$, the $n$-th 
projections $Y_{n}$ of $\Gamma^{\N}$ onto $\Gamma$
constitute a sequence of independent $\Gamma$-valued random 
variables with common distribution $\mu$, and the right random 
walk starting at $x\in \Gamma$ is given as
\begin{equation*}
 S_{n}=x Y_{1}\cdots Y_{n},\quad n\geq 1.
\end{equation*}
This is a generalization of the scheme of sums of i.i.d. random 
variables on the integers or on the reals. The $n$-step transition 
probabilities are obtained by
\begin{equation*}
p^{(n)}(x,y)=\mu^{(n)}(x^{-1}y), 
\end{equation*}
where $\mu^{(n)}$ is the $n$-fold convolution of $\mu$ with itself, 
with $\mu^{0}=\delta_{e}$, the point mass at the group identity. 
We denote by $\mathbb{P}_{x}$ the measure of the random walk and omit 
the subscript if the random walk starts at the identity $e$.

In order to relate random walks on groups with random walks on graphs, 
let us introduce the notion of \textit{Cayley graphs}, that is, graphs 
that encode the structure of discrete groups. Suppose that the group 
$\Gamma$ is finitely generated, and let $S$ be a symmetric set of 
generators of $\Gamma$. The \index{graph!Cayley graph}\textit{Cayley graph $\mathsf{G}(\Gamma, S)$} 
of $\Gamma$ with respect to the generating set $S$ has vertex set $\Gamma$, 
and two vertices $x,y\in\Gamma$ are connected by an edge, if and only 
if $x^{-1}y\in S$. This graph is connected, locally finite, and regular 
(all vertices have the same degree $|S|$). Notice that Cayley graphs are 
transitive in the sense that they look the same from every vertex. If $e\in S$, 
then $\mathsf{G}(\Gamma,S)$ has a loop at each vertex.

\begin{minipage}[b]{0.6\linewidth}
\begin{example}
The homogeneous tree $\mathcal{T}_{M}$ of degree $M$
is the Cayley graph of the group
$\langle a_{1},a_{2},\ldots ,a_{M}|a_{i}^{2}=e\rangle$
with respect to the 
generators $S=\{a_{1},a_{2},\ldots a_{M}\}$. This group is the free 
product of $M$ copies of the two-element group $\mathbb{Z}_{2}$. 
See Chapter II in {\sc Woess}~\cite{WoessBook} for details on free products.
\end{example} 
\end{minipage}
\hspace{0.5cm}
\begin{minipage}[b]{0.4\linewidth}

\begin{tikzpicture}[scale=0.6]
\draw (-1,0)-- node[above] {$\mathcal{T}_3$} (1,0);
\draw (1,0)--(2,1);
	\draw (2,1)--(3,1);
		\draw (3,1)--(3.5,1.5);
		\draw (3,1)--(3.5,0.5);
	\draw (2,1)--(2,2);
		\draw (2,2)--(2.5,2.5);
		\draw (2,2)--(1.5,2.5);
\draw (1,0)--(2,-1);
	\draw (2,-1)--(3,-1);
		\draw (3,-1)--(3.5,-0.5);
		\draw (3,-1)--(3.5,-1.5);
	\draw (2,-1)--(2,-2);
		\draw (2,-2)--(1.5,-2.5);
		\draw (2,-2)--(2.5,-2.5);
\draw (-1,0)--(-2,1);
	\draw (-2,1)--(-3,1);
		\draw (-3,1)--(-3.5,1.5);
		\draw (-3,1)--(-3.5,0.5);
	\draw (-2,1)--(-2,2);
		\draw (-2,2)--(-1.5,2.5);
		\draw (-2,2)--(-2.5,2.5);
\draw (-1,0)--(-2,-1);
	\draw (-2,-1)--(-3,-1);
		\draw (-3,-1)--(-3.5,-0.5);
		\draw (-3,-1)--(-3.5,-1.5);
	\draw (-2,-1)--(-2,-2);
		\draw (-2,-2)--(-1.5,-2.5);
		\draw (-2,-2)--(-2.5,-2.5);

\foreach \x in {-1,1}
	\filldraw [black] (\x,0) circle (2.5pt);
\foreach \x in {-3,-2,2,3}
	\filldraw [black] (\x,1) circle (2.5pt);
\foreach \x in {-3,-2,2,3}	
	\filldraw [black] (\x,-1) circle (2.5pt);
\foreach \x in {-2,2}	
	\filldraw [black] (\x,2) circle (2.5pt);
\foreach \x in {-2,2}	
	\filldraw [black] (\x,-2) circle (2.5pt);	
\end{tikzpicture}
\end{minipage}

\begin{minipage}{0.6\linewidth}
\begin{example}
Euclidean lattices are the most well-known Cayley graphs. 
In the abelian group $\mathbb{Z}^{d}$, we may choose
the set of all elements with Euclidean 
length $1$ as generating set $S$. The resulting Cayley graph is the
usual grid. The group $\mathbb{Z}^{2}$ can be written as $\langle a,b|ab=ba\rangle$.
\end{example} 
\end{minipage}
\hspace{1cm}
\begin{minipage}{0.4\linewidth}
\begin{tikzpicture}[scale=0.7]
\draw (-2.5,0) -- (2.5,0);
\draw (0,-2.5) -- (0,2.5);
\draw[step=1cm] (-2.4,-2.4) grid (2.4,2.4);

\foreach \x in {-2,-1,0,1,2}
	\foreach \y in {-2,-1,0,1,2}
		\filldraw [black] (\x,\y) circle (2.5pt);

\end{tikzpicture}
\end{minipage}

The \textit{simple random walk} on $\mathsf{G}=\mathsf{G}(\Gamma,S)$ is the right random walk on 
$\Gamma$ whose law $\mu$ is the equidistribution on $S$, i.e., $\mu(s)=1/|S|$ for $s\in S$.

For an arbitrary distribution $\mu$, we write $\supp(\mu)=\{x\in\Gamma:\mu(x)>0\}$. 
Then $\supp(\mu^{(n)})=(\supp(\mu))^n$, and the random walk with 
law $\mu$ is \index{random walk! irreducible}irreducible if and only if
\begin{equation*}
\bigcup_{n=1}^{\infty}(\supp(\mu))^n=\Gamma, 
\end{equation*}
i.e., the set $\supp(\mu)$ generates $\Gamma$ as a group.

\paragraph{Reversed Random Walk.}
If $(S_n)$ is the right random walk on $\Gamma$ with distribution law $\mu$, then the 
\index{random walk!reversed random walk}\textit{reversed random walk} 
$(\check{S}_n)$ on $\Gamma$ has the distribution law $\check{\mu}$ given by
\begin{equation}\label{eq:reversed_rw}
 \check{\mu}(\gamma)=\mu(\gamma^{-1}),\quad \text{for all }\gamma\in\Gamma.
\end{equation}

\paragraph{Random Walks on $\mathsf{G}$ and on $\Gamma\subset \AUT(\mathsf{G},P)$.}
Let $\Gamma$ be a closed subgroup of $\AUT(\mathsf{G},P)$ 
which acts transitively on $\mathsf{G}$. The graph $\mathsf{G}$ should not be
necessary a Cayley graph of $\Gamma$. One can then
define random walks on $\Gamma$ which are in direct 
correspondence with random walks $(\mathsf{G},P)$ on $\mathsf{G}$. Such random walks on $\Gamma$
inherit the properties of $(\mathsf{G},P)$.

The group $\Gamma$ carries a \emph{left Haar measure $d\gamma$}, since
$\Gamma\subset \AUT(X,P)$. The measure $d\gamma$ has the following properties:
every open set has positive measure, every compact set has finite measure
and $d\gamma$ is a left translation invariant measure. Moreover, as a Radon
measure with these properties, $d\gamma$ is unique up to multiplication by constants. 
If $\Gamma$ is discrete, the Haar measure is (a multiple of) the counting measure.
For details concerning integration on locally compact groups and Haar
measures on groups, the reader may have a look at the book
of {\sc Hewitt and Ross}~\cite{HewittRoss1963}.

Let us choose a left Haar measure $d\gamma$ on $\Gamma$, such that 
$\int_{\Gamma_o}d\gamma=1$, where $o\in \mathsf{G}$ is a reference vertex.
With the transition probabilities $P$ of the random walk $\Xn$ on $\mathsf{G}$, 
one can associate a Borel measure $\mu$ on $\Gamma$ by
\begin{equation}\label{eq:correspondence_rw}
 \mu(d\gamma)=p(o,\gamma o)d\gamma.
\end{equation}
The measure $\mu$ is absolutely continuous with respect to $d\gamma$.
One can check that $\mu$ defines a probability measure on $\Gamma$,
and induces the right random walk $(S_n)$ on $\Gamma$. Then $(S_no)$
is a model of the random walk $\Xn$ on $\mathsf{G}$ starting at $o$. In other words,
$(S_no)$ is a homogeneous Markov chain with transition probabilities
$p(x,y)$, for  $x,y\in \mathsf{G}$.  For more details, see also
{\sc Kaimanovich and Woess}~\cite[Section 2]{KaimanovichWoess2002}.

Therefore, whenever one has a Markov chain $(\mathsf{G},P)$ and a closed 
subgroup $\Gamma$ of $\AUT(\mathsf{G},P)$, then one can construct a measure $\mu$ on $\Gamma$ 
which is in direct correspondence with the transition probabilities $P$ by equation
\eqref{eq:correspondence_rw}. If $\mathsf{G}$ is a Cayley graph of $\Gamma$ with respect to some
generating set, then the correspondence between $\mu$ and $P$ 
is given by \eqref{eq:trpb_rw}.

\part{Behaviour at Infinity of Lamplighter Random Walks}
\chapter{Lamplighter Random Walks}

The aim of this chapter is to introduce a class of random walks
on \index{graph!wreath product}\textit{wreath products} of groups and graphs.
Wreath products of groups are the simplest non-trivial case of
semi-direct products, because they essentially arise from the 
action of a group on itself by translation. Such groups
are called \textit{groups with dynamical configuration}
in \cite{KaimanovichVershik1983}.

Random walks on wreath products are known in the literature as
\textit{lamplighter random walks}, because of the intuitive
interpretation of such walks in terms of configuration of lamps. 
Such walks appear also in the paper of {\sc Varopoulos}~\cite{Varopoulos1983}.

We first introduce lamplighter graphs and random walks on them, and
afterwards we consider group actions on lamplighter graphs.
Depending on the group actions, the random walks
will inherit different behaviour at infinity, which will be studied in the sequel. 
For simplicity of notation, we shall mostly write LRW for lamplighter random walks.

\section{Lamplighter Graphs}

Let $\mathsf{G}$ be an infinite, locally finite, transitive, connected graph and let 
$o$ be some reference vertex in $\mathsf{G}$. Imagine that at each 
vertex of $\mathsf{G}$ sits a lamp, which can have different states of intensity, 
but only finitely many. For sake of simplicity, we shall 
consider the case when the lamp has only two states, encoded 
by the elements of the set $\Z_{2}:=\Z/2\Z=\{0,1\}$, where the element 
$0$ represents the state off (the lamp is switched off) and 
the element $1$ represents the state on (the lamp is switched on). 
Anyway, instead of $\Z_{2}$ one can consider any finite set.

One can think of a person starting in $o$ with all 
lamps switched off and moving randomly in $\mathsf{G}$, according to 
some given probability, and switching randomly lamps on or off. 
We investigate the following model: at each step the person 
may walk to some random vertex (situated in a bounded neighbourhood 
of his  current position), and may change the state of some lamps in a 
bounded neighbourhood of his position. At every moment of time 
the lamplighter will leave behind a certain configuration 
of lamps. The configurations of lamps are encoded by functions 
\begin{equation*}
\eta:\mathsf{G}\rightarrow \Z_{2}, 
\end{equation*}
which give, for every $x\in \mathsf{G}$, the state of the lamp sitting there. Denote by 
\begin{equation*}
\hat{\mathcal{C}}=\{\eta : \mathsf{G}\rightarrow\Z_{2}\} 
\end{equation*}
the set of all configurations, and let $\mathcal{C} \subset \hat{\mathcal{C}}\ $ 
be the set of all finitely supported configurations, where a 
configuration is said to have finite support if the set 
\begin{equation*}
\supp(\eta)=\{x \in \mathsf{G}: \eta(x)\ne  0  \} 
\end{equation*}
is finite. Denote by $\bf{0}\bf$ the \textit{zero} or \textit{trivial configuration}, 
i.e. the configuration which corresponds to all lamps switched off, 
and by $\delta_{x}$ the configuration where only the lamp at 
$x\in \mathsf{G}$ is on and all other lamps are off. 

\begin{definition}
The \emph{wreath product} $\lgr$ of graphs $\Z_2$ and $\mathsf{G}$ is defined as the 
graph with vertex set $\mathcal{C}\times \mathsf{G}$ 
and adjacency relation given by 
\begin{equation}\label{neigh_relation}
(\eta, x) \sim (\eta', x') :\Leftrightarrow 
\begin{cases}
x\sim x' & \mbox{in} \ \mathsf{G}\  \mbox{and} \  \eta =\eta', \\
x=x' &\mbox{in} \ \mathsf{G} \ \mbox{and}\ \eta \bigtriangleup \eta'=\{x\},
\end{cases}
\end{equation}
where $\eta \bigtriangleup \eta'$ represents the subset of $\mathsf{G}$, 
where the configurations $\eta$ and $\eta'$ are different.
\end{definition}
The wreath product $\lgr$ will be refered as the \index{graph!lamplighter graph}\textit{lamplighter graph}.
The vertices of $\Z_{2}\wr \mathsf{G}$ are pairs of the form $(\eta,x)$, 
where $\eta$ represents a finitely supported $\Z_{2}$-valued 
configuration of lamps and $x$ some vertex in $\mathsf{G}$. The graph 
$\mathsf{G}$ will be called the \textit{base graph} or
the \textit{underlying graph} for the lamplighter graph $\lgr$. 

\begin{minipage}[b]{0.6\linewidth}
\begin{example}
Consider the wreath product $\Z_2\wr\Z_2$: the base graph 
and the graph of lamp states are both $\Z_2$.
Denote the vertices of the base graph by $\{a,b\}$ and the 
state of lamps by $\{0,1\}$. Then the lamplighter graph 
$\Z_2\wr\Z_2$ has $8$ vertices and it can be represented as in the figure.
\end{example} 
\end{minipage}
\hspace{0.5cm}
\begin{minipage}[b]{0.4\linewidth}
\begin{tikzpicture}[scale=0.58]

\draw (-1,2.5)--(-2,1);
\node (a) at (-2,2.5) {$(00,a)$};
\draw (-2,1)--(-2,-1);
\node (b) at (-3,1) {$(10,a)$};
\draw (-2,-1)--(-1,-2.5);
\node (c) at (-3,-1) {$(10,b)$};
\draw (-1,-2.5)--(-1,-2.5);
\node (d) at (-2,-2.5) {$(11,b)$};
\draw (-1,-2.5)--(1,-2.5);
\node (e) at (2,-2.5) {$(11,a)$};
\draw (1,-2.5)--(2,-1);
\node (f) at (3,-1) {$(01,a)$};
\draw (2,-1)--(2,1);
\node (g) at (3,1) {$(01,b)$};
\draw (2,1)--(1,2.5);
\node (h) at (2,2.5) {$(00,b)$};
\draw (1,2.5)--(-1,2.5);

\filldraw (-1,2.5) circle (0.1cm);
\filldraw (-2,1) circle (0.1cm);
\filldraw (-2,-1) circle (0.1cm);
\filldraw (-1,-2.5) circle (0.1cm);
\filldraw (1,-2.5) circle (0.1cm);
\filldraw (2,-1) circle (0.1cm);
\filldraw (2,1) circle (0.1cm);
\filldraw (1,2.5) circle (0.1cm);
\end{tikzpicture}

\end{minipage}

The wreath product of graphs of bounded geometry is a graph of bounded geometry,
and the wreath product of regular graphs is also regular. 

Let us now define a metric $d$ on the graph $\Z_{2}\wr \mathsf{G}$. If we denote by $l(x,x')$ the 
smallest length of a ``travelling salesman'' tour from $x$ to $x'$ that 
visits each element of the set $\eta \bigtriangleup \eta '$, then
\begin{equation}\label{LamplighterGraphGmetric}
d\left((\eta,x),(\eta',x')\right)=l(x,x')+|\eta'\bigtriangleup\eta|
\end{equation}
defines a metric on $\Z_{2}\wr \mathsf{G}$. Recall that the travelling 
salesman tour between two given points is the shortest possible 
tour that visits each point exactly once. Above, $|\cdot|$ 
represents the cardinality of the respective set. This metric 
will be called the \textit{lamplighter metric} or \textit{lamplighter distance}. 

\begin{remark} One can also consider a generalization
of wreath products, which are called in {\sc Erschler} \cite{Erschler2006}
\emph{wreath producs of graphs with respect to a family of subsets} and, in particular,
with respect to partitions.
\end{remark}

\section{Random Walks on Lamplighter Graphs}

Consider the state space $\lgr$ defined as in the previous section and an 
irreducible transition matrix $P$ on it, which determines the walk $(\lgr, P)$. 
This random walk will be called the  \index{random walk!lamplighter}
\emph{lamplighter random walk (LRW)}. The entries 
\begin{equation*}
p\big((\eta,x),(\eta',x')\big),\quad \text{ with } (\eta,x), (\eta',x')\in\lgr,
\end{equation*}
of the transition matrix $P$ are the one-step transition probabilities,
while the corresponding $n$-step transition probabilites of the 
random walk  are denoted by $p^{(n)}\big((\eta,x),(\eta',x')\big)$. 
Suppose that the starting point for the LRW is $\0$, 
where $o$ is a fixed vertex in $\mathsf{G}$, and $\textbf{0}$ is the 
trivial configuration.

The random walk on $\lgr$ with transition matrix 
\begin{equation*}
P=\Big(p\big((\eta,x),(\eta',x')\big)\Big)  
\end{equation*}
can also be described by a sequence of $(\lgr)$-valued random variables $\Zn$. 
More precisely, we write $Z_{n}=(\eta_{n},X_{n})$, where $\eta_{n}$ 
is the random configuration of lamps at time $n$, and $X_{n}$ is the 
random vertex in $\mathsf{G}$ where the lamplighter stands at time $n$. 
In the following, when referring to the lamplighter random walk, 
we shall use the sequence of random variables $\Zn$ with $Z_{n}=(\eta_{n},X_{n})$,
whose transitions are given by the matrix $P$.  

Assume that the lamplighter random walk $\Zn$ has finite first 
moment with respect to the lamplighter metric $d$ defined in \eqref{LamplighterGraphGmetric}, that is, 
\begin{equation*}\label{eq:trpb_base_rw}
\sum_{(\eta,x)\in\lgr}d \big((\textbf{0},o),(\eta,x)\big) p\big((\textbf{0},o),(\eta,x)\big)<\infty. 
\end{equation*}
The process $\Zn$ on $\lgr$ projects onto random processes $\Xn$ 
on $\mathsf{G}$ and $(\eta_{n})$ on the space of configurations $\mathcal{C}$.
Note that the stochastic 
process $(\eta_{n})$ is a sequence of random configurations of lamps, 
but not a Markov chain, since the configuration $\eta_{n}$ at time $n$ 
depends on the entire history of the process up to time $n$. 
The projection $\Xn$ is a random walk on the base graph $\mathsf{G}$ with  
starting point $o\in \mathsf{G}$ and one-step transition probabilities given by
\begin{equation}\label{trpb_base_random_walk}
p_{\mathsf{G}}(x,x')=\sum_{\eta'\in\mathcal{C}}p\big((\mathbf{0},x),(\eta',x')\big),\quad\mbox{ for all } x,x'\in \mathsf{G}.
\end{equation}
The corresponding $n$-step transition probabilities are 
denoted by $p_{\mathsf{G}}^{(n)}(x,x')$, and the transition matrix of $\Xn$
is $P_{\mathsf{G}}=(p_{\mathsf{G}}(x,y))$. The process $\Xn$ will be called
\textit{the base random walk} or \textit{projected random walk} 
on $\mathsf{G}$. 

\textbf{Key assumption}: throught this thesis, we shall
consider lamplighter random walks $\Zn$, such that the projection
$\Xn$ on the transitive base graph $\mathsf{G}$ is transient.

This implies that $\Xn$ leaves every finite subset of $\mathsf{G}$ with probability
 $1$ after a finite time. Transience of the base process is a key assumption,
which leads to the transience of the lamplighter random walk $\Zn$. 
In other words, transience of the random walk $\Xn$ on $\mathsf{G}$ implies that almost every 
path of the original random walk $\Zn$ on $\lgr$ will leave behind a 
certain limit configuration of lamps $\mathsf{G}$, which will not be necessarily 
finitely supported. 

{\textbf{Question}}: Does the limit configuration of lamps 
completely describe the behaviour of the lamplighter random walk 
$\Zn$ at ``infinity''?

The behaviour of the lamplighter random walk at ``infinity'' is  
the main topic of the first part of the work. This comprises the study of 
the \textit{convergence} and of the \textit{Poisson boundary} for lamplighter 
random walks $\Zn$.
\begin{remark}
In the study of the behaviour of lamplighter random walks $\Zn$ 
over lamplighter graphs $\lgr$, with $Z_n=(\eta_n,X_n)$, 
the properties of the base random walk $\Xn$ and the geometry of 
the base graph $\mathsf{G}$ play a crucial role. 
\end{remark}

\subsection{Examples of Transition Matrices}

There are different ways one can define transition matrices over
$\lgr$. One can start with transition matrices over $\mathbb{Z}_2$
and  $\mathsf{G}$, and construct a new transition matrix on $\lgr$ like 
in the sequel.

Consider $P_\mathsf{G}$ and $P_{\mathbb{Z}_{2}}$ transition
matrices on $\mathsf{G}$ and $\mathbb{Z}_{2}$, respectively. One can lift
$P_{\mathsf{G}}$ on $\mathsf{G}$ to $\Bar{P}_{\mathsf{G}}$ on $\lgr$ by setting
\begin{equation*}
\bar{p}_{\mathsf{G}}\big((\eta, x),(\eta', x')\big)=
\begin{cases}
p_{\mathsf{G}}(x, x'), & \text{if}\  \eta=\eta'\\
0, & \text{otherwise}
\end{cases}.
\end{equation*}
One can also lift $P_{\mathbb{Z}_{2}}$ on $\mathbb{Z}_{2}$ to 
$\bar{P}_{\mathbb{Z}_{2}}$ on $\lgr$ by setting
\begin{equation*}
\bar{p}_{\mathbb{Z}_{2}}\big((\eta, x),(\eta', x')\big)=
\begin{cases}
p_{\mathbb{Z}_{2}}\big(\eta(x),\eta'(x')\big), & \text{if}\  x=x'\ \text{and}\ \eta\bigtriangleup  \eta'=\{x\}\\
0, & \text{otherwise}
\end{cases}.
\end{equation*}
Using the embeddings of the transition matrices on $\Z_2$ and $\mathsf{G}$
into $\lgr$, one can construct different ``types'' of lamplighter random
walks (transition matrices) on $\lgr$.

\paragraph{Walk or Switch Random Walk.}
Let $a$ be a parameter with $0<a<1$. Define the transition matrix
$P_{a}$ on $\lgr$ by 
\begin{equation*}
P_{a}=a \bar{P}_{\mathsf{G}}+(1-a)\bar{P}_{\mathbb{Z}_{2}}.
\end{equation*}
The interpretation of $P_{a}$ in lamplighter terms is as follows. 
If the lamplighter stands at $x$ and the current configuration
is $\eta$, then he first tosses a coin. 
If "head" comes up (with probability $a$) then he makes 
a random move according to the probability distribution
$p_{\mathsf{G}}(x, \cdot)$ while leaving the lamps unchanged. 
If "tail" comes up, then he makes no move in the graph $\mathsf{G}$, 
but modifies the state of the lamp where he stands according
to the distribution $p_{\mathbb{Z}_2}\big(\eta(x),\cdot\big)$.

\begin{remark}
If the graph $\mathsf{G}$ is regular (i.e. all vertices have the same degree)
and $P_\mathsf{G}$ and $P_{\mathbb{Z}_{2}}$
are the transition matrices of the simple random walks on $\mathsf{G}$ and
$\mathbb{Z}_{2}$ respectively, then the simple random walk on $\lgr$ 
is given by  $P_{a}$ with 
\begin{equation*}
 a=\dfrac{\deg_{\mathsf{G}}}{\deg_{\mathsf{G}}+2},
\end{equation*}
where $\deg_\mathsf{G}$ is the vertices degree in $\mathsf{G}$.
\end{remark}

\paragraph{Switch-Walk-Switch Random Walk.}
Define a transition matrix $Q$ on $\lgr$ by the following matrix product
\begin{equation*}
Q=\bar{P}_{\mathbb{Z}_{2}}\cdot\bar{P}_{\mathsf{G}}\cdot\bar{P}_{\mathbb{Z}_{2}}.
\end{equation*}
The intuitive interpretation is: if the lamplighter stands at $x$ and the current
configuration of lamps is $\eta$, then he first changes the state
of the lamp at $x$ according to the probability distribution 
$p_{\mathbb{Z}_{2}}\big(\eta(x),\cdot\big )$. 
Then he makes a step to some point $x'\in \mathsf{G}$ according to the 
probability distribution $p_{\mathsf{G}}(x, \cdot)$, and at last,
he changes the state of the lamp at $x'$ according to the 
probability distribution $p_{\mathbb{Z}_{2}}\big(\eta(x'),\cdot\big)$.

The \textit{Switch-Walk-Switch} and \textit{Walk or Switch} lamplighter random 
walks are two basic examples of random walks which are well studied in the
literature. Nevertheless, we are not going to use this specific types
of transition matrices, but we shall instead work with general irreducible
transition matrices $P$ over $\lgr$, such that its projection $P_{\mathsf{G}}$ 
onto $\mathsf{G}$ is a transient random walk.

\section{Lamplighter Groups and Random Walks}

Recall that the set $\Z_{2}$ encodes the intensities 
of lamps $\{0,1\}$. If we endow this set with the operation 
of addition modulo $2$, then it becomes a group. Consider 
now the \textit{group $\Z_{2}$  acting transitively on itself} by 
left multiplication, and identify the set of intensities of 
lamps with the group $\Z_{2}$, such that the state $0$ corresponds 
to the group identity. For simplicity of notation we use $\Z_{2}$ for 
both the set of lamp intensities and the group with two elements acting on it. 

The set $\mathcal C$ of all finitely supported $\Z_2$-valued
configurations on $\mathsf{G}$ becomes then a group with the pointwise operation ``$\oplus$''
\begin{equation*}
(\eta\oplus\eta')(x)=\eta(x)\oplus\eta'(x),
\end{equation*}
taken in the group $\Z_{2}$. The unit element of $\mathcal{C}$ is the
zero configuration $\textbf{0}$, which corresponds to all lamps switched off.

Let $\Gamma$ be a closed subgroup of $\AUT(\mathsf{G})$ 
which acts transitively on $\mathsf{G}$. We do not require that $\mathsf{G}$ is
a Cayley graph of $\Gamma$. For instance, $\Gamma$ can also be a 
non-discrete group like in Section \ref{subsec:one_fixed_end}.

\begin{definition}The \index{group!wreath product}\emph{wreath product} of the groups $\Z_{2}$ and $\Gamma$,
denoted by $\Z_{2}\wr\Gamma$, is a 
semidirect product of $\Gamma$ and the direct sum $\sum_{\gamma'\in \Gamma}\Z_{2}$ of copies of $\Z_{2}$ 
indexed by $\Gamma$, where every $\gamma\in\Gamma$ acts
on $\sum_{\gamma'\in\Gamma}\Z_{2}$ by translation $T_{\gamma}$ defined as 
\begin{equation*}
(T_{\gamma}\phi)(\gamma')=\phi(\gamma^{-1}\gamma'),\quad \text{ for all } \gamma'\in \Gamma,\phi\in\mathcal{C}.
\end{equation*}
\end{definition}
The elements of $\Z_{2}\wr\Gamma$ are pairs of the form 
$(\phi,\gamma)\in\mathcal C\times \Gamma$, where $\phi$ 
represents a finitely supported configuration of lamps 
and $\gamma\in\Gamma$. A group operation on $\Z_{2}\wr\Gamma$, 
denoted by ``$\cdot$'' is given by
\begin{equation*}
(\phi,\gamma)\cdot(\phi',\gamma{'})=(\phi\oplus T_{\gamma}\phi',\gamma\gamma'),
\end{equation*}
where $\gamma,\gamma'\in\Gamma$, $\phi,\phi'\in\mathcal C$, and $\oplus$ 
is the componentwise addition modulo $2$. 

We shall call $\lgrp$ 
together with this operation the \index{group!lamplighter group}\textit{lamplighter group} 
over $\Gamma$. The group identity is $(\textbf{0},e)$, where 
$\textbf{0}$ is the zero configuration and $e$ is the unit 
element in $\Gamma$. Finally, define an action of 
elements $(\phi,\gamma)\in\lgrp$ on $\lgr$ by
\begin{equation}\label{grp_action_lrw}
 (\phi,\gamma)(\eta,x)=(\phi\oplus T_{\gamma}\eta,\gamma x),\quad \mbox{ for all } (\eta,x)\in\lgr.
\end{equation}
This action preserves the neighbourhood relation defined 
in \eqref{neigh_relation}. Therefore
\begin{equation*}
(\phi,\gamma)\in \AUT(\lgr), 
\end{equation*}
that is, the lamplighter group $\lgrp$ is a subgroup of $\AUT(\lgr)$. 
Now we have both an \emph{underlying geometric structure} 
$\lgr$ (the lamplighter graph) and an \emph{action of the group} $\lgrp$ 
(the lamplighter group) on it. When $\mathsf{G}$ is a Cayley graph 
of $\Gamma$ with respect to some finite generating set, then these two
structures can be identified, but since we work with more general
groups $\Gamma\subset \AUT(\mathsf{G})$, it is important to distinguish
between the lamplighter graph $\lgr$ and the lamplighter group $\lgrp$.

Since $\Z_{2}$ acts transitively on itself and $\Gamma$ 
acts transitively on $\mathsf{G}$, it is straightforward to see 
that the wreath product $\lgrp$ acts transitively on 
$\lgr$, since it is by construction a subgroup of $\AUT(\lgr)$. 
For more details, see also {\sc Woess}~\cite{WoessNote2005}.  

\textbf{Assumption}: suppose that $\Gamma\subset \AUT(\mathsf{G})$ is chosen
such that $\lgrp$ is a subgroup of $\big(\AUT(\lgr),P\big)$, 
that is, the transition probabilities $p(\cdot,\cdot)$ of $\Zn$ are 
invariant with respect to the action of the  group $\lgrp$. In other words,
$\Zn$ is a homogeneous random walk. This means that
for all $g=(\phi,\gamma)\in\lgrp$, we have
\begin{equation*}
p\Big(g(\eta,x),g(\eta',x')\Big)= p\Big((\eta,x),(\eta',x')\Big),\quad \text{ for all } (\eta,x),(\eta',x')\in\lgr.
\end{equation*}
\begin{corollary}\label{cor:xn_space_hom}
The factor chain $\Xn$ is also a homogeneous random walk on $\mathsf{G}$,
i.e $\Gamma\subset \AUT(\mathsf{G},P_{\mathsf{G}})$.
\end{corollary}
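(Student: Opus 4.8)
The plan is to derive the $\Gamma$-invariance of the projected transition matrix $P_{\mathsf{G}}$ directly from the homogeneity of $\Zn$, by exploiting the particular form of the action \eqref{grp_action_lrw} restricted to elements of the type $(\mathbf{0},\gamma)$. Fix $\gamma\in\Gamma$ and $x,x'\in\mathsf{G}$, and set $g=(\mathbf{0},\gamma)\in\lgrp$. Since $\lgrp\subset\AUT(\lgr,P)$ by the standing homogeneity assumption, we have $p\big(g(\eta,x),g(\eta',x')\big)=p\big((\eta,x),(\eta',x')\big)$ for all configurations $\eta,\eta'$.

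First I would record two elementary facts about the action of $g=(\mathbf{0},\gamma)$. Applying \eqref{grp_action_lrw}, we get $g(\eta,x)=(T_\gamma\eta,\gamma x)$; in particular $g(\mathbf{0},x)=(\mathbf{0},\gamma x)$, because the zero configuration is fixed by every translation, $T_\gamma\mathbf{0}=\mathbf{0}$. Second, for fixed $\gamma$ the map $\eta'\mapsto T_\gamma\eta'$ is a bijection of the configuration space $\mathcal{C}$ onto itself (with inverse $T_{\gamma^{-1}}$) that preserves finite support. These two observations are exactly what make the reindexing in the sum below legitimate, and that reindexing is really the only point requiring any care.

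The computation then runs as follows. Starting from the definition \eqref{trpb_base_random_walk} and inserting the homogeneity identity for $g$ into each summand,
\begin{align*}
p_{\mathsf{G}}(x,x')
&=\sum_{\eta'\in\mathcal{C}}p\big((\mathbf{0},x),(\eta',x')\big)\\
&=\sum_{\eta'\in\mathcal{C}}p\big((\mathbf{0},\gamma x),(T_\gamma\eta',\gamma x')\big),
\end{align*}
where I used $g(\mathbf{0},x)=(\mathbf{0},\gamma x)$ and $g(\eta',x')=(T_\gamma\eta',\gamma x')$. Substituting $\zeta=T_\gamma\eta'$ and using that $\eta'\mapsto T_\gamma\eta'$ permutes $\mathcal{C}$, the right-hand sum equals $\sum_{\zeta\in\mathcal{C}}p\big((\mathbf{0},\gamma x),(\zeta,\gamma x')\big)=p_{\mathsf{G}}(\gamma x,\gamma x')$, again by \eqref{trpb_base_random_walk}. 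Hence $p_{\mathsf{G}}(\gamma x,\gamma x')=p_{\mathsf{G}}(x,x')$ for all $\gamma\in\Gamma$ and $x,x'\in\mathsf{G}$, which is precisely the assertion $\Gamma\subset\AUT(\mathsf{G},P_{\mathsf{G}})$.

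I expect no serious obstacle: the corollary is essentially a bookkeeping consequence of the homogeneity of $\Zn$. The one step worth spelling out is that the summation variable may be relabelled by $T_\gamma$ without altering the total — that is, that $T_\gamma$ is a bijection of $\mathcal{C}$ — together with $T_\gamma\mathbf{0}=\mathbf{0}$, which is what keeps the base configuration equal to $\mathbf{0}$ on both sides so that the definition \eqref{trpb_base_random_walk} applies verbatim. If in addition one wished to justify that $\Xn$ is genuinely a Markov chain with these transitions (independence of the starting configuration), the same argument applied to the elements $(\phi,e)\in\lgrp$, which act by $(\eta,x)\mapsto(\phi\oplus\eta,x)$, shows that $\sum_{\eta'\in\mathcal{C}}p\big((\eta,x),(\eta',x')\big)$ does not depend on $\eta$; but for the present corollary the invariance established above is exactly what is required.
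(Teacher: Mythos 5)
Your proof is correct and follows essentially the same route as the paper: both apply the homogeneity of $\Zn$ to the group element $(\mathbf{0},\gamma)\in\lgrp$, use $T_\gamma\mathbf{0}=\mathbf{0}$, and reindex the sum defining $p_{\mathsf{G}}$ via the bijection $\eta'\mapsto T_\gamma\eta'$ of $\mathcal{C}$. Your write-up is in fact slightly more explicit than the paper's about why the reindexing is legitimate.
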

\begin{proof}
Definition \eqref{trpb_base_random_walk} of the 
transition probabilities of the random walk $\Xn$ on $\mathsf{G}$
implies that
\begin{equation*}
p_{\mathsf{G}}(\gamma x,\gamma x')=\sum_{\eta\in\mathcal{C}}p\big((\textbf{0},\gamma x),(\eta,\gamma x')\big),\quad\text{ for all }x,x'\in \mathsf{G},\gamma\in\Gamma.
\end{equation*}
Since $\lgrp$ acts transitively on $\lgr$, it follows that there exists
a configuration $\eta_1\in\mathcal{C}$ such that
\begin{equation*}
\eta=\mathbf{0}\oplus T_{\gamma}\eta_1.
\end{equation*}
Also, we can write $\mathbf{0}=\mathbf{0}\oplus T_{\gamma}\mathbf{0}$.
Using the action \eqref{grp_action_lrw} of the lamplighter group $\lgrp$ on 
the lamplighter graph $\lgr$, we can write 
\begin{equation*}
(\mathbf{0},\gamma x)=(\mathbf{0},\gamma)(\mathbf{0},x) \text{ and } (\eta,\gamma x')=(\mathbf{0},\gamma) (\eta_1,x').
\end{equation*}
Thus
\begin{equation*}
p\big((\mathbf{0},\gamma x),(\eta,\gamma x')\big)=p\big((\mathbf{0},\gamma)(\mathbf{0},x),(\mathbf{0},\gamma) (\eta_1,x')\big).
\end{equation*}
This, together with the invariance of the transition 
probabilities of the lamplighter random walk yields 
\begin{equation*}
p_{\mathsf{G}}(x,x')=p_{\mathsf{G}}(\gamma x, \gamma x'),\quad \text{for all } x,x'\in \mathsf{G},\gamma\in\Gamma,
\end{equation*}
which proves the claim.
\end{proof}

Recall now a simple fact about the asymptotic configuration 
size of the lamplighter random walk, which will be needed later.
\begin{lemma}\label{lem:rate_of_escape_rw}
Let $\Zn$ be a random walk with finite first moment on $\lgr$, 
with $Z_n=(\eta_n,X_n)$. Then there exists a constant $C\geq0$, such that
\begin{equation*}
 lim_{n\to\infty}\dfrac{|\supp(\eta_n)|}{n}=C.
\end{equation*}
In other words, the number of lamps which are turned on increases asymptotically at linear speed. 
\end{lemma}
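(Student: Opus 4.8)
The plan is to apply Kingman's subadditive ergodic theorem (Theorem \ref{thm:kingman}) to the sequence $W_n = \abs{\supp(\eta_n)}$. First I would pass to the group model of the walk: since $\lgrp$ is a closed subgroup of $\big(\AUT(\lgr),P\big)$ acting transitively on $\lgr$, the correspondence \eqref{eq:correspondence_rw} produces a probability measure on $\lgrp$ whose right random walk $S_n = Y_1\cdots Y_n$, with $Y_i$ i.i.d., satisfies $Z_n = S_n\0$. Writing $S_n = (\phi_n,\gamma_n)$, the action \eqref{grp_action_lrw} gives $Z_n = (\phi_n\oplus T_{\gamma_n}\mathbf{0},\gamma_n o) = (\phi_n,\gamma_n o)$, so that $\eta_n = \phi_n$ and $W_n = \abs{\supp(\phi_n)}$.

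Second, I would verify subadditivity. Splitting $S_{n+k} = S_n\cdot(Y_{n+1}\cdots Y_{n+k})$ and letting $(\psi,\beta)$ denote the element $Y_{n+1}\cdots Y_{n+k}$, the group law yields $\phi_{n+k} = \phi_n\oplus T_{\gamma_n}\psi$. Since $\supp(\phi\oplus\phi')\subseteq\supp(\phi)\cup\supp(\phi')$ and the translation $T_{\gamma_n}$ preserves the cardinality of supports, we obtain
\[
W_{n+k} = \abs{\supp(\phi_{n+k})} \leq \abs{\supp(\phi_n)} + \abs{\supp(\psi)} = W_n + W_k\circ T^n,
\]
where $T$ is the shift on the increment sequence and $W_k\circ T^n = \abs{\supp(\psi)}$ is the number of lamps lit by the increment walk restarted at time $n$. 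This is exactly the subadditivity hypothesis required by Theorem \ref{thm:kingman}, and the $W_n$ are clearly non-negative.

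Third, integrability of $W_1$ follows from the finite first moment assumption: by the formula \eqref{LamplighterGraphGmetric} for the lamplighter metric one has $\abs{\supp(\eta_1)} = \abs{\mathbf{0}\bigtriangleup\eta_1}\leq d\big(\0,Z_1\big)$, hence $\E[W_1]\leq \E\big[d(\0,Z_1)\big]<\infty$. Kingman's theorem then yields a $T$-invariant integrable $W_\infty$ with $W_n/n\to W_\infty$ almost surely. Finally, since the increments $Y_i$ are i.i.d., the shift $T$ is ergodic, so the invariant limit $W_\infty$ is almost surely equal to a constant $C\geq 0$, which completes the proof. The main point to get right is the subadditivity step: one must use that the configuration accumulated over the next $k$ steps enters only after translation by the current base position $\gamma_n$, and that this translation leaves the number of lit lamps unchanged, so the restarted quantity genuinely realizes $W_k\circ T^n$.
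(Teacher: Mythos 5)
Your proof is correct and takes the same route as the paper: the paper's own proof is a one-line appeal to Kingman's subadditive ergodic theorem applied to the sequence $\abs{\supp(\eta_n)}$, which it simply asserts to be subadditive. Your write-up supplies exactly the details the paper omits --- the verification of subadditivity via the wreath-product group law, integrability of $W_1$ from the finite first moment, and ergodicity of the i.i.d.\ increment shift to make the limit constant.
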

\begin{proof}
Kingman's subadditive ergodic theorem \cite{Kingman1968} applied to the sequence 
$|\supp(\eta_i)|$, which is subadditive, yields the desired result.
\end{proof}

\begin{remark}
The constant $C$ was studied for a large class of lamplighter 
random walks over discrete graphs and groups. It is greater than zero if 
and only if the factor chain $\Xn$ is transient.
\end{remark}

\paragraph*{Induced Random Walks on $\lgrp$.}
\label{par:correspondence_p_mu}
Let $\nu$ be a probability measure on $\lgrp$, which determines
the right random walk $(\lgrp,\nu)$, and which is uniquely 
induced by the transition matrix $P$
of $\Zn$ like in equation \eqref{eq:correspondence_rw}.

The measure $\mu$ on $\Gamma$, which is induced by the 
transition matrix $P_{\mathsf{G}}$ of the random walk $\Xn$ on $\mathsf{G}$,
is then given by
\begin{equation*}
\mu(x)=\sum_{\eta\in\mathcal{C}}\nu\big((\eta,x)\big).
\end{equation*}
Indeed, this follows from the fact that the transition probabilities of $\Xn$
on $\mathsf{G}$ are projections of the transition probabilities 
of $\Zn$ on $\lgr$ as in equation \eqref{eq:trpb_base_rw}.

For the correspondence between $\mu$ and $P_{\mathsf{G}}$, and $\nu$ and $P$, we shall 
use the notation $\mu\leftrightarrow P_{\mathsf{G}}$ and $\nu\leftrightarrow P$, respectively.

\chapter{Convergence to the Boundary}\label{chap:conv_lrw}

This chapter is devoted to the study of convergence (in a sense
to be specified) of homogeneous lamplighter random walks $\Zn$
on graphs $\lgr$, with $Z_n=(\eta_n,X_n)$, given that the 
base random walk $\Xn$ is transient on $\mathsf{G}$. We emphasize
that the geometry of $\mathsf{G}$ and the action of $\Gamma\subset \AUT(\mathsf{G})$ 
play an important role in the study of the behaviour of $\Zn$,
as $n$ tends to infinity.

We are interested in transitive, infinite base graphs $\mathsf{G}$ endowed with 
a ``rich'' boundary $\partial \mathsf{G}$. Using $\partial \mathsf{G}$, we construct 
a boundary $\Pi$ for the lamplighter graph $\lgr$. We then prove 
that $\Zn$ converges to some random variable $Z_{\infty}$ with
values in the boundary $\Pi$ almost surely, 
under some natural assumptions on the base random walk $\Xn$.
Finally, in the next chapters, the results obtained here will be 
applied to specific base graphs $\mathsf{G}$: graphs with infinitely many
ends, hyperbolic graphs, and Euclidean lattices. 

For lamplighter random walks over Euclidean lattices, the results
proved here were earlier obtained by {\sc Kaimanovich}~\cite{Kaimanovich1991}.
For the sake of completeness, we just show how to apply our results
in this case.

\section{The Boundary of the Lamplighter Graph}\label{sec:bndr_lgr}

\paragraph{The Boundary of the Base Graph.} In order to
``build'' a boundary for the lamplighter graph $\Z_2\wr \mathsf{G}$,
we start with the base graph $\mathsf{G}$, which is assumed to be infinite,
locally finite, connected and transitive. Let $d(\cdot,\cdot)$ be the
discrete graph metric on $\mathsf{G}$, and consider
\begin{equation*}
\widehat{\mathsf{G}}=\mathsf{G}\cup\partial \mathsf{G} 
\end{equation*}
an extended space of $\mathsf{G}$, not necessarily compact, with ideal 
\index{graph!boundary of a graph}\textit{boundary} $\partial \mathsf{G}$ 
(the set of points at infinity), such that $\widehat{\mathsf{G}}$ is \emph{compatible} 
with the group action $\Gamma$ on $\mathsf{G}$. Here, by compatibility between $\Gamma$
and $\widehat{\mathsf{G}}$ we mean that the action of $\Gamma$ on $\mathsf{G}$ extends 
to an action on $\widehat{\mathsf{G}}$ by 
homeomorphisms. Recall that $\Gamma\subset \AUT(\mathsf{G})$.

\paragraph{Convergence to the Boundary.}
In order to introduce the notion of convergence of a random walk $\Xn$ 
on $\mathsf{G}$ to the boundary $\partial \mathsf{G}$, recall first the definition 
of the trajectory space $\Omega$ of $\Xn$
\begin{equation*}
 \Omega=\mathsf{G}^{\Z_+}=\{\omega=(x_0,x_1,x_2, \ldots ):\ x_n\in \mathsf{G} \text{ for all } n\geq 0\}.
\end{equation*}
An element $\omega\in\Omega$ represents a possible evolution, that is, 
a possible sequence of points visited one after the other by $\Xn$. 
For $\omega=(x_0,x_1,x_2,\ldots )\in \Omega$ and $n\geq 0$, define 
the projections $X_n(\omega)=x_n$. 

Suppose that we have the boundary $\partial \mathsf{G}$ and the extended 
space $\widehat{\mathsf{G}}$ of $\mathsf{G}$ defined in \eqref{sec:bndr_lgr} and set
\begin{equation*}
\Omega_{\infty}=\{\omega\in\Omega: X_{\infty}(\omega)=\lim_{n\to\infty} X_n(\omega) \in\partial \mathsf{G} 
\text{ exists in the topology of } \widehat{\mathsf{G}}\}.
\end{equation*}
\begin{definition}
We say that the random walk $\Xn$ on $\mathsf{G}$ \emph{converges to the boundary
$\partial \mathsf{G}$} if  
\begin{equation*}
\mathbb{P}_{x}[\Omega_{\infty}]=1,\quad  \text{ for every } x\in \mathsf{G}. 
\end{equation*}
\end{definition}
For the convergence of $\Xn$ to $X_{\infty}\in \partial\mathsf{G}$, 
the notation $X_n\to X_{\infty}$ will be used. 

The random variable $X_\infty$ is measurable with respect to the Borel 
$\sigma$-algebra of $\partial \mathsf{G}$. If $\Xn$ converges
to the boundary $\partial \mathsf{G}$, the hitting distribution is the measure
$\mu_{\infty}$ defined for Borel sets $B\subset\partial \mathsf{G}$ by
\begin{equation*}
 \mu_{\infty}(B)=\mathbb{P}[X_{\infty}\in B|X_0=o].
\end{equation*}

\begin{definition}\label{definition:projectivity}
The boundary $\partial \mathsf{G}$ is called 
\index{graph!boundary of a graph!projective}\emph{projective}, if the 
following holds for sequences $(x_n),(y_n)$ of vertices in $\mathsf{G}$: 
if $(x_n)$ converges to a boundary point $\mathfrak{u}\in\partial \mathsf{G}$ and
\begin{equation*}
\sup_{n}d(x_n,y_n)<\infty ,
\end{equation*}
then also $(y_{n})$ converges to $\mathfrak{u}$.
\end{definition} 

For our results, a somehow weaker property of the boundary $\partial \mathsf{G}$ is needed.

\begin{definition}\label{def:weak_proj}
The boundary $\partial \mathsf{G}$ is called 
\index{graph!boundary of a graph!weakly projective}\emph{weakly projective} 
if the following holds for sequences $(x_{n}),(y_{n})$ of vertices in 
$\mathsf{G}$: if $(x_{n})$ converges to $\mathfrak{u}\in \partial \mathsf{G}$ and 
\begin{equation*}
\frac{d(x_{n},y_{n})}{d(x_{n},o)}\to 0,\quad \text{ as }n\to\infty, 
\end{equation*} 
then $(y_{n})$ converges also to $\mathfrak{u}$.
\end{definition}
\begin{remark}
Note that a weakly projective boundary is also projective, 
but the other way round does not necessary hold.
\end{remark} 
Indeed, when the sequence of vertices $(x_n)$ accumulates at some 
boundary point $\mathfrak{u}\in\partial \mathsf{G}$ and 
\begin{equation*}
d(x_n,y_n)\approx \log n \text{ and } d(x_n,o)\approx c n, \text{ for some }c>0,
\end{equation*}
then the requirements for $\partial \mathsf{G}$ to be a weakly projective 
boundary are satisfied, but not those for a projective boundary. 
We shall work with weakly projective boundaries.

\paragraph{The Boundary of the Lamplighter Graph $\lgr$.}  The natural 
compactification of the set of finitely supported configurations $\mathcal{C}$ 
in the topology of pointwise convergence is the set $\widehat{\mathcal{C}}$ of all, 
finitely or infinitely supported configurations. 

Since the vertex set of the lamplighter graph $\lgr$ is $\mathcal{C}\times \mathsf{G}$, the space 
\begin{equation*}
\partial (\lgr)=(\widehat{\mathcal{C}}\times \widehat{\mathsf{G}})\setminus (\mathcal{C} \times \mathsf{G}) 
\end{equation*}
is a \textit{natural boundary} at infinity for $\lgr$. Let us write
\begin{equation*}
\widehat{\lgr}=\widehat{\mathcal{C}} \times \widehat{\mathsf{G}}. 
\end{equation*}
The boundary $\partial (\lgr)$ contains all pairs $(\zeta,\mathfrak{u})$, 
where $\mathfrak{u}\in\partial \mathsf{G}$ and $\zeta$ is a finitely or infinitely 
supported configuration. This boundary is so ``rich'' that it gives us 
plenty of information on the behaviour of the lamplighter random walks
at infinity.

\section{Convergence of LRW}

Convergence of LRW $\Zn$ on $\lgr$ follows mainly from the convergence 
of the base random walk $\Xn$ on $\mathsf{G}$. For the time being,
we are still working with general random walks $\Xn$ on $\mathsf{G}$,
where $\mathsf{G}$ is a transitve graph. In order to get some information
about the random walk $\Zn$ on $\lgr$, we have to know something
about $\Xn$.  For this reason, some assumptions on
$\Xn$ and $\mathsf{G}$ are needed.
\begin{assumption}\label{assumptions_brw}
Assume that:
\begin{description}\label{ass}
 \item[(A1)] $\Xn$ has finite first moment on $\mathsf{G}$.
 \item[(A2)] $\Xn$ converges to $\partial \mathsf{G}$, with 
hitting distribution $\mu_{\infty}$.
 \item[(A3)] $\partial \mathsf{G}$ is weakly projective.
\end{description}
\end{assumption}
These assumptions are not very restrictive. We shall give several 
examples of graphs $\mathsf{G}$ and random walks on them where these assumptions hold. 

Given that the base random walk $\Xn$ on $\mathsf{G}$ converges to 
the boundary $\partial \mathsf{G}$, we prove that the lamplighter random
walk $\Zn$ on $\lgr$ converges to random variables with values
on the boundary $\partial(\lgr)$. This boundary is still too big
for our purposes, that is, it contains many points towards $\Zn$
converges with probability $0$. For this reason, let us define a
``smaller'' boundary $\Pi$ for the lamplighter graph, which is
still dense in $\partial(\lgr)$, and we shall show that the
random walk $\Zn$ converges with probability $1$ to a random
variable with values in $\Pi$. Define the subset  $\Pi$ of
$\partial(\lgr)$ by
\begin{equation}\label{eq:pi_boundary}
\Pi =\bigcup_{\mathfrak{u} \in \partial \mathsf{G}}\mathcal{C}_{\mathfrak{u}}\times \{\mathfrak{u}\}, 
\end{equation}
where the set $\mathcal{C}_{\mathfrak{u}}$ consists of all
configurations $\zeta$, which are either finitely supported,
or infinitely supported with $\supp(\zeta)$  accumulating only
at $\mathfrak{u}$. The set $\mathcal{C}_{\mathfrak{u}}$ is dense
in $\widehat{\mathcal{C}}$ because 
$\mathcal{C}\subset\mathcal{C}_{\mathfrak{u}}$ and $\mathcal{C}$
is dense in $\widehat{\mathcal{C}}$. Hence, $\Pi$ is also dense in $\partial(\lgr)$.

The action of the group $\lgrp$ on the lamplighter graph
$\lgr$ extends to an action on $\widehat{\lgr}=\widehat{\mathcal{C}} \times \widehat{\mathsf{G}}$
by homeomorphisms and leaves the Borel subset $\Pi\subset\partial(\lgr)$ invariant.
If we take $(\phi,\gamma)\in \lgrp$ and $(\zeta,\mathfrak{u}) \in \Pi$, then
\begin{equation*} \label{action}
(\phi,\gamma)(\zeta,\mathfrak{u})=(\phi \oplus T_{\gamma}\zeta,\gamma \mathfrak{u}).
\end{equation*}
If $\mathfrak{u}\in\partial \mathsf{G}$ and $\zeta$ is finitely supported or accumulates
only at $\mathfrak{u}$, then $T_{\gamma}\zeta$ can accumulate at most at
$\gamma\mathfrak{u}$. Also the configuration $\eta\oplus T_{\gamma}\zeta$
accumulates again at most at $x\mathfrak{u}$ because $\eta$ is finitely supported,
so that adding $\eta$ modifies $T_{\gamma}\zeta$ only in finitely many points.

\begin{definition}We shall say that a sequence of lamp configurations 
$\eta_n$ \emph{converges} to a configuration $\eta_{\infty}$, if
\begin{equation*}
\lim_{n\to\infty}\eta_n(x)=\eta_{\infty}(x),\quad \text{ for all } x\in \mathsf{G},
\end{equation*}
i.e., for all $x\in \mathsf{G}$ the sequence $\eta_n(x)$ stabilizes.
\end{definition}

For a special case, where the lamplighter changes the lamps configuration
only at the current vertex, it is clear that the lamplighter random walk 
converges to a point in $\Pi$. Indeed, by Assumption \ref{assumptions_brw} (A2), 
the base random walk $\Xn$ converges to a random element $X_{\infty}\in\partial \mathsf{G}$. 
Since only the states of lamps which are visited can be modified, and by 
transience every vertex is visited only finitely many times, after some time every 
vertex is left forever and the state of the lamp sitting there remains unchanged. 
Therefore the random configuration $\eta_n$ must converge pointwise to a random
configuration which accumulates at $X_{\infty}$.

We shall prove the convergence for general homogeneous
random walks on $\lgr$, not only restricted to the situation when the 
configuration can be changed at the current position.
This was also proved by {\sc Karlsson and Woess}~\cite{KarlssonWoess2007} for
a class of lamplighter random walks over homogeneous trees. The following
result is a generalization of \cite[Theorem 2.9]{KarlssonWoess2007}
for lamplighter random walks $\Zn$ over general transitive base graphs $\mathsf{G}$. 
In {\sc Sava}~\cite{SavaPoisson2010} this was proved for lamplighter random walks on 
discrete groups.

Assume that the random walk $\Xn$ on the transitive base graphs 
$\mathsf{G}$ satisfies Assumption \ref{assumptions_brw}. 
Then the following holds for homogeneous lamplighter random walks $\Zn$.

\begin{theorem}\label{thm_conv_lrw_general_graphs}
Let $\Zn$ be an irreducible and homogeneous random walk with finite first 
moment on $\lgr$. Then there exists a $\Pi$-valued random variable 
$Z_{\infty}=(\eta_{\infty},X_{\infty})$, such that $Z_n\to Z_{\infty}$
almost surely in the topology of $\widehat{\lgr}$, 
for every starting point $(\eta_0,x_0)$. Moreover, 
the distribution of $Z_{\infty}$ is a continuous measure on $\Pi$.
\end{theorem}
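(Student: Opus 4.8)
The plan is to reduce everything to the already-granted convergence of the base walk and then control the lamp configuration by playing off transience against the finite-first-moment bound. Throughout I write $Z_n=(\eta_n,X_n)$ and abbreviate $R_{n+1}=d(Z_n,Z_{n+1})$ for the lamplighter-metric size of the $n$-th step. First I would record the facts that come for free. By Assumption \ref{assumptions_brw}~(A2), $X_n\to X_\infty\in\partial\mathsf G$ almost surely, which already produces the second coordinate of $Z_\infty$ together with its hitting distribution $\mu_\infty$. By the transience of $\Xn$ the walk leaves every finite subset of $\mathsf G$ after finitely many steps, so $d(o,X_n)\to\infty$ and each vertex is visited only finitely often. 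By homogeneity the increments $R_{n+1}$ are identically distributed with $\E[R_1]<\infty$; hence $\sum_n\P[R_{n+1}>\eps n]\le \eps^{-1}\E[R_1]<\infty$ for every $\eps>0$, and Borel--Cantelli (which needs no independence) gives $R_{n+1}/n\to 0$ almost surely.

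The heart of the argument is the pointwise stabilization of $(\eta_n)$. The metric \eqref{LamplighterGraphGmetric} shows that every lamp altered at step $n$ lies on a travelling-salesman tour issuing from $X_n$ of length at most $R_{n+1}$, so $\eta_n(w)\neq\eta_{n+1}(w)$ forces $d(X_n,w)\le R_{n+1}$. I would fix $w$ and bound the number $N_w$ of steps that change the lamp at $w$ by $\#\{n:\ d(X_n,w)\le R_{n+1}\}$. Using the conditional tail $\P[R_{n+1}\ge t\mid\mathcal F_n]=\P[d(\0,Z_1)\ge t]$ supplied by homogeneity, combined with transience (so $d(X_n,w)\to\infty$ and each ball is entered finitely often), a conditional Borel--Cantelli argument should give $N_w<\infty$ almost surely. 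Consequently $\eta_n(w)$ stabilizes for every $w$, so $\eta_n\to\eta_\infty$ pointwise and $Z_n\to(\eta_\infty,X_\infty)$ in the topology of $\widehat{\lgr}$.

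It then remains to place $(\eta_\infty,X_\infty)$ inside $\Pi$, i.e.\ to show $\supp(\eta_\infty)$ can accumulate only at $X_\infty$. Here I would take any sequence of lit sites $w_{n_k}\in\supp(\eta_\infty)$ with $n_k\to\infty$; each was switched near $X_{n_k}$, so $d(w_{n_k},X_{n_k})\le R_{n_k+1}$. Since $R_{n+1}/n\to 0$ while $d(o,X_n)\to\infty$, the ratio $d(w_{n_k},X_{n_k})/d(o,X_{n_k})\to 0$, and weak projectivity (A3) then forces $w_{n_k}\to X_\infty$; thus $X_\infty$ is the only possible accumulation point of the support. This configuration analysis is the step I expect to be the main obstacle: it is precisely where (A1) and (A3) must be balanced against each other, and the delicate point is controlling the reach $R_{n+1}$ against the escape of $X_n$ — the slowly-escaping regime is what forces the finite-first-moment hypothesis to be used sharply, both here and in the stabilization step.

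Finally, for continuity of the law of $Z_\infty$ on $\Pi$ I would exploit that, since $\Xn$ is transient, the constant $C$ of Lemma \ref{lem:rate_of_escape_rw} is strictly positive, so almost surely infinitely many lamps are switched at distinct vertices that are eventually never revisited. Conditionally on the trajectory, the limiting lamp values at these infinitely many fresh sites are genuinely random because $P$ is irreducible, so the conditional distribution of $\eta_\infty$ is non-atomic; averaging over trajectories, the distribution of $Z_\infty$ is therefore a continuous measure on $\Pi$.
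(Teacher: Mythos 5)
Your overall route is the same as the paper's: convergence of the base walk via (A2), control of the step sizes $R_{n+1}=d(Z_n,Z_{n+1})$ through the finite first moment and Borel--Cantelli, weak projectivity to force the lit sites toward $X_\infty$, and a separate argument for continuity of the limit law. However, there is a genuine gap at the central step. From $R_{n+1}/n\to 0$ and $d(o,X_n)\to\infty$ alone you cannot conclude that $d(w_{n_k},X_{n_k})/d(o,X_{n_k})\to 0$: if, say, $R_{n+1}$ were of order $n^{2/3}$ (compatible with $R_{n+1}/n\to 0$) while $d(o,X_n)$ were of order $n^{1/2}$ (compatible with transience and $d(o,X_n)\to\infty$), the ratio would blow up. Weak projectivity is calibrated against a \emph{linearly} escaping denominator, and the missing ingredient is precisely the linear lower bound on $d(o,X_n)$. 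The paper supplies it by invoking Kingman's subadditive ergodic theorem (Theorem \ref{thm:kingman}) to get $d(o,X_n)/n\to l$ with $l>0$, so that the numerator $o(n)$ is dominated by the denominator $ln+o(n)$. Your proposal nowhere establishes linear escape, and the inference as written fails.

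The same issue undermines your stabilization step. The conditional Borel--Cantelli criterion reduces $N_w<\infty$ to the a.s.\ convergence of $\sum_n \P[R_1\ge d(X_n,w)]=\sum_{t}\P[R_1=t]\,V_t(w)$, where $V_t(w)$ is the number of visits of $\Xn$ to the ball $B(w,t)$. Transience only gives $V_t(w)<\infty$ for each fixed $t$; to make the sum converge under a first-moment hypothesis you need $V_t(w)=O(t)$, which is again a positive-speed statement (for a transient walk with sublinear escape, such as a zero-drift walk on $\Z^3$, one has $V_t(w)\sim t^2$ and your sum would require a second moment of $R_1$). The paper sidesteps this entirely: once every vertex touched at time $n$ is known to lie within $o(n)$ of $X_n$ and hence at distance at least $ln-o(n)$ from $o$, each fixed vertex is touched only finitely often, and stabilization and the accumulation statement follow in one stroke. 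To repair your proof, insert the Kingman step (or otherwise justify linear escape in the setting where the theorem is applied) before both the stabilization and the projectivity arguments; the continuity argument at the end is at the same level of detail as the paper's sketch and is acceptable.
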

\begin{proof}
Without loss of generality, we may suppose that the starting point
is $(\textbf{0},o)$, where $\textbf{0}$ is the trivial (zero)
lamps configuration and $o\in \mathsf{G}$ some reference vertex, whose 
choice is irrelevant by the transitivity of $\mathsf{G}$. 

The random walk $\Zn$ is homogeneous, and by Corollary 
\ref{cor:xn_space_hom} the factor chain $\Xn$ is also homogeneous. 
Also, the factor chain $\Xn$ on $\mathsf{G}$ is transient, and it converges almost 
surely to a random variable $X_{\infty}\in\partial \mathsf{G}$ by Assumption
\ref{assumptions_brw}.

Now, assume that $\Zn$ has finite first moment on $\lgr$. 
Then, the configuration $\eta_i$ of lamps at time $i$ can be
obtained by modifying the previous configuration $\eta_{i-1}$ 
in a finite number of vertices in $\mathsf{G}$. This implies that there exists 
a finitely supported configuration $\phi_i$, such that
\begin{equation*}
 \eta_i=\phi_i\oplus\eta_{i-1},\quad \text{ for all } i=1,2,\ldots ,n.
\end{equation*}
The configuration $\phi_i$ is zero in all points which are not touched 
by the random walker. Let now $(y_{n})$ be an unbounded 
sequence of elements in $\mathsf{G}$, with $y_{n}\in\supp(\phi_n)$.
Thus, $y_{n}$ is a sequence of vertices in $\mathsf{G}$ where the lamp is switched on.
Since $\Xn$ has finite first moment on $\mathsf{G}$, the following holds with probability $1$:
\begin{equation*}
\frac{d(y_{n},X_{n})}{n}\to 0,\quad\text{ as }n\to\infty.
\end{equation*}
\textit{Kingman's subadditive ergodic theorem \ref{thm:kingman}} 
(see also {\sc Kingman}~\cite{Kingman1968}) implies that there exists 
finite constant  $l>0$, such that
\begin{equation*}
\frac{d(X_{n},o)}{n}\to l,\quad \text{as}\ n\to\infty,\ \text{almost surely}.
\end{equation*}
Making use of the previous two equations and the triangle inequality, we get
\begin{equation}\label{eq:weak}
\frac{d(X_{n},y_{n})}{d(X_{n},o)}\to 0,\quad \text{as } n\to\infty. 
\end{equation}

Recall that by Assumption \ref{assumptions_brw} we have $X_n\to X_{\infty}$.
By the weakly projectivity of $\partial \mathsf{G}$
and from equation \eqref{eq:weak}, it follows that
$(y_{n})$ converges to $X_{\infty}$. Observe that 
\begin{equation*}
\supp(\eta_{n})\subset \bigcup_{i=1}^{n}\supp(\phi_i),
\end{equation*}
which is a union of finite sets. Since the unbounded 
sequence $y_{n}\in\supp(\phi_n)$ 
converges to $X_{\infty}$, it follows that 
$\supp(\eta_{n})$ must converge to $X_{\infty}$. 
That is, the random configuration $\eta_{n}$ converges pointwise 
to a limit configuration $\eta_{\infty}$, which accumulates at $X_{\infty}$ 
and $Z_{n}=(\eta_{n},X_{n})$ converges to a random element 
$Z_{\infty}=(\eta_{\infty},X_{\infty})\in\Pi$.

When the limit distribution of $\Xn$ is a continuous measure 
on $\partial \mathsf{G}$ (i.e., it carries no point mass), then the same 
is true for the limit distribution of $\Zn$ on $\Pi$. Indeed, 
supposing that there exists some single point in $\Pi$ with 
non-zero hitting probability measure, then a contradiction 
arises since one can find some single point in $\partial \mathsf{G}$ 
with non-zero measure. This is not possible because of the continuity 
of the limit distribution of $\Xn$.

On the other side, when the limit distribution of $\Xn$ is not continuous on 
$\partial \mathsf{G}$, one can use Borel-Cantelli lemma in order to prove that the limit 
distribution of $\Zn$ is still continuous.
\end{proof}

\chapter{Poisson Boundary of LRW}\label{chap:poiss_bndr_lrw}

The \textit{Poisson boundary} of a random walk is a measure space which
describes the stochastically significant behaviour of its paths
at infinity. In this chapter we present a method to identify the
Poisson boundary of lamplighter random walks over graphs
$\lgr$, given that the base walk $\Xn$ over $\mathsf{G}$ 
is transient and satisfies some suitable assumptions.
This method is called the \textit{Half-Space Method}.
The base graph $\mathsf{G}$ will be then replaced in the following chapters
by some specific graphs, and the method described here will 
be applied.

The Poisson boundary of lamplighter random walks over groups $\lgrp$, with 
$\Gamma$ a discrete group endowed with a rich boundary, was determined in 
{\sc Sava}~\cite{SavaPoisson2010}. 

For more information on the Poisson boundary, 
the reader is invited to have a look
at the introductory and complex papers of 
{\sc Kaimanovich}~\cite{Kaimanovich1991}, \cite{Kaimanovich1995}
and \cite{Kaimanovich2000}. The description of the Poisson boundary 
of lamplighter random walks over Euclidean lattices $\Z^d$, with $d\geq 5$, 
such that the base walk has zero drift was an open problem
for a long time, and has been recently solved by {\sc Erschler}~\cite{Erschler2010}.
For the relation between the Poisson boundary and the linear drift
of a random walk, see also {\sc Karlsson and Ledrappier}~\cite{KarlssonLedrappier2007}.
For other problems and methods related to the determination of the Poisson boundary,
see {\sc Ballmann and Ledrappier} \cite{BallmannLedrappier}, and {\sc Ledrappier} \cite{Ledrappier1985}.

\section{Preliminaries}

The \textit{Poisson boundary} of a Markov chain $(\mathsf{G},P)$  is defined
as the space of ergodic components of the time shift in the path space.
Under natural assumptions on the transition matrix $P$ on $\mathsf{G}$, there exists 
a measure space $(\Lambda,\lambda)$, such that the \textit{Poisson formula}
\begin{equation*}
 h_{\varphi}(x)=\int_{\Lambda}\varphi\ d\lambda_x
\end{equation*}
states an isometric isomorphism between the Banach space $H^{\infty}(\mathsf{G},P)$ of
bounded harmonic functions on $\mathsf{G}$ with sup-norm and the space $L^{\infty}(\Lambda,\lambda)$
of $\lambda$-measurable functions on $\Lambda$. The space $(\Lambda,\lambda)$ is 
called the \index{Markov chain!Poisson boundary}\textit{Poisson boundary} 
of the Markov chain $(\mathsf{G},P)$. \textit{Triviality of the Poisson boundary} is 
equivalent to the absence of non-constant bounded harmonic functions for 
the pair $(\mathsf{G},P)$. This is the so-called  
\index{Markov chain!Poisson boundary!Liouville property}\textit{Liouville property}. 

The Poisson formula characterizes the Poisson boundary up to a measure theoretical 
isomorphism. It also has a topological interpretation in terms of the Martin boundary, where
it consists of the set of possible limits of the Markov chain at the boundary together with the 
family of corresponding harmonic hitting distributions. Nevertheless, 
we emphasize that the Poisson boundary is a measure-theoretical object 
and all objects connected with the Poisson boundary are defined modulo subsets of 
measure $0$. For a detailed description, see {\sc Kaimanovich} \cite{Kaimanovich1992}. 

Recall that if $\Gamma$ is a group which acts transitively on $\mathsf{G}$ and 
leaves the transition operator $P$ on $\mathsf{G}$ invariant, that is, 
if $\Gamma\subset \AUT(\mathsf{G},P)$,
then there exists a measure $\mu$ on $\Gamma$, which determines the right 
random walk $(\Gamma,\mu)$. Moreover, the measure $\mu$ is uniquely induced 
by the transition probabilities $P$ of the 
pair $(\mathsf{G},P)$ as in equation \eqref{eq:correspondence_rw}. Recall the notation 
$P\leftrightarrow\mu$ for the respective correspondence.

Homogeneous Markov operators are intermediate between random walks on
countable groups and random walks on general locally compact groups.
Although the state space $\mathsf{G}$ is countable, the Poisson boundary of
the Markov chain $(\mathsf{G},P)$ is isomorphic with the Poisson boundary of
the induced random walk $(\Gamma,\mu)$ on $\Gamma\subset \AUT(\mathsf{G},P)$,
which is not necessarily discrete.

\begin{proposition}\label{prop:coresp_rw_mc}
If $\Gamma\subset \AUT(\mathsf{G},P)$, then the Poisson boundary of the random 
walk $(\Gamma,\mu)$ coincides with the Poisson boundary of the pair $(\mathsf{G},P)$.
\end{proposition}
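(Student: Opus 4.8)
The plan is to reduce everything to bounded harmonic functions and the Poisson formula: since that formula characterizes the Poisson boundary up to measure isomorphism, it is enough to exhibit an isometric isomorphism between $H^{\infty}(\mathsf{G},P)$ and $H^{\infty}(\Gamma,\mu)$ that is compatible with the two Poisson representations. The natural device is the orbit map $\pi\colon\Gamma\to\mathsf{G}$, $\pi(\gamma)=\gamma o$. Because $(S_{n}o)$ is a model of $\Xn$ under the correspondence \eqref{eq:correspondence_rw}, the map $\pi$ intertwines the right random walk $(\Gamma,\mu)$ with $(\mathsf{G},P)$, so for every bounded $P$-harmonic $h$ on $\mathsf{G}$ the pullback $h\circ\pi$ is a bounded $\mu$-harmonic function on $\Gamma$; it is moreover constant on the left cosets $\gamma\Gamma_{o}$, i.e. invariant under right translation by $\Gamma_{o}$, since $\pi(\gamma k)=\gamma k o=\gamma o$ for $k\in\Gamma_{o}$. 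This lift is injective and norm preserving, so it realizes $H^{\infty}(\mathsf{G},P)$ as the subspace of right-$\Gamma_{o}$-invariant elements of $H^{\infty}(\Gamma,\mu)$; equivalently, the Poisson boundary of $(\Gamma,\mu)$ maps equivariantly onto that of $(\mathsf{G},P)$.

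The substance of the proposition is the converse inclusion: \emph{every} bounded $\mu$-harmonic function on $\Gamma$ is already right-$\Gamma_{o}$-invariant, so the lift is onto. The key structural input is that $\mu$ is bi-invariant under the stabilizer $\Gamma_{o}$. Here I would first record that $\Gamma_{o}$ is compact, being a closed vertex stabilizer in $\AUT(\mathsf{G})$ for the locally finite graph $\mathsf{G}$; consequently the modular function $\Delta\colon\Gamma\to(0,\infty)$ is trivial on $\Gamma_{o}$, since $\Delta(\Gamma_{o})$ is a compact subgroup of $(0,\infty)$. Using $\Delta|_{\Gamma_{o}}\equiv 1$, the left-invariance of $d\gamma$, the $\Gamma$-invariance of $P$, and $ko=o$ for $k\in\Gamma_{o}$, a short change of variables in $\mu(d\gamma)=p(o,\gamma o)\,d\gamma$ shows that the pushforward of $\mu$ under both $\gamma\mapsto k\gamma$ and $\gamma\mapsto\gamma k$ is again $\mu$, for every $k\in\Gamma_{o}$. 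I expect this bookkeeping, rather than anything probabilistic, to be the one genuinely technical point.

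Granting the bi-invariance, the descent is immediate. Let $f$ be bounded and $\mu$-harmonic and fix $k\in\Gamma_{o}$. Right-invariance of $\mu$ gives, for every $\gamma$,
\begin{equation*}
\int_{\Gamma}f(\gamma g k)\,\mu(dg)=\int_{\Gamma}f(\gamma g)\,\mu(dg)=f(\gamma),
\end{equation*}
while conjugation-invariance of $\mu$ under $\Gamma_{o}$ (a consequence of the bi-invariance, as $\mathrm{Ad}_{k}$ is the composition of a left and a right translation by $\Gamma_{o}$) yields
\begin{equation*}
\int_{\Gamma}f(\gamma g k)\,\mu(dg)=\int_{\Gamma}f(\gamma k g)\,\mu(dg)=f(\gamma k),
\end{equation*}
the last equality being harmonicity of $f$ at the point $\gamma k$. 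Comparing the two displays gives $f(\gamma)=f(\gamma k)$ for all $\gamma\in\Gamma$ and $k\in\Gamma_{o}$; equivalently $f$ coincides with its average over $\Gamma_{o}$ and hence factors as $f=h\circ\pi$ for a bounded $P$-harmonic $h$ on $\mathsf{G}$. Thus $h\mapsto h\circ\pi$ is a surjective isometry $H^{\infty}(\mathsf{G},P)\to H^{\infty}(\Gamma,\mu)$ commuting with the Poisson representations, and by the Poisson formula the Poisson boundaries of $(\mathsf{G},P)$ and $(\Gamma,\mu)$ are isomorphic as measure spaces. The only obstacle I anticipate is the one already flagged, namely the careful verification that the induced measure $\mu$ is genuinely $\Gamma_{o}$-bi-invariant; once that is in place, the identification of the harmonic functions, and hence of the boundaries, follows from the two-line computation above.
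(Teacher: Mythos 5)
Your argument is correct. The paper does not actually reprove this statement — it defers to Kaimanovich and Woess \cite[Proposition 3.1]{KaimanovichWoess2002} — and your proof is essentially the argument from that reference: the induced measure $\mu(d\gamma)=p(o,\gamma o)\,d\gamma$ is bi-invariant under the compact stabilizer $\Gamma_{o}$ (compactness forcing $\Delta|_{\Gamma_{o}}\equiv 1$), whence every bounded $\mu$-harmonic function on $\Gamma$ is right-$\Gamma_{o}$-invariant and descends to a bounded $P$-harmonic function on $\mathsf{G}\cong\Gamma/\Gamma_{o}$, so the two spaces of bounded harmonic functions, and hence the Poisson boundaries, are identified.
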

For the proof see {\sc Kaimanovich and Woess}~\cite[Proposition 3.1]{KaimanovichWoess2002}.
\begin{definition}\label{def:mu_bndr}
A $\mu$-boundary for the random walk $(\Gamma,\mu)$ is a space $(B,\sigma)$
with the following properties:
\begin{enumerate}[(a)]
\item every path of the random walk converges to a limit in $B$ with 
hitting distribution $\sigma$.
\item the measure $\sigma$ is $\mu$-harmonic, i.e. $\mu\ast\sigma=\sigma$.
\item $\Gamma$ acts on $B$ by measurable bijections.
\end{enumerate}
\end{definition}
Due to the coincidence of the Poisson boundaries of $(\Gamma,\mu)$
and $(\mathsf{G},P)$, it follows that a $\mu$-boundary for $(\Gamma,\mu)$ is also 
a $\mu$-boundary for $(\mathsf{G},P)$.

If $\mathsf{G}$ is embedded into a topological space $B$, and every 
path of the Markov chain converges to a limit in $B$, then the space $B$
with the hitting measure $\sigma$ on it, is a \textit{quotient 
of the Poisson boundary}. Such quotients are 
\index{Markov chain!Poisson boundary!$\mu$-boundary}\textit{$\mu$-boundaries}.
Moreover, the topology of $B$ is irrelevant, since any projection
from the path space $\Omega=\mathsf{G}^{\mathbb{Z}_{+}}$ onto the space $(B,\sigma)$
gives rise to a $\mu$-boundary.

\textit{The Poisson boundary is the maximal $\mu$-boundary}. Here,
we mean maximality in a measure theoretic sense, i.e., there is no way
(up to measure $0$) of further splitting the boundary
points of this compactification. Therefore, 
the problem of identifying the Poisson boundary consists of two parts:
\begin{enumerate}
\item To find a $\mu$-boundary $(B,\sigma)$. This space is a \textit{priori}
just a quotient of the Poisson boundary. 
\item To show that this boundary is maximal, i.e., is isomorphic to 
the whole Poisson boundary. 
\end{enumerate}
The identification of a $\mu$-boundary can be done in geometric or 
combinatorial terms. Throughout this thesis, we shall consider a geometric 
approach in order to prove the maximality of a $\mu$-boundary. 
In Section \ref{sec:zero_drift}, the Poisson boundary
is described by a measure theoretical method, namely
the correspondence between the tail $\sigma$-algebra of a random
walk and its Poisson boundary is used. For the geometric approach, 
we shall use one very nice criterion called \textit{Strip Criterion}, 
developed by Kaimanovich in \cite{Kaimanovich2000}. This ``strip''
or ``bilateral'' approximation is inspired by the use of  
bilateral geodesics in cocompact rank $1$ Cartan-Hadamard manifolds 
by {\sc Ledrappier and Ballmann} \cite{BallmannLedrappier}.

There is a second criterion called \textit{Ray Criterion},
due again to {\sc Kaimanovich} \cite{Kaimanovich2000}, which
can also be used in the identification of the Poisson boundary,
and which will be stated below for sake of completeness. 
These criteria are based on entropies of conditional random walks,
and require an approximation of the sample paths of the random 
walk in terms of their limit behaviour.

These criteria allow to identify the Poisson boundary
with natural boundaries for several classes of graphs and groups
with \textit{hyperbolic properties}: hyperbolic graphs (or more
generally, Cayley graphs of groups of isometries of Gromov 
hyperbolic spaces), graphs with infinitely many ends, and some
other semi-direct and wreath products. All these graphs are 
endowed with natural and nice rich geometric boundaries, which will
be explained in what follows. Moreover, it is known that sample paths 
of the Markov chains on these graphs converge to natural boundaries.

Even if the determination of the Poisson boundary in this thesis is 
done by applying the \textit{Strip Criterion}, it is instructive to state 
both criteria here.

\begin{theorem}\textbf{[Ray Criterion]}\label{thm:ray_crit}
Let $P$ be a homogeneous Markov operator with finite first moment on $\mathsf{G}$
and let $(B,\sigma)$ be a $\mu$-boundary. If there exists a sequence
of measurable maps $R_n:B\rightarrow \mathsf{G}$, such that
\begin{equation*}
 d(X_n,R_n(X_{\infty}))=o(n),
\end{equation*}
for almost every path of the random walk $\Xn$ (with transition operator $P$), 
then $(B,\sigma)$ is the whole Poisson boundary of $\Xn$.
\end{theorem}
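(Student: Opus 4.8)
The plan is to follow Kaimanovich's entropy method. First I would pass from the homogeneous Markov chain $(\mathsf{G},P)$ to the induced random walk $(\Gamma,\mu)$ on the group $\Gamma\subset\AUT(\mathsf{G},P)$: by Proposition \ref{prop:coresp_rw_mc} the two have the same Poisson boundary, and since $P$ has finite first moment and $\mathsf{G}$ has bounded geometry (so balls grow at most exponentially), the walk $(\Gamma,\mu)$ has finite asymptotic (Avez) entropy
\begin{equation*}
h=\lim_{n\to\infty}\tfrac1n H\big(\mu^{(n)}\big)<\infty.
\end{equation*}
The whole task is then to show that the given $\mu$-boundary $(B,\sigma)$ is the \emph{maximal} one.

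The backbone is the Kaimanovich--Vershik conditional entropy criterion: a $\mu$-boundary $(B,\sigma)$ is the full Poisson boundary if and only if conditioning on the exit point removes all the linear entropy, i.e.
\begin{equation*}
\frac1n\,H\big(X_n\mid X_\infty\big)\longrightarrow 0,\qquad n\to\infty,
\end{equation*}
where $X_\infty$ denotes the $B$-valued limit (equivalently $\tfrac1n I(X_n;X_\infty)\to h$). Thus it suffices to deduce this sublinear conditional-entropy bound from the ray approximation. The ray hypothesis is tailor-made for this. Fix $\delta>0$; conditionally on $X_\infty=\xi$ the element $R_n(\xi)\in\mathsf{G}$ is deterministic, and $d(X_n,R_n(X_\infty))=o(n)$ a.s. means that, with conditional probability tending to $1$, $X_n$ lies in the ball $S_n^\xi=B\big(R_n(\xi),\delta n\big)$. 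Since $\mathsf{G}$ has bounded degree $D$, $\log|S_n^\xi|\le \delta n\log D+O(1)=O(\delta n)$. Splitting the conditional law of $X_n$ over $S_n^\xi$ and its complement via the elementary decomposition
\begin{equation*}
H(p)\le H_2\big(p(S^c)\big)+p(S)\log|S|+p(S^c)\,H\!\left(p|_{S^c}\right),
\end{equation*}
the ``core'' term contributes $O(\delta n)$ and the binary term is $O(1)$, so letting $\delta\to0$ after $n\to\infty$ would give the desired $\tfrac1n H(X_n\mid X_\infty)\to0$ \emph{provided the tail term is negligible}.

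Controlling that tail term is the main obstacle, and it is where finite first moment is essential: a.s.\ sublinear tracking by itself does not bound the expected entropy carried by the rare far excursions of $X_n$. To handle it I would use the a.s.\ convergence $-\tfrac1n\log\mu^{(n)}(X_n)\to h$, which follows from Kingman's subadditive ergodic theorem (Theorem \ref{thm:kingman}) applied to $W_n=-\log\mu^{(n)}(X_n)$; subadditivity comes from $\mu^{(n+m)}(X_{n+m})\ge \mu^{(n)}(X_n)\,\mu^{(m)}(X_n^{-1}X_{n+m})$, which yields $W_{n+m}\le W_n+W_m\circ T^n$ exactly as in Theorem \ref{thm:kingman}, and finite first moment makes $W_1$ integrable (indeed $\E[W_1]=H(\mu)<\infty$). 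This integrability, together with the resulting uniform integrability of $W_n/n$, forces the contribution of the low-probability far region to the conditional entropy to be $o(n)$ rather than a fixed multiple of $n$. Combining the core estimate with this tail control gives $\tfrac1n H(X_n\mid X_\infty)\to0$, and the entropy criterion then identifies $(B,\sigma)$ with the Poisson boundary of $\Xn$, as claimed. For the precise conditional-entropy formalism see {\sc Kaimanovich}~\cite{Kaimanovich2000} and {\sc Kaimanovich and Vershik}~\cite{KaimanovichVershik1983}.
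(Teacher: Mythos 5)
The paper does not prove this theorem: it states the Ray Criterion and refers to {\sc Kaimanovich}~\cite{Kaimanovich2000}, using only the Strip Criterion in its own arguments. So there is no in-paper proof to compare against; your sketch is the standard entropy-theoretic derivation from exactly the source the paper cites, and its overall architecture is correct: reduce to the induced walk $(\Gamma,\mu)$ via Proposition \ref{prop:coresp_rw_mc}, note that finite first moment plus bounded geometry gives finite Avez entropy, invoke the Kaimanovich--Vershik criterion that maximality of $(B,\sigma)$ is equivalent to sublinear conditional entropy given $X_\infty$, and then use the $o(n)$ tracking to trap $X_n$ in a ball of radius $\delta n$ around the deterministic point $R_n(\xi)$, whose logarithmic size is $O(\delta n)$.

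The one place where your argument is thinner than it reads is the tail term. Uniform integrability of $W_n/n=-\tfrac1n\log\mu^{(n)}(X_n)$ controls partial expectations of the \emph{unconditional} pointwise entropies over events of vanishing probability, but the tail term in your decomposition involves the \emph{conditional} law $p_n^\xi$; by the Radon--Nikodym formula $p_n^\xi(x)=\mu^{(n)}(x)\,\tfrac{d\sigma_x}{d\sigma}(\xi)$ there is an extra density term whose contribution on the rare far excursions also has to be shown to be $o(n)$, and your sketch does not address it. This can be repaired, but the cleaner route (and the one Kaimanovich actually takes) avoids the three-term entropy decomposition altogether: by the conditional Shannon theorem, $-\tfrac1n\log p_n^\xi(X_n)\to h(P|_\xi)$ almost surely for the conditional walk, so if $h(P|_\xi)=h'>0$ then every point carries conditional mass at most $e^{-(h'-\eps)n}$ eventually, whence
\begin{equation*}
\mathbb{P}^\xi\big[X_n\in B(R_n(\xi),\delta n)\big]\le \big|B(R_n(\xi),\delta n)\big|\,e^{-(h'-\eps)n}\le e^{(C\delta-h'+\eps)n}\longrightarrow 0
\end{equation*}
for $\delta$ small, contradicting the tracking hypothesis. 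Hence $h(P|_\xi)=0$ for $\sigma$-a.e.\ $\xi$ and the entropy criterion identifies $(B,\sigma)$ with the Poisson boundary. I recommend you replace the uniform-integrability step with this argument, or else spell out the control of the Radon--Nikodym term explicitly.
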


For the second criterion, we shall assume that simultaneously with a 
$\mu$-boundary $(B_{+},\sigma_{+})$ we are also given a $\check{\mu}$-boundary 
$(B_{-},\sigma_{-})$ of the reversed Markov operator $\check{P}$.
This criterion is symmetric with respect to the time reversal and leads
to a simultaneous identification of the Poisson boundaries of
$(\mathsf{G},P)$ and $(\mathsf{G},\check{P})$, respectively. For the definition of the
transition probabilities of $(\mathsf{G},\check{P})$, see \eqref{eq:reversed_rw}.

\begin{theorem}\textbf{[Strip Criterion]}\label{thm:strip_crit}
Let $P$ be a homogeneous Markov operator with finite first moment 
on $\mathsf{G}$ and let $(B_{+},\sigma_{+})$, $(B_{-},\sigma_{-})$ be a 
$\mu$- and a $\check{\mu}$- boundary, respectively. 
If there exists a measurable $\Gamma$-equivariant map $S$ assigning 
to almost every pair of points $(b_{-},b_{+})\in B_{-} \times B_{+}$ a 
non-empty ``strip'' $ S(b_{-},b_{+})\subset \mathsf{G}$, such that, for the ball 
$B(o,n)$ of  radius $n$ in the metric of $\mathsf{G}$,
\begin{equation}\label{eq:subexp_strip_growth}
\frac{1}{n}\log| S(b_{-},b_{+})\cap B(o,n) | \to 0 ,\ \text{ as }\ n\to\infty,
\end{equation}
for $(\sigma_{-} \times \sigma_{+})$-almost every 
$(b_{-},b_{+})\in B_{-} \times B_{+}$, then $(B_{+},\sigma_{+})$ and 
$(B_{-},\sigma_{-})$ are the Poisson boundaries of the Markov chains
$(\mathsf{G},P)$ and $(\mathsf{G},\check{P})$, respectively. 
\end{theorem}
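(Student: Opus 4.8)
The plan is to deduce maximality from Kaimanovich's entropy criterion rather than by exhibiting a ray directly, since the strip depends on both endpoints. First I would pass from the Markov chain to a group random walk: by Proposition~\ref{prop:coresp_rw_mc} the Poisson boundary of $(\mathsf{G},P)$ coincides with that of the induced walk $(\Gamma,\mu)$ with $\mu\leftrightarrow P$, and likewise on the reversed side with $(\Gamma,\check\mu)$, so it is enough to identify $(B_+,\sigma_+)$ and $(B_-,\sigma_-)$ as the Poisson boundaries of $(\Gamma,\mu)$ and $(\Gamma,\check\mu)$. I would then realise the walk on the two-sided trajectory space: take i.i.d.\ increments $(g_i)_{i\in\Z}$ with law $\mu$, set $X_n=g_1\cdots g_n$ for the forward positions (so $X_n\to b_+\in B_+$), and use the increments of non-positive index to run the reversed walk, which converges to $b_-\in B_-$. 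The key structural remark is that $b_-$ is a function of $(g_i)_{i\le 0}$ only, hence independent of the pair $(X_n,b_+)$, which is measurable with respect to $(g_i)_{i\ge 1}$.

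Next I would invoke the entropy criterion of {\sc Kaimanovich and Vershik}~\cite{KaimanovichVershik1983,Kaimanovich2000}: a $\mu$-boundary $(B_+,\sigma_+)$ is the full Poisson boundary if and only if the conditional entropy of the position given the boundary point grows sublinearly, i.e. $\tfrac1n H(X_n\mid b_+)\to 0$. The independence observed above lets me feed in the second boundary for free: since $b_-$ is independent of $(X_n,b_+)$ we have $b_-\perp X_n$ conditionally on $b_+$, so that $H(X_n\mid b_+)=H(X_n\mid b_-,b_+)$. Thus it suffices to show $\tfrac1n H(X_n\mid b_-,b_+)\to0$, and here the strip finally enters.

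To bound the conditional entropy I would bound the log-cardinality of the essential support of the conditional law of $X_n$ given $(b_-,b_+)$. Finite first moment, together with Kingman's theorem (Theorem~\ref{thm:kingman}) exactly as in the convergence proof, gives a finite rate of escape $\ell$, so that $d(o,X_n)\le(\ell+\eps)n$ for all large $n$ almost surely; hence the conditional support may be intersected with the ball $B(o,(\ell+\eps)n)$. The $\Gamma$-equivariance and stationarity of the strip map then confine $X_n$ (up to a bounded correction) to $S(b_-,b_+)$: shifting the bilateral trajectory by $X_n$ sends the forward and backward limits to $X_n^{-1}b_+$ and $X_n^{-1}b_-$ and sends the strip to $X_n^{-1}S(b_-,b_+)$, so the position at time $n$ is pinned to the strip attached to its own boundary data. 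Combining the two constraints yields
\[
H(X_n\mid b_-,b_+)\le \log\bigl|\,S(b_-,b_+)\cap B(o,(\ell+\eps)n)\,\bigr|,
\]
and the hypothesis \eqref{eq:subexp_strip_growth} makes the right-hand side $o(n)$ for $(\sigma_-\times\sigma_+)$-almost every $(b_-,b_+)$. This gives $\tfrac1n H(X_n\mid b_+)\to0$ and hence maximality of $(B_+,\sigma_+)$; running the identical argument with the roles of $\mu$ and $\check\mu$ (and of $b_+$ and $b_-$) exchanged identifies $(B_-,\sigma_-)$ as the Poisson boundary of $(\mathsf{G},\check P)$.

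I expect the main obstacle to be the confinement step, that is, turning the abstract equivariance and stationarity of $S$ into genuine control on the conditional support of $X_n$ inside $S(b_-,b_+)\cap B(o,(\ell+\eps)n)$; making this rigorous requires care with the measurable $\Gamma$-action on $B_-\times B_+$, with the normalisation of the Haar measure on the possibly non-discrete $\Gamma$, and with the passage from the two-sided cocycle identities to an honest cardinality bound. The entropy bookkeeping in the first two paragraphs is comparatively routine, and the finite-first-moment input is already available from the proof of Theorem~\ref{thm_conv_lrw_general_graphs}.
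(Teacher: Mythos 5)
You should first be aware that the thesis does not prove this statement at all: the Strip Criterion is quoted as an external result of {\sc Kaimanovich}~\cite{Kaimanovich2000} (his Theorems 6.4/6.5), and the text only states it in order to apply it. So there is no internal proof to compare against; what you have reconstructed is, at the level of strategy, Kaimanovich's own argument --- reduction to the induced walk on $\Gamma$ via Proposition~\ref{prop:coresp_rw_mc}, the bilateral path space with the backward limit $b_-$ independent of $(X_n,b_+)$, the conditional entropy criterion $\frac1n H(X_n\mid b_+)\to 0$, and the bound of the conditional entropy by the log-size of the strip intersected with a ball of radius $(\ell+\eps)n$. That outline is faithful, and your use of Kingman's theorem to get the linear radius is exactly what the finite first moment hypothesis is for. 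One small point: the entropy criterion needs $H(\mu)<\infty$, not merely a finite first moment; here this is harmless because a transitive graph has at most exponential volume growth, but it should be said.

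The genuine gap is the confinement step, and it is not merely a technicality to be ``made rigorous'': as stated it is false. Shift-equivariance gives $S(X_n^{-1}b_-,X_n^{-1}b_+)=X_n^{-1}S(b_-,b_+)$, but this does \emph{not} pin $X_n$ to $S(b_-,b_+)$; the position at time $n$ has no reason to lie in the strip (think of the geodesic $\pi(\mathfrak{u},\mathfrak{v})$ in a tree: the walk fluctuates off it). What stationarity of the bilateral shift actually yields is the identity $\mathbb{P}[X_n\in S(b_-,b_+)g]=\mathbb{P}[e\in S(b_-,b_+)g]$ for each fixed $g$, independent of $n$. Since the strip is a.s.\ nonempty, these events over $g\in\Gamma$ cover the whole space, so one chooses finitely many translates carrying probability $1-\delta$ and estimates $H(X_n\mid b_-,b_+)$ by decomposing over which translate $S(b_-,b_+)g$ contains $X_n$; the subexponential growth hypothesis must then be applied to each translate $S(b_-,b_+)g\cap B(o,(\ell+\eps)n)$ (which is why Kaimanovich's general form of the criterion quantifies over $g$), and the exceptional event of probability $\delta$ contributes at most $\delta\, n H(\mu)+O(1)$ to the entropy, which is killed by letting $\delta\to 0$ after dividing by $n$. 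Without this decomposition your displayed inequality $H(X_n\mid b_-,b_+)\le\log|S(b_-,b_+)\cap B(o,(\ell+\eps)n)|$ does not hold, so the argument as written does not close.
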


Recall that $\Gamma\subset \AUT(\mathsf{G},P)$.
Equivariance of the strip $S(b_{-},b_{+})\subset \mathsf{G}$ with respect to the group 
action $\Gamma$, means that, for all $\gamma\in\Gamma$,
\begin{equation*}
 \gamma S(b_{-},b_{+})=S(\gamma b_{-},\gamma b_{+}).
\end{equation*}
In most of the cases, the equivariance of the strip is 
very easy to prove. The ``harder'' part of the theorem is to prove
the subexponential growth of the chosen strip.
The ``thinner'' the strips $S(b_{-},b_{+})$, the larger the class of
Markov operators for which condition \eqref{eq:subexp_strip_growth}
is satisfied. This means that the sample paths of the random walk $\Xn$
on $\mathsf{G}$ go to infinity ``faster''.

In some cases, the existence of such strips is almost evident, whereas 
checking the \textit{Ray Criterion} may be rather complicate, or in some
cases it fails. However, there are also some situations where the 
\textit{Ray Criterion} is more helpful than the \textit{Strip Criterion}.
The ray criterion provides more information than the strip criterion about
the behaviour of sample paths of the random walk, and can also
be useful for other issues than the identification of the Poisson boundary.

See, for example, {\sc Ledrappier} \cite{Ledrappier2001} where it is used for 
estimating the Hausdorff dimension of the harmonic measure. On the
other hand, for checking the ray criterion one often needs rather elaborate 
estimates, whereas existence of strips is easier.

We state here another result which will be useful in the following,
and can be found in {\sc Kaimanovich and Woess}~\cite{KaimanovichWoess2002}.
\begin{proposition}\label{prop:rate_esc_poisson}
 Suppose that $P$ is a homogeneous Markov operator
with finite first moment on $\mathsf{G}$. Then the Poisson boundary
of the random walk $\Xn$ with transition matrix $P$ is trivial if
\begin{enumerate}[(a)]
\item $\mathsf{G}$ has subexponential growth, or
\item the drift $l(P)$ vanishes.
\end{enumerate}
\end{proposition}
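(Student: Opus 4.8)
The plan is to reduce the triviality of the Poisson boundary to the vanishing of the \emph{Avez asymptotic entropy}, and then to bound this entropy by the product of the rate of escape and the exponential growth rate of $\mathsf{G}$. By Proposition \ref{prop:coresp_rw_mc}, the pair $(\mathsf{G},P)$ and the induced random walk $(\Gamma,\mu)$ with $\mu\leftrightarrow P$ share the same Poisson boundary, so it suffices to work with $(\Gamma,\mu)$. Since $\Xn$ has finite first moment and $\mathsf{G}$ has bounded geometry (so that the balls $B(o,n)$ grow at most exponentially), the one-step entropy $H(\mu)=-\sum_x \mu(x)\log\mu(x)$ is finite: a measure with finite first moment on a space of at most exponential growth cannot spread its mass too thinly. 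Consequently the limit
\begin{equation*}
\entr=\lim_{n\to\infty}\frac{1}{n}H(\mu^{(n)})
\end{equation*}
exists and defines the asymptotic entropy. The \emph{entropy criterion} of {\sc Kaimanovich--Vershik} (and {\sc Derriennic}) then asserts that the Poisson boundary of $(\Gamma,\mu)$ is trivial if and only if $\entr=0$; this is the statement I would invoke to convert the problem into one about a single nonnegative number.

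The main tool is the \emph{fundamental inequality} of Guivarc'h,
\begin{equation*}
\entr \le l(P)\cdot v,
\end{equation*}
where $l(P)$ is the rate of escape furnished by the Rate of Escape proposition above and
\begin{equation*}
v=\limsup_{n\to\infty}\frac{1}{n}\log\bigl|B(o,n)\bigr|
\end{equation*}
is the exponential volume growth rate of $\mathsf{G}$, which is finite by bounded geometry. I would sketch its proof by comparing, for typical sample paths, the Shannon entropy $H(\mu^{(n)})$ with the logarithm of the number of vertices accessible in $n$ steps: up to a sublinear error the walk remains inside the ball $B\bigl(o,(l(P)+\eps)n\bigr)$, whose cardinality is of exponential order $\exp\bigl(v\,l(P)\,n+o(n)\bigr)$, and the elementary bound $H(\mu^{(n)})\le \log|\supp \mu^{(n)}|$ together with this concentration yields $\entr\le l(P)\,v$ after letting $\eps\to 0$.

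With these two ingredients the two cases are immediate. In case (a) the subexponential growth of $\mathsf{G}$ means $v=0$, and since $l(P)<\infty$ by the finite first moment assumption, the fundamental inequality forces $\entr\le l(P)\cdot 0=0$, hence $\entr=0$ and the boundary is trivial. In case (b) the drift vanishes, $l(P)=0$, and since $v<\infty$ we again obtain $\entr\le 0\cdot v=0$, so $\entr=0$ and the boundary is trivial. The step I expect to be the main obstacle is not the final logical deduction but the justification of the entropy machinery in the present generality: because $\Gamma$ is only assumed to be a closed, possibly \emph{non-discrete}, subgroup of $\AUT(\mathsf{G})$, both the definition of $\entr$ and the fundamental inequality must be set up with respect to the Haar measure $d\gamma$ rather than a counting measure, and one must check that the finite-first-moment hypothesis still guarantees finiteness of the relevant (differential) entropy. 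This is precisely the technical framework developed by {\sc Kaimanovich and Woess}~\cite{KaimanovichWoess2002}, which I would cite to make the argument rigorous.
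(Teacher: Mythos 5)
The paper does not prove this proposition at all --- it simply quotes it from {\sc Kaimanovich and Woess}~\cite{KaimanovichWoess2002} --- and your argument (the entropy criterion plus the fundamental inequality $\entr\le l(P)\cdot v$, carried out for the induced walk on the possibly non-discrete group $\Gamma$) is exactly the route taken in that reference, so your proposal is correct and matches the intended proof. The two technical points that genuinely need care --- controlling the entropy contributed by the tail event $\{d(o,X_n)>(l(P)+\eps)n\}$, which requires the $L^1$ convergence in Kingman's theorem and a Gibbs-inequality comparison with $e^{-(v+1)d(o,x)}$ to get finiteness of $H(\mu)$ and of the conditional entropies, and setting up the entropy with respect to Haar measure when $\Gamma$ is non-discrete --- are precisely the ones you flag at the end, and they are handled in the cited paper.
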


\section{Half-Space Method for LRW}\label{sec:half_space_method}

Let us go back to our setting where a transient
and irreducible random walk $\Xn$ with transition matrix 
$P_{\mathsf{G}}$ over the transitive graph $\mathsf{G}$ is given.
We also require Assumption \ref{assumptions_brw} to hold. The corresponding lamplighter
random walk on $\lgr$ is $\Zn$, with transition matrix $P$
and $Z_n=(\eta_n,X_n)$. It converges, by Theorem 
\ref{thm_conv_lrw_general_graphs}, to the geometric boundary $\Pi$
defined in \eqref{eq:pi_boundary}.

As before, $\mu_{\infty}^x$ is the hitting distribution of the 
random walk $\Xn$ starting at $x\in \mathsf{G}$. For Borel sets 
$B\subset \partial \mathsf{G}$, we have
\begin{equation*}
\mu_{\infty}^x(B)=\mathbb{P}[X_{\infty}\in B|X_0=x]. 
\end{equation*}
Factorizing with 
respect to the first step, the Markov property yields
\begin{equation*}
\mu_{\infty}^x=\sum_{y\in \mathsf{G}}p_{\mathsf{G}}(x,y)\mu_{\infty}^y.
\end{equation*}
The Borel probability measures family $\{\mu_{\infty}^x:x\in \mathsf{G}\}$, 
are called \textit{harmonic measures}. In view of the irreducibility 
assumption, all harmonic measures $\mu_{\infty}^x$ are equivalent 
to the measure $\mu_{\infty}=\mu_{\infty}^{o}$. Moreover, the space 
$(\partial \mathsf{G},\mu_{\infty})$ is a factor space of the 
Poisson boundary of the random walk $\Xn$, 
and the Poisson formula permits one to identify the space 
$L^{\infty}(\partial \mathsf{G},\mu_{\infty})$ with a certain subspace of 
the space $H^{\infty}(\mathsf{G},P_{\mathsf{G}})$ of bounded harmonic functions.

We are interested in describing the Poisson boundary of lamplighter random walks
$\Zn$ over $\lgr$, with the base random walk $\Xn$ on $\mathsf{G}$ being an irreducible,
transient random walk which satisfies Assumption \ref{assumptions_brw}.

Under the assumptions of Theorem \ref{thm_conv_lrw_general_graphs}, 
let $\nu_{\infty}$ be the distribution of $Z_{\infty}=(\eta_{\infty},X_{\infty})$ 
on $\Pi$, given that the position of the random walk $\Zn$ at time $n=0$ is $(\textbf{0},o)$.
This is a probability measure on $\Pi$ defined for Borel sets $U\subset\Pi$ by
\begin{equation*}
\nu_{\infty}(U)=\P[Z_{\infty}\in U\vert Z_{0}=(\textbf{0},o)].
\end{equation*}
Let $\nu$ be the unique probability measure on $\lgrp$ induced by $P$
($P\leftrightarrow \nu$) as in equation \eqref{eq:correspondence_rw}.
Then the measure $\nu_{\infty}$ is a \textit{$\nu$-harmonic measure} 
for $\Zn$. This means that it satisfies the convolution equation 
$\nu \ast\nu_{\infty}=\nu_{\infty}$. 

Since $\lgrp\subset \AUT(\lgr)$ acts on $\Pi$ by 
measurable bijections and the measure $\nu_{\infty}$ is stationary 
with respect to $\nu$, by Definition \ref{def:mu_bndr} the space
 $(\Pi,\nu_{\infty})$ is a \textit{$\nu$-boundary} for the random walk $\Zn$ 
with transition matrix $P$. We want to prove that this $\nu$-boundary
is indeed the maximal one, that is, the Poisson boundary.

We state a general method to describe the Poisson boundary
of LRW $\Zn$ on $\lgr$ under some reasonable assumptions on the 
base walk $\Xn$.

\subsection{The Half-Space Method}
\label{sec:Method for constructing the strip}
Assume that:
\begin{enumerate}
\item \textbf{Assumption} \ref{assumptions_brw} holds for $\Xn$ and $(\check{X}_{n})$. 
Let $\mu_{\infty}$ and $\check{\mu}_{\infty}$ be the respective 
hitting distributions on $\partial \mathsf{G}$. 
\item For $\mu_{\infty}\times\check{\mu}_{\infty}$-almost 
every pair $(\mathfrak{u},\mathfrak{v})\in\partial \mathsf{G}\times\partial \mathsf{G}$, 
one has a strip $\mathfrak{s}(\mathfrak{u},\mathfrak{v})$, which 
satisfies the conditions from Theorem \ref{thm:strip_crit}:
it is a subset of $\mathsf{G}$, it is $\Gamma$-equivariant, 
and it has subexponential growth, that is,
\begin{equation}\label{eq:subexp_base_strip}
\dfrac{1}{n}\log|\mathfrak{s}(\mathfrak{u},\mathfrak{v})\cap B(o,n)|\to 0,
\mbox{ as }n\to\infty,
\end{equation}
where $B(o,n)=\{x\in \mathsf{G}:\ d(o,x)\leq n\}$ is the ball with 
center $o$ and radius $n$ in $\mathsf{G}$. 
\item For every $x\in\mathfrak{s}(\mathfrak{u},\mathfrak{v})$, one 
can assign to the triple $(\mathfrak{u},\mathfrak{v},x)$ a partition 
of $\mathsf{G}$ into \textit{half-spaces} $\mathsf{G}_{\pm}$, such that 
$\mathsf{G}_{+}$ (respectively, $\mathsf{G}_{-}$) contains a neighbourhood 
of $\mathfrak{u}$ (respectively, $\mathfrak{v}$), and the assignments 
\begin{equation*}
(\mathfrak{u},\mathfrak{v},x)\mapsto \mathsf{G}_{\pm}(x) 
\end{equation*}
are $\Gamma$-equivariant.
\end{enumerate}
As a matter of fact, one can partition $\mathsf{G}$ in more than 
two subsets and the method can still be applied. However, the 
relevant subsets are the ones containing a neighbourhood of 
$\mathfrak{u}$ (respectively, $\mathfrak{v}$). Indeed, by Theorem 
\ref{thm_conv_lrw_general_graphs}, the LRW converges to the boundary 
$\Pi$, which is defined as the set of all pairs $(\phi,\mathfrak{u})$
with $\mathfrak{u}\in\partial \mathsf{G}$ and the only accumulation point of 
the configuration $\phi$ is $\mathfrak{u}$. Therefore, only there 
may be infinitely many lamps switched on (because $\mathfrak{u}$ 
and $\mathfrak{v}$ are the respective boundary points toward the 
random walks $(X_{n})$ and $(\check{X}_{n})$ converge). 

We want to build a finitely supported configuration associated to 
pairs $(\phi_{+},\phi_{-})$ of limit configurations (of the 
lamplighter random walk and of the reversed random walk) accumulating 
at $\mathfrak{u}$ and $\mathfrak{v}$, respectively. In order to do this, 
we restrict $\phi_{+}$ and $\phi_{-}$ on $\mathsf{G}_{-}$ and $\mathsf{G}_{+}$, 
respectively, and then we ``glue together'' the restrictions. Since 
the new configuration depends on the partition of $\mathsf{G}$, we cannot 
choose the same partition for  all $x$, because we will have a constant 
configuration which is not equivariant. Therefore, the partition of 
$\mathsf{G}$ should depend on $x\in s(\mathfrak{u},\mathfrak{v})$. 

Let us now state one of the main results on this thesis, regarding
the Poisson boundary of lamplighter random walks over general base graphs $\mathsf{G}$.
For discrete groups $\Gamma$, the result was published in {\sc Sava}~\cite{SavaPoisson2010}.

\begin{theorem}\label{PoissonTheorem}
Let $\Zn$ be an irreducible, homogeneous random walk with 
finite first moment on $\lgr$. If $\Pi$ is defined as in \eqref{eq:pi_boundary}
and the above assumptions are satisfied, 
then the measure space $(\Pi,\nu_{\infty})$ is the Poisson boundary of $\Zn$, 
where $\nu_{\infty}$ is the limit distribution on $\Pi$ of $Z_{n}$.
\end{theorem}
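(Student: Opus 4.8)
The plan is to apply the Strip Criterion (Theorem~\ref{thm:strip_crit}) to $\Zn$ on $\lgr$, taking the lamplighter group $\lgrp\subset\AUT(\lgr)$ as the acting group. The discussion preceding the statement already shows that $(\Pi,\nu_{\infty})$ is a $\nu$-boundary for $\Zn$; running the same argument for the reversed walk $(\check{Z}_n)$, whose base projection is $(\check{X}_n)$ and which is covered by the first half-space assumption (that Assumption~\ref{assumptions_brw} holds for both $\Xn$ and $(\check{X}_n)$), produces by Theorem~\ref{thm_conv_lrw_general_graphs} a $\check\nu$-boundary $(\check\Pi,\check\nu_{\infty})$. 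It then remains only to exhibit, for $\nu_{\infty}\times\check\nu_{\infty}$-almost every pair of boundary points, a measurable $\lgrp$-equivariant strip in $\lgr$ whose intersection with balls grows subexponentially.

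Write a forward boundary point as $b_+=(\phi_+,\mathfrak{u})\in\Pi$ and a reversed one as $b_-=(\phi_-,\mathfrak{v})\in\check\Pi$, so $\supp(\phi_+)$ accumulates only at $\mathfrak{u}$ and $\supp(\phi_-)$ only at $\mathfrak{v}$. Since the projection $(\phi,\mathfrak{u})\mapsto\mathfrak{u}$ pushes $\nu_{\infty}$ forward to $\mu_{\infty}$ (and $\check\nu_{\infty}$ to $\check\mu_{\infty}$), almost every pair $(b_-,b_+)$ lies over a pair $(\mathfrak{v},\mathfrak{u})$ for which the base strip $\mathfrak{s}(\mathfrak{u},\mathfrak{v})$ and the half-space assignments $x\mapsto\mathsf{G}_{\pm}(x)$ are defined. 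Following the gluing recipe given above the statement, for each $x\in\mathfrak{s}(\mathfrak{u},\mathfrak{v})$ I set
\[
\Phi_x=\phi_+\big|_{\mathsf{G}_-(x)}\oplus\phi_-\big|_{\mathsf{G}_+(x)},\qquad
S(b_-,b_+)=\{(\Phi_x,x):x\in\mathfrak{s}(\mathfrak{u},\mathfrak{v})\}.
\]
Because a neighbourhood of $\mathfrak{u}$ lies in $\mathsf{G}_+(x)$, the restriction $\phi_+\big|_{\mathsf{G}_-(x)}$ has finite support, and symmetrically for $\phi_-\big|_{\mathsf{G}_+(x)}$; hence $\Phi_x\in\mathcal{C}$ and $S(b_-,b_+)$ is a non-empty subset of $\lgr$. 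The equivariance $g\,S(b_-,b_+)=S(gb_-,gb_+)$ for $g\in\lgrp$ follows from the $\Gamma$-equivariance of $\mathfrak{s}$ and of the partition together with the action \eqref{grp_action_lrw}, and measurability is inherited from the base data.

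The growth estimate is where the construction pays off. The map $x\mapsto(\Phi_x,x)$ is a bijection of $\mathfrak{s}(\mathfrak{u},\mathfrak{v})$ onto $S(b_-,b_+)$, and by the lamplighter metric \eqref{LamplighterGraphGmetric} one has $d\big((\textbf{0},o),(\Phi_x,x)\big)\geq d(o,x)$, since the travelling-salesman tour length dominates the graph distance; thus any point of $S(b_-,b_+)$ in the ball $B((\textbf{0},o),n)$ has its $\mathsf{G}$-coordinate in $B(o,n)$. Therefore
\[
\big|S(b_-,b_+)\cap B((\textbf{0},o),n)\big|\leq\big|\mathfrak{s}(\mathfrak{u},\mathfrak{v})\cap B(o,n)\big|,
\]
and the base-strip hypothesis \eqref{eq:subexp_base_strip} makes the right-hand side $e^{o(n)}$. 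Taking $\tfrac{1}{n}\log$ and letting $n\to\infty$ verifies \eqref{eq:subexp_strip_growth}, so the Strip Criterion identifies $(\Pi,\nu_{\infty})$ as the Poisson boundary of $\Zn$ (and $(\check\Pi,\check\nu_{\infty})$ as that of $(\check{Z}_n)$), which is the claim.

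The step demanding the most care is not the growth bound, which collapses neatly onto the base strip, but the construction itself: one must verify that the half-spaces of the third assumption genuinely confine the accumulation of $\phi_+$ and $\phi_-$ to opposite sides, so that the glued configuration $\Phi_x$ retains only finite support, and that $x\mapsto\Phi_x$ is measurable and $\Gamma$-equivariant on a full-measure set of pairs. A related point to treat cleanly is the fibering of $\nu_{\infty}\times\check\nu_{\infty}$ over $\mu_{\infty}\times\check\mu_{\infty}$, which guarantees that the base strip and the half-space assignments are available for almost every pair $(b_-,b_+)$.
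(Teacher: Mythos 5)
Your proposal is correct and follows essentially the same route as the paper: both take the $\nu$- and $\check\nu$-boundaries supplied by Theorem \ref{thm_conv_lrw_general_graphs}, glue the two limit configurations across the half-spaces $\mathsf{G}_{\pm}(x)$ to get a finitely supported configuration $\Phi_x$ for each $x$ in the base strip, lift $\mathfrak{s}(\mathfrak{u},\mathfrak{v})$ to $S(b_-,b_+)=\{(\Phi_x,x)\}$, check $\lgrp$-equivariance, and bound $|S\cap B((\mathbf{0},o),n)|$ by $|\mathfrak{s}(\mathfrak{u},\mathfrak{v})\cap B(o,n)|$ via the lamplighter metric before invoking the Strip Criterion. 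The only cosmetic difference is that you justify the almost-everywhere availability of the base strip by the pushforward of $\nu_{\infty}\times\check\nu_{\infty}$ onto $\mu_{\infty}\times\check\mu_{\infty}$, while the paper phrases this as excluding the diagonal $\mathfrak{u}=\mathfrak{v}$ using continuity of the limit distributions; these amount to the same thing.
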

\begin{proof}
In order to apply the Strip Criterion (Theorem \ref{thm:strip_crit}), we need to find 
$\nu$- and $\check{\nu}$-boundaries for the lamplighter random walk $\Zn$ 
and the reversed lamplighter random walk $(\check{Z}_{n})$, respectively. By 
Theorem \ref{thm_conv_lrw_general_graphs} each of the random walks $(Z_{n})$ and 
$(\check{Z}_{n})$ starting at $(\textbf{0},o)$ converges almost surely to a $\Pi$-valued 
random variable. If $\nu_{\infty}$  and $\check{\nu}_{\infty}$ are their 
respective limit distributions on $\Pi$, then the spaces $(\Pi,\nu_{\infty})$ 
and $(\Pi,\check{\nu}_{\infty})$ are $\nu$- and $\check{\nu}$- boundaries of 
the respective random walks. 
 
Let us take $b_{+}=(\phi_{+},\mathfrak{u})$, $b_{-}=(\phi_{-},\mathfrak{v})\in\Pi$, 
where $\phi_{+}$ and $\phi_{-}$ are the limit configurations of $(Z_{n})$ and $(\check{Z}_{n})$, 
respectively, and $\mathfrak{u},\mathfrak{v}\in\partial \mathsf{G}$ are 
their only respective accumulation points. By the continuity of 
$\nu_{\infty}$  and $\check{\nu}_{\infty}$, the set 
\begin{equation*}
\{(b_{+},b_{-})\in\Pi\times\Pi:\mathfrak{u}=\mathfrak{v}\} 
\end{equation*}
has $(\nu_{\infty}\times\check{\nu}_{\infty})$-measure $0$, 
so that, in constructing the strip $S(b_{+},b_{-})$ we shall consider 
only the case $\mathfrak{u}\neq\mathfrak{v}$.

Use the third assumption in the \textit{Half-Space Method},
and for each $x\in\mathfrak{s}(\mathfrak{u},\mathfrak{v})$ 
consider a partition of $\mathsf{G}$ into $\mathsf{G}_{+}(x)$, $\mathsf{G}_{-}(x)$, and eventually 
$\mathsf{G}\setminus(\mathsf{G}_{+}\cup \mathsf{G}_{-})$. The set $\mathsf{G}_{+}$ 
(respectively, $\mathsf{G}_{-}$) contains a neighbourhood of $\mathfrak{u}$ 
(respectively, $\mathfrak{v}$), and $\mathsf{G}\setminus(\mathsf{G}_{+}\cup \mathsf{G}_{-})$ 
is the remaining subset (which may be empty). The set 
$\mathsf{G}\setminus(\mathsf{G}_{+}\cup \mathsf{G}_{-})$ contains 
neither $\mathfrak{u}$ nor $\mathfrak{v}$. The restriction 
of $\phi_{+}$ on $\mathsf{G}_{-}$ (respectively, of $\phi_{-}$ on $\mathsf{G}_{+}$) 
is finitely supported, since its only accumulation point is $\mathfrak{u}$ 
(respectively, $\mathfrak{v}$), which is not in a neighbourhood of $\mathsf{G}_{-}$ 
(respectively, $\mathsf{G}_{+}$). 
Now ``put together'' the restriction of 
$\phi_{+}$ on $\mathsf{G}_{-}$ and of $\phi_{-}$ on $\mathsf{G}_{+}$ in order to get 
the new configuration
\begin{equation}\label{StripConfiguration}
\Phi(b_{+},b_{-},x)=
\begin{cases}
\phi_{-}, & \mbox{on}\  \mathsf{G}_{+}\\
\phi_{+}, & \mbox{on}\ \mathsf{G}_{-}\\
0, & \mbox{on}\ \mathsf{G}\setminus(\mathsf{G}_{+}\cup \mathsf{G}_{-})
\end{cases} 
\end{equation}
on $\mathsf{G}$, which is, by construction, finitely supported.
For a graphic visualisation of the above construction, see Figure \ref{fig:half_spaces}.

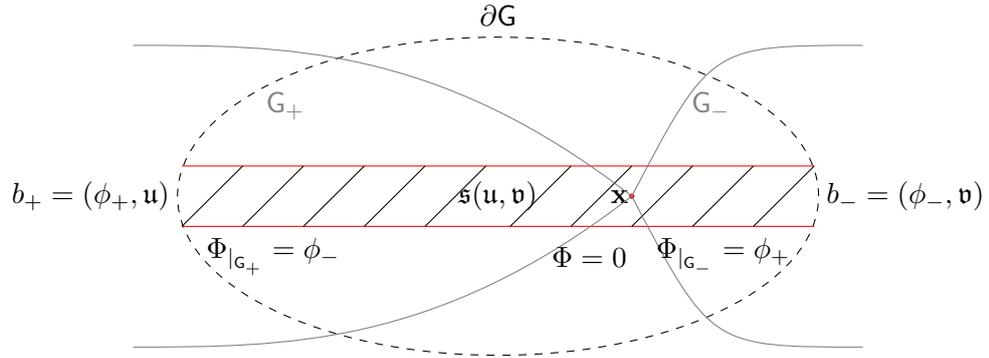
\begin{figure}[h]
\begin{tikzpicture}[scale=0.8]

\draw[dashed,thin] (0,0) ellipse (150pt and 75pt);
\node at (6.7,0) {$b_{-}=(\phi_{-},\mathfrak{v})$};
\node at (-6.7,0) {$b_{+}=(\phi_{+},\mathfrak{u})$};
\node at (0,3) {$\partial \mathsf{G}$};

\draw[thin, red] (-5.2,0.5)--(5.2,0.5);
\draw[thin, red] (-5.2,-0.5)--(5.2,-0.5);
\draw[thin] (-4.2,0.5)--(-5.2,-0.5);
\draw[thin] (-3.2,0.5)--(-4.2,-0.5);
\draw[thin] (-2.2,0.5)--(-3.2,-0.5); 
\draw[thin] (-1.2,0.5)--(-2.2,-0.5);
\draw[thin] (-0.2,0.5)--(-1.2,-0.5);
\draw[thin] (1.2,0.5)--(0.2,-0.5);
\draw[thin] (2.2,0.5)--(1.2,-0.5);
\draw[thin] (3.2,0.5)--(2.2,-0.5);
\draw[thin] (4.2,0.5)--(3.2,-0.5);
\draw[thin] (5.2,0.5)--(4.2,-0.5);

\node at (0,0) {$\mathbf{\mathfrak{s}(\mathfrak{u},\mathfrak{v})}$};

\node at (2,0) {$\mathbf{x}$};
\filldraw[red] (2.2,0) circle (1pt);

\draw[gray] (6,2.5) .. controls (3.5,2.5) .. (2.2,0)..controls (3.5,-2.5) .. (6,-2.5);
\draw[gray] (-6,2.5) .. controls (-3.5,2.5) and (-1,2.5).. (2.2,0)..controls (-1,-2.5) and (-3.5,-2.5) .. (-6,-2.5);
\node[gray] at (3.5,1.5) {$\mathsf{G}_{-}$};
\node at (3.7,-1) {$\Phi_{|_{\mathsf{G}_{-}}}=\phi_{+}$};
\node[gray] at (-3.5,1.5) {$\mathsf{G}_{+}$};
\node at (-3.7,-1) {$\Phi_{|_{\mathsf{G}_{+}}}=\phi_{-}$};
\node at (1.5,-1) {$\Phi=0$};
\end{tikzpicture}

\caption{The construction of the Half-Spaces}\label{fig:half_spaces}
\end{figure}

The sought for 
the ``bigger'' strip $S(b_{+},b_{-})\subset \lgr$ is the set
\begin{equation}\label{eq:lamplighter_strip}
S(b_{+},b_{-})=\{\left(\Phi,x\right) :\  x\in\mathfrak{s}(\mathfrak{u},\mathfrak{v})\}
\end{equation}
of all pairs $(\Phi,x)$, where $\Phi=\Phi(b_{+},b_{-},x)$ is the configuration 
defined above and $x$ runs through the strip $\mathfrak{s}(\mathfrak{u},\mathfrak{v})$ in $\mathsf{G}$. 
This is a subset of $\lgr$. We prove that the map 
\begin{equation*}
(b_{+},b_{-})\mapsto S(b_{+},b_{-})
\end{equation*}
is $\lgrp$-equivariant, i.e., for $g=(\phi,\gamma)\in \lgrp$
\begin{equation*}
gS(b_{+},b_{-})=S(gb_{+},gb_{-}).
\end{equation*}
Next,
\begin{equation*}
gS(b_{+},b_{-})=(\phi,\gamma)\cdot\{\left(\Phi,x\right) 
:\  x\in\mathfrak{s}(\mathfrak{u},\mathfrak{v})\}=\left\lbrace 
(\phi\oplus T_{\gamma}\Phi,\gamma x),\ x\in\mathfrak{s}(\mathfrak{u},\mathfrak{v})\right\rbrace .
\end{equation*}
If $x\in\mathfrak{s}(\mathfrak{u},\mathfrak{v})$, then 
$\gamma x\in\mathfrak{s}(\gamma \mathfrak{u},\gamma\mathfrak{v})$, 
since $\mathfrak{s}(\gamma \mathfrak{u},\gamma\mathfrak{v})$ is $\Gamma$-equivariant. Also,
\begin{equation*}
\phi\oplus T_{\gamma}\Phi=
\begin{cases}
\phi\oplus T_{\gamma}\phi_{-}, & \mbox{on}\  \gamma \mathsf{G}_{+} \\
\phi\oplus T_{\gamma}\phi_{+}, & \mbox{on}\  \gamma \mathsf{G}_{-} \\
	0, & \mbox{on}\ \mathsf{G}\setminus(\gamma \mathsf{G}_{+}\cup \gamma \mathsf{G}_{-}).
\end{cases}
\end{equation*}
This means that 
\begin{equation*}
\phi\oplus T_{\gamma}\Phi(b_{+},b_{-},x)=\Phi(gb_{+},gb_{-},\gamma x), 
\end{equation*}
for all $x\in\mathfrak{s}(\mathfrak{u},\mathfrak{v})$. On the other hand,
\begin{eqnarray*}
S(gb_{+},gb_{-}) & =  & S\big((\phi\oplus T_{\gamma}\phi_{+},\gamma\mathfrak{u}),(\phi\oplus T_{\gamma}\phi_{-},\gamma\mathfrak{v})\big)\\
&  = & \left\{\big(\Phi(gb_{+},gb_{-},\gamma x),\gamma x\big),\ x\in\mathfrak{s}(\mathfrak{u},\mathfrak{v})\right\},
\end{eqnarray*}
that is, $gS(b_{+},b_{-})=S(gb_{+},gb_{-})$, and this  
proves the $\lgrp$-equivariance of the strip $S(b_{+},b_{-})$.

Finally, let us prove that the strip $S(b_{+},b_{-})$ has subexponential growth. 
For this, let $(\eta,x)\in S(b_{+},b_{-})$ such that
\begin{equation*}
d\big((\textbf{0},o),(\eta,x)\big)\leq n. 
\end{equation*}
Definition \ref{LamplighterGraphGmetric} of the metric $d(\cdot,\cdot)$ on 
$\lgr$ implies that $d(o,x)\leq n$. Therefore, if 
\begin{equation}\label{eq:small_strip}
(\eta,x)\in S(b_{+},b_{-})\cap B\big((\textbf{0},o),n\big),\quad \text{then}
\quad x\in\mathfrak{s}(\mathfrak{u},\mathfrak{v})\cap B(o,n),
\end{equation}
where $B\big((\textbf{0},o),n\big)$ (respectively, $B(o,n)$) is the ball with center 
$(\textbf{0},o)$ (respectively, $o$) and of radius $n$ in $\lgr$ (respectively, $\mathsf{G}$). 
Since for every $x\in\mathfrak{s}(\mathfrak{u},\mathfrak{v})$ we associate 
only one configuration $\Phi$ in $S(b_{+},b_{-})$, equation \eqref{eq:small_strip} 
implies that 
\begin{equation*}
|S(b_{+},b_{-})\cap B((\textbf{0},o),n)|\leq |\mathfrak{s}(\mathfrak{u},\mathfrak{v})\cap B(o,n)|.
\end{equation*}
Finally, the assumption \eqref{eq:subexp_base_strip} that 
the $\mathfrak{s}(\mathfrak{u},\mathfrak{v})$ has subexponential growth leads to 
\begin{equation*}
\dfrac{\log|S(b_{+},b_{-})\cap B((\textbf{0},o),n)|}{n}\to 0,\mbox{ as }n\to\infty, 
\end{equation*}
and this proves the subexponential growth of the strip $S(b_+,b_{-})$.

Since for almost every pair of points $(b_{+},b_{-})\in\Pi\times\Pi$, we 
have assigned a strip $S(b_{+},b_{-})$, which satisfies the conditions from 
Theorem \ref{thm:strip_crit}, it follows that the measure space $(\Pi,\nu_{\infty})$ 
is the Poisson boundary of the lamplighter random walk $\Zn$.
\end{proof}

As an application of the \textit{Half Space Method}, we consider 
several classes of transitive base graphs $\mathsf{G}$: graphs with infinitely many ends,
hyperbolic graphs in the sense of Gromov and Euclidean lattices. For
random walks $\Xn$ on these types of graphs Assumption \ref{assumptions_brw} 
holds.

\begin{remark}
In principle, one can apply the construction from the previous proof
to ``iterated'' lamplighter graphs, which are defined as follows. We consider 
$\lgr$ as our base graph, and construct the lamplighter
graph $\Z_2\wr(\lgr)$ over $\lgr$, and so on. The ``smaller''
strip $\mathfrak{s}$ will be in this case a subset of $\lgr$, which has
subexponential growth, and using the Half Space Method,
we lift it to a ``bigger'' strip which satisfies the conditions in 
Theorem \ref{thm:strip_crit}.

\end{remark}

\chapter{Graphs with Infinitely many Ends}\label{inf_many_ends}

In this chapter we introduce transitive graphs $\mathsf{G}$ with rich geometric boundaries,
such as graphs with infinitely many ends, and we study the
behaviour at infinity of random walks $\Xn$ on $\mathsf{G}$ and of lamplighter random 
walks $\Zn$ on $\lgr$, given that $\mathsf{G}$ is a graph with 
infinitely many ends.

The behaviour at infinity of random walks over graphs $\mathsf{G}$ with infinitely
many ends depends on the action of the group $\Gamma\subset \AUT(\mathsf{G},P)$.
If $\Gamma$ does not fix any element of $\partial\mathsf{G}$, then it is
easy to study the convergence and the Poisson boundary of lamplighter random walks
$\Zn$ on $\lgr$ just using Theorem \ref{thm_conv_lrw_general_graphs} and 
Theorem \ref{PoissonTheorem}. On the other hand, if $\Gamma$ 
fixes one end of $\mathsf{G}$, which then has to be uniques, then we encounter one 
interesting situation: when the base random walk $\Xn$ has zero 
drift on $\mathsf{G}$, then one cannot describe the Poisson boundary of $\Zn$
by making use of the \textit{Half Space Method}. This case will be
treated separately in Section \ref{sec:zero_drift},
by considering the correspondence between the 
\textit{tail-algebra} and the Poisson boundary of the respective 
walk. The proof makes also use of \textit{cutpoints} and other results of {\sc James
and Peres}~\cite{JamesPeres1997}. 

\section{Ends of Graphs}\label{sec:space_of_ends}

The basic idea behind the concept of \textit{an end} is to distinguish between 
different ways of going to infinity. Ends carry a natural topology which is 
often not mentioned explicitely. \textit{Ends of graphs} and the 
\textit{end compactification} were originally introduced by 
{\sc Freudenthal}~\cite{Freudenthal1944}, who considered only locally finite graphs. 
{\sc Halin}~\cite{Halin1964} was the first to consider ends of non-locally finite graphs. 

Let $\mathsf{G}$ be an infinite, locally finite, connected graph. 
An \textit{infinite path} or \textit{ray} without self-intersections
 is a sequence $\pi=[x_{0},\ x_{1},\ldots]$ of distinct vertices, 
such that $x_{i}\sim x_{i-1}$ for all $i$, where $\sim$ denotes 
the neighbourhood relation. In $F$ is a finite set of vertices 
and/or edges of $\mathsf{G}$, then the (induced) graph $\mathsf{G}\setminus F$ has
finitely many connected components. Every ray $\pi$ must have all
but finitely many points in precisely one of them, and we say that
$\pi$ ends up in that component. Two rays are called \textit{equivalent} if,
for any finite set of edges $F$, they end up in the same component
of $\mathsf{G}\setminus F$. An \index{graph!end}\textit{end} of $\mathsf{G}$ is 
an equivalence class of rays. 

We write \textit{$\partial \mathsf{G}$ for the space of ends}, and 
$\widehat{\mathsf{G}}=\mathsf{G}\cup\partial \mathsf{G}$ for the 
\index{graph!end!end compactification}\textit{end compactification}
of $\mathsf{G}$. If $C$ is a component of $\mathsf{G} \setminus F$, then we write 
$\partial C$ for the set of those ends whose rays end up in $C$ and
 $\widehat{C}=C \cup \partial C$ for the resulting completion of $C$. 

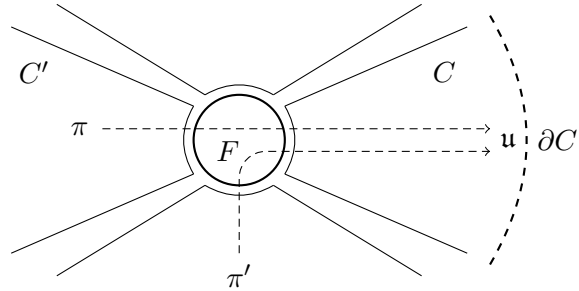
\begin{SCfigure}[1][h]
\begin{tikzpicture}[scale=0.6]

\draw[thick] (0,0) circle (1cm);

\draw (1,0.75)--(5,2.5);
 	\draw [bend right=30] (1,-0.75) to (1,0.75) ;
\draw (1,-0.75)--(5,-2.5);
\draw[thick,dashed] [bend right=30] (5.5,-2.75) to node [right] {$\partial C$} (5.5,2.75);

\draw (-1,0.75)--(-5,2.5);
	\draw [bend left=30] (-1,-0.75) to (-1,0.75);
\draw (-1,-0.75)--(-5,-2.5);

\draw (0.75,1)--(4,3);
	\draw [bend right=30] (0.75,1) to (-0.75,1);
\draw (-0.75,1)--(-4,3);

\draw (0.75,-1)--(4,-3);
	\draw [bend left=30] (0.75,-1) to (-0.75,-1);
\draw (-0.75,-1)--(-4,-3);

\draw[densely dashed, black,->] (-3,0.25)  --  (5.5,0.25);
\draw[densely dashed, black,rounded corners=10pt,->] (0,-2.5) -- (0,-0.25) -- (5.5,-0.25);

\node (f) at (-0.25,-0.25) {$F$};
\node (e) at (4.5,1.5) {$C$};
\node (d) at (-4.5,1.5) {$C'$};
\node (c) at (-3.5,0.25) {$\pi$};
\node (b) at (0,-3) {$\pi'$};
\node (a) at (5.9,0) {$\mathfrak{u}$};
\end{tikzpicture}
\caption{two equivalent rays $\pi$ and $\pi'$ 
ending up in the same component $C$.} 
\end{SCfigure}

Let us now explain the topology of $\widehat{\mathsf{G}}$. If $F$ is a 
finite set and $w\in\widehat{\mathsf{G}}$, then there is exactly one 
component of $\mathsf{G}\setminus F$ whose completion contains $w$. 
We denote the latter by $\widehat{C}(w,F)$. If we vary $F$ 
(finite, with $w\not\in F$), we obtain a neighbourhood base of 
$w$. If $x\in \mathsf{G}$, we can take for $F$ the finite set of neighbours 
of $x$ to see that the topology is discrete on $\mathsf{G}$. It has a countable 
base and it is Hausdorff. When $\mathfrak{u}\in\partial \mathsf{G}$, we can 
find a \textit{standard neighbourhood base}, that is, 
one of the form $\widehat{C}(\mathfrak{u},F_{k})$, $k\in\mathbb{N}$, 
where the finite sets $F_{k}\subset \mathsf{G}$ are such that
\begin{equation*}
F_{k}\cup \widehat{C}(\mathfrak{u},F_{k})\subset 
\widehat{C}(\mathfrak{u},F_{k-1}),\mbox{ for all }k. 
\end{equation*}

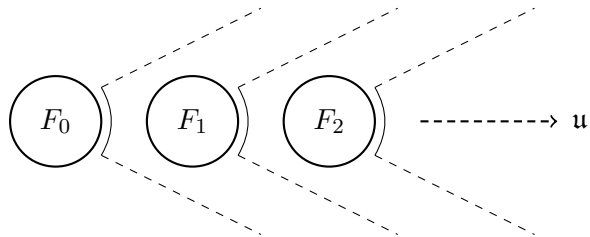
\begin{SCfigure}[1][h]
\begin{tikzpicture}[scale=0.6]

\draw[thick] (0,0) node {$F_{0}$} circle (1cm);
\draw[thick] (3,0) node {$F_{1}$} circle (1cm);
\draw[thick] (6,0) node {$F_{2}$} circle (1cm);

\draw[dashed] (1,0.75)--(4.5,2.5);
 	\draw [bend right=30] (1,-0.75) to (1,0.75) ;
\draw[dashed] (1,-0.75)--(4.5,-2.5);

\draw[dashed] (4,0.75)--(7.5,2.5);
 	\draw [bend right=30] (4,-0.75) to (4,0.75) ;
\draw[dashed] (4,-0.75)--(7.5,-2.5);

\draw[dashed] (7,0.75)--(10.5,2.5);
 	\draw [bend right=30] (7,-0.75) to (7,0.75) ;
\draw[dashed] (7,-0.75)--(10.5,-2.5);

\draw[thick,densely dashed, black,->] (8,0) --(11,0);
\node (a) at (11.5,0) {$\mathfrak{u}$};
\end{tikzpicture} 
\caption{a standard neighbourhood base of $\mathfrak{u}$, 
with $F_1,F_2,F_3$ finite sets} 
\end{SCfigure}

As the number of components of $\mathsf{G}\setminus F$ is finite, 
each $\widehat{C}(\mathfrak{u},F)$ is open and compact, 
and $\widehat{\mathsf{G}}$ is totally disconnected. The space 
$\widehat{\mathsf{G}}$ is called the \textit{end compactification} of $\mathsf{G}$. 

\begin{definition}
An end $\mathfrak{u}$ is called \index{graph!end!thin end}\emph{thin}, if it has a 
standard neighbourhood base with $F_{k}\subset \mathsf{G}$, such that 
\begin{equation*}
\diam(F_{k})=m<\infty , \quad \text{ for all } k,
\end{equation*}
where $\diam(F_{k})$ is the diameter of the set $F_{k}$ 
in the graph metric. The minimal $m$ with this property is the \emph{diameter} of 
$\mathfrak{u}$. Otherwise, $\mathfrak{u}$ is called \index{graph!end!thick end}\emph{thick}.
\end{definition}

Consider the ball $B(o,r)$ of center $o$ (some fixed vertex of $\mathsf{G}$) and 
radius $r$ in $\mathsf{G}$, with respect to the natural graph metric $d$.
In order to prove that the space of ends 
$\partial \mathsf{G}$ is a weakly projective space (see Definition \ref{def:weak_proj}) 
we shall use the following result which can be found in {\sc Woess}~\cite[page 231]{WoessBook}.
\begin{lemma}\label{lem:same_comp_x_y}
Let $r\geq 1$ and $x,y\in \mathsf{G}\setminus B(o,r)$. If 
\begin{equation*}
d(o,x)+d(o,y)-d(x,y)>2r, 
\end{equation*} 
then $x$ and $y$ belongs to the same component of $\mathsf{G}\setminus B(o,r)$.
\end{lemma}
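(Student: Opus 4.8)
The plan is to exhibit an explicit path from $x$ to $y$ that stays outside the ball $B(o,r)$; any geodesic between $x$ and $y$ will serve. The quantity $d(o,x)+d(o,y)-d(x,y)$ is (twice) the Gromov product of $x$ and $y$ based at $o$, and it controls how deeply a geodesic joining $x$ and $y$ must dip back towards $o$. The hypothesis says this product exceeds $r$, so intuitively such a geodesic cannot reach into $B(o,r)$. Turning this intuition into a proof is just two applications of the triangle inequality.

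First I would fix a geodesic path $\pi=[x=x_0,x_1,\dots,x_m=y]$ of length $m=d(x,y)$, and claim that $\pi$ avoids $B(o,r)$ entirely. Suppose not, so that some vertex $w=x_i$ satisfies $d(o,w)\leq r$. Because $w$ lies on a geodesic from $x$ to $y$, the subpaths are themselves geodesic, giving $d(x,w)+d(w,y)=d(x,y)$. Applying $d(o,x)\leq d(o,w)+d(w,x)$ and $d(o,y)\leq d(o,w)+d(w,y)$ and adding, I obtain
\begin{equation*}
d(o,x)+d(o,y)\leq 2d(o,w)+\bigl(d(w,x)+d(w,y)\bigr)=2d(o,w)+d(x,y).
\end{equation*}
Rearranging yields $d(o,x)+d(o,y)-d(x,y)\leq 2d(o,w)\leq 2r$, contradicting the assumption. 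Hence no vertex of $\pi$ lies in $B(o,r)$.

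Since both endpoints $x,y$ already lie in $\mathsf{G}\setminus B(o,r)$ and every intermediate vertex of $\pi$ does too, the path $\pi$ is a path in the induced subgraph $\mathsf{G}\setminus B(o,r)$ joining $x$ to $y$; here the only point to check is that consecutive vertices $x_{i-1},x_i$ remain adjacent after deleting $B(o,r)$, which is automatic once both endpoints of each edge of $\pi$ avoid $B(o,r)$. Therefore $x$ and $y$ lie in the same connected component of $\mathsf{G}\setminus B(o,r)$, as claimed. I do not expect any genuine obstacle in this argument; the only conceptual step is recognizing that the hypothesis is exactly a lower bound on the Gromov product, after which the triangle-inequality estimate is routine.
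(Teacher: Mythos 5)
Your proof is correct, and it is the standard argument: the paper itself does not prove this lemma but defers to the cited reference (Woess's book, p.~231), where exactly this geodesic-plus-triangle-inequality reasoning is used. The key step --- that a vertex $w$ of a geodesic from $x$ to $y$ with $d(o,w)\leq r$ would force $d(o,x)+d(o,y)-d(x,y)\leq 2d(o,w)\leq 2r$ --- is exactly right, and your remark that the edges of the geodesic survive the deletion of $B(o,r)$ once all their endpoints do is the only other point that needs checking.
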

In the results we are going to prove, it is enough for $\partial \mathsf{G}$
to be a weakly projective boundary. 

\begin{lemma}\label{lem:projective_bndr_gr_inf_ends}
The space of ends $\partial \mathsf{G}$ of a locally finite graph $\mathsf{G}$ is a weakly projective boundary.
\end{lemma}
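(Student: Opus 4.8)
The plan is to reduce weak projectivity to the combinatorial separation statement in Lemma \ref{lem:same_comp_x_y}. Recall that $(x_n)$ converging to an end $\mathfrak{u}$ in the topology of $\widehat{\mathsf{G}}$ means that for every finite $F\subset\mathsf{G}$ with $\mathfrak{u}\notin F$, the vertices $x_n$ eventually lie in the component $C(\mathfrak{u},F)$ of $\mathsf{G}\setminus F$ whose completion $\widehat{C}(\mathfrak{u},F)$ contains $\mathfrak{u}$. Since $\mathsf{G}$ is locally finite, every finite $F$ is contained in some ball $B(o,r)$, and I would first check that $C(\mathfrak{u},B(o,r))\subseteq C(\mathfrak{u},F)$: a ray converging to $\mathfrak{u}$ has a tail lying in both components, so the connected set $C(\mathfrak{u},B(o,r))$, being disjoint from $F$, must sit inside the single component $C(\mathfrak{u},F)$. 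Hence the sets $\widehat{C}(\mathfrak{u},B(o,r))$, $r\in\N$, already form a neighbourhood base of $\mathfrak{u}$, and it suffices to prove that for every $r$ the sequence $(y_n)$ eventually lies in $C(\mathfrak{u},B(o,r))$.

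Next I would fix notation: write $a_n=d(o,x_n)$, $b_n=d(o,y_n)$ and $c_n=d(x_n,y_n)$, so that the hypotheses are $x_n\to\mathfrak{u}$ together with $c_n/a_n\to 0$. First I note that $a_n\to\infty$: for each $r$, eventually $x_n\in C(\mathfrak{u},B(o,r))\subseteq\mathsf{G}\setminus B(o,r)$, so $a_n>r$ for large $n$. Combined with $c_n=o(a_n)$, the triangle inequality $b_n\geq a_n-c_n=a_n(1-c_n/a_n)$ shows $b_n\to\infty$ as well, so $(y_n)$ also eventually leaves every ball.

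The heart of the argument is the estimate on the Gromov-type quantity $a_n+b_n-c_n$. Using $b_n\geq a_n-c_n$ once more,
\[
a_n+b_n-c_n \;\geq\; 2a_n-2c_n \;=\; 2a_n\Bigl(1-\tfrac{c_n}{a_n}\Bigr),
\]
and since $a_n\to\infty$ while $c_n/a_n\to 0$, the right-hand side tends to infinity. Consequently, for any fixed $r\geq 1$ we have, for all sufficiently large $n$, both $x_n,y_n\in\mathsf{G}\setminus B(o,r)$ and $d(o,x_n)+d(o,y_n)-d(x_n,y_n)>2r$. Lemma \ref{lem:same_comp_x_y} (applied with the ball $B(o,r)$ playing the role of the removed finite set) then forces $x_n$ and $y_n$ into the same component of $\mathsf{G}\setminus B(o,r)$; as $x_n\in C(\mathfrak{u},B(o,r))$ for large $n$, this common component is $C(\mathfrak{u},B(o,r))$, whence $y_n\in C(\mathfrak{u},B(o,r))$ eventually. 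Since $r$ is arbitrary, $(y_n)$ converges to $\mathfrak{u}$, which is exactly the weak projectivity of $\partial\mathsf{G}$.

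The only genuinely delicate point I expect is the topological reduction of the first paragraph — confirming that the ball-complement components provide a neighbourhood base of $\mathfrak{u}$ and that end-convergence is tested componentwise. Once that is in place, the probabilistic-looking content collapses to the one-line estimate above feeding into Lemma \ref{lem:same_comp_x_y}. Throughout, local finiteness of $\mathsf{G}$ is used silently: it guarantees that balls are finite (so $\mathfrak{u}\notin B(o,r)$ and the end topology applies) and that $\mathsf{G}\setminus B(o,r)$ has only finitely many components.
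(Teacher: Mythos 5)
Your proof is correct and rests on the same key ingredient as the paper's, namely Lemma \ref{lem:same_comp_x_y} applied to the balls $B(o,r)$. The only real difference is organizational, and it is in your favour: the paper argues by contradiction, \emph{assuming} that $(y_n)$ converges to some end $\mathfrak{v}\neq\mathfrak{u}$ and then deriving the absurdity $\bigl(d(o,x_n)+d(o,y_n)\bigr)/d(o,x_n)\to 0$; this silently presupposes that $(y_n)$ has a limit in $\partial\mathsf{G}$ at all, which strictly speaking requires a compactness/subsequence argument (and the exclusion of subsequential limits inside $\mathsf{G}$, which your observation $d(o,y_n)\to\infty$ handles). Your direct version — showing $d(o,x_n)+d(o,y_n)-d(x_n,y_n)\geq 2d(o,x_n)\bigl(1-d(x_n,y_n)/d(o,x_n)\bigr)\to\infty$ and then invoking Lemma \ref{lem:same_comp_x_y} to place $y_n$ in $C(\mathfrak{u},B(o,r))$ for each fixed $r$ — avoids this entirely, at the modest cost of the preliminary check that the sets $\widehat{C}(\mathfrak{u},B(o,r))$ form a neighbourhood base of $\mathfrak{u}$, which you justify correctly using local finiteness. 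Both routes are sound; yours is the more self-contained of the two.
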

\begin{proof}
Let $(x_n)$ and $(y_n)$ be sequences of vertices in $\mathsf{G}$, such that $(x_n)$ 
converges to some end $\mathfrak{u}\in\partial \mathsf{G}$ in the topology of $\widehat{\mathsf{G}}$, and 
\begin{equation}\label{eq:quotient_dist}
\frac{d(x_{n},y_{n})}{d(o,x_{n})}\to 0 ,\quad \text{ as }n\to\infty. 
\end{equation} 
Assume that the sequence $(y_n)$ converges to another 
end $\mathfrak{v}\in\partial \mathsf{G}$, with $\mathfrak{u}\neq\mathfrak{v}$. 
Then there exists a $r\in\mathbb{N}$, such that, 
for all $n\geq n_r$,
\begin{equation*}
x_n\in \widehat{C}\big(\mathfrak{u},B(o,r)\big) \text{ and }
y_n\in \widehat{C}\big(\mathfrak{v},B(o,r)\big). 
\end{equation*}
This means that for $n$ big enough, the sequences $(x_n)$ and $(y_n)$ 
belong to different components of $\mathsf{G}\setminus B(o,r)$. From Lemma 
\ref{lem:same_comp_x_y}, it follows that
\begin{equation*}
d(o,x_n)+d(o,y_n)-2r<d(x_n,y_n).
\end{equation*}
Dividing both sides of the previous equation through $d(o,x_n)$, 
and using \eqref{eq:quotient_dist}, we get
\begin{equation*}
\frac{d(o,x_n)+d(o,y_n)}{d(o,x_n)}\to 0 ,\quad \text{ as }n\to\infty,
\end{equation*}
which is a contradiction. Therefore, the sequence $(y_n)$ 
converges to the same end $\mathfrak{u}$ and $\partial \mathsf{G}$ is 
a weakly projective boundary.
\end{proof}

\begin{proposition}
 The space of ends $\partial \mathsf{G}$ of a locally finite graph $\mathsf{G}$ is also projective space.
\end{proposition}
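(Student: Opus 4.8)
The plan is to derive projectivity as an immediate consequence of the weak projectivity just established in Lemma \ref{lem:projective_bndr_gr_inf_ends}, exploiting the general implication recorded in the Remark following Definition \ref{def:weak_proj}: every weakly projective boundary is projective. Concretely, I would show that the hypotheses of Definition \ref{definition:projectivity} force the hypotheses of Definition \ref{def:weak_proj}, and then quote the lemma.

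First I would record the elementary but crucial observation that if a sequence $(x_n)$ of vertices converges to an end $\mathfrak{u}\in\partial\mathsf{G}$ in the topology of $\widehat{\mathsf{G}}$, then $d(o,x_n)\to\infty$. Indeed, for any radius $r$, the set $\widehat{C}\big(\mathfrak{u},B(o,r)\big)$ is a neighbourhood of $\mathfrak{u}$ contained in $\mathsf{G}\setminus B(o,r)$, so eventually $x_n$ lies in it and hence $d(o,x_n)>r$. As $r$ is arbitrary, $d(o,x_n)\to\infty$.

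Next, I would assume the projectivity hypothesis, namely that $(x_n)\to\mathfrak{u}$ and $\sup_n d(x_n,y_n)=:M<\infty$. Combining the finiteness of $M$ with $d(o,x_n)\to\infty$ gives
\[
\frac{d(x_n,y_n)}{d(o,x_n)}\leq \frac{M}{d(o,x_n)}\to 0,\quad\text{ as }n\to\infty,
\]
which is precisely the condition appearing in the definition of a weakly projective boundary. Invoking Lemma \ref{lem:projective_bndr_gr_inf_ends}, which asserts that $\partial\mathsf{G}$ is weakly projective, one concludes that $(y_n)$ converges to $\mathfrak{u}$ as well, and hence $\partial\mathsf{G}$ is projective.

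I do not anticipate a genuine obstacle here: the only step requiring any care is the first one, that convergence to an end forces $d(o,x_n)\to\infty$, and this follows directly from the structure of the standard neighbourhood base of $\mathfrak{u}$ described in the section on ends. Everything else is a one-line estimate reducing the projectivity hypothesis to the already-verified weak projectivity hypothesis, so the proposition is effectively a corollary of the preceding lemma.
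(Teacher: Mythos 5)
Your argument is correct. The paper itself offers no proof of this proposition --- it simply refers to {\sc Woess}~\cite[page 233]{WoessBook}, where projectivity of the end boundary is established directly from the geometry of the components of $\mathsf{G}\setminus B(o,r)$ (essentially Lemma \ref{lem:same_comp_x_y}). You instead obtain it as a formal corollary of Lemma \ref{lem:projective_bndr_gr_inf_ends}: your preliminary observation that convergence to an end forces $d(o,x_n)\to\infty$ is sound, since the standard neighbourhoods $\widehat{C}\big(\mathfrak{u},B(o,r)\big)$ are disjoint from $B(o,r)$; hence a uniform bound $M$ on $d(x_n,y_n)$ gives $d(x_n,y_n)/d(o,x_n)\le M/d(o,x_n)\to 0$, which is the hypothesis of Definition \ref{def:weak_proj}, and the already-proved weak projectivity delivers the conclusion. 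This is precisely the implication recorded in the Remark after Definition \ref{def:weak_proj} (note that, despite the terminology, ``weakly projective'' is the \emph{stronger} property of the boundary, since its hypothesis is satisfied by a larger class of sequence pairs), and your proof makes that implication explicit rather than leaving it as an assertion. What your route buys is a short, self-contained derivation within the thesis from material already proved; what the cited direct route buys is independence from Lemma \ref{lem:projective_bndr_gr_inf_ends}. Either way the proposition holds, and I see no gap in your argument.
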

For the proof see {\sc Woess}~\cite[page 233]{WoessBook}. 

\section{The Structure Tree of a Graph}\label{sec:structure_tree}

The theory of cuts and structure trees was first 
developed by Dunwoody; see the book by 
{\sc Dicks and Dunwoody}~\cite{DicksDunwoody1989}, or for another
detailed description see {\sc Woess}~\cite{WoessBook} and {\sc Thomassen and Woess}
\cite{ThomassenWoess1993}. 
A detailed study of structure theory may be very fruitful 
for obtaining information on the behaviour of random walks.

A \index{graph!cut of a graph}\textit{cut} of a connected graph $\mathsf{G}$ is a set $F$ of 
edges whose deletion disconnects $\mathsf{G}$. If it disconnects 
$\mathsf{G}$ into precisely two connected components $A=A(F)$ and 
$A^{*}=A^{*}(F)=\mathsf{G}\setminus A$, then we call $F$ \textit{tight}, 
and $A,A^{*}$ are the \textit{sides of $F$}. In \cite{ThomassenWoess1993}, 
Thomassen has proved the following.
\begin{lemma}\label{StructureLemma}
For any $k\in \mathbb{N}$, there are only finitely 
many tight cuts $F$ with $|F|=k$ that contain a given edge of $\mathsf{G}$.
\end{lemma}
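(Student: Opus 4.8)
The plan is to reduce the statement to a \emph{localization} property of tight cuts and then close the argument using local finiteness. Fix the edge $e=(u,v)$. Since $\mathsf{G}$ is locally finite, every ball $B(u,R)$ spans only finitely many edges, so it suffices to prove the following localization claim: \emph{there is a radius $R=R(k)$ such that every tight cut $F$ with $|F|=k$ and $e\in F$ has all of its edges inside $B(u,R)$}, i.e.\ both endpoints of each edge of $F$ lie within distance $R$ of $u$. Once this is established, each such $F$ is a $k$-element subset of the finite edge set of $B(u,R)$, so there are at most $\binom{N}{k}$ of them, where $N$ is the (finite) number of edges in $B(u,R)$; in particular only finitely many.

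To set up the localization step I would write $F=E(A,A^{*})$ for the induced partition into the two \emph{connected} sides, with $u\in A$ and $v\in A^{*}$. The whole point is to exploit that \emph{both} sides are connected and separated by only $k$ edges. Suppose, for contradiction, that there were infinitely many distinct tight cuts $F_{1},F_{2},\dots$ of size $k$ through $e$. For each fixed radius only finitely many $k$-element edge sets fit inside $B(u,R)$, so after passing to a subsequence I may assume that $\max_{f\in F_{n}} d(u,f)\to\infty$; choosing $f_{n}=(x_{n},y_{n})\in F_{n}$ with $x_{n}$ on the side of $u$, this gives $d(u,x_{n})\to\infty$. The task is then to rule this out, i.e.\ to prove a uniform bound $d(u,x)\le R(k)$ for every edge $(x,y)\in F$.

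The bound itself is the main obstacle. Here is the route I would take. Join $u$ to $x$ by a path inside $A$ and $v$ to $y$ by a path inside $A^{*}$ (possible since both sides are connected); together with $e$ and $f$ these form a cycle crossing $F$ only at $e$ and $f$. On the other hand, a geodesic from $u$ to $x$ in $\mathsf{G}$ meets $F$ in an even number of edges, at most $|F|=k$, hence is cut into at most $k+1$ arcs, each lying entirely in one side. The heart of the matter is to show that in a locally finite (and here vertex-transitive) graph a \emph{long} arc running inside one side, while the complementary side stays connected, forces the creation of extra boundary edges, eventually contradicting $|F|=k$. This is an isoperimetric-type estimate, and it is exactly the content of Thomassen's argument in \cite{ThomassenWoess1993}; I would either invoke it directly or reprove the linear bound $R(k)=O(k)$ along these lines. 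An alternative, more combinatorial route is to use submodularity of the edge-boundary function $A\mapsto |E(A,A^{*})|$: the sets $A$ giving tight cuts of minimal size $k$ through $e$ form a structured family on which submodularity, together with connectivity of both sides and local finiteness, again yields finiteness. Either way, the geometric spreading bound for size-$k$ cuts with two connected sides is the step requiring real work; the reduction to it and the final counting are routine.
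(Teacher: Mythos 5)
The thesis does not prove this lemma at all: it is quoted as a result of Thomassen, with a pointer to \cite{ThomassenWoess1993}, and used as a black box in the construction of the structure tree. Your sketch therefore has to stand on its own, and it does not. The reduction to the ``localization claim'' is circular: in a locally finite graph, ``only finitely many tight $k$-cuts contain $e$'' and ``all tight $k$-cuts containing $e$ lie in some ball $B(u,R)$'' are the \emph{same} statement (finitely many finite edge sets lie in a common ball, and a ball spans only finitely many $k$-subsets of edges). So nothing has been reduced, the entire content of the lemma sits in the step you yourself label as requiring real work, and both mechanisms you offer for that step fail.

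Concretely, there is no radius $R(k)$ depending only on $k$, linear or otherwise. Take the two-way-infinite path on $\mathbb{Z}$ and add a path $D$ of length $L$ from the vertex $0$ to the vertex $1$, so that $e=(0,1)$ together with $D$ forms a cycle of length $L+1$ with two rays attached; this graph is connected with maximum degree $3$. For every edge $f_j$ of $D$ the pair $\{e,f_j\}$ is a tight $2$-cut (each side is a ray plus an arc of the cycle, hence connected, and no third edge joins the sides), and for $j\approx L/2$ the edge $f_j$ lies at distance about $L/2$ from $e$. The same example refutes the proposed isoperimetric mechanism: the arc from $u=0$ to the near endpoint of $f_j$ runs for roughly $L/2$ steps inside one side while the other side stays connected, and no extra boundary edges are created --- the cut still has exactly two edges. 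Vertex-transitivity is not the missing ingredient either, since the cited result holds for arbitrary connected graphs; its proof is a genuine finiteness argument (an induction on $k$), not a metric bound, which is consistent with the fact that the \emph{number} of tight $2$-cuts through $e$ in the example above grows with $L$. The submodularity remark is likewise only a gesture. What remains of your proposal is ``invoke Thomassen's result directly'', which is exactly what the thesis does.
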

Two cuts $F,F'$ are said \textit{to cross} if all four sets
\begin{equation*}
A(F)\cap A(F'),\ A(F)\cap A^{*}(F'),\ A^{*}(F)\cap A(F'),\ A^{*}(F)\cap A^{*}(F')
\end{equation*}
are nonempty. {\sc Dunwoody}~\cite{Dunwoody82} proved the following important theorem.

\begin{theorem}\label{thm:dcut}
Every infinite, connected graph with more than one end 
has a finite tight cut $F$ with infinite sides, such 
that $F$ crosses no $\gamma F$, where $\gamma\in \AUT(\mathsf{G})$. A cut 
with these properties will be called a \emph{$D$-cut}.
\end{theorem}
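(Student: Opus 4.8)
The plan is to work with cuts of minimal edge-cardinality and to resolve crossings between a cut and its translates by an \emph{uncrossing} argument, the engine of which is the submodularity of the edge-boundary. For a vertex set $A\subseteq\mathsf{G}$ let $\partial A$ denote the set of edges with exactly one endpoint in $A$, so that a tight cut is $F=\partial A$ for $A=A(F)$, with $|F|=|\partial A|$. The two inequalities I would rely on are
\[
|\partial(A\cap B)|+|\partial(A\cup B)|\le|\partial A|+|\partial B|,\qquad
|\partial(A\setminus B)|+|\partial(B\setminus A)|\le|\partial A|+|\partial B|,
\]
each obtained by classifying every edge according to the pair of sides (of $A$ and of $B$) in which its endpoints lie, since an edge contributing to the left-hand side contributes at least as much to the right.

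First I would set $m$ to be the least cardinality of a finite tight cut both of whose sides are infinite; such cuts exist by the standard theory of ends, as $\mathsf{G}$ has more than one end. Let $\mathcal{M}$ be the family of all tight cuts of size $m$ with two infinite sides; it is nonempty and $\AUT(\mathsf{G})$-invariant. The next point, which I would extract from Lemma \ref{StructureLemma} together with the local finiteness of $\mathsf{G}$, is that any fixed $F\in\mathcal{M}$ is crossed by only finitely many of its translates: connectivity of the sides forces a crossing translate $\gamma F$ to meet a bounded region determined by $F$, and through each edge of that region only finitely many tight cuts of size $m$ pass. Consequently the crossing number $c(F):=\bigl|\{\gamma F:\gamma\in\AUT(\mathsf{G}),\ \gamma F\text{ crosses }F\}\bigr|$ is finite; being $\AUT(\mathsf{G})$-invariant and non-negative integer valued, $c$ attains a minimum over $\mathcal{M}$.

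The proof then proceeds by descent. Choose $F=\partial A\in\mathcal{M}$ realizing the minimal value of $c$; if $c(F)=0$ we are done. Otherwise some $\gamma F=\partial B$, with $B=\gamma A$ and $B^{*}=\gamma A^{*}$, crosses $F$, so all four corners $A\cap B$, $A\cap B^{*}$, $A^{*}\cap B$, $A^{*}\cap B^{*}$ are nonempty. Applying the two submodular inequalities together with the minimality of $m$, I would argue that at least one corner $C$ yields a set with $|\partial C|=m$ whose two sides are infinite, so that $\partial C\in\mathcal{M}$; the inequalities must then hold with equality, which pins down the combinatorics of the uncrossed configuration. The goal is to show $c(\partial C)<c(F)$, contradicting the choice of $F$ and forcing any minimizer to have $c=0$.

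The main obstacle is precisely this corner analysis, and it splits into two tasks. The first is to guarantee that some corner is genuinely admissible, i.e. that $\partial C$ is a \emph{tight} cut (both sides connected) of size exactly $m$ with \emph{both} sides infinite: submodularity yields only ``$\le m$'', so one must invoke the minimality of $m$ to force $|\partial C|=m$, and must use tightness of $F$ and $\gamma F$ to control which of the four corners has infinite complement. The second, genuinely delicate, task is the strict drop $c(\partial C)<c(F)$: one shows that every translate crossing $\partial C$ can be matched to a distinct translate crossing $F$, while the particular translate $\gamma F$ that we uncrossed no longer crosses $\partial C$. Carrying out this bookkeeping uniformly over \emph{all} translates, rather than for the single pair $(F,\gamma F)$, is the heart of the argument and the step I expect to consume most of the work.
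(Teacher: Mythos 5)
First, a point of reference: the thesis does not prove Theorem \ref{thm:dcut} at all — it is quoted from Dunwoody, and the graph-theoretic proof you are reconstructing is essentially the one of Thomassen and Woess. Measured against that argument, your architecture (minimal tight cuts with two infinite sides, submodularity, finiteness of crossings via Lemma \ref{StructureLemma}, an extremal uncrossing step) is the right one, but both places where you defer the work are exactly where the proof lives, and as set up neither goes through. The finiteness of $c(F)$ does not follow from ``connectivity of the sides'' plus Lemma \ref{StructureLemma} alone; it needs the minimality of $m$ in an essential way. The correct argument fixes a finite connected subgraph $N$ consisting of $F$, its endpoint set $F^{0}$, and paths inside $A$ (resp.\ $A^{*}$) joining the endpoints of $F$ lying in $A$ (resp.\ $A^{*}$); if a crossing cut $\gamma F=\partial B$ of size $m$ contains no edge of $N$, then $N$ lies entirely in $B$ (say), whence $\partial(A\cap B^{*})$ and $\partial(A^{*}\cap B^{*})$ are disjoint nonempty subsets of $\gamma F$, and the infinite one among $A\cap B^{*},A^{*}\cap B^{*}$ yields a cut of size at most $m-1$ with two infinite sides, contradicting minimality. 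Only then does Lemma \ref{StructureLemma} bound the number of crossing cuts.

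The more serious gap is the descent $c(\partial C)<c(F)$. Your crossing number counts only translates of the cut in question, but the corner cut $\partial C$ is in general \emph{not} in the $\AUT(\mathsf{G})$-orbit of $F$, so $c(\partial C)$ counts crossings by translates of $\partial C$, and there is no evident injection from those into the translates of $F$ that cross $F$; the proposed ``matching'' compares two unrelated families. Even if you pass to the orbit-independent count $n(F)=\lvert\{F'\in\mathcal{M}: F'\text{ crosses }F\}\rvert$, the corner lemma (anything crossing a corner crosses $\partial A$ or $\partial B$) together with the fact that neither $\partial A$ nor $\partial B$ crosses its own corners gives only $n(\partial(A\cap B))+n(\partial(A\cup B))\le n(\partial A)+n(\partial B)-2$, which is perfectly compatible with the minimality of $n(\partial A)$ whenever $n(\partial B)\ge n(\partial A)+2$; no contradiction results. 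Closing this requires a finer count over all four corners, including a separate treatment of the configurations in which some corner is finite or fails to produce a tight cut of size $m$ — and that analysis is the actual content of Dunwoody's theorem. So the proposal is a sound plan with the two decisive steps missing, and the descent step as formulated would need to be reorganized before it could be completed.
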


Let $F$ be a $D$-cut of the locally finite, connected 
graph $\mathsf{G}$ and $\Gamma$ be a closed subgroup of $\AUT(\mathsf{G})$. Define 
\begin{equation*}
\mathcal{E}=\{ A(\gamma F),A^{*}(\gamma F): \gamma\in\Gamma\}.
\end{equation*}

This set has the following properties:
\begin{enumerate}
\item All $A\in \mathcal{E}$ are infinite and connected.
\item If $A\in \mathcal{E}$, then $A^{*}=\mathsf{G}\setminus A\in \mathcal{E}$.
\item If $A,B \in \mathcal{E}$ and $A\subset B$, then there 
are only finitely many $C\in \mathcal{E}$, such that $A\subset C\subset B$.
\item If $A,B \in \mathcal{E}$, then one of 
$A\subset B,\ A\subset B^{*},\ A^{*}\subset B\  \mbox{or}\  A^{*}\subset B^{*}$ holds.
\end{enumerate}

This properties can now be used to construct a 
tree $\mathcal{T}$, called the \textit{structure tree} of 
$\mathsf{G}$ with respect to $\Gamma$ and the $D$-cut $F$. One can think 
of an unoriented edge of $\mathcal{T}$ as a pair of oriented 
edges, where the second edge points from the endpoint to the 
initial point of the first one. The oriented edge set of 
$\mathcal{T}$ is $\mathcal{E}$. That is, if $A\in \mathcal{E}$, 
then $(A,A^{*})$ constitutes a pair of oppositely oriented 
edges between the same two vertices. If $A,B \in\mathcal{E}$ and 
$B\neq A^{*}$, then the endpoint of $A$ is the initial 
point of $B$ if $A\supset B$, and there is no 
$C\in \mathcal{E}$, such that $A\supset C\supset B$ properly.

In this way, we have defined $\mathcal{T}$ in terms of 
its edges and their incidence, contrary to the usual 
approach of defining a graph by starting with its 
vertices. A vertex of $\mathcal{T}$ is an equivalence 
class of edges ``with the same endpoint'', that is, 
$A,B\in\mathcal{E}$ are \textit{equivalent} in this 
sense, if $A=B$, or else, if $A\supset B^{*}$ properly 
and no $C\in\mathcal{E}$ satisfies $A\supset C\supset B^{*}$ properly. 
One can check that this is indeed an equivalence relation.

The vertex set is the set of all equivalence classes $[A]$, 
where $A\in\mathcal{E}$. Neighbourhood in $\mathcal{T}$ is 
described by $[A]\sim [A^{*}]$. One can show that $\mathcal{T}$ 
is a tree: it is connected because of the property $(3)$, 
it has no cycles, since the neighbourhood relation is defined 
in terms of inclusion of sets. 

The tree $\mathcal{T}$ is 
countable by Lemma \ref{StructureLemma}, but not necessarily 
locally finite. One can still define the set $\partial\mathcal{T}$ 
of ends of $\mathcal{T}$ as equivalence classes of rays, as 
in Section \ref{sec:space_of_ends}, and 
$\widehat{\mathcal{T}}=\mathcal{T}\cup\partial\mathcal{T}$. 
The group $\Gamma$ acts by automorphisms on $\mathcal{T}$ via 
$A\mapsto\gamma A$, where $\gamma\in\Gamma$ and $A\in \mathcal{E}$. 
The action has one or two orbits on $\mathcal{E}$ according 
to wheather $\gamma A(F)=A^{*}(F)$ for some $\gamma\in\Gamma$ or not. 
Consequently, $\Gamma$ acts transitively on $\mathcal{T}$ or else 
acts transitively on each of the two bipartite classes of $\mathcal{T}$ 
(that is, the sets of vertices at even/odd distance from a chosen origin).

\begin{example}If $\mathsf{G}$ is the homogeneous tree $\mathcal{T}_{M}$ of degree $M$, 
then any single edge constitutes a $D$-cut. Moreover, if $\Gamma$ is the 
whole automorphism group of $\mathcal{T}_{M}$, then the structure tree is 
again $\mathcal{T}_{M}$. This is not the case, when 
$\Gamma=\mathbb{Z}_{2}\ast \mathbb{Z}_{2} \cdots\ast \mathbb{Z}_{2}$ ($M$ times), 
or $\Gamma$ is the free group.
\end{example}

\begin{example}As an example with thick ends, consider the standard 
Cayley graph $\mathsf{G}$ of the free product
\begin{equation*}
\Gamma=\mathbb{Z}^{2}*\mathbb{Z}_{2}=\langle a,b,c | ab=ba,c^{2}=o\rangle, 
\end{equation*} 
acting by $\Gamma$ on itself. Each copy $\gamma \mathbb{Z}^{2}$ of the square 
grid within $\mathsf{G}$, where $\gamma\in\Gamma$, gives rise to a thick end 
(as an equivalence class of rays that end up in such a copy). The other ends 
are all thin, they have zero diameter. Let $F$ consist of the single edge $[o,c]$. 
This is a $D$-cut, and the structure tree has infinite vertex degrees.\end{example}

Next, let us introduce the \textit{structure map} 
$\varphi :\widehat{\mathsf{G}} \rightarrow \widehat{\mathcal{T}}$. 
In order to understand it, let $x \in \widehat{\mathsf{G}}$. Then there is some 
$A_{0}\in \mathcal{E}$ which contains $x$. If there is a minimal $A\in \mathcal{E}$ 
with this property, then we define $\varphi(x)$ as the end vertex of $A$ as an 
edge of $\mathcal{T}$. If there is no minimal 
$A$ with this property, then there must be a maximal strictly descending 
sequence $A_{0}\supset A_{1}\supset A_{2}\supset \cdots$ in $\mathcal{E}$, 
such that $x\in A_{n}$, for all $n$. As edges of $\mathcal{T}$, the $A_{n}$ 
constitute a path which defines an end in $\partial \mathcal{T}$. 
This end is $\varphi(x)$. The image of $x$ does not depend on 
the particular choice of the initial $A_{0}\in \mathcal{E}$ 
containing $x$. Via $(\gamma,A)\mapsto \gamma A$ for $A\in \mathcal{E}$, 
the group $\Gamma$ acts on $\mathcal{T}$ and $\varphi$ 
commutes with the actions of $\Gamma$ on $\widehat{\mathsf{G}}$ and on $\widehat{\mathcal{T}}$.

If $x$ is a vertex of $\mathsf{G}$, then $\varphi(x)$ is a vertex of $\mathcal{T}$. 
Given an end of $\mathcal{T}$, its preimage under $\varphi$ consists 
of a single end of $\mathsf{G}$. However, there are ends of $\mathsf{G}$ that are 
mapped to vertices of $\mathcal{T}$ under $\varphi$. We write 
\begin{equation*}
\partial^{(0)}\mathsf{G}=\varphi^{-1}(\partial \mathcal{T}). 
\end{equation*}
These are thin ends of $\mathsf{G}$, i.e., ends with finite diameter.


\paragraph{Group Actions.}\label{par:gr_act_gr_inf_ends}
If $\mathsf{G}$ is an infinite, locally finite, connected  and transitive graph
(i.e. there exists $\Gamma$  subgroup of $\AUT(\mathsf{G})$, which acts transitively on $\mathsf{G}$), 
then $\mathsf{G}$ has \textit{one, two or infinitely many ends}. For details, 
see {\sc Woess}~\cite{Woess1989}. 

If $\mathsf{G}$ has one end, then the end compactification is not suitable 
for a good description of the structure of $\mathsf{G}$ at infinity. 

If $\mathsf{G}$ has two ends, then it is isometric with the two-way-infinite 
path. Moreover, the behaviour at infinity of lamplighter random walks over 
the integer line (two-way-infinite path) is well-studied.

For the rest of this chapter we suppose that $\mathsf{G}$ has infinitely many ends, 
since we want a base structure (as a base graph for the lamplighter graph) 
which is endowed with a rich boundary.
The space $\widehat{\mathsf{G}}$ is the end compactification and the boundary 
$\partial \mathsf{G}$ is the space of ends of $\mathsf{G}$, with $|\partial \mathsf{G}|=\infty$.

The simplest examples of graphs with infinitely many ends are 
free products of graphs (with the exception of the Cayley graph 
of $\mathbb{Z}_2*\mathbb{Z}_2$, which has two ends). 
More generally, the free product of two (or finitely many) rooted graphs has 
infinitely many ends, unless both have only two elements. The tree is the 
nicest example of a graph with infinitely many ends. Anyway, 
there are graphs with infinitely many ends which have a very complicated structure. 
For a detailed exposition, the reader is invited to have a 
look at the book of {\sc Dicks and Dunwoody} \cite{DicksDunwoody1989}.
\paragraph{Ends and Fixed Sets.}\label{par:ends_fixed_sets}
Let $\mathsf{G}$ be a graph with infinitely many ends
and $\Gamma\subset \AUT(\mathsf{G})$ which acts transitively
on $\mathsf{G}$, and $\partial^{(0)}\mathsf{G}$ the set of thin ends, i.e., 
those ends with finite diameter. From {\sc Woess} \cite{Woess1989} it 
is known that the set of thin ends $\partial^{(0)}\mathsf{G}$ is dense in $\partial \mathsf{G}$.

\begin{definition}
A subset $B\subset\partial \mathsf{G}$ is fixed by the action of $\Gamma$, if 
$\gamma B=B$ (pointwise action) for every $\gamma\in\Gamma$.
\end{definition}

We also recall the following result, which will be relevant in the sequel. For
a proof, see {\sc Soardi and Woess}~\cite{SoardiWoess1989}.

\begin{theorem}\label{thm:fixed_end_amenable}
The group $\Gamma$ cannot fix a finite subset of $\partial \mathsf{G}$ other than a singleton.
This happens if and only if $\Gamma$ is amenable. 
\end{theorem}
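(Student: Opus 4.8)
The plan is to transfer the whole question to the structure tree $\mathcal{T}$ of $\mathsf{G}$ constructed in Section \ref{sec:structure_tree}, using that the structure map $\varphi\colon\widehat{\mathsf{G}}\to\widehat{\mathcal{T}}$ is $\Gamma$-equivariant and that, since $\mathsf{G}$ has infinitely many ends, $\mathcal{T}$ is a tree with infinitely many ends (its ends correspond to the thin ends of $\mathsf{G}$, which are dense in the infinite set $\partial\mathsf{G}$). Recall that $\Gamma$ acts on $\mathcal{T}$ either transitively or transitively on each of its two bipartite classes, so $\Gamma$ has at most two vertex orbits. The single decisive fact I would isolate first is that any $\Gamma$-invariant integer-valued function on the vertices of $\mathcal{T}$ takes only finitely many values, hence is bounded; I will use this repeatedly.

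First I would prove that no finite set $B\subset\partial\mathsf{G}$ with $|B|\ge 2$ can be fixed. Suppose it were, and pick two distinct ends of $B$; by equivariance $\Gamma$ fixes their two distinct images in $\widehat{\mathcal{T}}$. If both images are ends of $\mathcal{T}$, then $\Gamma$ preserves the bi-infinite geodesic $L$ joining them, so the map $w\mapsto d_{\mathcal{T}}(w,L)$ is $\Gamma$-invariant and therefore bounded by the remark above; but then every vertex of $\mathcal{T}$ lies within a fixed distance of $L$, forcing $\mathcal{T}$ to have only two ends, a contradiction. If instead one of the images is a vertex $v$ of $\mathcal{T}$, then $\Gamma$ fixes $v$, so the vertex orbit of $v$ is a singleton; under a transitive or bipartite-transitive action this forces $\mathcal{T}$ to be a single vertex or a star, each having fewer than three ends, again a contradiction. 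Hence only singletons can be fixed, which is the first assertion.

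It remains to show that $\Gamma$ fixes a single end if and only if it is amenable. For the forward implication I would exploit the locally compact structure of $\Gamma$ as a closed subgroup of $\AUT(\mathsf{G})$: since $\mathsf{G}$ is locally finite, vertex stabilizers are compact and open. A fixed single end of $\mathsf{G}$ is necessarily thin (a fixed vertex of $\mathcal{T}$ being excluded by the previous paragraph), so it corresponds to a fixed end $\omega$ of $\mathcal{T}$. Fixing $\omega$ yields a Busemann homomorphism $\beta\colon\Gamma\to\mathbb{Z}$ recording the shift toward $\omega$; its kernel fixes each horocycle around $\omega$ and is an increasing union of compact, hence amenable, stabilizers, so it is amenable. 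As an extension of the amenable group $\beta(\Gamma)\le\mathbb{Z}$ by this amenable kernel, $\Gamma$ is amenable.

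For the converse I would invoke the classification of isometric actions on trees: a closed subgroup of $\AUT(\mathcal{T})$ either fixes a vertex, stabilizes a pair of ends, fixes exactly one end, or is of \emph{general type}, in which case two hyperbolic elements with disjoint axes generate, by a ping-pong argument, a free subgroup, making $\Gamma$ non-amenable. Amenability thus excludes the general type, while the transitivity/boundedness argument of Part 1 excludes fixing a vertex and stabilizing a pair of ends; the only remaining possibility is that $\Gamma$ fixes exactly one end of $\mathcal{T}$, hence one end of $\mathsf{G}$ via $\varphi^{-1}$. I expect the main obstacle to be the two group-theoretic inputs that drive the amenability equivalence, namely the amenability of the one-end stabilizer (through the extension by $\mathbb{Z}$ and the union of compact stabilizers) and the ping-pong construction of a free subgroup in the general-type case; both rely on the locally compact, totally disconnected structure of $\Gamma$ and must be set up carefully, whereas the tree-geometry reductions above are comparatively routine.
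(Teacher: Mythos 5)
First, a point of comparison: the paper does not prove this theorem at all --- it is quoted as a known result with the proof deferred to Soardi and Woess, so there is no in-paper argument to measure yours against. Your outline is the standard proof (reduction to the structure tree, the trichotomy for isometric actions on trees, a Busemann homomorphism onto a subgroup of $\Z$ in the fixed-end case, ping-pong in the general-type case), and its overall architecture is right. The reduction steps are fine once you note that $\partial\mathcal{T}$ is infinite because $\partial^{(0)}\mathsf{G}=\varphi^{-1}(\partial\mathcal{T})$ is dense in the infinite Hausdorff space $\partial\mathsf{G}$ and $\varphi$ is injective on $\partial^{(0)}\mathsf{G}$, so two distinct fixed ends of $\mathsf{G}$ give either two distinct fixed ends of $\mathcal{T}$ or a fixed vertex of $\mathcal{T}$. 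One caveat: ``fixing a finite subset $B$'' should be read as setwise invariance $\gamma B=B$ (otherwise the assertion for $|B|\ge 2$ adds little); your argument treats only individually fixed ends. The repair is routine --- replace the single geodesic by the convex hull in $\mathcal{T}$ of the $\Gamma$-invariant finite set $\varphi(B)$, which is a $\Gamma$-invariant subtree that is either finite or has finitely many ends, and run the same bounded-invariant-function argument.

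The one step that would fail as written is the amenability of $\ker\beta$. You justify it by saying that $\ker\beta$ is an increasing union of compact vertex stabilizers, but compactness of stabilizers of $\mathcal{T}$-vertices in $\Gamma$ is not available: the paper states explicitly that the structure tree need not be locally finite, so the stabilizer in $\Gamma$ of a vertex of $\mathcal{T}$ has no reason to be compact, and you cannot import local finiteness of $\mathcal{T}$ from the one-fixed-end theorem without circularity. The fix is to descend to $\mathsf{G}$: an element of $\ker\beta$ fixes some vertex $x_m$ on a fixed ray to $\omega$ together with the oriented edge $A_m\in\mathcal{E}$ pointing from $x_m$ toward $\omega$; since $gA_m=A_m$ as a subset of $\mathsf{G}$, such an element stabilizes setwise the finite cut $\gamma_m F$ defining $A_m$, hence lies in the setwise stabilizer of the finite vertex set $(\gamma_m F)^0$, which \emph{is} compact and open because $\mathsf{G}$ is locally finite. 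With that substitution, $\ker\beta$ is a directed union of compact open subgroups and the extension argument goes through. Finally, in the converse direction, be aware that producing an abstract free subgroup in the general-type case is not enough (compact groups contain free subgroups and are amenable); you need the ping-pong subgroup to be discrete, hence closed, in $\Gamma$ --- that is exactly the content of the Tits-alternative input you flag, and it is indeed the one genuinely non-elementary ingredient.
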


Therefore, when studying random walks on graphs with infinitely many
ends, one has to distinguish two substantially different cases:
\textit{when $\Gamma\subset \AUT(\mathsf{G})$ doesn't fix any end in $\partial \mathsf{G}$ and 
when one end in $\partial \mathsf{G}$ is fixed under the action of $\Gamma$}. 

\section{LRW over Graphs with Infinitely many Ends}

The goal of this section is to apply the results developed in 
Chapter \ref{chap:conv_lrw} and Chapter \ref{chap:poiss_bndr_lrw}
for lamplighter random walks $\Zn$ on $\lgr$,
where $\mathsf{G}$ is a transitive graph with infinitely many ends, i.e., 
$|\partial \mathsf{G}|=\infty$. 

Let us now recall the setting we are working on: 
we are interested in the behaviour at infinity of
homogeneous random walks $\Zn$ on $\lgr$, 
with $\Z_n=(\eta_n,X_n)$. The factor chain $\Xn$ is a 
random walk over a transitive graph $\mathsf{G}$ with infinitely many ends. 
Note that when $\mathsf{G}$ has infinitely many ends, 
the lamplighter graph $\lgr$ over $\mathsf{G}$ has only one end. 
The group $\Gamma$ acts transitively on $\mathsf{G}$ (with or without fixed end) 
and the wreath product $\lgrp$ (the lamplighter group) acts transitively 
on the lamplighter graph $\lgr$.

\subsection{No Fixed End}\label{subsec:no_fixed_end}

When $\Gamma\subset \AUT(\mathsf{G},P)$ acts transitively on $\mathsf{G}$ and does not fix any 
end in $\partial \mathsf{G}$, it is known from {\sc Woess}~\cite{Woess_Amenable1989}
that $\Gamma$ is nonamenable. 

The following holds for random walks $\Xn$ on transitive graphs $\mathsf{G}$
with infinitely many ends. For the proof, see {\sc Woess}~\cite{Woess1989}.

\begin{theorem}
\label{thm:no_fixend_base_random_walk}
The random walk $\Xn$ over $\mathsf{G}$ converges almost surely in the end topology
to a random end $X_{\infty}\in\partial \mathsf{G}$. Denoting by $\mu_{\infty}$
the hitting distribution on $\partial \mathsf{G}$ we have:
	\begin{enumerate}[(a)]
              \item The support of $\mu_{\infty}$ is the whole $\partial \mathsf{G}$.
	      \item $\mu_{\infty}$ is continuous on $\partial \mathsf{G}$, that is 
		$\mu_{\infty}(\{\mathfrak{u}\})=0, \ \text{ for all }\mathfrak{u}\in \partial \mathsf{G}$.
	      \item The mass of thick ends is zero, i.e., $\mu_{\infty}(\partial \mathsf{G} \setminus \partial ^{(0)}\mathsf{G})=0$.
        \end{enumerate}
\end{theorem}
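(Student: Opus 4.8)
The plan is to route everything through the structure tree $\mathcal{T}$ of $\mathsf{G}$ (with respect to $\Gamma$ and a fixed $D$-cut) built in Section~\ref{sec:structure_tree}, together with the nonamenability of $\Gamma$ recorded above. Recall that the structure map $\varphi:\widehat{\mathsf{G}}\to\widehat{\mathcal{T}}$ is $\Gamma$-equivariant, sends vertices of $\mathsf{G}$ to vertices of $\mathcal{T}$, and identifies the thin ends $\partial^{(0)}\mathsf{G}=\varphi^{-1}(\partial \mathcal{T})$ with the ends of $\mathcal{T}$, each tree end having a single end of $\mathsf{G}$ as preimage. \textbf{Step 1 (convergence, and part (c) simultaneously).} First I would push the walk forward to the tree, setting $T_n=\varphi(X_n)$. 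Since the base walk is transient (nonamenability forces $\rho(P_{\mathsf{G}})<1$, hence transience), $X_n$ leaves every finite set, and because crossing a $D$-cut of $\mathsf{G}$ corresponds to moving across the associated edge of $\mathcal{T}$, the projected process $T_n$ escapes to infinity in $\mathcal{T}$. The key point is that each $D$-cut splits $\mathsf{G}$ into two infinite sides, and a crossing-count/Borel--Cantelli argument using transience shows that $X_n$ eventually settles on one side of every given cut, i.e. $T_n$ is eventually trapped in a strictly decreasing nested family of half-trees. This family determines a random end $\mathfrak{t}_\infty\in\partial\mathcal{T}$ with $T_n\to\mathfrak{t}_\infty$ almost surely; pulling back through $\varphi$, whose fibre over a tree end is a single end of $\mathsf{G}$, gives $X_n\to X_\infty:=\varphi^{-1}(\mathfrak{t}_\infty)\in\partial^{(0)}\mathsf{G}$ almost surely. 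In particular $\mu_{\infty}$ is carried by the thin ends, which is exactly part (c).

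\textbf{Step 2 (part (b), continuity).} Here I would use stationarity of the hitting measure. Writing $\mu$ for the measure induced on $\Gamma$ by $P_{\mathsf{G}}$ (so $\mu\leftrightarrow P_{\mathsf{G}}$), one has $\mu_{\infty}=\int_{\Gamma}\gamma_{*}\mu_{\infty}\,d\mu(\gamma)$, using $\gamma_{*}\mu_{\infty}=\mu_{\infty}^{\gamma o}$ from homogeneity. Let $c=\max_{\mathfrak{u}\in\partial\mathsf{G}}\mu_{\infty}(\{\mathfrak{u}\})$ and suppose $c>0$. The set $A=\{\mathfrak{u}:\mu_{\infty}(\{\mathfrak{u}\})=c\}$ is then nonempty and finite (at most $\lfloor 1/c\rfloor$ points). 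Evaluating the stationarity identity at an atom $\mathfrak{u}\in A$ gives $c=\int_{\Gamma}\mu_{\infty}(\{\gamma^{-1}\mathfrak{u}\})\,d\mu(\gamma)\le c$, and equality forces $\gamma^{-1}\mathfrak{u}\in A$ for $\mu$-almost every $\gamma$. Since $\supp(\mu)$ generates $\Gamma$, the finite set $A$ is $\Gamma$-invariant, contradicting Theorem~\ref{thm:fixed_end_amenable}, by which a nonamenable $\Gamma$ fixes no nonempty finite subset of $\partial\mathsf{G}$. Hence $c=0$ and $\mu_{\infty}$ is continuous.

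\textbf{Step 3 (part (a), full support).} Finally, $S=\supp(\mu_{\infty})$ is closed, and since the harmonic measures $\mu_{\infty}^{x}$ are mutually equivalent with $\gamma_{*}\mu_{\infty}=\mu_{\infty}^{\gamma o}$, we get $\gamma S=S$ for all $\gamma\in\Gamma$; thus $S$ is a nonempty closed $\Gamma$-invariant set. It then suffices to show the only such sets are $\emptyset$ and $\partial\mathsf{G}$, i.e. that $\Gamma$ acts minimally on $\partial\mathsf{G}$. This I would again read off the tree: a group acting on $\mathcal{T}$ with more than two ends and fixing no end acts minimally on $\partial\mathcal{T}$, and transporting this through the equivariant map $\varphi$ together with the density of $\partial^{(0)}\mathsf{G}$ in $\partial\mathsf{G}$ forces $S=\partial\mathsf{G}$.

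The main obstacle is Step 1: making rigorous that $T_n$ converges to a tree end requires a careful analysis of how often the walk can re-cross a given tight cut whose two sides are both infinite, and this is where the possible non-local-finiteness of $\mathcal{T}$ and the transience estimates must be combined. By contrast, Steps 2 and 3 are comparatively soft once convergence and nonamenability are in hand.
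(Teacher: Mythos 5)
First, a point of comparison: the paper does not prove Theorem \ref{thm:no_fixend_base_random_walk} at all --- it is quoted from {\sc Woess}~\cite{Woess1989}, where the convergence statement is obtained through the general theory of contractive $\Gamma$-compactifications rather than by a direct crossing count on the structure tree. So your proposal is not ``a different route from the paper's proof''; it is an attempt to supply a proof the thesis deliberately outsources. Your Steps 2 and 3 are sound and are essentially the standard arguments: the maximal-atom/stationarity argument ($\mu_\infty=\mu*\mu_\infty$, the set of atoms of maximal mass is finite and $\Gamma$-invariant, contradicting Theorem \ref{thm:fixed_end_amenable}) and the minimality argument (the support is a nonempty closed $\Gamma$-invariant set, and transitivity of $\Gamma$ on $\mathsf{G}$ forces the limit set to be all of $\partial\mathsf{G}$). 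The only soft spot there is that the finite invariant set in Step 2 is a priori invariant only under the closed group generated by $\supp(\mu)$, which need not be all of $\Gamma$; one has to check that this subgroup is still transitive and non-end-fixing before invoking Theorem \ref{thm:fixed_end_amenable}.

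The genuine gap is in Step 1, which is the heart of the theorem. Your crossing-count/Borel--Cantelli argument --- ``each crossing of a $D$-cut $F$ visits the finite set $F^{0}$, which by transience is visited finitely often'' --- is valid only for nearest-neighbour (or bounded-range) walks. The theorem is stated for a general irreducible homogeneous walk, and the paper explicitly uses it without any range or moment hypothesis; such a walk can jump from one side of $\gamma F$ to the other without ever touching $F^{0}$, and the expected number of such jumps, $\sum_{x\in A,\,y\in A^{*}}G(o,x)p(x,y)$, is not finite in general. Relatedly, the assertion that $T_n=\varphi(X_n)$ ``escapes to infinity in $\mathcal{T}$'' does not follow from transience of $\Xn$ in $\mathsf{G}$: a vertex of the structure tree can have infinite preimage (think of the copies of $\mathbb{Z}^2$ in $\mathbb{Z}^2 * \mathbb{Z}_2$), so $X_n$ could a priori go to infinity inside $\varphi^{-1}(v)$ for a single tree vertex $v$, i.e.\ converge to a thick end. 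Ruling this out is exactly assertion (c), so your Step 1 partly assumes what it must prove. To close the gap one needs either the contractivity of the end compactification together with the general convergence theorem for invariant walks under nonamenable transitive groups, or a quantitative estimate (as in \cite{Woess1989}) showing that after entering a component $\widehat{C}(\mathfrak{u},\gamma F)$ sufficiently deeply the walk leaves it again with probability bounded away from $1$, uniformly in $\gamma$ --- neither of which is supplied by the Borel--Cantelli sketch as written.
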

Recall that $\mu_{\infty}$ is the probability distribution defined 
for Borel subsets $B\subset\partial \mathsf{G}$ by
	\begin{equation}\label{eq:lim_dist_brw}
 	\mu_{\infty}(B)=\mathbb{P}[X_{\infty}\in B|X_0=o].
	\end{equation}

For convergence of $\Xn$ to $X_{\infty}\in\partial \mathsf{G}$ the finite
first moment assumption is not needed.
Suppose now that $\Xn$ has finite first moment on $\mathsf{G}$ 
and recall that the space of ends $\partial \mathsf{G}$ is a weakly projective 
boundary. Then Assumption \ref{assumptions_brw} holds.

Let us now state the result regarding the convergence of 
lamplighter random walks $\Zn$ over $\lgr$, whose projection
is the random walk $\Xn$ over the graph with infinitely many ends $\mathsf{G}$.
The boundary $\partial \mathsf{G}$ is the space of ends and the 
boundary $\Pi$ of $\lgr$ is defined in \eqref{eq:pi_boundary} as
\begin{equation*}
\Pi =\bigcup_{\mathfrak{u} \in \partial \mathsf{G}}\mathcal{C}_{\mathfrak{u}}\times \{\mathfrak{u}\}, 
\end{equation*}
where $\mathcal{C}_{\mathfrak{u}}$ is the set of configurations
which are either finitely supported or accumulate at $\mathfrak{u}$.
\begin{theorem}
\label{thm:conv_lrw_no_fixedend}
Let $\Zn$ be an irreducible, homogeneous random walk 
with finite first moment on $\lgr$, where $\mathsf{G}$ is a graph 
with infinitely many ends and $\Gamma\subset \AUT(\mathsf{G})$ 
does not fix any end in $\partial \mathsf{G}$.
Then there exists a $\Pi$-valued random variable $Z_{\infty}=(\eta_{\infty},X_{\infty})$,
such that $Z_n\to Z_{\infty}$ almost surely, in the topology of $\widehat{\lgr}$
for every starting point.
Moreover the distribution of $Z_{\infty}$ is a continuous measure on $\Pi$. 
\end{theorem}
\begin{proof}
The proof of this result follows basically the proof of 
Theorem \ref{thm_conv_lrw_general_graphs},
which holds for general base graphs $\mathsf{G}$ endowed with a rich boundary and such 
that Assumption \ref{assumptions_brw} holds. 

The general boundary $\partial \mathsf{G}$ 
of the base graph is here the space of ends and 
the boundary of the lamplighter graph 
$\lgr$ is $\Pi$ defined like in equation \eqref{eq:pi_boundary}.

Since, by Theorem \ref{thm:no_fixend_base_random_walk}, the 
limit distribution $\mu_{\infty}$ is a continuous measure
on $\partial \mathsf{G}$, the same is true (using a Borel-Cantelli argument)
for the limit distribution of $\Zn$ on $\Pi$.
\end{proof}

\paragraph{The Poisson Boundary.} First of all, let
us apply the Strip criterion due to {\sc Kaimanovich}~\cite[Thm. $6.5$ on p. 677]{Kaimanovich2000} 
in order to determine the Poisson boundary of random walks $\Xn$
over graphs with infinitely many ends $\mathsf{G}$. This  was done in 
{\sc Kaimanovich and Woess}~\cite[Thm. $5.19$]{KaimanovichWoess2002}.
Nevertheless, we give a complete proof here, since the 
geometric construction will be of interest for further results.

\begin{theorem}
\label{thm:poisson_rw_inf_ends}
If $\Gamma$ does not fix any end in $\partial\mathsf{G}$ and the random walk $\Xn$ has finite first 
moment on $\mathsf{G}$, then the Poisson boundary of $\Xn$ is the
measure space $(\partial \mathsf{G},\mu_{\infty})$, where $\mu_{\infty}$ 
is the limit distribution of $\Xn$ on $\partial \mathsf{G}$.
\end{theorem}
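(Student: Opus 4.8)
If $\Gamma$ does not fix any end in $\partial\mathsf{G}$ and the random walk $\Xn$ has finite first moment on $\mathsf{G}$, then the Poisson boundary of $\Xn$ is the measure space $(\partial\mathsf{G}, \mu_\infty)$.

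Let me think about how to prove this.

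**Setup and strategy:**

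The natural approach is the Strip Criterion (Theorem \ref{thm:strip_crit}). We already know:
- $\Xn$ converges a.s. to a random end $X_\infty \in \partial\mathsf{G}$ (Theorem \ref{thm:no_fixend_base_random_walk}).
- The hitting distribution $\mu_\infty$ is continuous (no atoms), supported on all of $\partial\mathsf{G}$, and the thick ends have measure zero.
- $\partial\mathsf{G}$ is projective and weakly projective.

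So $(\partial\mathsf{G}, \mu_\infty)$ is a $\mu$-boundary. The reversed walk $(\check X_n)$ is also a random walk with the same properties (transient, finite first moment, $\Gamma$ leaves $\check P$ invariant, $\Gamma$ fixes no end). So $(\partial\mathsf{G}, \check\mu_\infty)$ is a $\check\mu$-boundary.

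**What the Strip Criterion requires:**

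I need a $\Gamma$-equivariant map assigning to $(\sigma_- \times \sigma_+)$-almost every pair of boundary points $(\mathfrak{v}, \mathfrak{u}) \in \partial\mathsf{G} \times \partial\mathsf{G}$ a nonempty strip $\mathfrak{s}(\mathfrak{u}, \mathfrak{v}) \subset \mathsf{G}$ with subexponential growth:
$$\frac{1}{n}\log |\mathfrak{s}(\mathfrak{u},\mathfrak{v}) \cap B(o,n)| \to 0.$$

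**The geometric construction (the structure tree):**

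This is where I'd use the structure tree $\mathcal{T}$ and the structure map $\varphi: \widehat{\mathsf{G}} \to \widehat{\mathcal{T}}$ developed in Section \ref{sec:structure_tree}. The idea:

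1. Since $\mu_\infty$ (and $\check\mu_\infty$) give full mass to the thin ends $\partial^{(0)}\mathsf{G} = \varphi^{-1}(\partial\mathcal{T})$, I may assume $\mathfrak{u}, \mathfrak{v}$ are distinct thin ends. These map to distinct ends $\varphi(\mathfrak{u}), \varphi(\mathfrak{v}) \in \partial\mathcal{T}$ of the structure tree.

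2. In the tree $\mathcal{T}$, there is a unique bi-infinite geodesic connecting $\varphi(\mathfrak{u})$ and $\varphi(\mathfrak{v})$. Define $\mathfrak{s}(\mathfrak{u},\mathfrak{v})$ to be the preimage under $\varphi$ (intersected with $\mathsf{G}$) of the vertices along this geodesic.

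3. **Equivariance** is almost automatic: $\varphi$ commutes with the $\Gamma$-actions on $\widehat{\mathsf{G}}$ and $\widehat{\mathcal{T}}$, and $\Gamma$ maps the bi-infinite geodesic $[\varphi(\mathfrak{u}), \varphi(\mathfrak{v})]$ to $[\gamma\varphi(\mathfrak{u}), \gamma\varphi(\mathfrak{v})] = [\varphi(\gamma\mathfrak{u}), \varphi(\gamma\mathfrak{v})]$.

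**The main obstacle — subexponential growth:**

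The hard part is showing the strip grows subexponentially. The geodesic in $\mathcal{T}$ passes through vertices $[A_k]$ (equivalence classes of cuts), and each vertex of $\mathcal{T}$ pulls back to a bounded piece of $\mathsf{G}$. The key facts to exploit: by Lemma \ref{StructureLemma}, there are only finitely many tight cuts of bounded size through a fixed edge, and the cuts $F$ are finite. I expect that the portion of the strip at graph-distance roughly $n$ from $o$ corresponds to a bounded number of tree-vertices, each contributing a set of vertices whose size is controlled by the (finite) cut size. The count $|\mathfrak{s}(\mathfrak{u},\mathfrak{v}) \cap B(o,n)|$ should grow at most polynomially — in fact, roughly linearly — in $n$, since the geodesic in $\mathcal{T}$ up to tree-distance $n$ has $O(n)$ vertices, each with a uniformly bounded fiber under $\varphi$. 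This gives
$$\frac{1}{n}\log |\mathfrak{s}(\mathfrak{u},\mathfrak{v}) \cap B(o,n)| \to 0$$
as required. The delicate point is verifying that the $\varphi$-fibers of tree-vertices along the geodesic are uniformly bounded in $\mathsf{G}$ (not just finite), which should follow from the local finiteness of $\mathsf{G}$ together with the structure of $D$-cuts. Once subexponentiality and equivariance are in hand, the Strip Criterion yields that $(\partial\mathsf{G}, \mu_\infty)$ and $(\partial\mathsf{G}, \check\mu_\infty)$ are the Poisson boundaries of $\Xn$ and $(\check X_n)$, completing the proof.
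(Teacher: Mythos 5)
Your overall strategy is the same as the paper's: apply the Strip Criterion with a strip read off from the geodesic in the structure tree $\mathcal{T}$ between $\varphi(\mathfrak{u})$ and $\varphi(\mathfrak{v})$, after discarding the null set where $\mathfrak{u}=\mathfrak{v}$ or where one of the ends is thick. The difference lies in which part of the tree-geodesic you pull back to $\mathsf{G}$, and that difference is where your argument breaks.

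You define the strip as $\varphi^{-1}$ of the \emph{vertices} of the geodesic, i.e.\ as a union of fibers of the structure map. These fibers are the ``regions'' of $\mathsf{G}$ lying between consecutive cuts, and they are in general neither uniformly bounded nor even finite. The paper's own example of $\Gamma=\mathbb{Z}^{2}*\mathbb{Z}_{2}$ already shows this: each copy of the grid $\mathbb{Z}^2$ inside the Cayley graph is exactly such a fiber, so your strip would contain entire copies of $\mathbb{Z}^2$. There the growth is still polynomial, but if one replaces $\mathbb{Z}^2$ by a one-ended factor of exponential growth, a single fiber traversed by the geodesic already has exponential growth and condition \eqref{eq:subexp_strip_growth} fails for your strip. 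So the ``delicate point'' you flag is not merely delicate; the claim that the fibers are uniformly bounded is false, and local finiteness of $\mathsf{G}$ does not rescue it. The paper avoids the issue by taking the strip to be the union of the translates $\gamma F^{0}$ of the (finite) vertex set of the $D$-cut, indexed by the \emph{edges} of the tree-geodesic rather than its vertices:
\begin{equation*}
\mathfrak{s}(\mathfrak{u},\mathfrak{v})=\bigcup_{\gamma\in\Gamma}
\big\{\gamma F^{0}:\widehat{\mathcal{C}}(\mathfrak{u},\gamma F)
\neq\widehat{\mathcal{C}}(\mathfrak{v},\gamma F)\big\}.
\end{equation*}
Each such translate has exactly $|F^{0}|$ vertices, and a spacing estimate coming from Lemma \ref{StructureLemma} (after $k$ proper inclusions $A_0\supset\cdots\supset A_k$ in $\mathcal{E}$ one has $d(A_k,A_0^{*})\geq 2$) shows that at most $O(n)$ of these translates meet $B(o,n)$, giving the linear bound $|\mathfrak{s}(\mathfrak{u},\mathfrak{v})\cap B(o,n)|\leq cn$. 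Your proof becomes correct if you replace the vertex fibers by these cut sets; the equivariance argument you give carries over unchanged.
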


\begin{proof}
By Theorem \ref{thm:no_fixend_base_random_walk} and Definition \ref{def:mu_bndr}, 
the space $(\partial \mathsf{G},\mu_{\infty})$ is a $\mu$-boundary for the random walk
$\Xn$ with transition matrix $P_{\mathsf{G}}$, 
where $\mu$ is associated with $P_{\mathsf{G}}$ as in \eqref{eq:correspondence_rw}.
Theorem \ref{thm:no_fixend_base_random_walk} applies also to the reversed 
random walk $(\check{X}_{n})$, which is the random walk on $\mathsf{G}$ with transition 
matrix $\check{P}_{\mathsf{G}}$. If the corresponding limit distribution on $\partial \mathsf{G}$ is 
$\check{\mu}_{\infty}$, then $(\partial \mathsf{G},\check{\mu}_{\infty})$ is a 
$\check{\mu}$-boundary for $(\check{X}_{n})$.

Apply the Strip Criterion \ref{thm:strip_crit}. By Theorem \ref{thm:dcut},
there exists a $D$-cut $F\subset \mathsf{G}$, whose removal disconnects $\mathsf{G}$ into finitely 
many infinite connected components. The $D$-cut was used for the construction
of the structure tree $\mathcal{T}$ of $\mathsf{G}$. Denote by $F^{0}$ the set of
end vertices of $F$.

Let now $\mathfrak{u},\mathfrak{v}$ be ends of $\partial \mathsf{G}$. 
By continuity of $\mu_{\infty}$ and $\check{\mu}_{\infty}$, we have 
\begin{equation*}
\mu_{\infty}\times\ \check{\mu}_{\infty}\big(\{\mathfrak{u},\mathfrak{v}\in\partial \mathsf{G}: 
\mathfrak{u}=\mathfrak{v}\}\big) =0.
\end{equation*}
Therefore, we have to construct the strip $\mathfrak{s}(\mathfrak{u},\mathfrak{v})$
only in the case $\mathfrak{u} \neq \mathfrak{v}$. Define the ``small'' strip
\begin{equation}\label{strip_base_rw_gr_inf_ends}
\mathfrak{s}(\mathfrak{u},\mathfrak{v})=\bigcup_{\gamma\in\Gamma}
\big\{\gamma F^{0}:\widehat{\mathcal{C}}(\mathfrak{u},\gamma F)
\neq\widehat{\mathcal{C}}(\mathfrak{v},\gamma F)\big\}.
\end{equation}
The set $\mathcal{C}(\mathfrak{u},F)$ is the connected component of $\mathsf{G}$ 
which represents the end $\mathfrak{u}$ when we remove the finite set $F$ from $\mathsf{G}$, 
and $\widehat{\mathcal{C}}(\mathfrak{u},F)$ its completion (which contains 
$\mathfrak{u}$) in $\widehat{\mathsf{G}}$. The strip
$\mathfrak{s}(\mathfrak{u},\mathfrak{v})$ is a subset of 
$\mathsf{G}$.

\begin{figure}[h]

\begin{tikzpicture}[scale=0.9]

\path[shade] (0,0) circle (0.8cm);
\draw[thin, red] (0,0) circle (1cm); 
\node (a) at (0,1.2) {$F$};
\draw[->] (0,-1.5) -- (0,-1);
\node (a) at (0,-2) {$F^{0}$};

\draw[densely dashed] (1,0.75)--(6,2.5);
 	\draw [bend right=30] (1,-0.75) to (1,0.75) ;
	\node at (6,2) {$\widehat{\mathcal{C}}(\mathfrak{u},F)$};
	\node at (6.5,0) {$\mathfrak{u}$};
\draw[densely dashed] (1,-0.75)--(6,-2.5);

\draw[densely dashed] (-1,0.75)--(-6,2.5);
 	\draw [bend left=30] (-1,-0.75) to (-1,0.75) ;
	\node at (-6,2) {$\widehat{\mathcal{C}}(\mathfrak{v},F)$};
	\node at (-6.5,0) {$\mathfrak{v}$};
\draw[densely dashed] (-1,-0.75)--(-6,-2.5);

\draw[thin, red] (1.5,0)  circle (1cm);
\draw[->] (1.5,-1.5) -- (1.5,-1);
\node at (1.5,-2) {$\gamma_{1}F^{0}$};
\draw[thin, red] (-1.5,0) circle (1cm);
\draw[->] (-1.5,-1.5) -- (-1.5,-1);
\node  at (-1.5,-2) {$\gamma_{2}F^{0}$};

\draw[densely dashed] (2.5,0.75)--(6.5,1.5);
 	\draw [bend right=30] (2.5,-0.75) to (2.5,0.75) ;
	\node at (6,0.9) {$\widehat{\mathcal{C}}(\mathfrak{u},\gamma_{1}F)$};
\draw[densely dashed] (2.5,-0.75)--(6.5,-1.5);

\draw[densely dashed] (-2.5,0.75)--(-6.5,1.5);
 	\draw [bend left=30] (-2.5,-0.75) to (-2.5,0.75) ;
	\node at (-6,0.9) {$\widehat{\mathcal{C}}(\mathfrak{u},\gamma_{2}F)$};
\draw[densely dashed] (-2.5,-0.75)--(-6.5,-1.5);

\end{tikzpicture}

\caption{The construction of the strip $\mathfrak{s}(\mathfrak{u},\mathfrak{v})$}
\label{fig:strip_inf_ends}
\end{figure}
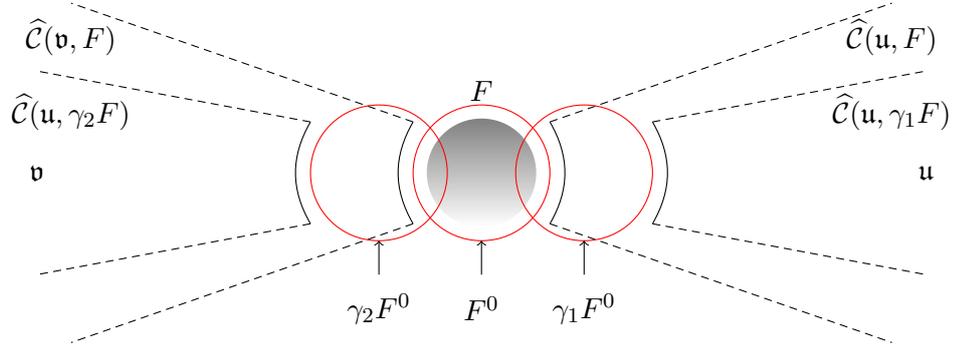

In other words, the strip $\mathfrak{s}(\mathfrak{u},\mathfrak{v})$ is the union of 
all translates $\gamma F^{0}$, with the property that the components, which
contain the ends $\mathfrak{u}$ and $\mathfrak{v}$ after removing the finite
set $F$ are different. See Figure \ref{fig:strip_inf_ends} for the construction. 

We have to check that the strip is $\Gamma$-equivariant and it is ``thin'' 
enough, i.e., it has subexponential growth.

Since $\Gamma$ acts transitively on $\mathsf{G}$, it is clear that $\gamma \mathfrak{s}(\mathfrak{u},\mathfrak{v})=\mathfrak{s}(\gamma\mathfrak{u},\gamma\mathfrak{v})$,
for every $\gamma\in\Gamma$. The strip 
$\mathfrak{s}(\mathfrak{u},\mathfrak{v})$ is the union of all $\gamma F^{0}$, such that the
sides of $\gamma F$, seen as edges of the structure tree $\mathcal{T}$, lie
on the geodesic between $\varphi{\mathfrak{u}}$ and $\varphi{\mathfrak{v}}$. Recall
that $\varphi$ is the function (the structure map) which maps $\widehat{\mathsf{G}}$ onto 
$\widehat{\mathcal{T}}$. 

From Theorem \ref{thm:no_fixend_base_random_walk}, the random walk 
$\Xn$ on $\mathsf{G}$ converges to a thin end in $\partial \mathsf{G}$. If $\mathfrak{u}$ 
and $\mathfrak{v}$ are thin ends in $\partial \mathsf{G}$, then $\varphi{\mathfrak{u}}$ 
and $\varphi{\mathfrak{v}}$ are ends in the 
structure tree $\mathcal{T}$ and the strip $\mathfrak{s}(\mathfrak{u},\mathfrak{v})$ 
is a two way infinite geodesic in $\mathcal{T}$. It can be empty in the case 
$\mathfrak{u}=\mathfrak{v}$, but we already excluded this case. 

From Lemma \ref{StructureLemma} and the fact that $F$ is a $D$-cut, there is
an integer $k>0$, such that the following holds: if $A_1,A_2,\ldots,A_k\in\mathcal{E}$ 
are egdes in $\mathcal{T}$ and $A_0\supset A_1 \supset \cdots \supset A_k$ properly,
then $d(A_k,A^{*}_0)\geq 2$. In other words, if $A_k$ is one of the sides of $\gamma F$,
with $\gamma\in\Gamma$, then $\gamma F^{0}$ is entirely contained in $A_0$.
The finiteness of $F^{0}$ implies the existence of a constant $c>0$, such that,
for the ball $B(o,n)$ of center $o$ and radius $n$ in the graph metric of $\mathsf{G}$,
\begin{equation*}
|\mathfrak{s}(\mathfrak{u},\mathfrak{v})\cap B(o,n)|\leq cn, 
\end{equation*}
for all n and distinct $\mathfrak{u},\mathfrak{v}\in\partial^{(0)}\mathsf{G}$. Applying the
logarithm and dividing through $n$ we get,
\begin{equation*}
\frac{1}{n}\log| \mathfrak{s}(\mathfrak{u},\mathfrak{v})\cap B(o,n) | \to 0 ,\ \text{ as }\ n\to\infty,
\end{equation*}
and this proves the subexponential growth of the strip. 
Therefore, by Strip Criterion \ref{thm:strip_crit},
the space $(\partial \mathsf{G},\mu_{\infty})$ is the Poisson 
boundary of the random walk $\Xn$ on $\mathsf{G}$.
\end{proof}

Now we are able to apply the \textit{Half Space Method} explained in 
Section \ref{sec:half_space_method} in order to describe 
the Poisson boundary of lamplighter random walks $\Zn$ over $\lgr$, 
where $\mathsf{G}$ is a transitive graph with infinitely many ends. Assumption 
\ref{assumptions_brw} holds for random walks $\Xn$ on $\mathsf{G}$ by 
Theorem \ref{thm:no_fixend_base_random_walk}. 

Similar results on the Poisson boundary of lamplighter random walks 
on groups with infinitely many ends were considered in {\sc Sava}~\cite{SavaPoisson2010}.
\begin{theorem}
\label{thm:poisson_lrw_gr_infends_nofixed_end}
Let $\Zn$ be an irreducible, homogeneous random walk with finite first moment 
on $\lgr$, where $\mathsf{G}$ is a transitive graph with infinitely many ends.
If $\Gamma\subset \AUT(\mathsf{G})$ does not fix any end in $\partial \mathsf{G}$, then 
$(\Pi,\nu_{\infty})$ is the Poisson boundary of $\Zn$, starting at 
$(\mathbf{0},o)$, where $\Pi$ is defined in \eqref{eq:pi_boundary}, and
$\nu_{\infty}$ is the limit distribution on $\Pi$.
\end{theorem}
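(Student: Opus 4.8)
The plan is to apply the general Half-Space Method, i.e.\ Theorem \ref{PoissonTheorem}, whose hypotheses are Assumption \ref{assumptions_brw} together with the three structural requirements (the small strip and the half-space partition). Nearly all the geometry has already been carried out in the proof of Theorem \ref{thm:poisson_rw_inf_ends}; what remains is to check that its ingredients fit the template of the Half-Space Method and then to invoke Theorem \ref{PoissonTheorem} verbatim.

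First I would verify Assumption \ref{assumptions_brw} for both $\Xn$ and the reversed walk $(\check{X}_n)$. Condition (A1) passes from $\Zn$ to its projection $\Xn$ because the lamplighter metric dominates the base distance: $d(o,x)\leq d\big((\textbf{0},o),(\eta,x)\big)$, so summing $\sum_{x}d(o,x)p_{\mathsf{G}}(o,x)$ against \eqref{trpb_base_random_walk} is bounded by the first moment of $\Zn$ on $\lgr$, which is finite by hypothesis. Condition (A2) is exactly Theorem \ref{thm:no_fixend_base_random_walk}, giving almost sure convergence of $\Xn$ to a random thin end $X_{\infty}\in\partial\mathsf{G}$ with continuous hitting distribution $\mu_{\infty}$, and condition (A3) is Lemma \ref{lem:projective_bndr_gr_inf_ends}, i.e.\ the space of ends is weakly projective. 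Since $(\check{X}_n)$ is again a homogeneous finite-first-moment random walk on the same transitive graph $\mathsf{G}$ (the reversed chain inherits these properties), all three conditions hold for it too, yielding a continuous hitting distribution $\check{\mu}_{\infty}$.

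Next I would exhibit the strip. The proof of Theorem \ref{thm:poisson_rw_inf_ends} already constructs, for $\mu_{\infty}\times\check{\mu}_{\infty}$-almost every pair $(\mathfrak{u},\mathfrak{v})$ of distinct ends, the strip $\mathfrak{s}(\mathfrak{u},\mathfrak{v})$ of \eqref{strip_base_rw_gr_inf_ends}, namely the union of translates $\gamma F^{0}$ of the endpoint set of a $D$-cut $F$ whose sides separate $\mathfrak{u}$ from $\mathfrak{v}$. There it is shown that $\mathfrak{s}$ is $\Gamma$-equivariant and, using Lemma \ref{StructureLemma} together with the fact that the strip projects onto the geodesic between $\varphi(\mathfrak{u})$ and $\varphi(\mathfrak{v})$ in the structure tree, that it grows at most linearly, so that $\frac{1}{n}\log|\mathfrak{s}(\mathfrak{u},\mathfrak{v})\cap B(o,n)|\to 0$. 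This is precisely requirement (2) of the Half-Space Method.

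The one genuinely new point is requirement (3), the half-space partition, which I expect to be the main (though mild) obstacle. For $x\in\mathfrak{s}(\mathfrak{u},\mathfrak{v})$ we have $x\in\gamma F^{0}$ for some $\gamma\in\Gamma$ with $\widehat{\mathcal{C}}(\mathfrak{u},\gamma F)\neq\widehat{\mathcal{C}}(\mathfrak{v},\gamma F)$. Since $\gamma F$ is a tight cut, deleting it splits $\mathsf{G}$ into exactly two sides, and I would set $\mathsf{G}_{+}(x)=\mathcal{C}(\mathfrak{u},\gamma F)$ and $\mathsf{G}_{-}(x)=\mathcal{C}(\mathfrak{v},\gamma F)$, whose completions are standard basic neighbourhoods of $\mathfrak{u}$ and $\mathfrak{v}$ respectively. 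Because $\gamma'(\gamma F)=(\gamma'\gamma)F$ and both the structure map $\varphi$ and the component assignment $\mathcal{C}(\cdot,\cdot)$ commute with the $\Gamma$-action, the map $(\mathfrak{u},\mathfrak{v},x)\mapsto\mathsf{G}_{\pm}(x)$ is $\Gamma$-equivariant. With all three requirements met, Theorem \ref{PoissonTheorem} applies directly and identifies $(\Pi,\nu_{\infty})$, with $\Pi$ as in \eqref{eq:pi_boundary}, as the Poisson boundary of $\Zn$. The only care needed is to work throughout on the full-measure set $\{\mathfrak{u}\neq\mathfrak{v}\}$, which is legitimate since continuity of $\mu_{\infty}$ and $\check{\mu}_{\infty}$ (Theorem \ref{thm:no_fixend_base_random_walk}) makes the diagonal null.
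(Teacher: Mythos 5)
Your proposal is correct and follows essentially the same route as the paper: verify Assumption \ref{assumptions_brw} via Theorem \ref{thm:no_fixend_base_random_walk} and Lemma \ref{lem:projective_bndr_gr_inf_ends}, reuse the strip \eqref{strip_base_rw_gr_inf_ends} from the proof of Theorem \ref{thm:poisson_rw_inf_ends}, build the half-spaces from the sides of the $D$-cut $\gamma F$ containing $x$, and invoke Theorem \ref{PoissonTheorem}. The only cosmetic difference is that the paper takes $\mathsf{G}_{-}(x)$ to be the full complement of $\mathsf{G}_{+}(x)$ (so the third set in \eqref{StripConfiguration} is empty), whereas you take the component of $\mathfrak{v}$, leaving the cut vertices in the zero-set; both fit the template.
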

\begin{proof}
In order for Theorem \ref{PoissonTheorem} to be applicable, 
we need the conditions required in the \textit{Half Space Method} to 
be satisfied for the base graph $\mathsf{G}$ and random walks $\Xn$
and $(\check{X}_n)$ on it. 

By assumption, $\Xn$ has finite first moment on $\mathsf{G}$, and
Theorem \ref{thm:no_fixend_base_random_walk}
and Lemma \ref{lem:projective_bndr_gr_inf_ends} imply that
Assumption \ref{assumptions_brw} hold for $\Xn$ and also
for $(\check{X}_n)$. As usual, $\mu_{\infty}$ and 
$\check{\mu}_{\infty}$ are the respective limit distributions on $\partial \mathsf{G}$. 

Next, assign a strip 
$\mathfrak{s}(\mathfrak{u},\mathfrak{v})\subset \mathsf{G}$ to almost 
every pair of ends $(\mathfrak{u},\mathfrak{v})\in\partial \mathsf{G}\times\partial \mathsf{G}$.
Consider the strip $\mathfrak{s}(\mathfrak{u},\mathfrak{v})\subset \mathsf{G}$
defined in the proof of Theorem \ref{thm:poisson_lrw_gr_infends_nofixed_end},  
by equation \eqref{strip_base_rw_gr_inf_ends}, which satisfies the Strip Criterion
conditions in Theorem \ref{thm:strip_crit}.

The next step is to partition $\mathsf{G}$ into half-spaces. By  construction of the
``small'' strip $\mathfrak{s}(\mathfrak{u},\mathfrak{v})$, 
every $x\in\mathfrak{s}(\mathfrak{u},\mathfrak{v})$ is contained in some 
cut $\gamma F$, for some $\gamma\in\Gamma$. Nevertheless, there
are finitely many $\gamma\in\Gamma$, such that, for 
$x\in \mathfrak{s}(\mathfrak{u},\mathfrak{v})$, we have $x\in\gamma F$,
since $F$ is finite.

The partition of $\mathsf{G}$ is done as follows: 
for each $x\in\mathfrak{s}(\mathfrak{u},\mathfrak{v})$, look 
at the $D$-cuts $\gamma F$ containing $x$, pick one of them 
and remove it from $\mathsf{G}$. 
Then the set $(\mathsf{G}\setminus\gamma F)$ contains finitely many 
connected components. This follows from the definition of a $D$-cut, 
and from the finiteness of the removed set $F$. Moreover, the connected 
components containing $\mathfrak{u}$ and $\mathfrak{v}$ are different, 
by the definition of the strip $\mathfrak{s}(\mathfrak{u},\mathfrak{v})$. 
Let $\mathsf{G}_{+}=\mathsf{G}_{+}(x)$ be the connected component 
of $(\mathsf{G}\setminus\gamma F)$, which contains $\mathfrak{u}$, and $\mathsf{G}_{-}=\mathsf{G}_{-}(x)$ 
be its complement in $\mathsf{G}$, which contains $\mathfrak{v}$. One can see here 
that the partition of $\mathsf{G}$ into the half-spaces $\mathsf{G}_{+}$ and 
$\mathsf{G}_{-}$ depends on the cut $\gamma F$ containing $x$, that is, 
depends on $x$. The sets $\mathsf{G}_{+}$ and $\mathsf{G}_{-}$ are $\Gamma$-equivariant. 

From the above, it follows that all the assumptions needed in the \textit{Half Space Method} 
hold in the case of a graph with infinitely many ends $\mathsf{G}$. 
Apply now Theorem \ref{PoissonTheorem}.

By Theorem \ref{thm_conv_lrw_general_graphs} each of the random walks $(Z_{n})$ and 
$(\check{Z}_{n})$ starting at $(\mathbf{0},o)$ converges almost surely to a 
$\Pi$-valued random variable. If $\nu_{\infty}$ and $\check{\nu}_{\infty}$ are 
their respective limit distributions on $\Pi$, then the spaces $(\Pi,\nu_{\infty})$ and 
$(\Pi,\check{\nu}_{\infty})$ are $\nu$- and $\check{\nu}$- boundaries 
of the respective walks. Take
\begin{equation*}
b_{+}=(\phi_{+},\mathfrak{u})\in\Pi,\text{ and } b_{-}=(\phi_{-},\mathfrak{v})\in\Pi 
\end{equation*} 
where $\phi_{+}$ and $\phi_{-}$ are the limit configurations of $(Z_{n})$ and 
$(\check{Z}_{n})$, respectively, and $\mathfrak{u},\mathfrak{v}\in\partial \mathsf{G}$ 
are their only respective accumulation points. Define the configuration 
$\Phi(b_{+},b_{-},x)$ like in \eqref{StripConfiguration}, that is,
\begin{equation*}
\Phi(b_{+},b_{-},x)=
\begin{cases}
\phi_{-}, & \mbox{on}\  \mathsf{G}_{+}\\
\phi_{+}, & \mbox{on}\ \mathsf{G}_{-}
\end{cases} 
\end{equation*}
where $\mathsf{G}\setminus(\mathsf{G}_{+}\cup \mathsf{G}_{-})$ is the empty set. Consider 
the strip $S(b_{+},b_{-})$ exactly like in \eqref{eq:lamplighter_strip}, i.e.,
\begin{equation*}
S(b_{+},b_{-})=\{\left(\Phi,x\right) :\  x\in\mathfrak{s}(\mathfrak{u},\mathfrak{v})\}.
\end{equation*}
By Theorem \ref{PoissonTheorem}, $S(b_{+},b_{-})$ satisfies the 
conditions from Strip Criterion \ref{thm:strip_crit}, and this implies 
that the space $(\Pi,\nu_{\infty})$ is the Poisson boundary of 
the lamplighter random walk $\Zn$ over $\lgr$.
\end{proof}

\subsection{One Fixed End}\label{subsec:one_fixed_end}

Let $\xi\in\partial \mathsf{G}$ be an end of $\mathsf{G}$, which is fixed under the action 
of the group $\Gamma\subset \AUT(\mathsf{G},P)$, and $\partial^*\mathsf{G}=\partial \mathsf{G}\setminus\{\xi\}$ 
the set of the remaining ends. 
Recall a result which can be found in {\sc Möller}~\cite{Moeller1992} and 
{\sc Woess}~\cite{Woess_Amenable1989}, \cite{Woess1989}. 
\begin{theorem}
\mbox{The following hold.}
 \begin{enumerate}[(a)]
  \item $\Gamma$ is amenable and acts transitively on $\partial^{*}\mathsf{G}$.
  \item The structure tree $\mathcal{T}$ of $\mathsf{G}$ is a homogeneous tree with finite degree  $q+1\geq 3$.
  \item The structure map $\varphi:\widehat{\mathsf{G}}\to\widehat{\mathcal{T}}$ is onto, and its restriction
	to $\partial \mathsf{G}$ is a homeomorphism $\partial \mathsf{G}\to\partial \mathcal{T}$. There is an integer $a>0$,
	such that
	\begin{equation*}
	 d_{\mathcal{T}}(\varphi x,\varphi y)\leq d_{\mathsf{G}}(x,y)\leq a\big(d_{\mathcal{T}}(\varphi x,\varphi y)+1\big).
	\end{equation*}
         Therefore, $\mathsf{G}$ and $\mathcal{T}$ are quasi-isometric graphs, whose ends are in bijection. 
 \end{enumerate}
\end{theorem}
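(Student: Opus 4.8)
The plan is to transport the whole problem to the structure tree $\mathcal{T}$ built in Section~\ref{sec:structure_tree} from a $D$-cut $F$ (Theorem~\ref{thm:dcut}), and to analyse the action of $\Gamma$ there through the structure map $\varphi\colon\widehat{\mathsf{G}}\to\widehat{\mathcal{T}}$, which is $\Gamma$-equivariant by construction. First I would record that half of part~(a) is immediate: since $\Gamma$ fixes the single end $\xi\in\partial\mathsf{G}$, Theorem~\ref{thm:fixed_end_amenable} yields that $\Gamma$ is amenable. By equivariance, $\xi$ is carried to a $\Gamma$-fixed point $\omega=\varphi(\xi)\in\widehat{\mathcal{T}}$; part of the work below is to confirm that $\omega$ is in fact an end of $\mathcal{T}$, so that from then on $\Gamma$ acts on the tree $\mathcal{T}$ fixing the end $\omega$.

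For part~(b) the substantive point is that amenability forces $\mathcal{T}$ to be \emph{locally finite and homogeneous}. The recurring obstruction—illustrated by the example $\mathbb{Z}^{2}\ast\mathbb{Z}_{2}$ in Section~\ref{sec:structure_tree}—is that thick ends produce vertices of infinite degree in $\mathcal{T}$. Following Möller~\cite{Moeller1992} and Woess~\cite{Woess_Amenable1989}, I would show that in the fixed-end case \emph{every} end of $\mathsf{G}$ is thin, i.e.\ $\partial\mathsf{G}=\partial^{(0)}\mathsf{G}=\varphi^{-1}(\partial\mathcal{T})$; this is the crux, and it is exactly where amenability enters. Once all ends are thin, Lemma~\ref{StructureLemma} (finitely many tight cuts of a given size through a fixed edge) together with the finiteness of $F$ bounds the degree of each vertex, so $\mathcal{T}$ is locally finite. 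Because $\Gamma$ acts transitively on $\mathsf{G}$, its action on $\mathcal{T}$ is vertex-transitive up to the bipartition; the fixed end $\omega$ orients the tree toward $\omega$, and transitivity forces the number of neighbours pointing away from $\omega$ to be a constant $q$, so $\mathcal{T}$ is regular of degree $q+1$. Here $q+1\geq 3$, since $\mathsf{G}$—and hence $\mathcal{T}$—has infinitely many ends and so cannot be the two-ended line. Transitivity of $\Gamma$ on $\partial^{*}\mathsf{G}$, completing part~(a), is then read off from $\mathcal{T}$: for two ends $\eta_1,\eta_2\neq\omega$ the confluents $\eta_i\wedge\omega$ are vertices, and vertex-transitivity combined with the fixed end $\omega$ supplies $\gamma\in\Gamma$ with $\gamma\omega=\omega$ and $\gamma\eta_1=\eta_2$; pushing this back through $\varphi|_{\partial\mathsf{G}}$ gives transitivity on $\partial^{*}\mathsf{G}=\partial\mathsf{G}\setminus\{\xi\}$.

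For part~(c), surjectivity of $\varphi$ onto $\widehat{\mathcal{T}}$ is formal: every vertex $[A]$ of $\mathcal{T}$ equals $\varphi(x)$ for a suitable $x\in\mathsf{G}$, and every end of $\mathcal{T}$, being a maximal descending chain in $\mathcal{E}$, selects an end of $\mathsf{G}$ mapping to it. Since all ends are thin, $\varphi$ restricts to a bijection $\partial\mathsf{G}\to\partial\mathcal{T}$; being continuous between compact Hausdorff spaces, it is a homeomorphism. For the metric estimate, the left inequality $d_{\mathcal{T}}(\varphi x,\varphi y)\leq d_{\mathsf{G}}(x,y)$ holds because adjacent vertices of $\mathsf{G}$ map to vertices of $\mathcal{T}$ at distance at most $1$ (any cut separating them in $\mathcal{T}$ must cross the joining edge), so $\varphi$ is $1$-Lipschitz. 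For the right inequality, two vertices with $d_{\mathcal{T}}(\varphi x,\varphi y)=k$ are separated by $k$ nested translates of $F$, each of diameter bounded by a constant depending only on $F$; routing a path through these uniformly small cut-sets gives $d_{\mathsf{G}}(x,y)\leq a\bigl(k+1\bigr)$ with $a$ determined by $\diam(F)$ and $|F^{0}|$. Together the two bounds are a quasi-isometry, and the boundary bijection exhibits the correspondence of ends.

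The main obstacle is the step buried in part~(b): proving that amenability of $\Gamma$ forces every end of $\mathsf{G}$ to be thin, hence the structure tree to be locally finite and homogeneous. Everything else—amenability itself, boundary transitivity, surjectivity of $\varphi$, and the two quasi-isometry inequalities—follows fairly mechanically from the structure-tree machinery of Section~\ref{sec:structure_tree} once that regularity is secured, which is precisely why the statement is quoted from Möller~\cite{Moeller1992} and Woess~\cite{Woess1989,Woess_Amenable1989} rather than proved in full here.
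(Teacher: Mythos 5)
First, note that the thesis does not prove this theorem at all: it is stated as a recollection from M\"oller and Woess, so there is no internal proof to measure your argument against. Your outline is the standard route through the structure-tree machinery of Section~\ref{sec:structure_tree}, and you correctly isolate the one substantive step --- that the fixed end together with amenability (which does follow instantly from Theorem~\ref{thm:fixed_end_amenable}) forces every end of $\mathsf{G}$ to be thin, so that the structure tree is locally finite and regular. But that step is exactly the content of the theorem being quoted, and you do not prove it; you defer it back to the same references. So as a proof the proposal is circular at its core: everything you derive ``mechanically'' downstream (the boundary homeomorphism, the quasi-isometry) hangs on the regularity statement you assume.

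Beyond that central gap, three secondary steps are glossed in ways that would not survive being written out. (i) Transitivity on $\partial^{*}\mathsf{G}$: vertex-transitivity on $\mathcal{T}$ gives you, for ends $\eta_1,\eta_2\neq\omega$, automorphisms $\gamma_n$ matching longer and longer initial segments of the geodesics $\pi(\eta_i,\omega)$, but each $\gamma_n$ only sends $\eta_1$ to \emph{some} end below the image vertex, not to $\eta_2$; you need to pass to a limit of the $\gamma_n$ and invoke that $\Gamma$ is a \emph{closed} subgroup of $\AUT(\mathsf{G})$ to land back inside $\Gamma$. (ii) The left-hand inequality $d_{\mathcal{T}}(\varphi x,\varphi y)\leq d_{\mathsf{G}}(x,y)$ is not the one-liner you give: a single edge of $\mathsf{G}$ can lie in \emph{several} translates $\gamma F$ each of which separates its endpoints, and Lemma~\ref{StructureLemma} only bounds their number by a finite constant, which yields a Lipschitz constant $N$, not $1$; getting constant $1$ requires a more careful choice or normalisation of the cut system. (iii) Homogeneity: the paper records that $\Gamma$ acts transitively on $\mathcal{T}$ \emph{or} on each of its two bipartite classes; in the latter case transitivity only forces the tree to be bi-regular, so concluding that $\mathcal{T}$ is homogeneous of a single degree $q+1$ needs an extra argument (or an adjustment of the tree). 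None of these is fatal to the strategy, but together with the unproved thinness claim they mean the proposal is an honest reduction to the cited literature rather than a self-contained proof --- which, to be fair, is also all the thesis itself offers.
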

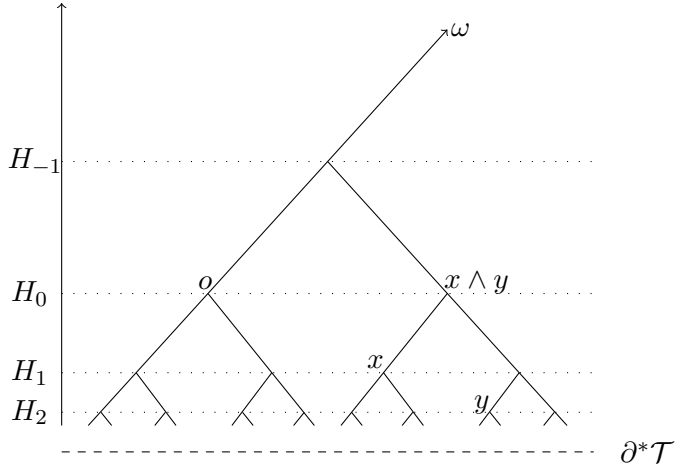
\begin{figure}[h]
\centering

\begin{tikzpicture}[scale=0.7]

\draw[dashed](0,0)-- (10,0);
\node (a) at (11,0) {$\partial^*\mathcal{T}$};

\draw[thin,->] (0,0.5)--(0,8.5);

\draw[->] (0.5,0.5)--(7.25,8);
\node (b) at (7.5,8) {$\omega$};

\draw (5,5.5)--(9.5,0.5);
\draw[loosely dotted] (0,5.5)--(10,5.5);
\draw[loosely dotted] (0,3)--(10,3);
\draw[loosely dotted] (0,1.5)--(10,1.5);
\draw[loosely dotted] (0,0.75)--(10,0.75);

\draw (2.75,3)--(4.75,0.5);
\draw (7.25,3)--(5.25,0.5);

\draw (1.4,1.5)--(2.15,0.5);
\draw (3.95,1.5)--(3.20,0.5);

\draw (6.05,1.5)--(6.80,0.5);
\draw (8.60,1.5)--(7.85,0.5);

\draw (0.74,0.75)--(0.94,0.5);
\draw (1.95,0.75)--(1.75,0.5);

\draw (3.4,0.75)--(3.60 ,0.5);
\draw (4.56,0.75)--(4.36,0.5);

\draw (5.45,0.75)--(5.65 ,0.5);
\draw (6.60,0.75)--(6.40,0.5);

\draw (8.05,0.75)--(8.25 ,0.5);
\draw (9.26,0.75)--(9.06,0.5);

\node (c) at (2.7,3.2){$o$};
\node (d) at (7.8,3.2){$x\wedge y$};
\node (e) at (7.9,0.9){$y$};
\node (f) at (5.9,1.7) {$x$};

\node (g) at (-0.6,0.75) {$H_2$};
\node (h) at (-0.6,1.5) {$H_1$};
\node (i) at (-0.6,3) {$H_0$};
\node (i) at (-0.5,5.5) {$H_{-1}$};

\end{tikzpicture}
\caption{The homogeneous tree $\mathcal{T}$ with a fixed end $\omega$}\label{fig:hom_tree_fixed_end}
\end{figure}
The interpretation of this result is that $\mathsf{G}$ is described - up to small modifications 
encoded by the structure map $\varphi$ - by its structure tree which looks like
in Figure \ref{fig:hom_tree_fixed_end}, with $\varphi(\xi)=\omega$. 

Let $\mathcal{T}$ be the homogeneous tree with a fixed end $\omega$, like in
Figure \ref{fig:hom_tree_fixed_end}, with $\partial \mathcal{T}$
its space of ends and $\partial ^{*}\mathcal{T}=\partial \mathcal{T}\setminus\{\omega\}$.  
Random walks on homogeneous trees with a fixed end are well studied by {\sc Cartwright, Kaimanovich 
and Woess}~\cite{Cartwright_Kaimanovich_Woess_1994}. In this section we want to
extend their results for lamplighter random walks on such graphs: we study the
convergence and the Poisson boundary of homogeneous random walks $\Zn$ on $\Z_2\wr\mathcal{T}$.  

Due to the quasi-isometry between $\mathsf{G}$ and its structure tree $\mathcal{T}$, 
and also to the fact that $\mathcal{T}$ can be represented
as a homogeneous tree with a fixed end, like in 
Figure \ref{fig:hom_tree_fixed_end}, one can replace $\mathsf{G}$ with $\mathcal{T}$ in $\lgr$.
The geometry of $\mathcal{T}$ is quite simple and the behaviour of random walks
on such trees is completely understood. 

\textit{Throughout this section, $\mathcal{T}$ represents the base graph for the lamplighter 
graph $\Z_2\wr\mathcal{T}$, and the goal is the study of random walks on $\Z_2\wr\mathcal{T}$.}
We reconsider briefly the homogeneous tree and its affine group and recall the main known results.

\paragraph*{Geometry of the Oriented tree.} 
If $\omega$ is the fixed reference end in $\partial\mathcal{T}$, 
set for all $x\neq y$ in $\widehat{\mathcal{T}}=\mathcal{T}\cup\partial \mathcal{T}$,
\begin{equation*}
 x\wedge y=\text{ first common vertex of } \pi(x,\omega) \text{ and } \pi(y,\omega),
\end{equation*}
where $\pi(x,\omega)$ is the geodesic ray starting at $x$ and ending at the fixed end $\omega$.
See once again Figure \ref{fig:hom_tree_fixed_end} for a graphic visualization.

Let $o$ be a reference vertex in $\mathcal{T}$ called \textit{origin}. Define the 
\textit{height function} $h:\mathcal{T}\to\mathbb{Z}$ by
\begin{equation}\label{eq:height_fc}
 h(x)=d(x,x\wedge o)-d(o,x\wedge o).
\end{equation}
This function is known in the literature as the \textit{Busemann function}, 
and represents the generation number of $x$. For $m\in\mathbb{Z}$,
the \textit{horocycle} at level $m$ is the infinite set
\begin{equation*}
 H_m=\{x\in\mathcal{T}:h(x)=m\}.
\end{equation*}
One can imagine the oriented tree $\mathcal{T}$ in Figure \ref{fig:hom_tree_fixed_end}
as an infinite genealogical tree, where $\omega$ represents the 
``mythical ancestor'' and every vertex $x\in H_m$ has an
unique ``father'' $x^{-}$ in $H_{m-1}$ and $q\geq 2$ ``sons'' $x_j$,
for $j=1,\ldots ,q$
in $H_{m+1}$. 
Using the height function, one can express the distance between $x$ and $y$ in
$\mathcal{T}$ as
\begin{equation*}
 d(x,y)=h(x)+h(y)-2h(x\wedge y).
\end{equation*}
Finally, define a bounded metric $\rho$ on $\widehat{\mathcal{T}}$, which
is an \textit{ultrametric} (that is, $\rho(x,y)\leq\max\{\rho(x,z),\rho(z,y)\}$, 
for all $x,y,z\in\widehat{\mathcal{T}}$):
\begin{equation*}
 \rho(x,y):=
\begin{cases}
q^{-d(o,x\wedge y)}  , & \text{ if } x\neq y \\
0 , & \text{ if } x=y
\end{cases}.
\end{equation*}
Then a sequence $(x_n)$ converges to $x$ in $\widehat{\mathcal{T}}$ if $\rho(x_n,x)$
tends to zero as $n$ tends to infinity.

\paragraph{The Affine Group $\AFF(\mathcal{T})$ of a Tree.}
The \textit{affine group} of a tree $\mathcal{T}$ is the group $\AFF(\mathcal{T})$
of all isometries $\gamma$ which fix $\omega$. Changing the reference 
end $\omega$ means passing to a conjugate of this group. The name is chosen because
of the analogy with the Poincar\'{e} upper half plane, where the group of
all isometries which fix the point at infinity coincides with the affine group of the
real line. The affine group of an oriented tree and random walks on it was also
studied by {\sc Brofferio}~\cite{Brofferio2004}. 

\paragraph{Random Walks on $\mathcal{T}$ and on its Affine Group $\AFF(\mathcal{T})$.}
Random walks on the oriented tree and on its affine group are well studied by
{\sc Cartwright, Kaimanovich and Woess} \cite{Cartwright_Kaimanovich_Woess_1994}. 
For random walks on these type of trees, they have obtained the following results:
\begin{itemize}
\item Convergence to the boundary $\partial\mathcal{T} $, 
and hence, existence of a harmonic measure on $\partial\mathcal{T} $.
\item The solution of the Dirichlet problem at infinity.
\item Law of large numbers and central limit theorem, formulated with 
respect to two natural length functions on the affine group of $\mathcal{T}$.
\item Identification of the Poisson boundary, that is, a description
of bounded harmonic functions for random walks on $\mathcal{T}$.
\end{itemize}

In order to state the results on convergence and Poisson boundary
of lamplighter random walks $\Zn$ on $\mathbb{Z}_2\wr\mathcal{T}$, 
with  $Z_n=(\eta_n, X_n)$, similar results for random 
walks $\Xn$ on $\mathcal{T}$ are needed. For complete proofs and more details
see once again {\sc Cartwright, Kaimanovich and Woess}~\cite{Cartwright_Kaimanovich_Woess_1994}.

Consider an irreducible random walk $\Xn$ with transition matrix $P_{\mathcal{T}}$ on $\mathcal{T}$, 
and $\Gamma\subset \AFF(\mathcal{T})$ which fixes the end $\omega\in\partial \mathcal{T}$.
Like before, let $\mu$ be the probability measure on $\Gamma$, which 
is uniquely induced (recall the notation $P_{\mathcal{T}}\leftrightarrow\mu$) 
by the transition probabilities
$P_{\mathcal{T}}$ of $\Xn$, like in \eqref{eq:correspondence_rw}. The measure $\mu$ determines
the right random walk $(\Gamma,\mu)$ on $\Gamma$.

Note that if $(X_n)$ is a random walk on the oriented tree $\mathcal{T}$, then
$h(X_n)$ is a random walk on the integer line $\mathbb{Z}$, where $h$ is the height
function defined in \eqref{eq:height_fc}. Indeed, the mapping
\begin{align*}
 \Psi :\mathcal{T} & \to \mathbb{Z}\\
 x & \mapsto h(x)
\end{align*}
induces a projection of any random walk on $\mathcal{T}$ onto a walk
on $\mathbb{Z}$.

Define the \index{random wakl!drift}\emph{modular drift} of the random walk 
$(X_n)$ on $\mathcal{T}$ with transition matrix $P_{\mathcal{T}}$, as
\begin{equation*}
 \delta(P_{\mathcal{T}})=\sum_{x\in\mathcal{T}}h(x)p_{\mathcal{T}}(o,x).
\end{equation*}
The reversed random walk $(\check{X}_n)$ on $\mathcal{T}$ has the transition matrix 
$\check{P}_{\mathcal{T}}$, whose entries are given by
\begin{equation*}
 \check{p}_{\mathcal{T}}(x,y)=p_{\mathcal{T}}(y,x)q^{h(x)-h(y)}.
\end{equation*}
Note that if $\delta(P_{\mathcal{T}})<0$, than $\delta(\check{P}_{\mathcal{T}})>0$ and the other way round.
The following is known from {\sc Cartwright et. al}~\cite{Cartwright_Kaimanovich_Woess_1994}
and for the case $\delta(P_{\mathcal{T}})=0$ from {\sc Brofferio}~\cite{Brofferio2004}.

\begin{theorem}
\label{thm:conv_poisson_tree_fixed_end}
If $(X_n)$ is irreducible and has finite first moment on $\mathcal{T}$, then:
\begin{enumerate}[(a)]
\item If $\delta(P_{\mathcal{T}})>0$, then $(X_n)$
converges almost surely to a random end 
$X_\infty\in\partial^*\mathcal{T}=\partial\mathcal{T}\setminus \{\omega\}$.
If $\mu_{\infty}$ is the distribution
of $X_{\infty}$ on $\partial^* \mathcal{T}$, we have that 
$\supp(\mu_{\infty})=\partial^* \mathcal{T}$
and $(\partial^* \mathcal{T},\mu_{\infty})$ is the Poisson boundary of $(X_n)$.
\item If $\delta(P_{\mathcal{T}})\leq 0$, then $(X_n)$ converges to the fixed end $\omega$ almost surely, 
and the Poisson boundary of $(X_n)$ is trivial.
\end{enumerate}
\end{theorem}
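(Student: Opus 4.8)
The plan is to push everything through the height function $h$ of \eqref{eq:height_fc}, using the fact that $h(X_n)$ is a random walk on $\Z$ whose increments $h(X_k)-h(X_{k-1})$ are, by homogeneity, i.i.d.\ with mean $\delta(P_{\mathcal{T}})$ and finite first moment (inherited from the finite first moment of $\Xn$ on $\mathcal{T}$). The strong law of large numbers gives $h(X_n)/n\to\delta(P_{\mathcal{T}})$ almost surely, which already separates the three regimes, and the Chung--Fuchs dichotomy shows that for $\delta(P_{\mathcal{T}})=0$ the projected walk $h(X_n)$ is recurrent, so that $\liminf_n h(X_n)=-\infty$.

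\textbf{Convergence.} First I would record the geometric dictionary: a vertex sequence $(x_n)$ converges to an end $\xi\in\partial^*\mathcal{T}$ precisely when $h(x_n)\to+\infty$ and the confluents $x_m\wedge x_n$ stabilise, whereas $x_n\to\omega$ precisely when $h(o\wedge x_n)\to-\infty$ (recall that $o\wedge x_n$ always lies on the geodesic $\pi(o,\omega)$, so its height is a non-positive integer). For $\delta(P_{\mathcal{T}})>0$ we have $h(X_n)\to+\infty$; the walk then ceases to return below its running minimum and commits to a single branch, so $X_n\to X_\infty\in\partial^*\mathcal{T}$, which is the Cartwright--Kaimanovich--Woess argument. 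For $\delta(P_{\mathcal{T}})\le 0$ I would use $h(o\wedge X_n)\le h(X_n)$ together with $\liminf_n h(X_n)=-\infty$: for a nearest neighbour walk one has the clean identity $h(o\wedge X_n)=\min_{k\le n}h(X_k)$, which is monotone and tends to $-\infty$, giving $X_n\to\omega$, while the general finite-first-moment case needs Brofferio's control of the overshoot. Irreducibility forces every end of $\partial^*\mathcal{T}$ to be charged, giving $\supp(\mu_\infty)=\partial^*\mathcal{T}$.

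\textbf{Poisson boundary when $\delta(P_{\mathcal{T}})\neq 0$.} Here I would invoke the Strip Criterion (Theorem \ref{thm:strip_crit}) in one stroke for $P_{\mathcal{T}}$ and $\check P_{\mathcal{T}}$. When $\delta(P_{\mathcal{T}})>0$ we have $\delta(\check P_{\mathcal{T}})<0$, so the forward walk converges to $\partial^*\mathcal{T}$ while the reversed walk converges to $\omega$; I take the $\mu$-boundary $(\partial^*\mathcal{T},\mu_\infty)$ and the one-point, hence trivial, $\check\mu$-boundary $\{\omega\}$, and to the pair $(\omega,\xi)$ I assign the strip $S(\omega,\xi)=\pi(\omega,\xi)$, the bi-infinite geodesic joining the two ends. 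This assignment is $\Gamma$-equivariant because $\Gamma$ fixes $\omega$ and acts on $\partial^*\mathcal{T}$, and $|\pi(\omega,\xi)\cap B(o,n)|\le 2n+1$ is linear, hence subexponential, so \eqref{eq:subexp_strip_growth} holds. The Strip Criterion then yields simultaneously that $(\partial^*\mathcal{T},\mu_\infty)$ is the Poisson boundary of $\Xn$ and that the reversed negative-drift walk has trivial Poisson boundary $\{\omega\}$; since any walk with negative modular drift is the reverse of one with positive drift, this also settles the $\delta(P_{\mathcal{T}})<0$ claim of part (b).

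\textbf{Poisson boundary when $\delta(P_{\mathcal{T}})=0$, the hard case.} The geodesic strip degenerates once both ends coincide with $\omega$, so the Strip Criterion is unavailable. Instead I would show that the drift vanishes, $l(P_{\mathcal{T}})=0$, and appeal to Proposition \ref{prop:rate_esc_poisson}(b). From the distance formula $d(o,X_n)=h(X_n)-2\,h(o\wedge X_n)$ and $h(X_n)/n\to 0$ it suffices to prove $h(o\wedge X_n)/n\to 0$; since $h(o\wedge X_n)=\min_{k\le n}h(X_k)$ in the nearest neighbour case, and $h(X_k)/k\to 0$ forces $\min_{k\le n}h(X_k)=o(n)$ (using $h(X_0)=0$ for the upper bound), the rate of escape is zero and the Poisson boundary is trivial. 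The main obstacle is exactly this $\delta(P_{\mathcal{T}})=0$ regime: establishing convergence to $\omega$ and the estimate $h(o\wedge X_n)=o(n)$ beyond the nearest neighbour setting requires the finer renewal and overshoot analysis of Brofferio, which is why the general statement is only asserted here and the nearest neighbour hypothesis reappears in the lamplighter counterpart treated later.
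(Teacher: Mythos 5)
Your identification of the Poisson boundary follows the paper's route exactly: for $\delta(P_{\mathcal{T}})\neq 0$ the paper also applies the Strip Criterion to the pair consisting of $(\partial^*\mathcal{T},\mu_\infty)$ and the one-point $\check\mu$-boundary $\{\omega\}$, with the bi-infinite geodesics $\pi(\mathfrak{u},\omega)$ as strips, and for $\delta(P_{\mathcal{T}})=0$ it likewise deduces $l(P_{\mathcal{T}})=0$ and invokes Proposition \ref{prop:rate_esc_poisson}. The convergence statements are simply cited from Cartwright--Kaimanovich--Woess and Brofferio in the paper, so there you are attempting more than the paper does -- and that is where there is a genuine error.

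The ``clean identity'' $h(o\wedge X_n)=\min_{k\le n}h(X_k)$ is false even for nearest neighbour walks: take $X_0=o$, $X_1=o^-$, $X_2=o$, $X_3$ a son of $o$; then $o\wedge X_3=o$ has height $0$ while $\min_{k\le 3}h(X_k)=-1$. Only the inequality $\min_{k\le n}h(X_k)\le h(o\wedge X_n)\le \min\{0,h(X_n)\}$ holds (the path must pass through the confluent), and in particular $h(o\wedge X_n)$ is not monotone, so your deduction of $X_n\to\omega$ from $\liminf_n h(X_n)=-\infty$ does not go through as written. The standard repair is to use transience of $\Xn$ (a standing assumption of the thesis) to get almost sure convergence to \emph{some} end of $\widehat{\mathcal{T}}$ -- for each finite vertex set the walk eventually stays in one component of its complement -- and then to observe that convergence to an end in $\partial^*\mathcal{T}$ would force $h(X_n)\to+\infty$, contradicting the Chung--Fuchs recurrence of $h(X_n)$; hence the limit end must be $\omega$. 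Note that your rate-of-escape computation for the case $\delta(P_{\mathcal{T}})=0$ survives intact, since there you only need the correct inequality $\min_{k\le n}h(X_k)\le h(o\wedge X_n)\le 0$ together with $h(X_k)/k\to 0$.
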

The case $\delta(P_{\mathcal{T}})=0$ is a special one, since the projection $h(X_n)$ of
$X_n$ is the simple random walk on $\mathbb{Z}$, which is reccurent. Nevertheless,
the random walk $(X_n)$ is transient and it converges to the fixed end
$\omega$, but its Poisson boundary is trivial.

The Poisson boundary in the previous result can be described by using
the Strip Criterion \ref{thm:strip_crit}, in the case of positive 
(negative, respectively) drift. For completeness,
we shall give here the idea of the realization of the Poisson boundary 
for random walks on trees $\mathcal{T}$ with a fixed end $\omega$.
For more details, see once again 
{\sc Cartwright, Kaimanovich and Woess}~\cite{Cartwright_Kaimanovich_Woess_1994}.

\begin{proof}[Idea of the proof of Theorem \ref{thm:conv_poisson_tree_fixed_end}.]
If the modular drift $\delta(P_{\mathcal{T}})=0$, then also the drift (rate of escape) $l(P_{\mathcal{T}})=0$, 
and the triviality of the Poisson boundary follows from 
Proposition \ref{prop:rate_esc_poisson}. 
\begin{enumerate}[(a)] 
\item If $\delta(P_{\mathcal{T}})>0$, then $\delta(\check{P}_{\mathcal{T}})<0$. Also 
$X_n\to X_{\infty}\in\partial^{*}\mathcal{T}$ and $\check{X}_{n}\to\omega$, almost
surely. Then $(\partial^{*}\mathcal{T},\mu_\infty)$ is a $\mu$-boundary,
where, as usual $\mu$ is associated with $P_{\mathcal{T}}$ like in \ref{eq:correspondence_rw},
and $(\{\omega\},\delta_{\omega})$ is a $\check{\mu}$-boundary.

Thus, we can apply the Strip Criterion \ref{thm:strip_crit}, and choose
the geodesic lines between $\mathfrak{u}\in\partial^{*}\mathcal{T}$
and $\omega$ as the strips $\mathfrak{s}(\mathfrak{u},\omega)$. Measurability of
the map $\mathfrak{u}\mapsto \mathfrak{s}(\mathfrak{u},\omega)$ is obvious,
and 
\begin{equation*}
\gamma \mathfrak{s}(\mathfrak{u},\omega)=\mathfrak{s}(\gamma\mathfrak{u},\omega),
\end{equation*}
for every $\gamma\in \Gamma$ and $\mathfrak{u}\in\partial^*\mathcal{T}$.
Therefore the strips $\mathfrak{s}(\mathfrak{u},\omega)$ satisfy the conditions
required in Theorem \ref{thm:strip_crit}, and the measure space $(\partial^*\mathcal{T},\mu_{\infty})$
is the Poissson boundary of the random walk $\Xn$ with transition matrix $P_{\mathcal{T}}$ over $\mathcal{T}$.

\item If $\delta(P_{\mathcal{T}})< 0$, we exchange the roles of $P_{\mathcal{T}}$ and $\check{P}_{\mathcal{T}}$
in the first case and we get the triviality of the Poisson boundary of
$\Xn$.
\end{enumerate} 
\end{proof}
\paragraph{Lamplighter Random Walks on $\mathbf{\Z_2}\wr \mathbf{\mathcal{T}}$.}
Consider lamplighter random walks $\Zn$ on $\mathbb{Z}_2\wr \mathcal{T}$,
with $Z_n=(\eta_n,X_n)$ and $\mathcal{T}$ is the tree with the fixed end $\omega$ 
represented in Figure \ref{fig:hom_tree_fixed_end}. Assume that $\Xn$
has finite first moment on $\mathcal{T}$.
The convergence of $\Zn$ is a simple consequence of Theorem 
\ref{thm_conv_lrw_general_graphs}.

Recall the Definition \ref{eq:pi_boundary} of the geometric
boundary $\Pi$ of the lamplighter graph $\lgr$. 
We replace in this section the graph $\mathsf{G}$ with the tree $\mathcal{T}$
with a fixed end $\omega$, and the boundary $\partial \mathsf{G}$
with $\partial\mathcal{T}=\partial^*\mathcal{T}\cup\{\omega\}$.
We can then rewrite $\Pi$ as
\begin{equation*}
 \Pi=\Pi^*\cup \omega^*,
\end{equation*}
where $\Pi^*$ is the set of all pairs $(\zeta,\mathfrak{u})$,
that is,
\begin{equation}\label{eq:pi*_boundary}
 \Pi^*=\bigcup_{\mathfrak{u}\in\partial^*\mathcal{T}}\mathcal{C}_{\mathfrak{u}}\times \{\mathfrak{u}\},
\end{equation}
and $\mathcal{C}_{\mathfrak{u}}$ is the set of all configurations $\zeta$ which are either
finitely supported, or infinitely supported with $\supp(\zeta)$
accumulating only at $\mathfrak{u}$. Also
\begin{equation}\label{eq:omega*}
\omega^* =\{ (\zeta,\omega):\zeta\in\mathcal{C}_{\omega}\}.
\end{equation}
\begin{theorem}
\label{thm:conv_LRW_fixed_end}
Let $\Zn$ be an irreducible, homogeneous random walk with finite first 
moment on $\mathbb{Z}_2\wr\mathcal{T}$, where $\mathcal{T}$ is an 
homogeneous tree and $\Gamma\subset \AFF(\mathcal{T})$.
\begin{enumerate}[(a)]
\item If $\delta(P_{\mathcal{T}})>0$, then there exists a $\Pi^*$-valued random variable $Z_{\infty}$,
such that $Z_n\to Z_{\infty}$ almost surely, in the topology of $\widehat{\mathbb{Z}_2\wr \mathcal{T}}$.
\item If $\delta(P_\mathcal{T})\leq 0$, then $\Zn$ converges almost surely to some $\omega^*$-valued 
random variable, in the topology of $\widehat{\mathbb{Z}_2\wr \mathcal{T}}$.
\end{enumerate}  
The distribution of $Z_{\infty}$ on $\Pi^*$ (on $\omega^*$ respectively) is a 
continuous measure.
\end{theorem}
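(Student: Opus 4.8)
The convergence of $\Zn=(\eta_n,X_n)$ splits, exactly as in Theorem~\ref{thm_conv_lrw_general_graphs}, into the convergence of the base walk $\Xn$ on $\mathcal{T}$ and the convergence of the lamp configurations $\eta_n$. The first ingredient is already settled by Theorem~\ref{thm:conv_poisson_tree_fixed_end}: if $\delta(P_{\mathcal{T}})>0$ then $X_n\to X_\infty\in\partial^*\mathcal{T}$, and if $\delta(P_{\mathcal{T}})\le 0$ then $X_n\to\omega$, almost surely. For the lamps I would first note that pointwise convergence $\eta_n\to\eta_\infty$ is automatic: by transience of $\Xn$ every vertex of $\mathcal{T}$ is visited finitely often, so each lamp is toggled finitely often and its state stabilizes, giving a well-defined $\eta_\infty\in\widehat{\mathcal{C}}$. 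Hence $Z_n\to(\eta_\infty,X_\infty)$ in $\widehat{\mathbb{Z}_2\wr\mathcal{T}}$ in every case, and the entire content is to show that $\supp(\eta_\infty)$ accumulates only at $X_\infty$, so that the limit lies in $\Pi^*$ (of \eqref{eq:pi*_boundary}) when $X_\infty\in\partial^*\mathcal{T}$, respectively in $\omega^*$ (of \eqref{eq:omega*}) when $X_\infty=\omega$.

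\textbf{The cases with positive rate of escape.} When $\delta(P_{\mathcal{T}})>0$ or $\delta(P_{\mathcal{T}})<0$ the base walk has nonzero rate of escape $l(P_{\mathcal{T}})>0$, and Assumption~\ref{assumptions_brw} holds: (A1) because $\Zn$, hence $\Xn$, has finite first moment; (A2) is precisely the convergence from Theorem~\ref{thm:conv_poisson_tree_fixed_end}; and (A3) holds since the space of ends of a locally finite graph is weakly projective by Lemma~\ref{lem:projective_bndr_gr_inf_ends}. Thus Theorem~\ref{thm_conv_lrw_general_graphs} applies verbatim and produces a $\Pi$-valued limit $Z_\infty=(\eta_\infty,X_\infty)$. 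Reading off the base limit concludes these cases: for $\delta(P_{\mathcal{T}})>0$ we get $X_\infty\in\partial^*\mathcal{T}$, so $Z_\infty\in\Pi^*$, which is (a); for $\delta(P_{\mathcal{T}})<0$ we get $X_\infty=\omega$, so $Z_\infty\in\omega^*$, the part of (b) with strict negative drift.

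\textbf{The zero-drift case --- the main obstacle.} The subcase $\delta(P_{\mathcal{T}})=0$ is the genuinely hard one, because then $l(P_{\mathcal{T}})=0$ and the proof of Theorem~\ref{thm_conv_lrw_general_graphs} no longer goes through: it controls a switching position $y_n\in\supp(\phi_n)$ via the ratio $d(X_n,y_n)/d(X_n,o)\to 0$ and then invokes weak projectivity, but with $d(X_n,o)=o(n)$ and $d(X_n,y_n)=o(n)$ this ratio becomes indeterminate. Here I would argue directly with the genealogy of the oriented tree, using that $X_n\to\omega$ is equivalent to $h(X_n\wedge o)\to-\infty$, where $h$ is the height function \eqref{eq:height_fc} and $\wedge$ is the confluent towards $\omega$. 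For a boundary point $\mathfrak{v}\neq\omega$ the entire ray towards $\mathfrak{v}$ sits below the fixed confluent $c=\mathfrak{v}\wedge o$, and a short computation shows that a lamp lit deep in the $\mathfrak{v}$-subtree, at height $h(y)$, while $h(X_n\wedge o)=-M$, forces $d(y,X_n)\ge h(y)+M$. Accumulation of $\supp(\eta_\infty)$ at $\mathfrak{v}$ would therefore demand infinitely many switches whose displacement from the walker grows without bound. The plan is to exclude this by combining the finite first moment of $\Zn$ with the transience of $\Xn$ --- the walk enters the deep part of any fixed side-cone only finitely often --- but I expect a naive Borel--Cantelli bound to be too weak (the forced displacements may grow only logarithmically), so the real work is a sharper quantitative estimate. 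This is exactly the regime the thesis isolates and later treats by cutpoints in Section~\ref{sec:zero_drift}, and it is where I expect essentially all the difficulty to lie.

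\textbf{Continuity of the limit law.} Finally I would verify that $\nu_\infty$ is non-atomic. In the cases $\delta(P_{\mathcal{T}})\neq 0$ this is part of the conclusion of Theorem~\ref{thm_conv_lrw_general_graphs}, already invoked above (for $\delta>0$ it is also visible from the continuous, full-support hitting measure $\mu_\infty$ on $\partial^*\mathcal{T}$ in Theorem~\ref{thm:conv_poisson_tree_fixed_end}). In the zero-drift case the base limit $X_\infty=\omega$ is deterministic, hence an atom, so continuity must be supplied entirely by the lamp component: since $|\supp(\eta_n)|\to\infty$ by Lemma~\ref{lem:rate_of_escape_rw} (the constant $C$ there being positive as $\Xn$ is transient), the same Borel--Cantelli argument used at the end of the proof of Theorem~\ref{thm_conv_lrw_general_graphs} shows that every fixed configuration $\zeta\in\mathcal{C}_\omega$ is hit with probability $0$, so $\nu_\infty$ is a continuous measure on $\omega^*$. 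This would complete the proof.
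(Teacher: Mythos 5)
Your treatment of part (a) and of the $\delta(P_{\mathcal{T}})<0$ half of part (b) coincides with the paper's proof: the paper likewise verifies Assumption \ref{assumptions_brw} via Theorem \ref{thm:conv_poisson_tree_fixed_end} and weak projectivity of the end boundary, applies Theorem \ref{thm_conv_lrw_general_graphs} with $\partial^*\mathcal{T}$ (resp.\ the singleton $\{\omega\}$) in place of $\partial\mathsf{G}$, and obtains continuity of $\nu_\infty$ on $\omega^*$ by the same Borel--Cantelli argument you invoke. Your observation that $\delta(P_{\mathcal{T}})\neq 0$ forces $l(P_{\mathcal{T}})>0$ (via $d(X_n,o)\geq |h(X_n)|$) is a correct and useful justification for why the ratio argument of Theorem \ref{thm_conv_lrw_general_graphs} applies verbatim in those cases.

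The one genuine gap is the subcase $\delta(P_{\mathcal{T}})=0$ of part (b), and you have identified it yourself: when $l(P_{\mathcal{T}})=0$ both $d(X_n,y_n)$ and $d(X_n,o)$ are $o(n)$, so the quotient $d(X_n,y_n)/d(X_n,o)$ in \eqref{eq:weak} need not tend to $0$ and weak projectivity gives nothing. Your proposed repair --- showing that a lamp lit in a fixed side-cone $\mathcal{T}_u$ at depth $j$ after the walk has left $\mathcal{T}_u$ for good forces a displacement $d(X_k,y)\geq j$, and then trying to rule out infinitely many such events by Borel--Cantelli --- is the right geometric picture, but as you concede it is not carried to a conclusion: for a general finite-first-moment step distribution the events $\{d(Z_{k-1},Z_k)\geq m\}$ occur infinitely often for every fixed $m$, so a naive summability bound does not close the argument, and you do not supply the sharper estimate you say is needed. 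So the proposal, as written, does not prove the $\delta(P_{\mathcal{T}})=0$ statement. For comparison, the paper's own proof is no better here: it asserts only that Theorem \ref{thm_conv_lrw_general_graphs} ``can be easily adapted'' in the zero-drift case, without addressing the failure of \eqref{eq:weak}. (Note that in the regime where the zero-drift Poisson boundary is actually identified, Section \ref{sec:zero_drift}, the walk is assumed nearest-neighbour and satisfies the local condition \eqref{eq:local_cond}; there $\supp(\eta_\infty)$ is contained in the trajectory of $\Xn$, which accumulates only at $\omega$, and the convergence statement is immediate. If you restrict to that setting, your sketch closes; for the general finite-first-moment case the quantitative estimate is genuinely missing from both your proposal and the paper.)
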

\begin{proof}
(a) The result is an application of 
Theorem \ref{thm_conv_lrw_general_graphs},
which holds for general transitive base graphs $\mathsf{G}$ endowed with a rich boundary and such that
Assumption \ref{assumptions_brw} holds. 
From Theorem \ref{thm:conv_poisson_tree_fixed_end} it follows
that $\Xn$ satisfies Assumption \ref{assumptions_brw},
with the boundary $\partial \mathsf{G}:=\partial^*\mathcal{T}$.
Then $\Pi^*$ is the boundary for the lamplighter random walk
and $\Zn$ converges to $\Pi^*$ almost surely.

The limit distribution of $X_n$ is a continuous measure on
$\partial^*\mathcal{T}$, and this implies the continuity
of the limit distribution of $Z_n$ on $\Pi^*$.

(b) This is again an application of Theorem
\ref{thm_conv_lrw_general_graphs} when $\delta(P_{\mathcal{T}})<0$, with $\omega$ in the place of 
$\partial \mathsf{G}$ and $\omega^*$ defined in \eqref{eq:omega*} in the place of $\Pi$.
Assumption \ref{assumptions_brw} holds once again for $(X_n)$
and the requirements of Theorem \ref{thm_conv_lrw_general_graphs}
are fulfilled. This proves the desired.

For $\delta(P_{\mathcal{T}})=0$, also the drift of the base random walk is zero,
and Theorem \ref{thm_conv_lrw_general_graphs} can be easily adapted to prove the
convergence of the LRW $\Zn$ to a random variable in $\omega^*$.

When the base random walk $\Xn$ 
converges to $\omega$ almost surely, then the limit distribution is 
the point mass $\delta_{\omega}$ at $\omega$. Nevertheless, 
using Borel-Cantelli lemma, 
one can prove that the limit distribution $\nu_{\infty}$ of $\Zn$ 
on $\omega^*$ is a continuous measure. 
\end{proof}

\paragraph{The Poisson Boundary.}
The main goal of this subsection is to describe 
the Poisson boundary of
lamplighter random walks $\Zn$ on $\mathbb{Z}_2\wr \mathcal{T}$, where
$\mathcal{T}$ is a homogeneous tree with a fixed end $\omega$, like in
Figure \ref{fig:hom_tree_fixed_end}. This will be done
by making use of \textit{Half Space Method}
in the case when the base random walk $\Xn$ has non-zero 
modular drift. 

The case $\delta(P_{\mathcal{T}})=0$ is slightly different
and is also the most difficult one, and it will be treated
separately in the following section. We emphasize that
the proof is completely different from all previous ones and is based 
on the existence of cutpoints for random walks. I am
very grateful to Vadim Kaimanovich for useful disscutions
on this problem, and for the main idea of Section \ref{sec:zero_drift}. 

The correspondence between the tail $\sigma$- algebra 
of a random walk and its Poisson boundary will be used.
Nevertheless, we are able to solve this problem only when
the base random walk $\Xn$ is of nearest neighbour type.
The general case of random walks over $\mathbb{Z}_2\wr \mathcal{T}$ 
such that the projected random walk 
$(X_n)$ on $\mathcal{T}$ has zero drift and bounded range
(not only range $1$ like in our approach) is awaiting future work.

\begin{remark}
{\sc Erschler}~\cite{Erschler2010} proved recently 
that the Poisson boundary of lamplighter random walks over Euclidean 
lattices $\mathbb{Z}^d$, with $d\geq 5$, such that the projection on $\mathbb{Z}^d$ has 
zero drift, is isomorphic with the space of infinite limit configurations
of lamps. She uses a modified version of the Ray Criterion \ref{thm:ray_crit}. 

Her methods does not apply in our case, when the underlying tree 
$\mathcal{T}$ has a fixed end, and the base walk has zero drift.
\end{remark}

For the Poisson boundary of lamplighter random walks over homogeneous
trees with the action of a transitive group without any fixed ends,
see the paper of {\sc Karlsson and Woess}~\cite{KarlssonWoess2007}.
They proved that the Poisson boundary is the space of
infinite limit configurations accumulating at boundary points (ends) 
of the tree. 

Recall that $P_{\mathcal{T}}$ is the transition matrix of the base random walk 
$\Xn$ on $\mathcal{T}$, and $\delta(P_{\mathcal{T}})$ its modular drift.

\begin{theorem}
\label{thm:poisson_lrw_fixed_end}
Let $\Zn$ be an irreducible, homogeneous random walk with 
finite first moment on $\mathbb{Z}_2\wr\mathcal{T}$,
and $\Gamma\subset\AFF(\mathcal{T})$.
\begin{enumerate}[(a)]
\item If $\delta(P_{\mathcal{T}})>0$, then $(\Pi^*,\nu_{\infty})$
is the Poisson boundary of $\Zn$, where $\Pi^*$ is given as in \eqref{eq:pi*_boundary} 
and $\nu_{\infty}$ is the limit distribution of $\Zn$ on $\Pi^*$.
\item If $\delta(P_{\mathcal{T}})<0$, then $(\omega^*,\nu_{\infty})$  is the Poisson boundary
of $\Zn$, where $\omega^*$ is given as in \eqref{eq:omega*} and $\nu_{\infty}$ is the limit
distribution of $\Zn$ on $\omega^*$.
\end{enumerate} 
\end{theorem}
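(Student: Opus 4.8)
The plan is to prove both parts by the Half-Space Method, applying the Strip Criterion (Theorem \ref{thm:strip_crit}) directly as in the proof of Theorem \ref{PoissonTheorem}. The one structural novelty compared with the generic situation is that, because $\Gamma$ fixes the end $\omega$, the forward walk and the reversed walk accumulate on \emph{different} pieces of $\Pi=\Pi^*\cup\omega^*$; so the $\nu$-boundary and the $\check\nu$-boundary are the distinct spaces $\Pi^*$ and $\omega^*$, and I would feed these two spaces into the Strip Criterion rather than a single common boundary.

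For part (a), assume $\delta(P_{\mathcal{T}})>0$. First I would record the two boundaries. Theorem \ref{thm:conv_LRW_fixed_end}(a) gives $Z_n\to Z_\infty\in\Pi^*$, so $(\Pi^*,\nu_\infty)$ is a $\nu$-boundary for $\Zn$. Since the modular drift changes sign under time reversal, $\delta(\check P_{\mathcal{T}})<0$, and Theorem \ref{thm:conv_LRW_fixed_end}(b) shows that the reversed lamplighter walk $(\check Z_n)$ converges into $\omega^*$, giving a $\check\nu$-boundary $(\omega^*,\check\nu_\infty)$. By Theorem \ref{thm:conv_poisson_tree_fixed_end} both $\Xn$ and $(\check X_n)$ satisfy Assumption \ref{assumptions_brw} on $\mathcal{T}$ (with base boundaries $\partial^*\mathcal{T}$ and $\{\omega\}$ respectively), so all hypotheses of the Strip Criterion are available.

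The core of the proof is the strip construction. For $b_+=(\phi_+,\mathfrak{u})\in\Pi^*$ and $b_-=(\phi_-,\omega)\in\omega^*$ with $\mathfrak{u}\neq\omega$, I would take as base strip $\mathfrak{s}(\mathfrak{u},\omega)$ the two-way infinite geodesic joining $\mathfrak{u}$ to $\omega$ in $\mathcal{T}$ --- precisely the strip already used in the proof of Theorem \ref{thm:conv_poisson_tree_fixed_end}. It meets each horocycle in at most one vertex, so $|\mathfrak{s}(\mathfrak{u},\omega)\cap B(o,n)|\leq c\,n$ and $\frac1n\log|\mathfrak{s}(\mathfrak{u},\omega)\cap B(o,n)|\to0$, and it is manifestly $\Gamma$-equivariant. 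Cutting the geodesic edge at a vertex $x\in\mathfrak{s}(\mathfrak{u},\omega)$ splits $\mathcal{T}$ into a half-space $\mathcal{T}_+(x)$ containing a neighbourhood of $\mathfrak{u}$ and a half-space $\mathcal{T}_-(x)$ containing a neighbourhood of $\omega$; this assignment is $\Gamma$-equivariant and genuinely depends on $x$, as the method demands. Gluing as in \eqref{StripConfiguration} ($\phi_-$ on $\mathcal{T}_+$, $\phi_+$ on $\mathcal{T}_-$) yields a finitely supported configuration $\Phi(b_+,b_-,x)$: the restriction of $\phi_+$ to $\mathcal{T}_-$ is finite because $\supp(\phi_+)$ accumulates only at $\mathfrak{u}$, which does not lie in a neighbourhood of $\mathcal{T}_-$, and symmetrically for $\phi_-$ on $\mathcal{T}_+$. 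Then the lamplighter strip $S(b_+,b_-)=\{(\Phi,x):x\in\mathfrak{s}(\mathfrak{u},\omega)\}$ of \eqref{eq:lamplighter_strip} is $\lgrp$-equivariant by the same computation as in the proof of Theorem \ref{PoissonTheorem}, and since exactly one configuration is attached to each $x$ it satisfies $|S(b_+,b_-)\cap B(\0,n)|\leq|\mathfrak{s}(\mathfrak{u},\omega)\cap B(o,n)|$, hence has subexponential growth. The Strip Criterion then identifies $(\Pi^*,\nu_\infty)$ as the Poisson boundary of $\Zn$.

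For part (b), with $\delta(P_{\mathcal{T}})<0$, I would argue by time reversal: now $\delta(\check P_{\mathcal{T}})>0$, so exchanging the roles of $P$ and $\check P$ reduces the situation to part (a) applied to the reversed walk, and the identical geodesic-strip construction shows $(\omega^*,\nu_\infty)$ is the Poisson boundary of $\Zn$. The main thing to get right --- and the only real obstacle --- is the bookkeeping forced by the asymmetry of the two boundaries: I must consistently track which accumulation point, $\mathfrak{u}$ or $\omega$, lands in which half-space, so as to guarantee that the glued configuration $\Phi$ has finite support; everything else reduces to the linear growth of the vertical geodesic, which is what makes the Strip Criterion apply. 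The borderline case $\delta(P_{\mathcal{T}})=0$ is deliberately excluded, since there the drift vanishes and this geodesic-strip scheme breaks down; it is treated by the tail-$\sigma$-algebra method in Section \ref{sec:zero_drift}.
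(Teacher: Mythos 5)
Your proposal is correct and follows essentially the same route as the paper: both verify Assumption \ref{assumptions_brw} via Theorem \ref{thm:conv_poisson_tree_fixed_end}, take the two-way infinite geodesic $\pi(\mathfrak{u},\omega)$ as the base strip, partition $\mathcal{T}$ by removing $x$ into the component $\mathcal{T}_+(x)$ containing $\mathfrak{u}$ and its complement, glue the limit configurations as in \eqref{StripConfiguration}, and invoke Theorem \ref{PoissonTheorem} / the Strip Criterion, handling $\delta(P_{\mathcal{T}})<0$ by exchanging $P$ and $\check P$. Your explicit remark that the $\nu$- and $\check\nu$-boundaries are the distinct pieces $\Pi^*$ and $\omega^*$ is exactly the bookkeeping the paper performs implicitly.
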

The case $\delta(P_{\mathcal{T}})=0$ is excluded here, and considered separately in 
Section \ref{sec:zero_drift}.

Note that this result gives both the Poisson boundary of $\Zn$ 
(with transition matrix $P$) and of the reversed random walk $(\check{Z}_n)$
(with transition matrix $\check{P}$). The finite
first moment condition, the irreducibility and the homogeneity
hold for both $\Zn$ and $(\check{Z}_n)$, simultaneously.

\begin{proof}[Proof of Theorem \ref{thm:poisson_lrw_fixed_end}]
Apply \textit{Half Space Method} \ref{sec:half_space_method} and Theorem \ref{PoissonTheorem}.

(a) Case $\delta(P_{\mathcal{T}})>0$: In order to apply Theorem 
\ref{PoissonTheorem}, we check again that the conditions required 
in \textit{Half Space Method} \ref{sec:half_space_method} are satisfied
for the oriented tree $\mathcal{T}$ and random walks $\Xn$ and $(\check{X}_n)$ on it. 

From the finite first moment assumption and 
Theorem \ref{thm:conv_poisson_tree_fixed_end} it follows that
Assumption \ref{assumptions_brw} hold for $\Xn$, $(\check{X}_n)$ respectively.
By Theorem \ref{thm:conv_poisson_tree_fixed_end}, 
$\Xn$ converges to a random end in $\partial^*\mathcal{T}$
and $(\check{X}_n)$ converges to the fixed end $\omega$. 
Denote by $\mu_{\infty}$ the limit distribution of $\Xn$ on $\partial^*\mathcal{T}$.
The limit distribution of $(\check{X}_n)$ is the point mass $\delta_{\omega}$
at $\omega$.

Next, assign a strip 
$\mathfrak{s}(\mathfrak{u},\omega)\subset \mathcal{T}$ to almost 
every end $\mathfrak{u}\in\partial^*\mathcal{T}$.
Consider the strip $\mathfrak{s}(\mathfrak{u},\omega)\subset \mathcal{T}$
\begin{equation*}
\mathfrak{s}(\mathfrak{u},\omega)=\pi(\mathfrak{u},\omega),
\end{equation*}
where $\pi(\mathfrak{u},\omega)$ is the unique two-sided infinite geodesic between 
$\mathfrak{u}$ and $\omega$, which has linear growth and
is equivariant with respect to the action of $\Gamma$
on $\mathcal{T}$. Therefore it satisfies the conditions required in Strip
Criterion \ref{thm:strip_crit}.

The partition of $\mathcal{T}$ in half-spaces is done like this:
for every $x\in\mathfrak{s}(\mathfrak{u},\omega)$ let
$\mathcal{T}_{+}(x)$ be the unique connected component
which contains the end $\mathfrak{u}$ after the removal of
$x$ from $\mathcal{T}$, and $\mathcal{T}_{-}(x)$ be its complement in $\mathcal{T}$.
Then
\begin{equation*}
\mathcal{T}_{+}=\mathcal{T}_{+}(x) \text{ and } \mathcal{T}_{-}=\mathcal{T}\setminus \mathcal{T}_{+},
\end{equation*}
are $\Gamma$-equivariant sets. Thus, all requirements
for the \textit{Half Space Method} \ref{sec:half_space_method} hold
and we can apply Theorem \ref{PoissonTheorem}. 

By Theorem \ref{thm:conv_LRW_fixed_end}
the random walk $\Zn$ ($(\check{Z}_n)$, respectively)
converges to a random element in $\Pi^*$ ($\omega^*$, respectively)
with limit distributions $\nu_{\infty}$ ($\check{\nu}_{\infty}$, respectively).
Then $(\Pi^*,\nu_{\infty})$ and $(\omega^*,\check{\nu}_{\infty})$
are $\nu$- and $\check{\nu}$-boundaries of the respective random walks.
Take
\begin{equation*}
b_{+}=(\phi_{+},\mathfrak{u})\in\Pi^*,\text{ and } b_{-}=(\phi_{-},\omega)\in\omega^* ,
\end{equation*} 
where $\phi_{+}$ and $\phi_{-}$ are the limit configurations of $\Zn$ and 
$(\check{Z}_{n})$, respectively, and $\mathfrak{u}\in\partial^*\mathcal{T},\ \omega$ 
are their only respective accumulation points. Define the configuration 
$\Phi(b_{+},b_{-},x)$ by
\begin{equation*}
\Phi(b_{+},b_{-},x)=
\begin{cases}
\phi_{-}, & \mbox{on}\  \mathcal{T}_{+}\\
\phi_{+}, & \mbox{on}\ \mathcal{T}_{-}
\end{cases} 
\end{equation*}
and the strip $S(b_{+},b_{-})$ like in equation \eqref{eq:lamplighter_strip}:
\begin{equation*}
S(b_{+},b_{-})=\{\left(\Phi,x\right) :\  x\in\mathfrak{s}(\mathfrak{u},\omega)\}.
\end{equation*}
By Theorem \ref{PoissonTheorem}, $S(b_{+},b_{-})$ satisfies the 
conditions from Theorem \ref{thm:strip_crit}, and it follows 
that the space $(\Pi^*,\nu_{\infty})$ is the Poisson boundary of 
$\Zn$ and $(\omega^*,\check{\nu}_{\infty})$ is the Poisson boundary of
$(\check{Z}_n)$ over $\Z_2\wr\mathcal{T}$.

(b) Case $\delta(P_{\mathcal{T}})<0$: The proof is like before
with the roles of $P_{\mathcal{T}}$ (respectively $P$) and $\check{P}_{\mathcal{T}}$ (respectively
$\check{P}$) exchanged.
\end{proof}

\section{Zero-Drift Random Walks on the Oriented Tree}
\label{sec:zero_drift}

In this section we describe the Poisson boundary of
lamplighter random walks $\Zn$ on $\Z_2\wr \mathcal{T}$, 
where $\mathcal{T}$ is the oriented tree 
of degree $q+1\geq 3$ given in Figure \ref{fig:hom_tree_fixed_end} (with $q=2$ in the picture),
such that the base random walk $\Xn$ on $\mathcal{T}$ has modular drift 
$\delta(P_{\mathcal{T}})=0$. 

Suppose that $\Xn$ is a nearest neighbour random walk on 
$\mathcal{T}$ with transition probabilities $P_{\mathcal{T}}=\big(p_{\mathcal{T}}(x,y)\big)$ given by
\begin{equation}\label{eq:tr_pb_fixed_end}
 p_{\mathcal{T}}(x,y)=
\begin{cases}
\frac{1}{2}  , &  \text{ if } y=x^{-} \\
\frac{1}{2q} , &  \text{ if } y=x_j,
\end{cases}
\end{equation}
where $x_j$ is one of the q ``sons'' of $x$, and $x^-$ is the father of $x$. Then the horocyclic projection 
$h(X_n)$ of $X_n$ on the integer line $\mathbb{Z}$ is the simple random walk
on $\mathbb{Z}$, which is recurrent. Nevertheless, the random walk
$\Xn$ on $\mathcal{T}$ is transient and converges to $\omega$ by Theorem 
\ref{thm:conv_poisson_tree_fixed_end}.
Also, the random walk $\Zn$ on $\mathbb{Z}_2\wr\mathcal{T}$ converges to 
$(\zeta_{\infty},\omega)\in\omega^*$
by Theorem \ref{thm:conv_LRW_fixed_end}, where $\zeta_{\infty}$ is 
the limit configuration (not necessary with finite support) of the LRW. 

We shall prove that the Poisson boundary
of LRW $\Zn$, with $Z_n=(\eta_n,X_n)$, such that the  base
random walk $\Xn$ has transition probabilities
given by \eqref{eq:tr_pb_fixed_end}, is described by the space of limit
configurations $\zeta_{\infty}$ accumulating at $\omega$. 
For doing this, the Strip and Ray Criterion
(which use in some sense the entropy of the conditional
random walk) are not suitable and another approach is needed.

The proof will be done in several steps, and the first one
is to give to the oriented tree in Figure \ref{fig:hom_tree_fixed_end}
another geometric interpretation, which will make things easier.

Let us consider the reference point $o\in\mathcal{T}$ and the 
one-sided infinite geodesic $\pi(o,\omega)$ joining $o$
with the fixed end $\omega$. Then one can interpret the tree
$\mathcal{T}$ in Figure \ref{fig:hom_tree_fixed_end} as the 
infinite geodesic $\pi(o,\omega)$, which is isomorphic with $\mathbb{Z}_+$, 
with a tree $\mathcal{T}_k$ attached at each point 
$k\in\mathbb{Z}_{+}\cong \pi(o,\omega)$, so that
$k$ is the origin of $\mathcal{T}_k$. For a graphic visualization, see Figure \ref{fig:tree_zero_drift}.

\begin{figure}[h]
\begin{tikzpicture}[scale=0.95]

\draw[->] (0,0) -- (11,0);
\filldraw (0,0) circle (1.5pt);
\filldraw (3,0) circle (1.5pt);
\filldraw (6,0) circle (1.5pt);
\filldraw (9,0) circle (1.5pt);
\draw (-0.25,0) node {$0$};
\draw (11.25,0) node {$\infty$};
\draw (0,-1) node {$\mathcal{T}_0$};
\draw (2.7,-1) node {$\mathcal{T}_1$};
\draw (5.7,-1) node {$\mathcal{T}_2$};
\draw (8.7,-1) node {$\mathcal{T}_3$};
\draw (0,0)--(-1.7,-2.5);
\draw (0,0)--(1.7,-2.5);
\draw (-1,-1.5)--(-0.30,-2.5);
\draw (1,-1.5)--(0.30,-2.5);
\draw[thin,dashed,->] (3.2,0.2)--node[above] {$1/2$}(4.1,0.2);
\draw[thin,dashed,->] (2.8,0.2)--node[above] {$1/4$}(1.9,0.2);
\draw[thin,dashed,->] (3.2,-0.2)--node[right] {$1/4$}(3.2,-1.2);
\draw (3,0.3) node {$1$};
\draw (6,0.3) node {$2$};
\draw (9,0.3) node {$3$};
\draw (3,0)--(3,-1.5);
\draw (3,-1.5)--(1.8,-3);
\draw (3,-1.5)--(4.2,-3);
\draw (2.2,-2.5)--(2.6,-3);
\draw (3.8,-2.5)--(3.4,-3);

\draw (6,0)--(6,-1.5);
\draw (6,-1.5)--(4.8,-3);
\draw (6,-1.5)--(7.2,-3);
\draw (5.2,-2.5)--(5.6,-3);
\draw (6.8,-2.5)--(6.4,-3);

\draw (9,0)--(9,-1.5);
\draw (9,-1.5)--(7.8,-3);
\draw (9,-1.5)--(10.2,-3);
\draw (8.2,-2.5)--(8.6,-3);
\draw (9.8,-2.5)--(9.4,-3);
\end{tikzpicture}

\caption{Another interpretation of the oriented tree}\label{fig:tree_zero_drift}
\end{figure}
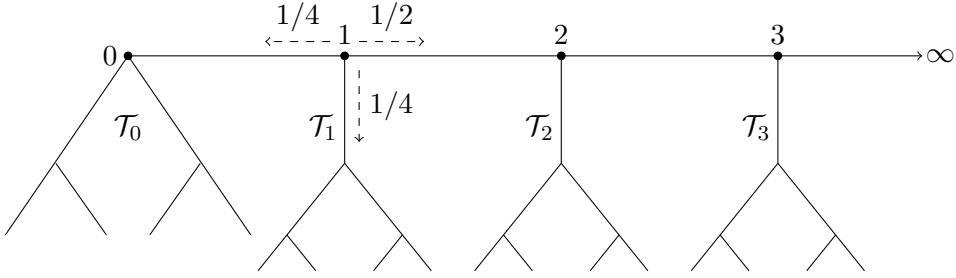
Consider now the following stopping times (exit times)
\begin{equation}\label{eq:stop_times}
\tau_0=0 \text{ and }\tau_{m+1}=\min\{n>\tau_m:X_n\in\mathcal{T}\setminus\mathcal{T}_{\tau_m}\}, \text{ for }m\geq 1.
\end{equation}
If $X_{\tau_m}=k$, this means that for $n\in[ \tau_m,\tau_{m+1}]$, the random 
walk $\Xn$ will move only in the tree $\mathcal{T}_k$, and $X_{\tau_{m+1}}\in\{k-1,k+1\}$.
In other words, the exit time $\tau_{m+1}$ represents the first time when the 
random walk $\Xn$ leaves the attached tree $\mathcal{T}_k$, with $X_{\tau_m}=k$. 

The random walk $\Xn$ restricted to the stopping times $\tau_m$
is again a nearest neighbour random walk (Markov chain) on the positive 
integer line $\mathbb{Z}_{+}$. For simplicity of notation, let us denote by 
$Y_n=X_{\tau_{n}}$ and by 
\begin{equation*}
Q=\big(q(i,j)\big)_{i,j\in\Z_{+}} 
\end{equation*}
the transition matrix of $(Y_n)$. Then the entries of $Q$ are 
\begin{equation*}
q(i,j)=\mathbb{P}[Y_{n+1}=i|Y_n=j]=\mathbb{P}[X_{\tau_{n+1}}=i|X_{\tau_n}=j], \text{ for }i,j\in\Z_+.
\end{equation*}
We have $Y_0=0$ and $Y_1=1$, and the transition probabilities of $(Y_n)$ can be easily computed as
\begin{equation}\label{eq:tr_pb_z}
 q(i,i+1)=\frac{q}{q+1},\text{ and } q(i,i-1)=\frac{1}{q+1}, \text{ for } i\geq 1
\end{equation}
where $q+1$ is the degree of $\mathcal{T}$, with $q\geq 2$, and
\begin{equation*}
q(0,1)=1.
\end{equation*}
All the other entries $q(i,j)$ with $j \notin \{i-1,i+1\}$ are zero, since $(X_n)$
is a nearest neighbour random walk. This random walk is transient and 
almost every path of $(Y_n)$ converges almost surely to $\infty$. 
This gives also another explanation of the transience of the random walk 
$\Xn$ on $\mathcal{T}$ and of the convergence to the fixed end $\omega$.

Next, we introduce some definitions and facts,
which can be found in {\sc James and Peres}~\cite{JamesPeres1997}.

\paragraph{Cut Points.}
Let $(X_j)$ be a Markov chain on a countable state space $\mathsf{G}$.
A \index{Markov chain!cut time}\emph{cut time} for the Markov chain $(X_j)$
is an integer $n$ with
\begin{equation*}
 X_{[0,n]}\cap X_{[n+1,\infty)}=\emptyset,
\end{equation*}
where $X_{[0,n]}=\{X_j:0\leq j\leq n\}$, and in this case the 
random variable $X_n$ is called a 
\index{Markov chain!cut point}\emph{cut point}. 
A cutpoint is a point in the state space $\mathsf{G}$ which is visited exactly once
by the random walk. If $X_n$ is a cut point, then deleting $X_n$ from the trajectory
cuts the trajectory into two disjoint components, 
$X_{[0,n)}$ and $X_{[n+1,\infty)}$, with no possible transition from the
first to the second.

The following result will be needed. For details, see {\sc James and Peres}
\cite[Theorem 1.2 (a)]{JamesPeres1997}.

\begin{theorem}\label{thm:inf_cutpoints}
Any transient random walk with bounded increments on the lattice $\mathbb{Z}^d$
has infinitely many cut points with probability $1$.
\end{theorem}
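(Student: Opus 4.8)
The plan is to split the proof into a zero--one law and a positivity statement. Write $A_n$ for the event that $n$ is a cut time, so $A_n = \{X_{[0,n]}\cap X_{[n+1,\infty)}=\emptyset\}$, and let $G=\limsup_n A_n$ be the event that infinitely many cut times occur; the goal is $\P[G]=1$.

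First I would establish a Hewitt--Savage zero--one law for $G$ with respect to the i.i.d.\ increments $\xi_1,\xi_2,\dots$ of the walk. The key observation is that $G$ is, up to a null set, invariant under every finite permutation $\sigma$ of the increments. Indeed, permuting the first $N$ increments leaves the partial sum $X_n=\xi_1+\cdots+\xi_n$ unchanged for each $n\ge N$, because a sum is symmetric; hence the path $(X_n)_{n\ge N}$ is pointwise unchanged, and only the finitely many points $X_0,\dots,X_{N-1}$, all lying in the ball $B(o,NL)$ (where $L$ bounds the increments), may move. Since the walk is transient it visits $B(o,NL)$ only finitely often almost surely, so there is a random time $T$ with $X_{[T,\infty)}\cap B(o,NL)=\emptyset$. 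For $n\ge T$ the cut condition splits as $X_{[0,N]}\cap X_{[n+1,\infty)}=\emptyset$, which holds automatically for both the original and the permuted path, together with $X_{[N,n]}\cap X_{[n+1,\infty)}=\emptyset$, which is permutation invariant. Thus the cut times $\ge T$ coincide before and after permuting, so $G$ is invariant and the Hewitt--Savage law gives $\P[G]\in\{0,1\}$.

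It then remains to prove $\P[G]>0$. I would do this by a second-moment (Paley--Zygmund) estimate on the counting variable $C_N=\sum_{n=1}^N \mathbf{1}_{A_n}$, whose mean is $\E[C_N]=\sum_{n=1}^N \P[A_n]$. The two analytic inputs are (i) $\E[C_N]\to\infty$ and (ii) a matching second-moment bound $\E[C_N^2]\le K\,(\E[C_N])^2$ for some constant $K$ independent of $N$. Granting these, Paley--Zygmund yields $\P[C_N\ge \tfrac12\E[C_N]]\ge (4K)^{-1}$ for all $N$; since $C_N\uparrow C_\infty$ and $\E[C_N]\to\infty$, letting $N\to\infty$ gives $\P[C_\infty=\infty]\ge (4K)^{-1}>0$, that is, $\P[G]>0$. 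Combined with the zero--one law this forces $\P[G]=1$, as claimed.

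The first- and second-moment estimates are where bounded increments and transience really enter, and (ii) is the main obstacle. To analyse $\P[A_n]$ one translates by $X_n$ and decomposes the path into the reversed past and the future, which become two independent copies of the walk issuing from a common point; $\P[A_n]$ is then the probability that these two transient trajectories stay disjoint up to a growing horizon, and the divergence $\sum_n \P[A_n]=\infty$ must be read off from Green function and intersection estimates that are dimension sensitive (in $d\ge 5$, as well as in the drift case $d\le 2$, the two paths separate and $\P[A_n]$ is bounded below, whereas in $d=3,4$ one needs the cut-point exponent to keep the sum divergent). For the correlations $\P[A_m\cap A_n]$ controlling (ii) I would condition on the positions at the two times $m<n$ and use the bounded-increment Green function to show that, away from the diagonal, $\P[A_m\cap A_n]$ factorises up to a bounded multiplicative constant as $\P[A_m]\,\P[A_n]$, which yields (ii). Assembling these estimates, which is the technical heart of James and Peres, completes the proof; as a sanity check, in the nearest-neighbour walk on $\Z_+$ relevant to this chapter the argument degenerates to the elementary fact that every strict ascending ladder epoch is a cut point, of which a transient walk has infinitely many.
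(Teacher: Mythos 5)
First, a point of reference: the thesis does not prove this statement --- it is quoted from {\sc James and Peres} [Theorem 1.2(a)], and the only argument the surrounding text supplies is the elementary one for the concrete chain $(Y_n)$ on $\Z_+$ with rightward drift, where cut points are the ladder points never revisited and occur with positive density $p=\P[Y_n\ge 1\ \forall n\ge 1]$. So your proposal must be judged against the literature rather than against an internal proof. Your first half is correct and complete: the event $G$ of having infinitely many cut times is, modulo null sets, invariant under finite permutations of the increments, since permuting the first $N$ increments moves only $X_0,\dots,X_{N-1}$ inside $B(o,NL)$, which the transient walk eventually leaves forever; Hewitt--Savage then gives $\P[G]\in\{0,1\}$, and this exchangeability mechanism is indeed the one James and Peres exploit.

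The gap is in the positivity half. You reduce $\P[G]>0$ to the two estimates (i) $\sum_n\P[A_n]=\infty$ and (ii) $\E[C_N^2]\le K(\E[C_N])^2$, and then state that these ``must be read off from Green function and intersection estimates'' whose assembly ``is the technical heart of James and Peres.'' But those two estimates \emph{are} the theorem. For a mean-zero bounded-increment walk on $\Z^4$, (i) alone is Lawler's cut-time theorem (there $\P[A_n]$ decays like a power of $\log n$), whose proof via the non-intersection exponent is a substantial piece of work; in $d=3$ one needs $\P[A_n]\asymp n^{-\zeta}$ with $\zeta<1$; and the off-diagonal factorisation $\P[A_m\cap A_n]\le K\,\P[A_m]\,\P[A_n]$ in (ii) requires separation lemmas of comparable depth. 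None of this is supplied, so what you have is a correct reduction together with an accurate map of where the difficulty lives, not a proof. Two smaller remarks: the only case this chapter actually uses is the one-dimensional walk with drift, for which your closing observation already yields a complete elementary proof --- although not every strict ascending ladder epoch is a cut time, only those whose value is never attained again, and with drift a positive fraction of them are, which is all that is needed; and for that chain $(Y_n)$ the increments are not i.i.d.\ (because of the behaviour at $0$), so even the Hewitt--Savage step would need a word of adaptation in the setting where the theorem is actually applied.
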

For the random walk $(Y_n)$ on $\mathbb{Z}_+$ with transition probabilities
$q(i,j)$ given in \eqref{eq:tr_pb_z}, consider the random time
\begin{equation*}
 \xi_n=\inf\{j:|Y_j|\geq n\}.
\end{equation*}
Then one can show that with probability one $\xi_n$ is a cut time for
infinitely many $n$. Thus, the random walk $(Y_n)$ on $\Z_+$
has infinitely many cut times, and implicitely also infinitely many cut points. 

For the random walk $(Y_n)$, let $I_n$ be the indicator function
of the event $\{ n \text{ is a cut time}\}$ and let 
\begin{equation*}
R_n=\sum_{j=0}^n I_{j} 
\end{equation*}
be the number of cut times encountered up to time $n$. Then $R_n$ is a stationary
process and \textit{Birkhoff Ergodic Theorem} implies that 
there exists a constant $p>0$, such that 
\begin{equation*}
\lim_{n\to\infty}\frac{R_n}{n}=p>0.
\end{equation*}
The constant $p$ can be explicitely computed as $p=\mathbb{P}[Y_n\geq 1,\forall n\geq 1]$. 
The number $p>0$ is called the 
\index{Markov chain!cut point!density of}\textit{density of the cut points}
for the random walk $(Y_n)$ on $\mathbb{Z}_+$.

\begin{remark}\label{rem:inf_cut_points}
The set of cut points of $(Y_n)$ has strictly positive 
density $p$ and this implies that for any integer $N>1/p$, 
one can choose a set of trajectories, such that, in any collection 
of $N$ trajectories, at least $2$ have infinitely many common cut points. 
\end{remark}

\paragraph*{The Poisson Boundary.}
Assume that the lamplighter random walk $\Zn$ is of \textit{nearest neighbour type} 
on $\Z_2\wr\mathcal{T}$, that is, the following
\textit{local condition} is satisfied:
\begin{equation}\label{eq:local_cond}
p\big((\eta,x)(\eta',x')\big)>0,
\end{equation}
only if $x$ and $x'$ are neighbours in $\mathcal{T}$ and the configurations $\eta,\eta'$
may only differ at the point $x$.
 
Let us now state the main result on the description of the Poisson boundary
of LRW $\Zn$, with $Z_n=(\eta_n,X_n)$, when the base random walk $\Xn$ on the tree $\mathcal{T}$
with a fixed end $\omega$ has zero modular drift $\delta(P_\mathcal{T})$.

\begin{theorem}\label{thm:poisson_fixed_end_zero_drift}
Let $\Zn$ be an irreducible, nearest neighbour random walk on 
$\mathbb{Z}_2\wr\mathcal{T}$, such that the base walk $\Xn$ on $\mathcal{T}$
is of nearest neighbour type  with $\delta(P_{\mathcal{T}})=0$. 
Then the Poisson boundary of $\Zn$ is given by
the space of limit configurations
endowed with the corresponding hitting distribution $\nu_{\infty}$.
\end{theorem}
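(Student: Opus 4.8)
The plan is to realize the Poisson boundary as the tail boundary and then to show, using the cut points of the base walk, that this tail boundary carries no more information than the limit configuration. Since $\Zn$ is a homogeneous walk and hence is modeled by a random walk on the group $\lgrp$, its Poisson boundary coincides with its tail $\sigma$-algebra $\mathcal{A}_{\infty}=\bigcap_{n}\sigma(Z_n,Z_{n+1},\dots)$; this is the coincidence of the tail and the Poisson boundaries alluded to earlier, and it is the reason a measure-theoretic rather than geometric argument is available. By Theorem \ref{thm:conv_LRW_fixed_end} we have $Z_n\to(\zeta_{\infty},\omega)\in\omega^*$ almost surely, the map $(Z_n)\mapsto\zeta_{\infty}$ is $\mathcal{A}_{\infty}$-measurable, and $(\omega^*,\nu_{\infty})$ is a $\nu$-boundary in the sense of Definition \ref{def:mu_bndr}; thus $\sigma(\zeta_{\infty})\subset\mathcal{A}_{\infty}$ modulo null sets, and it only remains to prove the reverse inclusion. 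Equivalently, I must show that the walk conditioned on its limit configuration has a trivial tail.

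For this I would first exploit the geometric picture of Figure \ref{fig:tree_zero_drift}: $\mathcal{T}$ is the geodesic $\pi(o,\omega)\cong\mathbb{Z}_+$ with a subtree $\mathcal{T}_k$ hanging off each point $k$. Because $\Xn$ is of nearest neighbour type and the local condition \eqref{eq:local_cond} allows a lamp to be switched only at the current vertex, the lamps inside $\mathcal{T}_k$ are frozen the moment the walker abandons the levels $\le k$, and their frozen value is exactly the restriction of $\zeta_{\infty}$ there. The induced walk $Y_n=X_{\tau_n}$ on $\mathbb{Z}_+$ with transition probabilities \eqref{eq:tr_pb_z} is transient with bounded increments, so by Theorem \ref{thm:inf_cutpoints} it has infinitely many cut points, and these have a strictly positive density $p$. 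I would work with the cut points that are ascending ladder epochs never undercut afterwards: at such a time the walker sits at a geodesic vertex $k$ that it is reaching for the first time, so the whole region $\bigcup_{j\ge k}\mathcal{T}_j$ above it is still \emph{fresh} (all lamps off), while the region below carries precisely $\zeta_{\infty}$ restricted there. Consequently the entire state of the walk at such a cut time is determined by the single integer $k$ together with $\zeta_{\infty}$.

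The heart of the proof, and the step I expect to be the main obstacle, is to convert this freezing structure into conditional tail triviality by a coupling across common cut points. Fixing $N>1/p$ and sampling, under the law conditioned on $\{\zeta_{\infty}=\zeta\}$, $N$ exchangeable copies of the walk, Remark \ref{rem:inf_cut_points} guarantees that at least two of them, say $Z$ and $Z'$, share infinitely many common cut points. At a common ladder cut point at level $k$, both walks occupy the vertex $k$ with the same frozen configuration $\zeta$ below and an empty configuration above, hence they are in \emph{identical} states; by the strong Markov property their futures are then identically distributed and can be coupled to coincide from that time on, so $Z$ and $Z'$ have the same tail. Exchangeability of the $N$ copies, together with $N$ being arbitrary, forces the tail class to be an almost surely deterministic function of $\zeta$, which is exactly triviality of the conditional tail. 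The delicate points I anticipate are (i) verifying that ladder cut points with the fresh-above property occur with positive density and that two of the $N$ conditioned trajectories really share infinitely many of them at equal levels, (ii) handling the regular conditional probabilities so that the Markov coupling at the cut time is legitimate under the conditioning on $\zeta_{\infty}$, and (iii) checking that the surgery producing the coupled trajectory yields an admissible path of $\Zn$ with the prescribed limit configuration. Once these are settled, $\mathcal{A}_{\infty}=\sigma(\zeta_{\infty})$ modulo null sets, and combined with the first paragraph this identifies the Poisson boundary of $\Zn$ with the space of limit configurations equipped with $\nu_{\infty}$.
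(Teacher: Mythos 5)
Your proposal runs on the same engine as the paper's proof --- cut points of the induced walk $(Y_n)$ on $\Z_+$, the observation that at a cut point the configuration below the current level is frozen at its limit value while everything at or above it is still untouched, and a James--Peres style argument with $N>1/p$ trajectories sharing infinitely many common cut points --- but it is organised differently, and the reorganisation opens a gap. The paper never works with the tail of $\Zn$ itself: it first passes to the time-changed chain $(\tilde Z_n)=(\Phi_n,Y_n)$ observed at the exit times $\tau_m$, proves that the tail $\sigma$-algebra of \emph{that} chain is $\sigma(\Phi_\infty)$ (via conditional independence of the excursions between consecutive cut times and Kolmogorov's $0$--$1$ law, Proposition \ref{prop:exchangeable}), and only then identifies the Poisson boundary with the tail boundary of $(\tilde Z_n)$ through Kaimanovich's covering Markov operator theorem, using the positive density of cut points to see that the fibres of the cover are finite (Proposition \ref{prop:poisson_tail}); the covering argument is needed precisely because $(\tilde Z_n)$ is not a group random walk. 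You instead invoke the coincidence of Poisson and tail boundaries for the homogeneous walk $\Zn$ at the outset and then try to compute the tail of $\Zn$ directly by a coupling.

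The gap is in that coupling step. A common cut point at level $k$ is reached by the two trajectories at \emph{different times} $\tau_n\neq\tau'_{n'}$, so coupling the futures from the identical states $Z_{\tau_n}=Z'_{\tau'_{n'}}$ produces paths that agree only modulo a time shift. That is exactly what orbit equivalence requires --- indeed, for any bounded harmonic $h$ one gets $h(Z_{\tau_{n_j}})=h(Z'_{\tau'_{n'_j}})$ along the common cut points and hence equal martingale limits, so the two paths define the same Poisson boundary point --- but it is \emph{not} tail equivalence, which demands agreement at equal times; a tail event of $\Zn$ can see the time parametrisation of the excursions inside the subtrees $\mathcal{T}_k$, and your coupling does not match those. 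Since your first step reduces the Poisson boundary to the tail, and your coupling only delivers orbit equivalence, the argument does not close as written; your worry (i) is about matching levels, not times, so it does not catch this. This is exactly the point where the paper states it must depart from James--Peres (``different sequences of cut times associated to the same limit configuration'') and why it establishes conditional tail triviality for the time-changed chain by an independence/$0$--$1$-law argument rather than by coupling. The cleanest repair of your route is to drop the tail reduction altogether and run the harmonic-function martingale identity above together with the $N>1/p$ exchangeability argument directly on the Poisson boundary, or else to follow the paper and prove conditional tail triviality for $(\tilde Z_n)$ by independence of the inter-cut-time blocks.
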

The first step in the proof of this theorem is to construct
another ``lamplighter type'' random walk $(\tilde{Z}_n)$ - on a modified 
state space - with the same Poisson boundary as $\Zn$. Then, Theorem 
\ref{thm:poisson_fixed_end_zero_drift} is
just a consequence of three other results which we will state
and prove in what follows.

\paragraph{Extended ``Lamplighter Type'' Random Walk.}

Using the stopping times $\tau_m$ defined in \eqref{eq:stop_times}, 
we construct another ``lamplighter type '' random walk $(\tilde{Z}_n)$, 
similar to $\Zn$. 

For all $k\in\Z_+$, denote by $\mathcal{C}_k$ the set of all
$\Z_2$-valued lamp configurations on the tree $\mathcal{T}_k$, rooted
at $k$. Let also $\tilde{\mathcal{C}}$ be the space of all \emph{finitely 
supported ``generalized'' configurations} $\Phi$ over $\Z_+$, where $\Phi$ is given by
\begin{align*}
 \Phi :\Z_+ & \to \mathcal{C}_k\\
  k & \mapsto \Phi(k)\in\mathcal{C}_k.
\end{align*}
This means that the value $\Phi(k)$ of the  
configuration $\Phi$ at a point $k\in\Z_+$ is a configuration 
 on the whole tree $\mathcal{T}_k$ rooted at $k$,
i.e., a configuration in $\mathcal{C}_k$.

Let the ``new'' state space $\tilde{Z}_+$ be the product of the space 
$\tilde{\mathcal{C}}$ of ``generalized'' configurations $\Phi$
with $\Z_+$, that is,
\begin{equation*}
\tilde{Z}_+=\tilde{\mathcal{C}}\times\Z_+ .
\end{equation*}
Consider now the random walk $(\tilde{Z}_n)$ on $\tilde{Z}_+$,
with $\tilde{Z}_n=(\Phi_n, Y_n)$, such that $\Phi_n$ is a generalized configuration 
over $\Z_+$, and the projection of $(\tilde{Z}_n)$ on $\Z_+$ is the
random walk $(Y_n)$ on $\Z_+$, with transition probabilities given 
as in \eqref{eq:tr_pb_z}. This description of the chain $(\tilde{Z}_n)$
follows from the fact that in between of two consecutive stopping times
$\tau_m,\tau_{m+1}$, the random walk $\Xn$ moves only in the tree
$\mathcal{T}_k$, with 
\begin{equation*}
Y_m=X_{\tau_m}=k \quad \text{ and }\quad \Phi_m=\eta_{\tau_m}. 
\end{equation*}
Moreover, because of the local condition we have assumed in \eqref{eq:local_cond},
the lamplighter configuration may change only in points of $\mathcal{T}_k$.

The walk $(\tilde{Z}_n)$, with $\tilde{Z}_n=(\Phi_n, Y_n)$,
can be viewed as an ``extended random walk'' of $(Y_n)$.
In other words, we add to the states of the chain $(Y_n)$ the lamp configuration
$\Phi_n$. The lamp configurations $\Phi_n$ are similar to the \textit{occupation numbers} 
defined in {\sc James and Peres}~\cite{JamesPeres1997}.

We denote by $\tilde{Q}$ the transition  matrix of $(\tilde{Z}_n)$. Its entries are
of the form
\begin{equation*}
 \tilde{q}\big((\phi_1,k_1),(\phi_2,k_2)\big).
\end{equation*}
For $k_2\notin\{k_1-1,k_1+1\}$, the entries are zero.
Let $n_1$ and $n_2$ be such that 
\begin{equation*}
 k_1=Y_{n_1}=X_{\tau_{n_1}} \text{ and }k_2=Y_{n_2}=X_{\tau_{n_2}}.
\end{equation*}
Then
\begin{equation*}
 \phi_1=\eta_{n_1} \text{ and } \phi_2=\eta_{n_2},
\end{equation*}
and the transtion probabilities of $(\tilde{Z}_n)$ are given by
\begin{equation*}
 \tilde{q}\big((\phi_1,y_1),(\phi_2,y_2)\big)=p\big((\eta_{n_1},X_{\tau_{n_1}}),(\eta_{n_2},X_{\tau_{n_2}})\big).
\end{equation*}

\begin{remark}With this construction in hand, the identification problem 
for the Poisson boundary of $\Zn$ on $\Z_2\wr\mathcal{T}$ reduces to the identification 
problem for the Poisson boundary of the new walk $(\tilde{Z}_n)$ on 
$\tilde{Z}_+=\tilde{\mathcal{C}}\times\Z_+$. 
\end{remark}

Recall that $(Y_n)$ is a random walk with drift to the right on $\Z_+$, and from 
Theorem \ref{thm:inf_cutpoints}, it has infinitely many cutpoints,
with probability $1$. 

\paragraph{Another Description of the Poisson Boundary.}
For the random walk $\Xn$ on the state space $\mathsf{G}$, let
like before $\Omega= \mathsf{G}^{\Z_+}$ be its path space. 
The shift $T:\Omega\to \Omega$ in the path space is defined as 
\begin{equation*}
 T(x_0,x_1,x_2,\ldots)=(x_1,x_2,\ldots).
\end{equation*}
Recall that the Poisson boundary is the space of ergodic components of the time shift
in the path space $\mathsf{G}^{\Z_+}$, endowed with the probability measure $\mathbb{P}$. 
In a more detailed way, denote by $\sim$ the 
\textit{orbit equivalence relation} of the shift $T$ in the path space:
\begin{equation*}
 (\mathbf{x})\sim(\mathbf{x'}) \quad \Leftrightarrow \quad \exists\ n,n'\geq 0: 
T_n\mathbf{x}=T_{n'}\mathbf{x'},\text{ for }\mathbf{x},\mathbf{x'}\in \Omega,
\end{equation*}
and by $\mathcal{A}_T$ the $\sigma$-algebra of all measurable unions of
$\sim$-classes (mod $0$) in the space $(\mathsf{G}^{\Z_+},\mathbb{P})$. 

Then the Poisson boundary of $\Xn$ is the quotient of the path 
space $(\mathsf{G}^{\Z_+},\mathbb{P})$ with respect to the equivalence relation $\sim$.

\paragraph{Tail $\sigma$-Algebra.}
The \textit{tail boundary} is analogous to the definition
of the Poisson boundary, with the equivalence relation
$\sim$ being now replaced with the \textit{tail equivalence relation} 
$\approx$, which is defined as
\begin{equation*}
 (\mathbf{x})\approx(\mathbf{x'}) \quad \Leftrightarrow \quad \exists\ N \geq 0: 
T_n\mathbf{x}=T_{n}\mathbf{x'},\text{ for }\mathbf{x},\mathbf{x'}\in \Omega,\forall n\geq N.
\end{equation*}
An important difference is that unlike the $\sigma$-algebra $\mathcal{A}_T$
from the definition of the Poisson boundary, the \textit{tail $\sigma$-algebra}
$\mathcal{A}^{\infty}$ of all measurable unions of $\approx$-classes
can be presented as the limit of the decreasing sequence of $\sigma$-algebras
$\mathcal{A}_n^{\infty}$ determined by the positions of sample paths of the random walk $(X_n)$
at times $\geq n$. In other words,
\begin{equation*}
 \mathcal{A}^{\infty}=\bigcap_n \mathcal{A}_n^{\infty},
\end{equation*}
where $\mathcal{A}_n^{\infty}$ are the coordinate $\sigma$-algebras
generated by the random variables $(X_k)_{k\geq n}$.

The Poisson and the tail boundaries are sometimes confused, and indeed
they do coincide for ``most common'' random walks. Their coincidence
for random walks on groups is a key ingredient of the entropy theory
of random walks. See {\sc Kaimanovich} \cite{Kaimanovich1992}
for more details and for criteria of triviality of these boundaries
and of their coincidence provided by $0-2$ laws.

Consider the extended random walk $\tilde{Z}_n=(\Phi_n,Y_n)$
defined above on the state space $\tilde{Z}_+$. 
Then the \textit{tail $\sigma$-algebra} of $(\tilde{Z}_n)$
can be described as the $\sigma$-algebra generated by the 
\textit{tail equivalence relation} $\approx$:
\begin{equation}
 \big((\phi_n,y_n)\big)\approx \big((\phi_n',y_n')\big)\Leftrightarrow \exists N\geq 0:
y_{n}  =y_{n}',\ \phi_{n} =\phi_{n}',\forall n\geq N,
\end{equation}
where $(\phi_n,y_n)$ and $(\phi'_n,y'_n)$ are two trajectories 
of the random walk $(\tilde{Z}_n)$. The \textit{tail boundary} of 
$(\tilde{Z}_n)$ is the quotient of the path space 
with respect to the tail equivalence relation $\approx$.

Recall that $(Y_n)$ is a transient random walk
on $\Z_+$, with transition probabilities given in \eqref{eq:tr_pb_z},
and $(\tilde{Z}_n)$ is the extended ``lamplighter type'' random
walk on $\tilde{\Z_+}$, which is also transient.

If  $\Phi_n(k)$ is the random configuration of lamps at time
$n$ on the tree $\mathcal{T}_k$ rooted at $k\in \Z_+$, let 
\begin{equation*}
\Phi_{\infty}(k)=\lim_{n\to\infty}\Phi_n(k), \text{ for every } k\in\Z_+. 
\end{equation*} 
The existence of the limit follows from the fact
that a transient process visits every state infinitely often.

\begin{lemma}\label{lem:sigma_alg}
For $n\geq 1$ consider the event $C_n=\{n\text{ is a cut time}\}$.
For $n\geq 1$ this event is in the $\sigma$-algebra $\sigma(\Phi_{\infty},Y_0)$,
and for trajectories in $C_n$ the value of $\tilde{Z}_n=(\Phi_n,Y_n)$
can be approximated in terms of the limit configuration $\Phi_{\infty}$ only. 
\end{lemma}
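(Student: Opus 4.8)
The plan is to first convert the combinatorial notion of a cut time into an explicit statement about the induced walk $(Y_n)$ on $\Z_+$, and then read off its effect on the generalized configurations $\Phi_n$. First I would record that, because $(Y_n)$ is a nearest-neighbour walk on $\Z_+$ with drift to the right and $Y_n\to\infty$, an index $n$ is a cut time if and only if $Y_m\le Y_n$ for all $m\le n$ and $Y_m>Y_n$ for all $m>n$; writing $k=Y_n$, this forces $Y_{n+1}=k+1$ and says that after time $n$ the walk never returns to a level $\le k$. Indeed, a step to $k-1$ would revisit the already visited vertex $k-1$, and any later visit to $\{0,\dots,k\}$ would destroy the disjointness of $Y_{[0,n]}$ and $Y_{[n+1,\infty)}$; conversely the two conditions make these index ranges occupy disjoint sets of levels. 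Translating through the stopping times $\tau_m$, on $C_n$ the trajectory of $\Xn$ performs its last excursion into $\mathcal{T}_k$ during $[\tau_n,\tau_{n+1}]$ and thereafter stays inside $\bigcup_{j\ge k+1}\mathcal{T}_j$ forever. Hence the lamps in $\mathcal{T}_0,\dots,\mathcal{T}_k$ are never touched after $\tau_{n+1}$ and their states are already final: $\Phi_m(j)=\Phi_\infty(j)$ for all $m\ge n+1$ and all $j\le k$.

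This freezing yields the approximation statement directly. On $C_n$, with $k=Y_n$, one has $\Phi_n(j)=\Phi_\infty(j)$ for every $j<k$ (these trees are not entered at or after $\tau_n$), while for $j>k$ the tree $\mathcal{T}_j$ has not yet been reached at time $\tau_n$, since $k$ is a running maximum, so $\Phi_n(j)$ equals the initial configuration --- the trivial one when $(\tilde{Z}_n)$ is started from $(\mathbf{0},o)$. Thus $\Phi_n$ coincides with a configuration assembled entirely from $\Phi_\infty$ and $Y_0$, except possibly on the single tree $\mathcal{T}_k$, where $\Phi_n(k)$ records the lamps at the beginning of the last excursion through $\mathcal{T}_k$ rather than at its end. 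Since $\mathcal{T}_k$ contributes only one $\mathcal{C}_k$-valued coordinate, $\tilde{Z}_n=(\Phi_n,Y_n)$ is determined by $\Phi_\infty$ up to this single controlled error term, which is exactly the asserted approximation of $\tilde{Z}_n$ by $\Phi_\infty$ alone.

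It remains to prove $C_n\in\sigma(\Phi_\infty,Y_0)$, and I expect this to be the main obstacle, because a priori the $\Z_2$-valued lamps may cancel and the limit configuration need not record the path finely enough to locate a cut. The plan is to recover, modulo null sets, the increasing sequence of cut levels $k_1<k_2<\cdots$ (the values $Y_n$ at the cut times) as a measurable function of $\Phi_\infty$ and $Y_0$, and then to reconstruct the cut indices themselves by an induction anchored at $Y_0$. The structural input I would use is the renewal property at a cut: conditionally on $k$ being a cut level, the post-cut walk starts afresh at $k+1$ independently of the past, so that $\Phi_\infty$ restricted to $\bigcup_{j\le k}\mathcal{T}_j$ becomes independent of its restriction to $\bigcup_{j\ge k+1}\mathcal{T}_j$; combined with irreducibility (each genuine excursion leaves a detectable trace in $\Phi_\infty$ with positive probability) this should identify the cut levels measurably. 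The delicate part is precisely this measurable identification: one must rule out, on a set of positive measure, two trajectories with identical $\Phi_\infty$ disagreeing on where the cuts occur, which is why the argument has to proceed modulo $0$ and to rely on the conditional-independence structure supplied by the cut points of James and Peres rather than on any pointwise reconstruction.
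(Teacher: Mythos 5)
Your first two paragraphs are correct and follow essentially the same route as the paper's proof: you translate the cut-time condition for the nearest-neighbour walk $(Y_n)$ into the running-maximum/last-exit description, conclude that the trees $\mathcal{T}_j$ with $j<k$ are finalized at time $n$ (so $\Phi_n(j)=\Phi_\infty(j)$ there) and that the trees with $j>k$ still carry the initial configuration, and isolate $\mathcal{T}_k$ as the single coordinate where $\Phi_n$ and $\Phi_\infty$ can disagree. You are in fact more careful than the paper here: the paper asserts that the tree at the cut level ``will not be touched at all during the entire process,'' which ignores both possible earlier visits to level $k$ (the cut condition only forces $Y_m\le Y_n$ for $m\le n$, not strict inequality) and the final excursion into $\mathcal{T}_k$ during $[\tau_n,\tau_{n+1})$; your reading, with $\Phi_n(k)$ as the one controlled error term, is the correct interpretation of ``approximated.''

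The genuine gap is in the measurability claim $C_n\in\sigma(\Phi_\infty,Y_0)$, which is the real content of the lemma and which your third paragraph only plans rather than proves (``this should identify the cut levels measurably''). The ingredient the paper uses, and which is absent from your proposal, is the comparison of \emph{pairs of trajectories}: by Remark \ref{rem:inf_cut_points} the cut points have strictly positive density, so one can arrange that any two trajectories under consideration share infinitely many common cut points; for two trajectories with the same limit configuration sharing a cut point $t$, the finalized data at $t$ is in both cases the restriction $\Phi_\infty([0,t))$, and this is what allows the paper to partition $C_n$ into the events $C_n\cap\{\Phi_n=\Phi_\infty([0,t)),\,Y_n=t\}$ and assign them to $\sigma(\Phi_\infty,Y_0)$. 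Your alternative route --- recovering the cut levels from $\Phi_\infty$ because ``each genuine excursion leaves a detectable trace with positive probability'' --- fails as stated: with positive probability an excursion leaves no trace, so on a set of positive measure the cut levels are simply invisible in $\Phi_\infty$ and no pointwise or naive almost-sure reconstruction of the cut set from the limit configuration exists. You acknowledge this and retreat to ``conditional independence modulo $0$,'' but that is exactly the point at which the proof must be carried out, and it is where your proposal stops.
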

\begin{proof}
Let $(\phi_n,y_n)$ be a trajectory 
of the walk $\tilde{Z}_n=(\Phi_n,Y_n)$ and $\phi_{\infty}$
the limit configuration of lamps for this trajectory.
If $t\in\Z_+$ is a cut point of the
trajectory $(y_n)$ attained at time $n$ (so that $y_n=t$), then
$\phi_n$ is the restriction $\phi_{\infty}\big([0,t)\big)$ of the limit
configuration $\phi_{\infty}$ to the random subset $\{0,1,2,\ldots ,t-1\}\subset \Z_+$,
because the random cut point $t$ will be visited only once by  
$(y_n)$. That is, on the trees $\mathcal{T}_0,\mathcal{T}_1,\ldots ,\mathcal{T}_{t-1}$ 
the configuration will not change anymore after time $n$ and on the tree 
$\mathcal{T}_t$ the configuration will not be touched at all during 
the entire process. Therefore $\phi_n=\phi_{\infty}\big([0,t)\big)$ and
\begin{equation}\label{eq:limit_cfg_decomposition}
\phi_{\infty}=\phi_{\infty}\big([0,t)\big)\oplus \phi_0(t)\oplus \phi'\big([t+1,\infty)\big),
\end{equation}
where
\begin{equation*}
\phi_{\infty}([0,t))=\phi_{\infty}(0)\oplus\phi_{\infty}(1)\oplus \cdots 
\oplus \phi_{\infty}(t-1), \text{ for } t\in\Z_+,
\end{equation*}
$\phi_0(t)$ is the initial configuration (all lamps are off) 
on the tree $\mathcal{T}_t$ rooted at $t$, and $\phi'([t+1,\infty))$
is the configuration of lamps on the set $\{t+1,t+2,\ldots\}$,
which can still change at times $\{n+1,n+2,\ldots\}$. 

This proves that, under the condition that $t$ is a cut point for the trajectory $(y_n)$,
the lamp configuration is finalized on the random interval $\{0,1,2,\ldots ,t-1\}$.
Moreover, its value is determined by the limit configuration $\phi_{\infty}$,
and a decomposition of the form \eqref{eq:limit_cfg_decomposition}
is given.

Let $(\phi'_n,y'_n)$ be another trajectory of the ``lamplighter type''
random walk $(\tilde{Z}_n)$ such that the limit configurations 
$\phi_{\infty}$ and $\phi'_{\infty}$ coincide,
and such that $(y_n)$ and $(y'_n)$ have infinitely many 
common cut points. The existence of such paths follows from Remark \ref{rem:inf_cut_points}.
If $t\in\Z_+$ is such a common cut point
(which occurs at different cut time $n'$ with $y'_{n'}=t$), 
then at time $ n'$ the lamp configuration
$\phi'_{n'}$ is finalized and is again given by the restriction of
the limit configuration $\phi_{\infty}$ on the random interval $\{0,1,2,\ldots ,t-1\}$.
Moreover $\phi'_{n'}=\phi_n=\phi_{\infty}([0,t))$ and 
for $\phi_{\infty}$ a decomposition of the form \eqref{eq:limit_cfg_decomposition}
is given. This holds for all common cut points. 

Thus $C_n$ may be partitioned into subevents 
$C_n\cap \{\Phi_n=\Phi_{\infty}\big([0,t)\big), Y_n=t\}$
(with $t$ a cut point of $(Y_n)$) which are in the $\sigma$-algebra 
generated by $\Phi_{\infty}$ and $Y_0$, and the lemma is proved.
\end{proof}

\begin{proposition}\label{prop:exchangeable}
Let $(\tilde{Z}_n)$, with $Z_n=(\Phi_n,Y_n)$, be the extended ``lamplighter type''
random walk on $\tilde{Z}_+$. Then the tail $\sigma$-algebra of $(\tilde{Z}_n)$ is 
generated by the limit configurations of lamps $\Phi_{\infty}$.
\end{proposition}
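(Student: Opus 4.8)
The plan is to prove the two inclusions $\sigma(\Phi_{\infty})\subseteq\mathcal{A}^{\infty}$ and $\mathcal{A}^{\infty}\subseteq\sigma(\Phi_{\infty})$, where $\mathcal{A}^{\infty}=\bigcap_{n}\mathcal{A}_{n}^{\infty}$ is the tail $\sigma$-algebra of $(\tilde{Z}_{n})$ and $\sigma(\Phi_{\infty})$ is generated by the limit configuration. The first inclusion is the easy one. For each $k\in\Z_{+}$ the value $\Phi_{\infty}(k)=\lim_{n\to\infty}\Phi_{n}(k)$ is well defined almost surely, since the drift of $(Y_{n})$ to the right forces $Y_{n}>k$ eventually, so the tree $\mathcal{T}_{k}$ is visited only finitely often and $\Phi_{n}(k)$ stabilizes. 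Because a limit does not depend on any finite initial segment, $\Phi_{\infty}(k)$ is measurable with respect to $\sigma(\tilde{Z}_{j}:j\geq m)=\mathcal{A}_{m}^{\infty}$ for every $m$, and hence $\Phi_{\infty}$ is $\mathcal{A}^{\infty}$-measurable.

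The substance of the proof is the reverse inclusion, which I would reduce to a \emph{conditional $0$--$1$ law}: for every tail event $A\in\mathcal{A}^{\infty}$ one has $\P[A\mid\Phi_{\infty}]\in\{0,1\}$ almost surely. This suffices, for if $\P[A\mid\Phi_{\infty}]=\mathbf{1}_{A'}$ with $A'=\{\P[A\mid\Phi_{\infty}]=1\}\in\sigma(\Phi_{\infty})$, then $\P[A\cap A']=\P[A']=\P[A]$ forces $A=A'$ modulo a null set, so $A\in\sigma(\Phi_{\infty})$. To establish the $0$--$1$ law I would exploit the cut-point structure through Lemma \ref{lem:sigma_alg} and Remark \ref{rem:inf_cut_points}. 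Lemma \ref{lem:sigma_alg} tells us that the event $C_{n}=\{n\text{ is a cut time}\}$ lies in $\sigma(\Phi_{\infty},Y_{0})=\sigma(\Phi_{\infty})$ and that on $C_{n}$ the state $\tilde{Z}_{n}=(\Phi_{n},Y_{n})$ is determined by $\Phi_{\infty}$, with $\Phi_{n}$ equal to the restriction $\Phi_{\infty}([0,t))$ at the cut point $t=Y_{n}$.

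The key mechanism is a switching (exchangeability) argument at common cut points, which also explains the name of the proposition. Run two independent copies of $(\tilde{Z}_{n})$ conditioned to share the same limit configuration $\Phi_{\infty}$; by the positive density of cut points for $(Y_{n})$ and Remark \ref{rem:inf_cut_points}, their horocyclic projections share infinitely many common cut points $t$. At such a $t$ both walks occupy the identical state $(\Phi_{\infty}([0,t)),t)$ by Lemma \ref{lem:sigma_alg}, so splicing the past of one copy to the future of the other produces a trajectory with the same conditional law given $\Phi_{\infty}$; this is where the regeneration property of cut points, namely the absence of any transition from past to future, is used. A tail event is insensitive to the finite initial segment altered by such a splice, so $\mathbf{1}_{A}$ is invariant under switching of arbitrarily late futures, which forces $\P[A\mid\Phi_{\infty}]\in\{0,1\}$ and completes the reverse inclusion.

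I expect the main obstacle to be the rigorous justification of this switching step. Two points require care: first, that conditionally on $\Phi_{\infty}$ two independent copies indeed share infinitely many common cut points almost surely, which calls for a second-moment or ergodic estimate in the spirit of {\sc James and Peres}~\cite{JamesPeres1997} rather than the bare density statement; and second, that the splice is genuinely measure preserving even though the cut times are \emph{not} stopping times, being determined by the entire future, so the ordinary strong Markov property is unavailable and one must argue directly from the regeneration decomposition of the path at cut points.
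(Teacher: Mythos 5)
Your setup is sound: the inclusion $\sigma(\Phi_{\infty})\subseteq\mathcal{A}^{\infty}$ is correct and easy, and the reduction of the reverse inclusion to the conditional $0$--$1$ law $\P[A\mid\Phi_{\infty}]\in\{0,1\}$ for tail events $A$ is exactly the right target. The gap is in the switching step. Splicing the past of one copy onto the future of the other at a \emph{common cut point} $t$ only works for the tail $\sigma$-algebra if both copies sit at $t$ at the \emph{same time index}; in general the two copies reach $t$ at different cut times $n\neq n'$, so the spliced trajectory carries the future of the second copy shifted in time by $n-n'$. A tail event, as defined via the relation $T_n\mathbf{x}=T_n\mathbf{x}'$ for all large $n$, is \emph{not} invariant under such a time shift --- only orbit (shift) equivalence survives. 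So your argument, as stated, yields a shift-coupling and hence controls the invariant $\sigma$-algebra (the Poisson boundary, i.e.\ the content of Proposition \ref{prop:poisson_tail}), not the tail $\sigma$-algebra claimed here. This is precisely the obstruction the paper flags in the remark following Proposition \ref{prop:poisson_tail}: unlike in {\sc James and Peres}~\cite{JamesPeres1997}, coincidence of limit configurations does \emph{not} force coincidence of the cut-time sequences, so the two-trajectory exchange cannot be applied naively. Upgrading your splice to an exact coupling would require infinitely many common cut \emph{times} at common cut \emph{points}, a substantially stronger statement than Remark \ref{rem:inf_cut_points} provides, and you have not supplied it.

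The paper avoids the two-trajectory comparison altogether. It conditions a \emph{single} trajectory on $\Phi_{\infty}$ and on the starting point, and uses Lemma \ref{lem:sigma_alg} to decompose the path into blocks between consecutive cut times $n_k<n_{k+1}$: during the $k$-th block the lamp configuration can only be modified on the trees $\mathcal{T}_{t_k},\dots,\mathcal{T}_{t_{k+1}-1}$, and these blocks are mutually independent conditionally on $\Phi_{\infty}$. Kolmogorov's $0$--$1$ law applied to this conditionally independent block sequence gives conditional triviality of the tail $\sigma$-algebra given $\Phi_{\infty}$, which is the assertion. If you want to keep a coupling-flavoured proof you would have to carry out the regeneration decomposition you allude to in your last paragraph in this single-trajectory form; as written, the splicing mechanism does not prove the proposition.
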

\begin{proof}
We first describe the tail $\sigma$-algebra of the process
$(\Phi_n,Y_n)$ conditioned on the limit configuration $\Phi_{\infty}$
and on $(\Phi_0,Y_0)=(\mathbf{0},o)$.
By Theorem \ref{thm:inf_cutpoints}, the process $(Y_n)$
has infinitely many cut points, and implicitely also an infinite sequence
$n_1<n_2<\ldots$ of cut times . From Lemma \ref{lem:sigma_alg}, 
at these cut times the values of $Y_{n_1},Y_{n_2},\ldots$ are determined,
and the limit configuration is finalized, that is, it will not
change anymore from now on. Let $t_k$, $t_{k+1}$ be
the respective cut points for two consecutive cut times
$n_k, n_{k+1}$, i.e., $Y_{n_k}=t_k$. 

Between times
$\{n_k,n_k+1,n_k+2,\ldots,n_{k+1}-1\}$ the configuration of lamps 
can only be modified on the set $\{t_k,t_k+1,\ldots,t_{k+1}-1\}$,
that is, only on the trees $\mathcal{T}_{t_k},\mathcal{T}_{t_{k}+1},\ldots,\mathcal{T}_{t_{k+1}-1}$.
For different $k$, these sequences of modified configurations
are mutually independent (conditionally on $\Phi_{\infty}$), 
and hence the tail $\sigma$-algebra
of the process $(\Phi_n,Y_n)_n$, conditioned on $\Phi_{\infty}$ is trivial
by Kolmogorov $0-1$ law. Conditional triviality of a $\sigma$-algebra given 
$\Phi_{\infty}$ means that the $\sigma$-algebra is generated by $\Phi_{\infty}$ 
up to completion. That is, the tail $\sigma$-algebra
of $(\Phi_n,Y_n)_n$ is generated by the limit configurations of 
lamps $\Phi_{\infty}$. 
\end{proof}

\begin{remark}
Note that if for two sample trajectories $(\phi_n,y_n)$ and $(\phi'_n,y'_n)$,
the respective limit configurarions coincide, and the paths
$(y_n)$ and $(y_n')$ have infinitely many common cut points, then they correspond to the 
same point of the Poisson boundary of $(\tilde{Z}_n)$.
\end{remark}

\begin{proposition}\label{prop:poisson_tail}
For the extended ``lamplighter type'' random walk $(\tilde{Z}_n)$
on the state space $\tilde{Z}_+$, with $\tilde{Z}_n=(\Phi_n, Y_n)$,
its tail boundary coincides with its Poisson boundary,
up to sets of measure zero, for any starting point $(\Phi_0,Y_0)$.
\end{proposition}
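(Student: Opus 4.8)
The plan is to compare the two defining equivalence relations directly and to exploit the description of the tail $\sigma$-algebra already obtained in Proposition \ref{prop:exchangeable}. Recall that the Poisson boundary is the quotient of the path space $(\tilde{Z}_+^{\,\Z_+},\P)$ by the orbit equivalence relation $\sim$ of the shift $T$, while the tail boundary is the quotient by the tail equivalence relation $\approx$. The first observation is that $\approx$ refines $\sim$: if two trajectories satisfy $T_n\mathbf{x}=T_n\mathbf{x'}$ for all $n\geq N$, then in particular $T_N\mathbf{x}=T_N\mathbf{x'}$, so $\mathbf{x}\sim\mathbf{x'}$. Consequently every measurable union of $\sim$-classes is a union of $\approx$-classes, which gives the inclusion $\mathcal{A}_T\subseteq\mathcal{A}^{\infty}$ of the invariant (Poisson) $\sigma$-algebra into the tail $\sigma$-algebra. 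This is the general fact that the Poisson boundary is always a measure-theoretic quotient of the tail boundary (see {\sc Kaimanovich}~\cite{Kaimanovich1992}), so one inclusion is free, and it remains only to prove the reverse inclusion $\mathcal{A}^{\infty}\subseteq\mathcal{A}_T$ modulo $\P$-null sets.

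For the reverse inclusion I would use Proposition \ref{prop:exchangeable}, which asserts that the tail $\sigma$-algebra $\mathcal{A}^{\infty}$ of $(\tilde{Z}_n)$ is generated, up to completion, by the limit configuration $\Phi_{\infty}$. Thus it suffices to show that $\Phi_{\infty}$ is measurable with respect to $\mathcal{A}_T$, i.e.\ that $\sigma(\Phi_{\infty})\subseteq\mathcal{A}_T$ modulo $0$. The key point is that $\Phi_{\infty}$ is a \emph{shift-invariant} function on the path space. Indeed, along a trajectory $\boldsymbol\omega=(z_0,z_1,\ldots)$ with $z_n=(\phi_n,y_n)$, the configuration coordinate of $z_n$ is exactly $\Phi_n$, so that $\Phi_{\infty}(\boldsymbol\omega)=\lim_n\phi_n$; applying the shift replaces the $n$-th configuration by the $(n+1)$-th, whence
\begin{equation*}
\Phi_{\infty}(T\boldsymbol\omega)=\lim_{n\to\infty}\phi_{n+1}=\lim_{m\to\infty}\phi_m=\Phi_{\infty}(\boldsymbol\omega).
\end{equation*}
Iterating this identity shows that $\Phi_{\infty}$ is constant along every orbit $\{T_n\boldsymbol\omega\}$, and hence constant on each $\sim$-class: if $\boldsymbol\omega\sim\boldsymbol\omega'$ with $T_n\boldsymbol\omega=T_{n'}\boldsymbol\omega'$, then $\Phi_{\infty}(\boldsymbol\omega)=\Phi_{\infty}(T_n\boldsymbol\omega)=\Phi_{\infty}(T_{n'}\boldsymbol\omega')=\Phi_{\infty}(\boldsymbol\omega')$. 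Therefore $\Phi_{\infty}$ is $\mathcal{A}_T$-measurable, giving $\mathcal{A}^{\infty}=\sigma(\Phi_{\infty})\subseteq\mathcal{A}_T$ up to null sets.

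Combining the two inclusions yields $\mathcal{A}^{\infty}=\mathcal{A}_T$ modulo $\P$-null sets, which is exactly the statement that the tail and Poisson boundaries of $(\tilde{Z}_n)$ coincide; the argument is insensitive to the starting point $(\Phi_0,Y_0)$, since both the invariance of $\Phi_{\infty}$ and Proposition \ref{prop:exchangeable} hold for any initial distribution. I expect the genuine content of the proof to have been discharged already in Proposition \ref{prop:exchangeable} (where the cut-point structure forces conditional triviality of the tail via Kolmogorov's $0$--$1$ law); the remaining obstacle here is purely the bookkeeping of the measure-theoretic quotients, namely checking carefully that a $T$-invariant generator of $\mathcal{A}^{\infty}$ suffices to collapse the tail boundary onto the Poisson boundary. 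Once the shift-invariance of $\Phi_{\infty}$ is observed, no further estimates are needed.
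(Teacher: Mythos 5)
Your proof is correct, but it follows a genuinely different route from the one in the thesis. The thesis argues structurally: the Poisson boundary of $(\tilde{Z}_n)$ maps onto the space of limit configurations (which, by Proposition \ref{prop:exchangeable}, is the tail boundary); Remark \ref{rem:inf_cut_points} --- the strictly positive density of cut points, so that among any $N>1/p$ trajectories with the same limit configuration at least two share infinitely many common cut points --- shows that this map has \emph{finite fibers}; and Kaimanovich's theory of covering Markov operators (\cite{Kaimanovich1995}, Theorem 4.3.3) is then invoked to conclude that such a finite cover must be trivial. You instead observe that $\Phi_{\infty}$ is literally shift-invariant, $\Phi_{\infty}\circ T=\Phi_{\infty}$, hence $\sigma(\Phi_{\infty})\subseteq\mathcal{A}_T$, and you squeeze this against the free inclusion $\mathcal{A}_T\subseteq\mathcal{A}^{\infty}$ and the identity $\mathcal{A}^{\infty}=\sigma(\Phi_{\infty})$ mod $0$ from Proposition \ref{prop:exchangeable}. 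This is shorter and self-contained: it needs neither the finite-fiber estimate nor the external covering-operator machinery, since the cut-point density has already done its work inside Proposition \ref{prop:exchangeable}. What the thesis's argument buys in exchange is a structural picture (a finite extension of a boundary inside the Poisson boundary is forced to be trivial) that parallels the James--Peres treatment and would survive in settings where the natural generator of the tail is not manifestly shift-invariant --- here it is, which is exactly the shortcut you exploit. Two small points of bookkeeping are worth making explicit: $\Phi_{\infty}$ is defined only on the (strictly $T$-invariant) full-measure event where the pointwise limits exist, so it should be extended by a constant off that event before one concludes strict $\mathcal{A}_T$-measurability; and Proposition \ref{prop:exchangeable} is proved for the starting point $(\mathbf{0},o)$, so for an arbitrary starting point one should note, as you do, that its proof carries over unchanged.
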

\begin{proof}

Remark \ref{rem:inf_cut_points} (the density of the cut times) implies that the Poisson boundary of 
$(\tilde{Z}_n)$ covers the
space of limit configurations with finite fibers, that is, the
tail boundary (the tail $\sigma$-algebra), which is, in view of Proposition 
\ref{prop:exchangeable}, generated by the space of limit 
configurations $\Phi_{\infty}$. This is possible,
using the theory of covering Markov operators in {\sc Kaimanovich}
\cite{Kaimanovich1995} (see Theorem $4.3.3$ and Example in $4.4.4$), 
only when the cover is trivial. 
Thus, the Poisson boundary coincides with the tail boundary 
of $(\tilde{Z}_n)$.
\end{proof}

\begin{remark} The arguments in the previous two proofs are similar
to the description of the exchangeable sigma-algebra of bounded range random walks 
on transient groups due to {\sc James and Peres}~\cite{JamesPeres1997}. In both 
cases the argument is based on the existence of cut points at which a certain part
of the limit configuration is finalized. 

In  James and Peres \cite{JamesPeres1997}, coincidence of the limit configurations 
for two trajectories implies that they have the same sequence of cut points and cut times. 
This is not the case in our setup, because of which we have to use a 
special argument dealing with different sequences of cut times
associated to the same limit configuration. 
\end{remark}

\begin{proof}[Proof of Theorem \ref{thm:poisson_fixed_end_zero_drift}]
From the construction of the ``lamplighter-type'' random walk
$(\tilde{Z}_n)$, it follows that the Poisson boundary of $\Zn$
is reduced to the identification of the Poisson boundary for $(\tilde{Z}_n)$.

From Proposition \ref{prop:poisson_tail} 
it follows that the Poisson boundary of $(\tilde{Z}_n)$ coincides
with its tail $\sigma$-algebra, which is by Proposition 
\ref{prop:exchangeable} generated by the space of limit configurations 
$\Phi_{\infty}$. Consequently also the Poisson
boundary of $\Zn$ is isomorphic with the space of limit configurations
with the respective hitting distribution $\nu_{\infty}$. 
\end{proof}

The proof of the Theorem \ref{thm:poisson_fixed_end_zero_drift} is 
done only when the base random walk $\Xn$ is of nearest neighbour type and its associated
LRW $\Zn$ satisfies the local condition \eqref{eq:local_cond}. 
A next step is to generalize this results
to bounded range random walks $\Zn$ on $\Z_2\wr\mathcal{T}$. 

\chapter{Hyperbolic Graphs}
\label{chap:hyperbolic_graphs}

In this chapter we study once more the behaviour at infinity
of lamplighter random walks $\Zn$ on $\lgr$, when the
transitive base graph $\mathsf{G}$ is a hyperbolic graph in the sense of Gromov.
The method developed in Section \ref{sec:half_space_method}
is again applied here. Hyperbolicity of $\mathsf{G}$ and its geometric 
properties will be of important use.

We first lay out the basic definitions and properties 
of the hyperbolic graphs and groups, and then briefly reconsider
random walks $\Xn$ on such structures. Using the results
known for random walks $\Xn$ on hyperbolic graphs $\mathsf{G}$,
we prove similar results for lamplighter random walks $\Zn$, with $Z_n=(\eta_n,X_n)$
on $\lgr$.

\section{Preliminaries}
\label{sec:preliminaries_hyperbolic}

We recall here the most important definitions of hyperbolic 
graphs and their hyperbolic boundary and compactification. 
There is a vast literature on hyperbolic spaces, in particular 
on \textit{hyperbolic groups} (i.e., groups which have a 
hyperbolic Cayley graph). For details, the reader is invited 
to consult the texts by {\sc Gromov}~\cite{Gromov1987}, 
{\sc Ghys and de la Harpe}~\cite{GhysHarpe1990}, 
{\sc Coornaert, Delzant and Papadopoulus}~\cite{CoornaertDelzantPapadopoulos1990}, or, 
for a presentation in the context of random walks on graphs, {\sc Woess}~\cite[Section 22]{WoessBook}.

Let $(\mathsf{G},d)$ be a \index{metric space!proper}\textit{proper metric space}, that is, a space in which 
every closed ball $B(x,r)=\{y\in \mathsf{G}:d(x,y)\leq r\}$ is compact. 
For $x,y\in \mathsf{G}$, a \emph{geodesic arc} $\pi(x,y)$ in $\mathsf{G}$ is the image of
an isometric embedding of the real interval  $[0,d(x,y)]$ 
into $\mathsf{G}$ which sends $0$ to $x$ and $d(x,y)$ to $y$. The geodesic arc may
not be unique. 
 
Suppose that $\mathsf{G}$ is also \index{metric space!geodesic}\textit{geodesic}: for every pair of points 
$x,y \in \mathsf{G}$, there is a geodesic arc $\pi(x,y)$ in $\mathsf{G}$.
A \index{metric space!geodesic!geodesic triangle}\emph{geodesic triangle} 
consists of three points $u,v,w$ together with the geodesic arcs
$\pi(u,v),\pi(v,w),\pi(w,u)$, which are called the \emph{sides}
of the triangle.

\begin{minipage}[b]{0.6\linewidth}
\begin{definition}
A geodesic triangle is called 
\index{metric space!geodesic!thin geodesic triangle}\emph{$\delta$-thin}, with 
$ \delta \geq 0$, if every point on any one of the sides 
is at distance at most $\delta$ from some point on one of the other two sides.
\end{definition}
\begin{definition}
One says that the space $\mathsf{G}$ is \index{graph!hyperbolic}\emph{hyperbolic}, 
if there is $\delta \geq 0$ such that every geodesic triangle in $\mathsf{G}$ is $\delta$-thin.
\end{definition}
\end{minipage}
\hspace{0.5cm} 
\begin{minipage}[b]{0.4\linewidth}
\begin{tikzpicture}
\draw (-2,0)--(2,0);
\filldraw[black] (-2,0) node [above] {$u$} circle (2.5pt);
\filldraw[black] (2,0) node [above] {$v$} circle (2.5pt);
\filldraw[black] (0,3) node [above] {$w$} circle (2.5pt);

\draw [bend right=30] (-2,0) to (0,3) ;
\draw [bend left=30] (2,0) to (0,3) ;

\draw [dashed] (0,0)  -- (-1,0.75);
\draw [dashed] (0,0) -- (1,0.75);
\node (a) at (0,0.5) {$\leq\delta$};
\end{tikzpicture}
\end{minipage}

The most typical examples of a hyperbolic spaces are trees 
(where $\delta=0$), the hyperbolic
upper half-plane $\mathbb{H}=\mathbb{H}_{2}$ (where $\delta=\log(1+\sqrt{2})$),
and the Poincar\'{e} unit disc.

\paragraph{Gromov Metric. }\label{gromov_distance}Let us 
choose a reference point $o$ in $\mathsf{G}$ and define, for $x,y \in \mathsf{G}$, the \textit{Gromov inner product}
\begin{equation*}
|x\wedge y|=\frac{1}{2}\big[|x|+|y|-d(x,y)\big],
\end{equation*}
where $|x|=d(o,x)$. If $\mathsf{G}$ is a tree (which is $0$-hyperbolic), 
then this is the usual graph distance between $o$ and the geodesic $\pi(x,y)$. 
A good way to think of this is as follows. Using the thin triangles 
property of hyperbolic metric spaces, we see that two walkers moving 
from $o$ to $x$ and $y$ respectively along suitable geodesics will 
remain close together (less than $2\delta$ apart) for a certain distance, 
before beginning to diverge rapidly. The Gromov inner product measures 
approximately the length of time that the two walkers remain close together.

\paragraph{Hyperbolic Boundary and Compactification.}\label{hyperbolic boundary}
Assume that $\mathsf{G}$ is a locally finite, transitive $\delta$-hyperbolic graph, 
with the natural metric (discrete graph metric) $d$ on it. In order to describe the 
\textit{hyperbolic boundary} $\partial_{h}\mathsf{G}$ and the 
\textit{hyperbolic compactification} 
$\widehat{\mathsf{G}}$, we define a new metric on $\mathsf{G}$ which can be extended on the ``boundary''.

Choose a constant $a>0$, such that $(e^{3\delta a}-1)<\sqrt{2}-1$,
and define for $x,y \in \mathsf{G}$ and the fixed vertex $o$
\begin{equation*}
\varrho _{a}(x,y)=
\begin{cases}
0, & x=y  \\
\exp(-a | x\wedge y |), & x \neq y .
\end{cases}
\end{equation*}
This is not a metric unless $\mathsf{G}$ is a tree. We now define
\begin{equation*}\label{hyperbolic_metric}
\theta_{a}(x,y)=\inf \left\{ \ \sum_{i=1}^{n}\varrho _{a}(x_{i-1},x_{i}):
\ n\geq 1,\ x=x_{0},x_{1},\ldots ,x_{n}=y \in \mathsf{G} \right\}.
\end{equation*}
Then $\theta_{a}$ is a metric on $\mathsf{G}$, and the graph $\mathsf{G}$ is discrete 
in this metric. We define $\widehat{\mathsf{G}}$ as the completion of $\mathsf{G}$ 
in the metric $\theta_{a}$. For the hyperbolic graph $\mathsf{G}$, 
the space $\widehat{\mathsf{G}}$ is compact. The space $\widehat{\mathsf{G}}$ 
is called the 
\index{graph!hyperbolic!compactification}\textit{hyperbolic compactification} 
of the graph $\mathsf{G}$. 
Each isometry from $\AUT(\mathsf{G})$ extends to a homeomorphism of $\widehat{\mathsf{G}}$. 
A sequence $(x_{n})$ with $|x_{n}| \to \infty $ is Cauchy, if and only if 
\begin{equation}\label{cauchy1}
\lim_{m,n \to \infty} | x_{m}\wedge x_{n}| =\infty,
\end{equation}
and another Cauchy sequence $(y_{n})$ will define the same boundary point, if and only if 
\begin{equation}\label{cauchy2}
\lim_{n \to \infty } |x_{n} \wedge y_{n}| =\infty.
\end{equation} 
Thus, one can also construct the 
\index{graph!hyperbolic!boundary}\textit{hyperbolic boundary} $\partial_{h}\mathsf{G}$ 
by factoring the set of all sequences in $\mathsf{G}^{\Z_+}$, which satisfy 
\eqref{cauchy1}, with respect to the equivalence relation given by \eqref{cauchy2}. 
The topology of $\widehat{\mathsf{G}}$ does not depend on the choice of $a$, and it is also 
independent of the choice of the base point $o$. 

Similary to trees, a third, equivalent way is to describe the 
hyperbolic boundary $\partial_{h}\mathsf{G}$ via 
equivalence of geodesic rays.
Two rays $\pi=[x_{0},x_{1},\ldots]$ and 
$\pi'=[y_{0},y_{1},\ldots]$ are equivalent if 
\begin{equation*}
\liminf_{n\to\infty }d(y_{n},x_n)< \infty.
\end{equation*}

\paragraph*{Hyperbolic and End Compactification.}
For a hyperbolic graph $\mathsf{G}$, it is easy to understand how its hyperbolic 
boundary $\partial_{h}{\mathsf{G}}$ is related to the space of ends 
$\partial \mathsf{G}$.  The hyperbolic boundary is finer, that is, the identity on $\mathsf{G}$ extends to 
a continuos surjection from the hyperbolic to the end compactification 
which maps $\partial_{h}\mathsf{G}$ onto $\partial \mathsf{G}$. For trees, the two
compactifications are the same. 

The hyperbolic compactification of a hyperbolic graph is a contractive
compactification with respect to $\AUT(\mathsf{G})$. For a proof, see
{\sc Woess} \cite[Theorem 22.14]{WoessBook}. Moreover, we can also prove even the
weaker property of the hyperbolic boundary, namely the weak projectivity.

Recall the Definition \ref{def:weak_proj} of the weak projectivity.
 
\begin{lemma}\label{lem:projective_hyperbolic}
The hyperbolic boundary $\partial_{h}\mathsf{G}$ of a hyperbolic graph $\mathsf{G}$ is a 
weakly projective space.
\end{lemma}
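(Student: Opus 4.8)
The plan is to verify the defining condition of weak projectivity (Definition~\ref{def:weak_proj}) directly, working with the Gromov inner product $|x\wedge y|=\tfrac12\big(|x|+|y|-d(x,y)\big)$, where $|x|=d(o,x)$, and with the sequential description of $\partial_h\mathsf{G}$ provided by the Cauchy criteria \eqref{cauchy1} and \eqref{cauchy2}. So I would let $(x_n)$ and $(y_n)$ be sequences of vertices in $\mathsf{G}$ with $x_n\to\mathfrak{u}\in\partial_h\mathsf{G}$ and $d(x_n,y_n)/d(o,x_n)\to 0$, and aim to show $y_n\to\mathfrak{u}$. Since $\mathfrak{u}$ is a boundary point, $|x_n|=d(o,x_n)\to\infty$, and by \eqref{cauchy1} the sequence $(x_n)$ satisfies $\lim_{m,n}|x_m\wedge x_n|=\infty$.

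The first step is a purely metric estimate. From the triangle inequality $|y_n|\ge |x_n|-d(x_n,y_n)$ I obtain
$$|x_n\wedge y_n|=\tfrac12\big(|x_n|+|y_n|-d(x_n,y_n)\big)\ge |x_n|-d(x_n,y_n)=|x_n|\Big(1-\tfrac{d(x_n,y_n)}{|x_n|}\Big).$$
Because $d(x_n,y_n)/|x_n|\to 0$ and $|x_n|\to\infty$, this forces both $|y_n|\to\infty$ and $\lim_n|x_n\wedge y_n|=\infty$.

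The second, and key, step uses hyperbolicity through the $\delta$-inequality for the Gromov product, $|x\wedge z|\ge\min\{|x\wedge y|,|y\wedge z|\}-\delta$ for all $x,y,z\in\mathsf{G}$, valid in a $\delta$-hyperbolic space (see {\sc Woess}~\cite[Section 22]{WoessBook}). Applying it twice gives
$$|y_m\wedge y_n|\ge\min\{|y_m\wedge x_m|,\,|x_m\wedge x_n|,\,|x_n\wedge y_n|\}-2\delta.$$
As $m,n\to\infty$, each of the three products on the right tends to infinity (the outer two by the first step, the middle one because $(x_n)$ is Cauchy), so $\lim_{m,n}|y_m\wedge y_n|=\infty$. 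By \eqref{cauchy1} the sequence $(y_n)$ is then Cauchy and converges to some $\mathfrak{v}\in\partial_h\mathsf{G}$. Finally, since the first step already yields $\lim_n|x_n\wedge y_n|=\infty$, the equivalence criterion \eqref{cauchy2} shows that $(x_n)$ and $(y_n)$ define the same boundary point, i.e.\ $\mathfrak{v}=\mathfrak{u}$, so $y_n\to\mathfrak{u}$, as required.

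The only genuinely hyperbolic input is the $\delta$-inequality of the third step; everything else is bookkeeping with the inner product and the triangle inequality. The one technical point to watch is the exact additive constant in the $\delta$-inequality (it depends on whether one normalises via thin triangles or the four-point condition), but since the argument only uses that the bound has the form ``$\min$ minus a constant,'' its precise value is irrelevant when passing to the limit.
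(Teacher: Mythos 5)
Your proof is correct and its core computation coincides with the paper's: both arguments reduce the claim to showing $\lim_n\lvert x_n\wedge y_n\rvert=\infty$ and extract this from the Gromov product formula together with $d(x_n,y_n)/\lvert x_n\rvert\to 0$ and $\lvert x_n\rvert\to\infty$. The one place you go beyond the paper is your second step: the paper invokes the equivalence criterion \eqref{cauchy2} immediately after establishing $\lvert x_n\wedge y_n\rvert\to\infty$, without checking that $(y_n)$ is itself a Cauchy sequence in the sense of \eqref{cauchy1}, which is a hypothesis of that criterion. Your double application of the $\delta$-inequality $\lvert x\wedge z\rvert\ge\min\{\lvert x\wedge y\rvert,\lvert y\wedge z\rvert\}-\delta$ supplies exactly this missing verification, and it is the only point where hyperbolicity of $\mathsf{G}$ is actually used (the paper's written argument, as it stands, never uses $\delta$-hyperbolicity at all). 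So your version is the more complete one; the cost is only the small amount of bookkeeping in the three-term minimum, and, as you note, the precise additive constant is irrelevant in the limit.
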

\begin{proof}
Let $(x_n),(y_n)$ be sequences of vertices in $\mathsf{G}$ such that $(x_n)$ converges 
to a hyperbolic boundary point $\mathfrak{u}\in\partial_h \mathsf{G}$ and 
\begin{equation}\label{previous_eq}
\frac{d(x_{n},y_{n})}{|x_n|}\to 0 ,\text{ as }n\to\infty.
\end{equation} 
In order to prove that $(y_n)$ converges to the same boundary point $\mathfrak{u}$, 
we show that equation \eqref{cauchy2} holds. Using the Gromov inner product
\begin{equation*}
|x_n\wedge y_n|=\frac{1}{2}\big[|x_n|+|y_n|-d(x_n,y_n)\big],
\end{equation*}
we obtain
\begin{equation*}
|x_n\wedge y_n|=\frac{1}{2}|x_n|\Bigg[1+\frac{|y_n|}{|x_n|}-\frac{d(x_n,y_n)}{|x_n|}\Bigg].
\end{equation*}
Now equation \eqref{previous_eq} and $|x_n|\to\infty$ implies that $|x_n\wedge y_n|\to \infty$ as $n\to \infty$. 
Therefore the sequence $(y_n)$ converges to the same boundary point $\mathfrak{u}\in\partial_h \mathsf{G}$.
\end{proof}
We shall also use the fact that for every two distinct hyperbolic boundary points
$\mathfrak{u},\mathfrak{v}\in\partial_h \mathsf{G}$, there is an infinite geodesic
$\pi(\mathfrak{u},\mathfrak{v})$ between them, which may not be unique.
For the proof see again {\sc Woess}~\cite{WoessBook}.

The boundary $\partial_h \mathsf{G}$ of an infinite transitive hyperbolic graph $\mathsf{G}$ 
is either infinite or has cardinality $2$. In the latter case, 
it is a graph with two ends that is quasi-isometric with the 
two-way infinite path $\mathbb{Z}$, 
and the Poisson boundary of any homogeneous random walk with finite 
first moment is trivial. In the sequel, we consider hyperbolic graphs
$\mathsf{G}$, which have infinite boundary.

\section{LRW over Hyperbolic Graphs}

Let $\mathsf{G}$ be a transitive hyperbolic graph with infinite hyperbolic boundary
$\partial_{h}\mathsf{G}$ and $\Gamma\subset \AUT(\mathsf{G})$.
Like in the previous sections, $\Zn$ is a homogeneous lamplighter random walk with 
transition matrix $P$ on $\lgr$, such that the projection
of $\Zn$ on $\mathsf{G}$ is the random walk $\Xn$ with transition matrix $P_{\mathsf{G}}$ on $\mathsf{G}$.

Recall that the boundary $\partial (\lgr)$ of $\lgr$ is given by
\begin{equation*}
\partial (\lgr)=(\widehat{\mathcal{C}}\times \widehat{\mathsf{G}})\setminus (\mathcal{C} \times \mathsf{G}) 
\end{equation*}
and the dense subset $\Pi$ of it defined in \eqref{eq:pi_boundary}
is now replaced by
\begin{equation}\label{eq:pi_boundary_hyperbolic}
\Pi_h =\bigcup_{\mathfrak{u} \in \partial_h \mathsf{G}}\mathcal{C}_{\mathfrak{u}}\times \{\mathfrak{u}\}, 
\end{equation}
where the set $\mathcal{C}_{\mathfrak{u}}$ consists of all
configurations $\zeta$ which are either finitely supported,
or infinitely supported with $\supp(\zeta)$  accumulating only
at $\mathfrak{u}\in \partial_h \mathsf{G}$.

Taking into account the connection between the hyperbolic boundary $\partial_h \mathsf{G}$ and
the space of ends $\partial \mathsf{G}$ of a transitive graph $\mathsf{G}$, we have to distinguish
two different cases:
\begin{enumerate}[(a)]
 \item \textit{Infinite hyperbolic boundary and infinitely many ends.}
 \item \textit{Infinite hyperbolic boundary and only one end.}
\end{enumerate}
 
\subsection{Infinite Hyperbolic Boundary and Infinitely Many Ends} 

From the fact that the identity on $\mathsf{G}$ extends to a continuous 
surjection from the hyperbolic to the end compactification, 
which maps $\partial_{h}\mathsf{G}$ onto $\partial \mathsf{G}$, 
it follows that we are in the case of Chapter \ref{inf_many_ends}, 
i.e., of a graph with infinitely many ends. The ends are the 
connected components of the hyperbolic boundary.

The convergence of $\Zn$ is given by Theorem 
\ref{thm:conv_lrw_no_fixedend} and Theorem \ref{thm:conv_LRW_fixed_end} 
respectively (with the hyperbolic boundary $\partial_{h}\mathsf{G}$ 
instead of the space of ends $\partial \mathsf{G}$), depending on whether an 
element of $\partial_h \mathsf{G}$ is fixed under the action of $\Gamma$ or not.
 
The Poisson boundary of the lamplighter random walk $\Zn$ is given by:
\begin{itemize}
\item Theorem \ref{thm:poisson_lrw_gr_infends_nofixed_end}, with
$\Pi_h$ instead of $\Pi$, in the nondegenerate case
when no hyperbolic element of $\partial_h \mathsf{G}$ is fixed by $\Gamma$.
\item In the ``degenerate case'', when $\Gamma$ fixes one point in 
$\partial_h \mathsf{G}$ (which has to be unique, given that $\partial_h \mathsf{G}$
is assumed to be infinite), then $\mathsf{G}$ ``looks'' like in 
Figure \ref{fig:hom_tree_fixed_end}, since the ends are the
connected components of the hyperbolic boundary by {\sc Pavone}
\cite{Pavone1989}. The convergence of $LRW$
is given by Theorem \ref{thm:conv_LRW_fixed_end} 
and the Poisson boundary is described in Theorem 
\ref{thm:poisson_lrw_fixed_end} for the non-zero modular drift case,
and for zero modular drift in Theorem \ref{thm:poisson_fixed_end_zero_drift}.
\end{itemize}

\subsection{Infinite Hyperbolic Boundary and One End}

The problem of the existence of an one-ended hyperbolic graph $\mathsf{G}$
with a transitive group $\Gamma$ that fixes a boundary point
in $\partial_h \mathsf{G}$ is still unsolved. Experts belive that the
answer is negative. See the remarks at the end of the paper 
{\sc Kaimanovich and Woess}~\cite{KaimanovichWoess2002}. Since this
situation is only hypothetical, we shall consider in this section
the case when the graph $\mathsf{G}$ has infinite hyperbolic boundary
$\partial_h \mathsf{G}$ and only one end, and no element of $\partial_h \mathsf{G}$ is
fixed under the action of $\Gamma$.

For homogeneous random walks $\Xn$ on $\mathsf{G}$ with $|\partial_h \mathsf{G}|=\infty$ 
the following holds. See {\sc Woess}~\cite{Woess1993} for the proof.

\newpage

\begin{theorem}
\label{thm:conv_rw_hyperbolic}
If $\mathsf{G}$ is a hyperbolic graph and $\Gamma\subset \AUT(\mathsf{G})$ 
does not fix any element of $\partial_h \mathsf{G}$, then the random walk
$\Xn$ converges almost surely in the hyperbolic topology
to a random point $X_{\infty}\in\partial_h \mathsf{G}$. If $\mu_{\infty}$
is the limit distribution of $\Xn$, then
\begin{enumerate}[(a)]
      \item The support of $\mu_{\infty}$ is the whole $\partial_h \mathsf{G}$.
      \item The measure $\mu_{\infty}$ is continuous on $\partial \mathsf{G}$, that is 
		$\mu_{\infty}(\{\mathfrak{u}\})=0$, $\mathfrak{u}\in \partial_h \mathsf{G}$.
        \end{enumerate}
\end{theorem}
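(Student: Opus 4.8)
The plan is to establish the result in three stages: first secure transience of $\Xn$, then obtain almost sure convergence to $\partial_h \mathsf{G}$ from the contractivity of the hyperbolic compactification, and finally deduce the two properties of the hitting measure $\mu_{\infty}$ from the fact that $\Gamma$ acts non-elementarily on $\partial_h \mathsf{G}$.

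First I would record that $\Gamma$ is \emph{non-elementary}: since $|\partial_h \mathsf{G}|=\infty$ and $\Gamma$ fixes no point of $\partial_h \mathsf{G}$, it cannot fix a finite subset of the boundary either, for fixing a finite set would make $\Gamma$ elementary and force $\partial_h \mathsf{G}$ to be finite, contrary to $|\partial_h \mathsf{G}|=\infty$. In the spirit of Theorem \ref{thm:fixed_end_amenable} this makes $\Gamma$ non-amenable, so the homogeneous operator $P$ has spectral radius $\rho(P)<1$, whence $\Xn$ is transient. Transience guarantees that $\Xn$ leaves every ball $B(o,r)$ after a finite time, so that $|X_n|=d(o,X_n)\to\infty$ almost surely.

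For the convergence I would use that the hyperbolic compactification $\widehat{\mathsf{G}}$ is contractive with respect to $\AUT(\mathsf{G})$ (see {\sc Woess}~\cite{WoessBook}). Writing the walk as $X_n=S_n o$ for the induced right random walk on $\Gamma$, the increments beyond time $n$ are independent of $S_n$, and the north--south dynamics of hyperbolic isometries force the translates $S_n$ to contract $\widehat{\mathsf{G}}$ towards a single boundary point. Concretely, one checks through the Gromov inner product that
\begin{equation*}
|X_m\wedge X_n|=\tfrac{1}{2}\bigl(d(o,X_m)+d(o,X_n)-d(X_m,X_n)\bigr)\longrightarrow\infty
\end{equation*}
as $m,n\to\infty$: Kingman's subadditive ergodic theorem (Theorem \ref{thm:kingman}) applied to $d(o,X_n)$ yields a drift $\ell\ge 0$, positive by non-amenability, and applied to the shifted increments gives $d(X_m,X_n)\approx\ell\,(n-m)$, so that $|X_m\wedge X_n|\gtrsim\ell\,m$. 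By the Cauchy criteria \eqref{cauchy1} and \eqref{cauchy2} this makes $(X_n)$ almost surely Cauchy in the metric $\theta_a$, convergent to a random $X_{\infty}\in\partial_h \mathsf{G}$. The hard part will be exactly this step: turning the heuristic $d(X_m,X_n)\approx\ell(n-m)$ into rigorous control of the fluctuations of the Gromov product, and it is here that one invokes either the positivity of the drift or, more robustly, the contraction property of $\widehat{\mathsf{G}}$ to bypass a direct fluctuation estimate.

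It then remains to analyse $\mu_{\infty}$, the law of $X_{\infty}$ started at $o$. Factoring on the first step shows that $\mu_{\infty}$ is $\mu$-stationary, $\mu_{\infty}=\sum_{\gamma}\mu(\gamma)\,\gamma_{*}\mu_{\infty}$. For part \textbf{(a)}, the support of a stationary measure is a closed $\Gamma$-invariant subset of $\partial_h \mathsf{G}$; since $\Gamma$ is non-elementary and transitive on $\mathsf{G}$, its limit set is all of $\partial_h \mathsf{G}$ and the action there is minimal, forcing $\supp(\mu_{\infty})=\partial_h \mathsf{G}$. For part \textbf{(b)}, I would run the standard Furstenberg atom argument: if atoms existed, let $m$ be the maximal atom mass (attained, as at most $1/m$ atoms of mass $m$ can occur) and let $A$ be the finite set of maximal atoms; stationarity gives $m=\mu_{\infty}(\{\xi\})=\sum_{\gamma}\mu(\gamma)\mu_{\infty}(\gamma^{-1}\xi)\le m$, and equality forces $\gamma^{-1}A=A$ for every $\gamma\in\supp(\mu)$, hence for all of $\Gamma$. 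Thus $\Gamma$ would fix the finite set $A\subset\partial_h \mathsf{G}$, contradicting non-elementarity. Therefore $\mu_{\infty}(\{\mathfrak{u}\})=0$ for every $\mathfrak{u}$, i.e.\ $\mu_{\infty}$ is continuous.
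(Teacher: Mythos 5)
The paper does not prove this theorem at all: it is quoted from the literature with the reference to {\sc Woess}~\cite{Woess1993}, so there is no internal proof to compare against. Your overall architecture (transience via non-amenability, convergence via the boundary theory, then stationarity plus the minimality and Furstenberg atom arguments for (a) and (b)) is the standard one, and the treatment of $\mu_{\infty}$ in the last paragraph is essentially sound. The problem lies in the convergence step, which you yourself flag as the hard part, and it is a genuine gap rather than a routine verification.

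First, your main route runs through Kingman's theorem applied to $d(o,X_n)$, which requires finite first moment; but the theorem carries no moment hypothesis, and the paper states explicitly right after it that ``the convergence to the boundary of homogeneous random walks $\Xn$ on hyperbolic graphs holds without any need of the first moment assumption.'' Second, even granting finite first moment and positive drift $\ell>0$, the estimate $d(X_m,X_n)\approx \ell(n-m)$ does not follow from subadditive ergodic theory in the form you need. What one can control is the one-sided bound $d(X_m,X_n)\le\sum_{i=m+1}^{n}d(X_{i-1},X_i)\sim L(n-m)$ with $L=\E\,d(X_0,X_1)\ge\ell$, and substituting this into the Gromov product gives
\begin{equation*}
2\,|X_m\wedge X_n|\;\ge\;(\ell+L)\,m-(L-\ell)\,n+o(n),
\end{equation*}
which tends to $-\infty$ as $n\to\infty$ for fixed $m$ whenever $L>\ell$. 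So the claimed lower bound $|X_m\wedge X_n|\gtrsim \ell m$ is not established, and controlling the fluctuations of $d(X_m,X_n)$ uniformly in $n\ge m$ is precisely the content of the ray-approximation results, which are usually derived \emph{after} boundary convergence is known, not before. The proof that actually works (Woess~\cite{Woess1993}, see also Kaimanovich~\cite{Kaimanovich2000}) is the one you mention only in passing: show that the hyperbolic compactification is a contractive $\Gamma$-compactification, produce a $\mu$-stationary measure $\nu$ on $\widehat{\mathsf{G}}$ as a weak limit of Ces\`aro averages of $\mu^{(n)}\ast\delta_o$, use the martingale convergence theorem to show that $S_n\nu$ converges a.s.\ to a point mass $\delta_{X_\infty}$, and then use contractivity to convert this into convergence of $X_n=S_no$ itself. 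As written, your argument asserts the conclusion of that machinery (``north--south dynamics force the translates to contract'') without supplying it, so the central claim of the theorem is not yet proved.
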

The convergence to the boundary of homogeneous random walks $\Xn$ on 
hyperbolic graphs $\mathsf{G}$ holds without any need of the first moment assumption.
In order to have similar convergence results for lamplighter random walks $\Zn$, 
we need this assumption on the base random walk $\Xn$.
From now on, we suppose that $\Xn$ has finite first moment on 
the transitive base graph $\mathsf{G}$.
Recall also that the hyperbolic boundary $\partial_h \mathsf{G}$ is 
weakly projective, by Lemma \ref{lem:projective_hyperbolic}.

Convergence of lamplighter random walks $\Zn$ on $\lgr$, with
$\mathsf{G}$ a hyperbolic graph, follows easily from Theorem 
\ref{thm_conv_lrw_general_graphs}, since Assumption 
\ref{assumptions_brw} is satisfied. Recall the definition 
\eqref{eq:pi_boundary_hyperbolic} of the boundary $\Pi_h$ of the
graph $\lgr$.

\begin{theorem}
\label{thm:conv_lrw_hyperbolic}
Let $\Zn$ be an irreducible, homogeneous random walk with finite 
first moment on $\lgr$, where $\mathsf{G}$ is a hyperbolic graph and $\Gamma\subset \AUT(\mathsf{G})$ 
does not fix any element of $\partial_h \mathsf{G}$. Then there exists a $\Pi_h$-valued 
random variable $Z_{\infty}$ such that $Z_n\to Z_{\infty}$ almost surely, in the topology of $\widehat{\lgr}$, 
for  every starting point. The distribution of $Z_{\infty}$ is a continuous measure on $\Pi_h$. 
\end{theorem}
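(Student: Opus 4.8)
The plan is to deduce this statement directly from the general convergence result, Theorem \ref{thm_conv_lrw_general_graphs}, whose only nontrivial input is Assumption \ref{assumptions_brw} on the base walk $\Xn$. Thus the whole proof reduces to verifying that this assumption holds when the base graph $\mathsf{G}$ is hyperbolic with infinite boundary $\partial_h \mathsf{G}$ and $\Gamma\subset\AUT(\mathsf{G})$ fixes no point of $\partial_h \mathsf{G}$, and then invoking the general theorem with the roles of $\partial \mathsf{G}$ and $\Pi$ played by $\partial_h \mathsf{G}$ and $\Pi_h$ respectively.

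First I would check the three conditions of Assumption \ref{assumptions_brw}. Condition (A1) holds by hypothesis, since we assume throughout this section that $\Xn$ has finite first moment on the transitive base graph $\mathsf{G}$. Condition (A2) is exactly the content of Theorem \ref{thm:conv_rw_hyperbolic}: under the stated hypotheses $\Xn$ converges almost surely in the hyperbolic topology to a random point $X_{\infty}\in\partial_h \mathsf{G}$, with hitting distribution $\mu_{\infty}$. Condition (A3) is provided by Lemma \ref{lem:projective_hyperbolic}, which asserts that the hyperbolic boundary $\partial_h \mathsf{G}$ is weakly projective. I would also record that the hyperbolic compactification $\widehat{\mathsf{G}}$ is compatible with the group action, in the sense that every isometry in $\AUT(\mathsf{G})$ extends to a homeomorphism of $\widehat{\mathsf{G}}$, so that the machinery underlying Theorem \ref{thm_conv_lrw_general_graphs} applies verbatim with $\partial_h \mathsf{G}$ in place of the abstract boundary $\partial \mathsf{G}$.

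With Assumption \ref{assumptions_brw} in force, Theorem \ref{thm_conv_lrw_general_graphs} yields a $\Pi_h$-valued random variable $Z_{\infty}=(\eta_{\infty},X_{\infty})$ such that $Z_n\to Z_{\infty}$ almost surely in the topology of $\widehat{\lgr}$, for every starting point. Here $X_{\infty}\in\partial_h \mathsf{G}$ is the limit of the base walk and $\eta_{\infty}$ is the (not necessarily finitely supported) limit configuration whose support accumulates only at $X_{\infty}$, so that $(\eta_{\infty},X_{\infty})$ indeed lies in $\Pi_h$ as defined in \eqref{eq:pi_boundary_hyperbolic}.

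Finally, for the continuity of the limit distribution I would invoke part (b) of Theorem \ref{thm:conv_rw_hyperbolic}, which guarantees that $\mu_{\infty}$ is a continuous measure on $\partial_h \mathsf{G}$, carrying no point mass. By the concluding part of the proof of Theorem \ref{thm_conv_lrw_general_graphs}, continuity of the base hitting distribution transfers to continuity of the limit distribution of $\Zn$ on $\Pi_h$: were there a single point of $\Pi_h$ with positive hitting probability, its projection to $\partial_h \mathsf{G}$ would yield a point of positive $\mu_{\infty}$-mass, contradicting the continuity of $\mu_{\infty}$. I do not anticipate a genuine obstacle, since the result is an instance of the general method established earlier; the only points demanding care are the clean identification of the abstract boundary $\partial \mathsf{G}$ appearing in Theorem \ref{thm_conv_lrw_general_graphs} with the hyperbolic boundary $\partial_h \mathsf{G}$, and the verification that $\Pi_h$ is the correct replacement for $\Pi$.
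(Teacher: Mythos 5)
Your proposal is correct and follows exactly the route the paper takes: the paper's own proof simply states that the result follows from Theorem \ref{thm_conv_lrw_general_graphs} (and Theorem \ref{thm:conv_lrw_no_fixedend}) with $\partial_h \mathsf{G}$ in place of $\partial \mathsf{G}$, after noting that Assumption \ref{assumptions_brw} is satisfied via Theorem \ref{thm:conv_rw_hyperbolic} and Lemma \ref{lem:projective_hyperbolic}. Your write-up is in fact more explicit than the paper's about checking (A1)--(A3) and about how continuity of $\mu_{\infty}$ transfers to the limit distribution on $\Pi_h$, but the substance is identical.
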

\begin{proof}
The proof of this result follows basically the proof of 
Theorem \ref{thm_conv_lrw_general_graphs} and Theorem 
\ref{thm:conv_lrw_no_fixedend},
with the hyperbolic boundary $\partial_h \mathsf{G}$ instead 
of the space of ends $\partial \mathsf{G}$.
\end{proof}

\subsubsection{Poisson Boundary}

Let us first recall the description of the Poisson
boundary of random walks $\Xn$ over transitive hyperbolic graphs $\mathsf{G}$, which will 
be used for the Poisson boundary of lamplighter random walks $\Zn$. For
sake of completeness, we also give here the idea of the proof. For 
a detailed proof, see also {\sc Kaimanovich} \cite{Kaimanovich2000}. 

\begin{theorem}
\label{thm:poisson_rw_hyperbolic}
Let $\mathsf{G}$ be a hyperbolic graph with $|\partial_h \mathsf{G}|=\infty$, 
and $\Gamma\subset \AUT(\mathsf{G})$ does not fix any element of $\partial_h \mathsf{G}$.
If $\Xn$ is an homogeneous random walk with finite first moment on $\mathsf{G}$, then
its Poisson boundary  is $(\partial_h \mathsf{G},\mu_{\infty})$.
\end{theorem}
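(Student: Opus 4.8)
The plan is to identify the Poisson boundary by means of the Strip Criterion (Theorem \ref{thm:strip_crit}), exactly as was done for the base walk on graphs with infinitely many ends in Theorem \ref{thm:poisson_rw_inf_ends}, but now with the hyperbolic boundary in place of the space of ends. First I would record that, by Theorem \ref{thm:conv_rw_hyperbolic}, the walk $\Xn$ converges almost surely to a random point $X_{\infty}\in\partial_h \mathsf{G}$ whose distribution $\mu_{\infty}$ is continuous with full support. Together with the finite first moment assumption and the weak projectivity of $\partial_h \mathsf{G}$ (Lemma \ref{lem:projective_hyperbolic}), this shows that $(\partial_h\mathsf{G},\mu_{\infty})$ is a $\mu$-boundary in the sense of Definition \ref{def:mu_bndr}. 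Since the hypotheses are symmetric under time reversal (the reversed walk $(\check{X}_n)$ has the same finite first moment and $\Gamma$ still fixes no point of $\partial_h \mathsf{G}$), the same theorem produces a $\check{\mu}$-boundary $(\partial_h\mathsf{G},\check{\mu}_{\infty})$ for $(\check{X}_n)$.

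Next I would construct the strip. Because $\mu_{\infty}$ is continuous, the diagonal $\{(\mathfrak{u},\mathfrak{v}):\mathfrak{u}=\mathfrak{v}\}$ is $(\mu_{\infty}\times\check{\mu}_{\infty})$-null, so it suffices to define $\mathfrak{s}(\mathfrak{u},\mathfrak{v})$ for distinct $\mathfrak{u}\neq\mathfrak{v}$. For such a pair I would fix a bi-infinite geodesic $\pi(\mathfrak{u},\mathfrak{v})$ joining them, which exists by the fact recalled before the statement, and set
\begin{equation*}
\mathfrak{s}(\mathfrak{u},\mathfrak{v})=\{x\in\mathsf{G}:d(x,\pi(\mathfrak{u},\mathfrak{v}))\leq C\}
\end{equation*}
for a suitable constant $C=C(\delta)$. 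The thickening by $C$ is forced by hyperbolicity: geodesics between two given boundary points need not be unique, but by the $\delta$-thin triangles property any two of them stay within bounded Hausdorff distance, so for $C$ large enough $\mathfrak{s}(\mathfrak{u},\mathfrak{v})$ is independent of the chosen representative geodesic and the assignment $(\mathfrak{u},\mathfrak{v})\mapsto\mathfrak{s}(\mathfrak{u},\mathfrak{v})$ is measurable. Equivariance is immediate: every $\gamma\in\Gamma$ is an isometry of $\mathsf{G}$ that extends to a homeomorphism of $\widehat{\mathsf{G}}$ carrying geodesics to geodesics, whence $\gamma\,\mathfrak{s}(\mathfrak{u},\mathfrak{v})=\mathfrak{s}(\gamma\mathfrak{u},\gamma\mathfrak{v})$.

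Finally I would verify the growth condition \eqref{eq:subexp_strip_growth}, where I expect the estimate to be in fact linear rather than merely subexponential. The part of the geodesic $\pi(\mathfrak{u},\mathfrak{v})$ lying in $B(o,n+C)$ consists of at most $2(n+C)+1$ vertices, since a geodesic meets a ball of radius $r$ in a segment of length at most $2r$; and because $\mathsf{G}$ has bounded geometry, each vertex of the geodesic has at most $M_C$ vertices within distance $C$. Hence $|\mathfrak{s}(\mathfrak{u},\mathfrak{v})\cap B(o,n)|\leq M_C\,(2(n+C)+1)$, and therefore
\begin{equation*}
\frac{1}{n}\log|\mathfrak{s}(\mathfrak{u},\mathfrak{v})\cap B(o,n)|\to 0,\quad\text{as }n\to\infty.
\end{equation*}
With all three conditions of Theorem \ref{thm:strip_crit} verified, the Strip Criterion yields that $(\partial_h\mathsf{G},\mu_{\infty})$ is the Poisson boundary of $\Xn$. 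The main obstacle, and the point requiring the most care, is precisely the geometric control of the strip: one must invoke the thin-triangles property to show that all geodesics between $\mathfrak{u}$ and $\mathfrak{v}$ fellow-travel, so that a single bounded neighbourhood already captures every relevant ``parallel'' direction, and then combine this with local finiteness to convert the essential one-dimensionality of the geodesic into the linear cardinality bound above.
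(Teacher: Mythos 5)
Your proposal is correct and follows essentially the same route as the paper: apply the Strip Criterion with $(\partial_h\mathsf{G},\mu_\infty)$ and $(\partial_h\mathsf{G},\check\mu_\infty)$ as the $\mu$- and $\check\mu$-boundaries, take the strip to be the geodesics joining $\mathfrak{u}$ and $\mathfrak{v}$, and use the $\delta$-thin triangles (fellow-travelling) property plus local finiteness to get a linear, hence subexponential, bound on $|\mathfrak{s}(\mathfrak{u},\mathfrak{v})\cap B(o,n)|$. The only cosmetic difference is that you take a bounded neighbourhood of one chosen geodesic while the paper takes the union of all geodesics between the two boundary points; by the fellow-traveller property these sets agree up to bounded thickening, so the arguments are interchangeable.
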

\begin{proof}
The proof is very similar with the proof of Theorem 
\ref{thm:poisson_rw_inf_ends}. By Theorem \ref{thm:conv_rw_hyperbolic} 
and Definition \ref{def:mu_bndr}, 
the space $(\partial_h \mathsf{G},\mu_{\infty})$ is a $\mu$-boundary for the random walk
$\Xn$. Theorem \ref{thm:conv_rw_hyperbolic} applies also to the reversed 
random walk $(\check{X}_{n})$, with the limit distribution 
$\check{\mu}_{\infty}$, and $(\partial \mathsf{G},\check{\mu}_{\infty})$ is a 
$\mu$-boundary for $(\check{X}_{n})$. Recall that $\mu$ is the probability measure
on $\Gamma$ which is uniquely induced by the transition matrix $P_{\mathsf{G}}$
of $\Xn$ as in \eqref{eq:correspondence_rw}.

Apply the Strip Criterion \ref{thm:strip_crit}
and define the strip $\mathfrak{s}(\mathfrak{u},\mathfrak{v})$, for
$\mathfrak{u},\mathfrak{v}\in\partial_h \mathsf{G}$.
By continuity of $\mu_{\infty}$ and $\check{\mu}_{\infty}$, we have 
\begin{equation*}
\mu_{\infty}\times\ \check{\mu}_{\infty}\big(\{\mathfrak{u},\mathfrak{v}\in\partial_h \mathsf{G}: 
\mathfrak{u}=\mathfrak{v}\}\big) =0.
\end{equation*}
Therefore, we have to construct the strip $\mathfrak{s}(\mathfrak{u},\mathfrak{v})$
only in the case $\mathfrak{u} \neq \mathfrak{v}$. Let
\begin{equation}\label{eq:strip_rw_hyperbolic}
\mathfrak{s}(\mathfrak{u},\mathfrak{v})=\{x\in \mathsf{G}:x
\text{ lies on a two way infinite geodesic }  \text{ between }\mathfrak{u}\text{ and }\mathfrak{v}\}.
\end{equation} 
The strip $\mathfrak{s}(\mathfrak{u},\mathfrak{v})$ is set 
of all points $x$ from all geodesics in $\mathsf{G}$ joining 
$\mathfrak{u}$ and $\mathfrak{v}$. This is a subset of $\mathsf{G}$, and 
\begin{equation*}
\gamma \mathfrak{s}(\mathfrak{u},\mathfrak{v})=\mathfrak{s}(\gamma\mathfrak{u},\gamma\mathfrak{v}), 
\end{equation*}
for every $\gamma\in\Gamma$. In a $\delta$-hyperbolic 
graph any two geodesics with the same endpoints are 
within uniformly bounded distance at most $2\delta$ one from another 
(see \cite{GhysHarpe1990} for details), 
and the geodesics have linear growth. This implies that  
there exists a constant $c>0$, such that 
\begin{equation*}
|\mathfrak{s}(\mathfrak{u},\mathfrak{v})\cap B(o,n)|\leq cn,
\end{equation*}
for all $n$ and distinct $\mathfrak{u},\mathfrak{v}\in\partial_{h} \mathsf{G}$. 
This proves the subexponential growth of 
$\mathfrak{s}(\mathfrak{u},\mathfrak{v})$, which completes 
the proof.
\end{proof}

In order to describe the Poisson boundary of lamplighter 
random walks $\Zn$ over $\lgr$, when $\mathsf{G}$ is a $\delta$-hyperbolic graph
with infinite hyperbolic boundary and only one end, we need some
additional facts.

Consider the hyperbolic graph $\mathsf{G}$ and its hyperbolic boundary 
$\partial_{h}\mathsf{G}$ as being 
described by equivalence of geodesic rays. For $y\in \mathsf{G}$ and 
$\mathfrak{u}\in\partial_{h}\mathsf{G}$ let 
$\pi=[y=y_{0},y_{1},\ldots,\mathfrak{u}]$ be a 
geodesic ray joining $y$ with $\mathfrak{u}$. For every $x\in \mathsf{G}$, let 
\begin{equation*}
\beta_{\mathfrak{u}}(x,\pi)=\limsup_{i\to\infty}\big(d(x,y_{i})-i\big).
\end{equation*} 
Define the \textit{Busemann function}
\begin{equation*}
\beta_{\mathfrak{u}}:\mathsf{G}\times \mathsf{G}\rightarrow \mathbb{R}, \text{ for } \mathfrak{u}\in\partial_{h}\mathsf{G}, 
\end{equation*}
as follows:
\begin{equation*}
\beta_{\mathfrak{u}}(x,y)=\sup\{\beta_{\mathfrak{u}}(x,\pi'):
\pi'\text{ is a geodesic ray from }y\text{ to } \mathfrak{u}\}.
\end{equation*}
The \textit{horosphere} with the centre in $\mathfrak{u}$ and passing through 
$x\in \mathsf{G}$, denoted $H_{x}(\mathfrak{u})$ is the set
\begin{equation*}
H_{x}(\mathfrak{u})=\{ y\in \mathsf{G}:\ \beta_{\mathfrak{u}}(x,y)=0\}.
\end{equation*}
For every $x,y\in \mathsf{G}$ and $\mathfrak{u}\in\partial_{h}\mathsf{G}$ the 
distances $d(x,\mathfrak{u})$ and $d(y,\mathfrak{u})$ are not defined, 
but the Busemann function $\beta_{\mathfrak{u}}(x,y)$ gives sense to the expression 
$d(x,\mathfrak{u})-d(y,\mathfrak{u})$, 
which is of the type $\infty -\infty$. 
One can think of $\beta_{\mathfrak{u}}(x,y)$ as being the distance 
between the horospheres $H_{x}(\mathfrak{u})$ and $H_{y}(\mathfrak{u})$, 
where $\beta_{\mathfrak{u}}(x,y)>0$ if $x$ is at the 
exterior of the horoball limited by $H_{y}(\mathfrak{u})$.
For properties of the Busemann function, see 
\cite[Chapter 8]{GhysHarpe1990}.

\begin{theorem}
\label{thm:poisson_lrw_hyperbolic_graphs}
Let $\Zn$ be an irreducible, homogeneous random walk with 
finite first moment on $\lgr$, where $\mathsf{G}$ is a hyperbolic 
transitive graph with $|\partial_e\mathsf{G}|=1$ and $|\partial_h \mathsf{G}|=\infty$.
If $\Gamma\subset \AUT(\mathsf{G})$ acts transitively on $\mathsf{G}$ and does not 
fix any element in $\partial_h \mathsf{G}$, and $\nu_{\infty}$ is 
the limit distribution on $\Pi_h$,
with $\Pi_h$ defined in \eqref{eq:pi_boundary_hyperbolic},
then $(\Pi_h,\nu_{\infty})$ is the Poisson boundary of $\Zn$.
\end{theorem}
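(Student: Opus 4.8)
The plan is to verify that the base graph $\mathsf{G}$ and the base walk $\Xn$ satisfy all the hypotheses of the \textit{Half-Space Method} of Section \ref{sec:half_space_method}, and then invoke Theorem \ref{PoissonTheorem} essentially verbatim, exactly as was done for graphs with infinitely many ends in Theorem \ref{thm:poisson_lrw_gr_infends_nofixed_end}. The only genuinely new ingredient compared with that case is the construction of the equivariant half-space partition: a one-ended graph cannot be disconnected by a finite cut, so the partition into $\mathsf{G}_{\pm}$ must be produced coarsely, and this is precisely where the Busemann function $\beta_{\mathfrak{u}}$ introduced before the theorem enters.

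First I would record that Assumption \ref{assumptions_brw} holds for both $\Xn$ and $(\check{X}_n)$: the finite first moment is part of the hypothesis and is inherited by the reversed walk; convergence to $\partial_h \mathsf{G}$ with continuous, full-support hitting distributions $\mu_{\infty},\check{\mu}_{\infty}$ is Theorem \ref{thm:conv_rw_hyperbolic}; and weak projectivity of $\partial_h \mathsf{G}$ is Lemma \ref{lem:projective_hyperbolic}. The convergence of $\Zn$ and $(\check{Z}_n)$ to $\Pi_h$ with continuous limit distributions $\nu_{\infty},\check{\nu}_{\infty}$ is Theorem \ref{thm:conv_lrw_hyperbolic}, so that $(\Pi_h,\nu_{\infty})$ and $(\Pi_h,\check{\nu}_{\infty})$ are a $\nu$- and a $\check{\nu}$-boundary, respectively. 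By continuity the diagonal $\{\mathfrak{u}=\mathfrak{v}\}$ is $(\mu_{\infty}\times\check{\mu}_{\infty})$-null, so only the case $\mathfrak{u}\neq\mathfrak{v}$ must be treated.

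For the strip I would take $\mathfrak{s}(\mathfrak{u},\mathfrak{v})$ exactly as in \eqref{eq:strip_rw_hyperbolic}, the union of all two-way infinite geodesics joining $\mathfrak{u}$ and $\mathfrak{v}$; its $\Gamma$-equivariance is immediate and its linear (hence subexponential) growth was already established inside the proof of Theorem \ref{thm:poisson_rw_hyperbolic} via the $2\delta$-fellow-traveling of geodesics with common endpoints, which I would simply quote. The new step is the half-space partition. For $x\in\mathfrak{s}(\mathfrak{u},\mathfrak{v})$ I would set
\begin{equation*}
\mathsf{G}_{+}(x)=\{y\in \mathsf{G}:\ \beta_{\mathfrak{u}}(x,y)\ge \beta_{\mathfrak{v}}(x,y)\},\qquad \mathsf{G}_{-}(x)=\mathsf{G}\setminus \mathsf{G}_{+}(x).
\end{equation*}
Since every $\gamma\in\Gamma$ acts as an isometry and the Busemann cocycle transforms by $\beta_{\gamma\mathfrak{u}}(\gamma x,\gamma y)=\beta_{\mathfrak{u}}(x,y)$, the assignment $(\mathfrak{u},\mathfrak{v},x)\mapsto \mathsf{G}_{\pm}(x)$ is $\Gamma$-equivariant. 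Thin-triangle estimates show that for $y$ in a small enough boundary neighbourhood of $\mathfrak{u}$ one has $\beta_{\mathfrak{u}}(x,y)\to+\infty$ and $\beta_{\mathfrak{v}}(x,y)\to-\infty$, so $\mathsf{G}_{+}(x)$ contains a neighbourhood of $\mathfrak{u}$, and symmetrically $\mathsf{G}_{-}(x)$ contains a neighbourhood of $\mathfrak{v}$.

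With these three items in hand Theorem \ref{PoissonTheorem} applies directly: for $b_{+}=(\phi_{+},\mathfrak{u})$ and $b_{-}=(\phi_{-},\mathfrak{v})$ in $\Pi_h$ one glues $\phi_{-}$ on $\mathsf{G}_{+}$ and $\phi_{+}$ on $\mathsf{G}_{-}$ to form the finitely supported configuration $\Phi(b_{+},b_{-},x)$ — finiteness holds because $\phi_{+}$ accumulates only at $\mathfrak{u}$, which $\mathsf{G}_{-}$ avoids, and symmetrically for $\phi_{-}$ — and lifts $\mathfrak{s}(\mathfrak{u},\mathfrak{v})$ to the strip $S(b_{+},b_{-})$ of \eqref{eq:lamplighter_strip}, which is $\lgrp$-equivariant and of subexponential growth; the Strip Criterion (Theorem \ref{thm:strip_crit}) then identifies $(\Pi_h,\nu_{\infty})$ as the Poisson boundary of $\Zn$. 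I expect the main obstacle to be exactly the well-definedness and the neighbourhood property of these Busemann half-spaces: because Busemann functions on a merely $\delta$-hyperbolic graph are coarse cocycles, defined only up to an $O(\delta)$ error and through a supremum over geodesic rays, one must check carefully that the resulting fuzziness is confined to a bounded region around the geodesic and spoils neither the finite support of the glued configuration nor the containment of genuine boundary neighbourhoods of $\mathfrak{u}$ and $\mathfrak{v}$.
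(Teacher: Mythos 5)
Your proposal is correct and follows the paper's own route: verify Assumption \ref{assumptions_brw} via Theorem \ref{thm:conv_rw_hyperbolic} and Lemma \ref{lem:projective_hyperbolic}, reuse the geodesic strip \eqref{eq:strip_rw_hyperbolic} with its $2\delta$-fellow-travelling linear-growth bound, supply an equivariant half-space partition, and feed everything into Theorem \ref{PoissonTheorem}. The one place where you diverge is the partition itself. The paper sets $\mathsf{G}_{+}(x)=H_{x}(\mathfrak{u})$ and $\mathsf{G}_{-}(x)=H_{x}(\mathfrak{v})\setminus H_{x}(\mathfrak{u})$, i.e.\ it uses the two horoballs centred at $\mathfrak{u}$ and $\mathfrak{v}$ through $x$ together with a third ``degenerate'' region on which $\Phi$ is set to $0$; you instead compare the two Busemann cocycles and take $\mathsf{G}_{+}(x)=\{y:\beta_{\mathfrak{u}}(x,y)\ge\beta_{\mathfrak{v}}(x,y)\}$ with complement $\mathsf{G}_{-}(x)$. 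Both are $\Gamma$-equivariant and both isolate neighbourhoods of $\mathfrak{u}$ and $\mathfrak{v}$ (with the sign convention of the paper, $\beta_{\mathfrak{u}}(x,y)\to+\infty$ and $\beta_{\mathfrak{v}}(x,y)\to-\infty$ as $y\to\mathfrak{u}$, which is exactly what you use), so either choice makes the glued configuration $\Phi(b_{+},b_{-},x)$ finitely supported and Theorem \ref{PoissonTheorem} applies. Your version buys a genuine two-set partition with no leftover region, and it sidesteps a small imprecision in the paper, where $H_{x}(\mathfrak{u})$ is literally defined as the level set $\{y:\beta_{\mathfrak{u}}(x,y)=0\}$ even though the argument needs the corresponding horoball; the price is exactly the caveat you flag yourself, namely that Busemann functions on a $\delta$-hyperbolic graph are only coarse cocycles, so the boundary between $\mathsf{G}_{+}(x)$ and $\mathsf{G}_{-}(x)$ is fuzzy up to $O(\delta)$ — harmless here, since the $O(\delta)$ ambiguity stays in a bounded neighbourhood of the strip and affects neither the finite support of $\Phi$ nor the containment of boundary neighbourhoods.
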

\begin{proof}
We apply Theorem \ref{PoissonTheorem}. First of all, we 
check that the conditions required in the 
\textit{Half Space Method} are satisfied for
$\Xn$ and $(\check{X}_n)$ on $\mathsf{G}$, which have finite first
moments. From Theorem \ref{thm:conv_rw_hyperbolic}
and Lemma \ref{lem:projective_hyperbolic} it follows that
the Assumption \ref{assumptions_brw} holds. 

For the second requirement in the \textit{Half Space Method}
consider the strip $s(\mathfrak{u},\mathfrak{v})\subset \mathsf{G}$
defined in \eqref{eq:strip_rw_hyperbolic}, which has
sub-exponential growth.

Finally, let us partition $\mathsf{G}$ into half-spaces. Actually, 
this is one of the examples where the partition is made into 
two half-spaces and another ``degenerate'' set on 
which the lamplighter configuration will be set up to be  $0$. 
For every $x\in\mathfrak{s}(\mathfrak{u},\mathfrak{v})$, 
let $H_{x}(\mathfrak{u})$ (respectively, $H_{x}(\mathfrak{v})$) 
be the horosphere with center $\mathfrak{u}$ (respectively, $\mathfrak{v}$) 
and passing through $x$. Remark that the two horospheres 
may have non empty intersection. Consider the 
partition of $\mathsf{G}$ into the subsets $\mathsf{G}_{+}$, $\mathsf{G}_{-}$, and $\mathsf{G}\setminus(\mathsf{G}_{+}\cup \mathsf{G}_{-})$, 
where 
\begin{equation*}
\mathsf{G}_{+}(x)=H_{x}(\mathfrak{u}) 
\end{equation*}
contains a neighbourhood of $\mathfrak{u}$, and
\begin{equation*}
\mathsf{G}_{-}(x)=H_{x}(\mathfrak{v})\setminus H_{x}(\mathfrak{u}), 
\end{equation*}
contains a neighbourhood of $\mathfrak{v}$. This partition is $\Gamma$-equivariant. 

Up to now, we have checked that the assumptions required in the
\emph{Half-Space Method }\ref{sec:half_space_method} are fulfilled,
when $\mathsf{G}$ is a hyperbolic graph. Apply now Theorem \ref{PoissonTheorem}.

By Theorem \ref{thm:conv_lrw_hyperbolic} each of the random walks 
$\Zn$ and $(\check{Z}_{n})$ 
starting at $(\mathbf{0},o)$ converges almost surely to a 
$\Pi_h$-valued random variable, with $\Pi_h$ given in 
\eqref{eq:pi_boundary_hyperbolic}, with limit distributions 
$\nu_{\infty}$ and $\check{\nu}_{\infty}$ respectively.
Then $(\Pi_h,\nu_{\infty})$ and 
$(\Pi_h,\check{\nu}_{\infty})$ are $\nu$- and $\check{\nu}$- boundaries 
of the respective random walks. Take
\begin{equation*}
b_{+}=(\phi_{+},\mathfrak{u}),\text{ and } b_{-}=(\phi_{-},\mathfrak{v})\in\Pi_h ,
\end{equation*} 
where $\phi_{+}$ and $\phi_{-}$ are the limit configurations of $(Z_{n})$ and 
$(\check{Z}_{n})$, respectively, and $\mathfrak{u},\mathfrak{v}\in\partial \mathsf{G}$ 
are their only respective accumulation points. 

Define the configuration 
$\Phi(b_{+},b_{-},x)$ like in \eqref{StripConfiguration}, that is,
\begin{equation*}
\Phi(b_{+},b_{-},x)=
\begin{cases}
\phi_{-}, & \text{on}\  H_{x}(\mathfrak{u})\\
\phi_{+}, & \text{on}\  H_{x}(\mathfrak{v})\setminus H_{x}(\mathfrak{u})\\
0,        & \text{on}\  \mathsf{G}\setminus \big(H_{x}(\mathfrak{u})\cup H_{x}(\mathfrak{v})\big)
\end{cases} 
\end{equation*}
and the strip $S(b_{+},b_{-})$ exactly like in \eqref{eq:lamplighter_strip}, i.e.,
\begin{equation*}
S(b_{+},b_{-})=\{\left(\Phi,x\right) :\  x\in\mathfrak{s}(\mathfrak{u},\mathfrak{v})\}.
\end{equation*}
For a graphic visualization of the above construction of the strip
and lamps configuration $\Phi$, see Figure \ref{fig:horosphere_strip}.

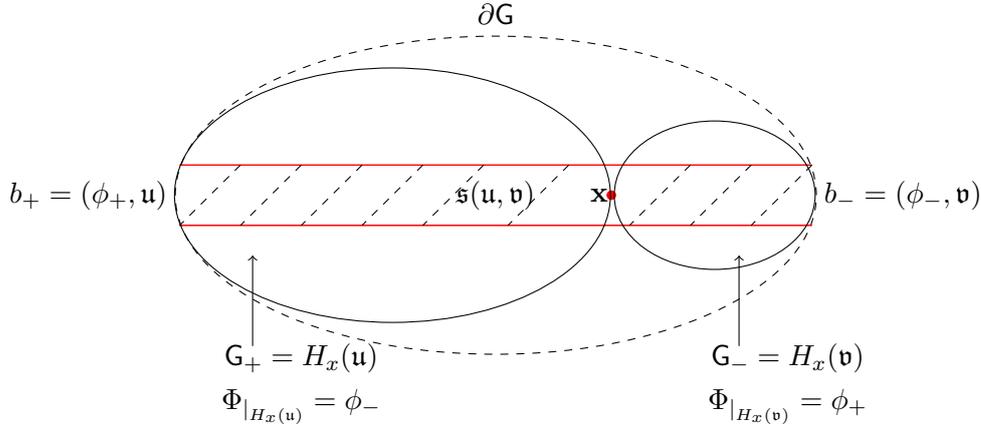
\begin{figure}[h]

\begin{tikzpicture}[scale=0.8]

\draw[dashed,thin] (0,0) ellipse (150pt and 75pt);
\node at (6.7,0) {$b_{-}=(\phi_{-},\mathfrak{v})$};
\node at (-6.7,0) {$b_{+}=(\phi_{+},\mathfrak{u})$};
\node at (0,3) {$\partial \mathsf{G}$};

\draw[thin,red] (-5.2,0.5) to (5.2,0.5);
\draw[thin,red] (-5.2,-0.5) to (5.2,-0.5);

\draw[thin, red] (-5.2,0.5)--(5.2,0.5);
\draw[thin, red] (-5.2,-0.5)--(5.2,-0.5);
\draw[thin,dashed] (-4.2,0.5)--(-5.2,-0.5);
\draw[thin,dashed] (-3.2,0.5)--(-4.2,-0.5);
\draw[thin,dashed] (-2.2,0.5)--(-3.2,-0.5); 
\draw[thin,dashed] (-1.2,0.5)--(-2.2,-0.5);
\draw[thin,dashed] (-0.2,0.5)--(-1.2,-0.5);
\draw[thin,dashed] (1.2,0.5)--(0.2,-0.5);

\draw[thin,dashed] (3.2,0.5)--(2.2,-0.5);
\draw[thin,dashed] (4.2,0.5)--(3.2,-0.5);
\draw[thin,dashed] (5.2,0.5)--(4.2,-0.5);

\node at (1.7,0) {$\mathbf{x}$};
\filldraw[red] (1.9,0) circle (2pt);
\node at (0,0) {$\mathbf{\mathfrak{s}(\mathfrak{u},\mathfrak{v})}$};
\draw (3.6,0) ellipse (47 pt and 35 pt);
\draw (-1.7,0) ellipse (102pt and 60 pt );

\draw[->] (4,-2.5) -> (4,-1);
\node at (4.8,-2.7) {$\mathsf{G}_{-}=H_x(\mathfrak{v})$};
\node at (4.8,-3.5) {$\Phi_{|_{H_x(\mathfrak{v})}}=\phi_{+}$};;
\draw[->] (-4,-2.5) -> (-4,-1);
\node at (-3.2,-2.7) {$\mathsf{G}_{+}=H_x(\mathfrak{u})$};
\node at (-3.2,-3.5) {$\Phi_{|_{H_x(\mathfrak{u})}}=\phi_{-}$};;

\end{tikzpicture}

\caption{The strip construction using horospheres}
\label{fig:horosphere_strip}
\end{figure}

From Theorem \ref{PoissonTheorem}, $S(b_{+},b_{-})$ satisfies the 
conditions from Theorem \ref{thm:strip_crit}, and it follows 
that the space $(\Pi_h,\nu_{\infty})$ is the Poisson boundary of 
the lamplighter random walk $\Zn$ over $\lgr$.
\end{proof}
 
\chapter{LRW over Euclidean Lattices}

The wreath product $\Z_2\wr\Z^d$ was first considered in
{\sc Kaimanovich and Vershik} \cite{KaimanovichVershik1983}
as a source of several examples and counterexamples
illustrating the relationship between growth, amenability
and the Poisson boundary for random walks on groups.

In this chapter we consider lamplighter random walks over Euclidean lattices
$\mathbb{Z}^d$. The associated lamplighter graph is $\mathbb{Z}_2\wr\mathbb{Z}^d$.
We show how to apply the \textit{Half Space Method} \ref{sec:half_space_method} 
in order to describe the Poisson boundary
of lamplighter random walks $\Zn$, in the case when the random walk 
$\Xn$ of $\Z^d$ has non-zero drift.

We emphasize that the results in this
section were earlier obtained by Kaimanovich {\sc Kaimanovich}
\cite{Kaimanovich2001} in the non-zero drift case. Nevertheless,
we still recall them, as another application of our methods.
For the zero drift case, the description of the Poisson boundary 
was recently done in {\sc Erschler}
\cite{Erschler2010}, by using a modified version of the Ray Criterion.

\section{Random Walks on $\Z_2\wr\Z^d$}

Let now $\mathsf{G}= \Z^{d}$, $d\geq 3$, be the $d$-dimensional lattice, 
with the Euclidean metric $|\cdot|$ on it. For $\Z^{d}$, there 
are also natural boundaries and compactifications. A nice example 
of compactification is obtained by embedding $\Z^{d}$ into the 
$d$-dimensional unit disc via the map $x\mapsto x/(1+|x|)$, 
and taking the closure. In this compactification, the boundary 
$\partial\Z^{d}$ is the unit sphere $S_{d-1}$ in $\mathbb{R}^d$, 
and a sequence $(x_{n})$ in $\Z^{d}$ converges to $\mathfrak{u}\in S_{d-1}$ 
if and only if 
\begin{equation*}
|x_{n}|\to\infty \text{ and } \frac{x_{n}}{|x_{n}|}\to\mathfrak{u} \text{ as } n\to\infty.
\end{equation*}
Recall first the Definition \ref{def:weak_proj} of a weakly projective boundary.

\newpage

\begin{lemma}
The boundary $S_{d-1}$ is a weakly projective boundary.  
\end{lemma}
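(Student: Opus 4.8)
The plan is to verify Definition~\ref{def:weak_proj} directly, using the explicit description of convergence to $S_{d-1}$ recalled just above: a sequence $(x_n)$ in $\Z^d$ converges to $\mathfrak{u}\in S_{d-1}$ precisely when $\abs{x_n}\to\infty$ and $x_n/\abs{x_n}\to\mathfrak{u}$. So I would start with sequences $(x_n),(y_n)$ of vertices such that $x_n\to\mathfrak{u}$ and $\abs{x_n-y_n}/\abs{x_n}\to 0$ (here the reference vertex is $o=0$ and $d(\cdot,\cdot)=\abs{\cdot-\cdot}$ is the Euclidean metric), and then show that $(y_n)$ satisfies the same two conditions.

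First I would establish that $\abs{y_n}\to\infty$. This is immediate from the triangle inequality: one has
\begin{equation*}
\abs{y_n}\ge\abs{x_n}-\abs{x_n-y_n}=\abs{x_n}\Bigl(1-\tfrac{\abs{x_n-y_n}}{\abs{x_n}}\Bigr),
\end{equation*}
and the factor in parentheses tends to $1$ while $\abs{x_n}\to\infty$, so $\abs{y_n}\to\infty$.

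The substantive (though still routine) step is to control the directions $y_n/\abs{y_n}$, which I would compare with $x_n/\abs{x_n}$ via the splitting
\begin{equation*}
\Abs{\frac{y_n}{\abs{y_n}}-\frac{x_n}{\abs{x_n}}}\le \Abs{\frac{y_n}{\abs{y_n}}-\frac{y_n}{\abs{x_n}}}+\Abs{\frac{y_n}{\abs{x_n}}-\frac{x_n}{\abs{x_n}}}.
\end{equation*}
The second summand equals $\abs{y_n-x_n}/\abs{x_n}\to0$ by hypothesis, while the first equals $\bigl|\,1-\abs{y_n}/\abs{x_n}\,\bigr|=\bigl|\,\abs{x_n}-\abs{y_n}\,\bigr|/\abs{x_n}\le\abs{x_n-y_n}/\abs{x_n}\to0$ by the reverse triangle inequality. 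Hence $y_n/\abs{y_n}-x_n/\abs{x_n}\to0$, and since $x_n/\abs{x_n}\to\mathfrak{u}$ we conclude $y_n/\abs{y_n}\to\mathfrak{u}$, i.e.\ $(y_n)$ converges to $\mathfrak{u}$ as well.

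There is no serious obstacle here; the only point requiring a moment's care is that weak projectivity permits $\abs{x_n-y_n}$ to grow — only $\abs{x_n-y_n}=o(\abs{x_n})$ is assumed — so one cannot invoke ordinary projectivity, but the normalisation estimate above is exactly what absorbs this. I would remark in passing that this confirms $S_{d-1}$ is weakly projective without being projective, in agreement with the Remark following Definition~\ref{def:weak_proj}, so that Assumption~\ref{assumptions_brw}(A3) is available for random walks on $\Z^d$ converging to $S_{d-1}$.
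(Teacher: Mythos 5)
Your proof is correct and follows essentially the same route as the paper: both arguments reduce the claim to showing $\abs{y_n}/\abs{x_n}\to 1$ and then transfer the convergence of directions from $x_n/\abs{x_n}$ to $y_n/\abs{y_n}$ (the paper factors $y_n/\abs{x_n}=(y_n/\abs{y_n})\cdot(\abs{y_n}/\abs{x_n})$, you use an equivalent triangle-inequality splitting), and your explicit check that $\abs{y_n}\to\infty$ is a small but welcome addition. One caveat on your closing aside: the Remark following Definition~\ref{def:weak_proj} asserts that weak projectivity \emph{implies} projectivity (it is the stronger property, despite the name), so the claim that $S_{d-1}$ is ``weakly projective without being projective'' misstates that relationship and should be dropped.
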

\begin{proof}
Let $(x_{n})$ be a sequence which converges to $\mathfrak{u}\in S_{d-1}$, 
and $(y_{n})$ be another sequence in $\Z^d$, such that 
\begin{equation*}
\frac{x_{n}}{|x_{n}|}\to\mathfrak{u}, \text{ and } \frac{|x_{n}-y_{n}|}{|x_{n}|}\to 0 \text{ as }n\to\infty.
\end{equation*}
Since 
\begin{equation*}
\Big{|}\dfrac{x_{n}}{|x_{n}|}-\dfrac{y_{n}}{|x_{n}|}\Big{|} = \dfrac{|x_{n}-y_{n}|}{|x_{n}|}\to 0,
\text{ as }n\to\infty,
\end{equation*}
it follows that $\frac{y_{n}}{|x_{n}|}\to\mathfrak{u}$.  Now
\begin{equation*}
\frac{y_{n}}{|x_{n}|}=\frac{y_{n}}{|y_{n}|}\cdot\frac{|y_{n}|}{|x_{n}|} 
\end{equation*}

and the sequence $|y_{n}|/|x_{n}|$ of real numbers converges to $1$, 
since we can bound it from above and from below by two sequences both converging to 1. 
Therefore $y_{n}/|y_{n}|\to\mathfrak{u}$, and this proves the desired.
\end{proof}

Consider the random walk $\Xn$ with law $\mu$ on $\Z^d$
and the lamplighter random walk $\Zn$ with law $\nu$ on $\Z_2\wr\Z^d$.
If the law $\mu$ of $\Xn$ has non-zero first moment (drift)
\begin{equation*}
m=\sum_{x}x\mu(x)\in\mathbb{R}^{d}, 
\end{equation*}
then the law of large numbers implies that $\Xn$ converges to the 
boundary $S_{d-1}$ in this compactification with deterministic limit $m/|m|$. 
In particular, the limit distribution $\mu_{\infty}$ is the Dirac mass at this point. 

Let us now state the result on the Poisson boundary of lamplighter random 
walks $\Zn$ on $\Z_2\wr\Z^d$ in the case of non-zero drift.

\begin{theorem}\label{thm:poisson_lrw_lattices}
Let $\Zn$, with $Z_{n}=(\eta_{n},X_{n})$, be a random walk
with law $\nu$ on $\Z_{2}\wr \mathbb{Z}^{d}$, $d\geq 3$,
such that $\supp(\nu)$ generates 
$\Z_{2}\wr \mathbb{Z}^{d}$, and $\Xn$ has non-zero drift on 
$\mathbb{Z}^{d}$. If $\nu$ has finite first moment,
and $\Pi$ is defined as in \eqref{eq:pi_boundary}
with the unit sphere $S_{d-1}$ instead of $\partial \mathsf{G}$, then
$(\Pi,\nu_{\infty})$ is the Poisson boundary of $\Zn$, where 
$\nu_{\infty}$ is the limit distribution of $\Zn$ on $\Pi$.
\end{theorem}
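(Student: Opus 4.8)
The plan is to apply the \emph{Half-Space Method} of Section \ref{sec:half_space_method} and to conclude by Theorem \ref{PoissonTheorem}. Accordingly, I would verify its three hypotheses for the base walk $\Xn$ and its reversal $(\check{X}_n)$ on $\Z^d$, equipped with the compactification whose boundary is the unit sphere $S_{d-1}$. Irreducibility and homogeneity of $\Xn$ follow from the assumption that $\supp(\nu)$ generates $\Z_2\wr\Z^d$ and from the fact that a walk on a group is automatically homogeneous.

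First, Assumption \ref{assumptions_brw}. Since $\nu$ has finite first moment, its projection $\mu$ has finite first moment on $\Z^d$, giving (A1); the same holds for $\check\mu$, because reversal preserves the moment ($|\gamma^{-1}|=|\gamma|$). For (A2), the non-vanishing drift $m=\sum_x x\,\mu(x)\neq 0$ and the strong law of large numbers force $X_n/|X_n|\to m/|m|$ almost surely, so $\Xn$ converges to the \emph{deterministic} boundary point $\mathfrak{u}_0=m/|m|$ with $\mu_\infty=\delta_{\mathfrak{u}_0}$; likewise $(\check{X}_n)$ has drift $-m$ and converges to $\mathfrak{v}_0=-m/|m|$ with $\check\mu_\infty=\delta_{\mathfrak{v}_0}$. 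Assumption (A3) is exactly the preceding lemma: $S_{d-1}$ is weakly projective.

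The decisive — and, in this amenable setting, delicate — point is the strip. Because $\mu_\infty$ and $\check\mu_\infty$ are point masses, the only pair to treat is the single (antipodal, hence distinct) pair $(\mathfrak{u}_0,\mathfrak{v}_0)$; and since translations act as the identity on $S_{d-1}$, the equivariance requirement $\gamma\,\mathfrak{s}(\mathfrak{u}_0,\mathfrak{v}_0)=\mathfrak{s}(\gamma\mathfrak{u}_0,\gamma\mathfrak{v}_0)$ degenerates to translation-invariance of the strip. I would therefore simply take $\mathfrak{s}(\mathfrak{u}_0,\mathfrak{v}_0)=\Z^d$: it is non-empty and trivially $\Z^d$-invariant, and its subexponential growth
\begin{equation*}
\frac1n\log|\,\Z^d\cap B(o,n)| = \frac1n\log|B(o,n)|\to 0
\end{equation*}
is precisely the statement that $\Z^d$ has polynomial growth. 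This is the step where the Euclidean (amenable) nature of the base intervenes, and I expect it to be the main conceptual obstacle: no proper \emph{thin} subset can be equivariant here, so it is only the polynomial growth of the whole lattice that rescues the Strip Criterion — which is coherent with the base Poisson boundary being trivial.

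Finally, the half-space partition. For $x\in\mathfrak{s}(\mathfrak{u}_0,\mathfrak{v}_0)=\Z^d$ I would cut $\Z^d$ by the affine hyperplane through $x$ orthogonal to $m$, setting
\begin{equation*}
\Z^d_+(x)=\{y\in\Z^d:\langle y-x,m\rangle>0\},\qquad \Z^d_-(x)=\{y\in\Z^d:\langle y-x,m\rangle\le 0\}.
\end{equation*}
Then $\Z^d_+(x)$ contains a neighbourhood of $\mathfrak{u}_0$ and $\Z^d_-(x)$ a neighbourhood of $\mathfrak{v}_0$, and the assignment $x\mapsto\Z^d_\pm(x)$ is translation-equivariant, $\Z^d_\pm(x+v)=v+\Z^d_\pm(x)$. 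With all three hypotheses of the Half-Space Method in place, Theorem \ref{PoissonTheorem} gives that $(\Pi,\nu_\infty)$ is the Poisson boundary of $\Zn$. I would only have to record, as in the proof of Theorem \ref{PoissonTheorem}, that gluing $\phi_+$ on $\Z^d_-$ and $\phi_-$ on $\Z^d_+$ yields a finitely supported configuration: $\supp(\phi_+)$ accumulates only at $\mathfrak{u}_0$, which lies in the open hemisphere disjoint from $\overline{\Z^d_-}$, so $\supp(\phi_+)\cap\Z^d_-(x)$ is finite, and symmetrically for $\phi_-$.
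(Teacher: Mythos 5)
Your proposal is correct and follows essentially the same route as the paper's proof: verification of Assumption \ref{assumptions_brw} via the law of large numbers (deterministic antipodal limit points $\pm m/|m|$), the choice of the full lattice $\mathfrak{s}(\mathfrak{u},\mathfrak{v})=\Z^d$ as the subexponentially growing strip, and the half-space partition by the hyperplane through $x$ orthogonal to the drift direction, followed by an application of Theorem \ref{PoissonTheorem}. Your additional observations (that equivariance degenerates to translation-invariance, and that the glued configuration is finitely supported because each limit configuration accumulates only in the opposite open hemisphere) are accurate and consistent with the paper's argument.
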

Thus, the Poisson boundary of lamplighter random walks
is described by the space of infinite limit configurations of lamps.
\begin{proof}
Here it is easy to check the requirements in the \textit{Half Space Method}.

The random walk $\Xn$ (respectively $(\check{X}_n)$) converges to 
the boundary $S_{d-1}$ with deterministic limit 
$\mathfrak{u}=m/|m|$ (respectively, $\mathfrak{v}=-m/|m|$), 
in the case of non-zero drift $m$. The limit distributions 
$\nu_{\infty}$ and $\check{\nu}_{\infty}$ are the Dirac-masses at the
respective limit points.

Define the strip $\mathfrak{s}(\mathfrak{u},\mathfrak{v})=\Z^{d}$. 
It does not depend on the limit points, it is $\Z^{d}$-equivariant, 
and it has polynomial growth of order $d$, that is, also subexponential growth. 

Next, let us partition $\Z^{d}$ into half-spaces. 
Denote by $\pi(\mathfrak{u},\mathfrak{v})$ the geodesic 
of $S_{d-1}$ joining the two deterministic boundary points 
$\mathfrak{u},\mathfrak{v}\in S_{d-1}$. 
This is exactly the diameter of the ball, since the points $\mathfrak{u}$ and 
$\mathfrak{v}$ are antipodal points, i.e., they are opposite through the centre. 
For every $x\in\mathfrak{s}(\mathfrak{u},\mathfrak{v})=\Z^{d}$, 
consider the hyperplane which passes through $x$ and is orthogonal to 
$\pi(\mathfrak{u},\mathfrak{v})$. This hyperplane cuts $\Z^{d}$ into two disjoint spaces 
$\Z_{+}$ and $\Z_{-}$, containing $\mathfrak{u}$ and $\mathfrak{v}$, 
respectively. The half-spaces $\Z_{+}$ and $\Z_{-}$ are $\Z^{d}$-equivariant.
Apply now Theorem \ref{PoissonTheorem}.

By Theorem \ref{thm_conv_lrw_general_graphs} each of the random walks $(Z_{n})$ and 
$(\check{Z}_{n})$ converges almost surely to a $\Pi$-valued random variable, 
where $\Pi$ is defined as in \eqref{eq:pi_boundary}, with $S_{d-1}$ instead of $\partial \mathsf{G}$. 
Nevertheless, the only ``active'' points of non-zero $\mu_{\infty}$- and 
$\check{\mu}_{\infty}$-measure on $S_{d-1}$ are  $\mathfrak{u}=m/|m|$ and 
$\mathfrak{v}=-m/|m|$, respectively. More precisely, $\Pi$ can be written as
\begin{equation*}
\Pi =\Big{(}\mathcal{C}_{\mathfrak{u}}\times\{\mathfrak{u}\}\Big{)}\cup
\Big{(}\mathcal{C}_{\mathfrak{v}}\times\{\mathfrak{v}\}\Big{)}, 
\end{equation*}
where $\mathcal{C}_{\mathfrak{u}}$ (respectively, $\mathcal{C}_{\mathfrak{v}}$) 
is the set of all configurations accumulating only at $\mathfrak{u}$ (respectively, $\mathfrak{v}$).

If $\nu_{\infty}$  and $\check{\nu}_{\infty}$ are the limit distributions of $\Zn$ 
and $(\check{Z}_{n})$ on $\Pi$, then the spaces 
$(\Pi,\nu_{\infty})$ and $(\Pi,\check{\nu}_{\infty})$ are 
$\nu$- and $\check{\nu}$- boundaries of the respective random walks. Take 
\begin{equation*}
b_{+}=(\phi_{+},\mathfrak{u})\text{ and } b_{-}=(\phi_{-},\mathfrak{v})\in\Pi, 
\end{equation*}
where $\phi_{+}$ and $\phi_{-}$ are the limit configurations of $\Zn$ and 
$(\check{Z}_{n})$, respectively, and $\mathfrak{u},\mathfrak{v}$ 
are their only respective accumulation points. Define the configuration 
$\Phi(b_{+},b_{-},x)$ like in \eqref{StripConfiguration}, and the strip 
$S(b_{+},b_{-})$ exactly like in \eqref{eq:lamplighter_strip}.
From Theorem \ref{PoissonTheorem}, $S(b_{+},b_{-})$ satisfies the 
conditions from Theorem \ref{thm:strip_crit}, and it follows that 
the space $(\Pi,\nu_{\infty})$ is the Poisson boundary of the lamplighter 
random walk $\Zn$ over $\Z_{2}\wr \Z^d$.
\end{proof}

\chapter{Open Problems on LRW}

The goal of this chapter is to give a brief overview 
on some problems that are related to the first part of 
the thesis. This is only a small personal selection of 
the vast questionings concerning Lamplighter Random Walks.

\section{Poisson Boundary of LRW}

Let $\mathcal{T}$ be the oriented tree in Figure \ref{fig:tree_zero_drift}
with a fixed end $\omega$, and $\Xn$ be a random walk with zero modular 
drift $\delta(P)$ on $\mathcal{T}$. Consider the associated lamplighter random 
walk $Z_n=(\eta_n,X_n)$ on $\Z_2\wr\mathcal{T}$. In Theorem 
\ref{thm:poisson_fixed_end_zero_drift},
we have proved that the Poisson boundary of $\Zn$ is the space
of limit configurations of lamps together with the
respective hitting distribution, only 
when $\Xn$ and $\Zn$ are both of nearest neighbour type. This means 
that the configuration of the lamp can be changed only at the current position. This assumption
cannot be avoided in our proof.

It will be interesting to generalize this result when the base random walk 
$\Xn$ has bounded range (not range $1$ like in our case)
and the lamp configuration can be changed in a bounded neighbourhood of the
current position (not only at the current position like in our settings).

\begin{conjecture}
For any random walk $\Zn$ with bounded range on $\Z_2\wr\mathcal{T}$, such that
the projection $\Xn$ on $\mathcal{T}$ has zero drift, its 
Poisson boundary is isomorphic with the space of infinite limit configurations 
of lamps, endowed with the respective hitting distribution. 
\end{conjecture}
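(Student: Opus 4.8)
The plan is to follow the architecture of the proof of Theorem~\ref{thm:poisson_fixed_end_zero_drift}, replacing the notion of a single cut vertex by that of a \emph{cut block} whose width exceeds the range of the walk. First I would keep the geometric reinterpretation of Figure~\ref{fig:tree_zero_drift}: the tree $\mathcal{T}$ is the geodesic ray $\pi(o,\omega)\cong\Z_+$ with a subtree $\mathcal{T}_k$ rooted at each $k$, and the limit lamp configuration $\eta_\infty$ is recorded as a generalized configuration $\Phi_\infty$ with $\Phi_\infty(k)\in\mathcal{C}_k$. Let $R$ denote the (bounded) range of $\Zn$. Since the base walk $\Xn$ still has $\delta(P_{\mathcal{T}})=0$ it converges to $\omega$ almost surely, so its $\Z_+$-shadow still escapes to $\infty$; the difference is that a single step may now cross up to $R$ consecutive levels, so that one vertex of the ray no longer disconnects the already-visited region from the future one.

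To restore a separating structure I would replace the exit times \eqref{eq:stop_times} by exit times from \emph{slabs} $\{k,k+1,\dots,k+R-1\}$ of width $R$, together with a record of the entry vertex in the boundary horocycle. This produces an induced process $(Y_n)$ on $\Z_+$, enriched by the finitely many possible entry positions, which is transient with drift to the right and has increments bounded by a constant depending only on $R$. Applying the James--Peres theorem (Theorem~\ref{thm:inf_cutpoints}, together with its Markov-chain refinement in \cite{JamesPeres1997}) to $(Y_n)$ would yield infinitely many cut times of strictly positive density, exactly as in Remark~\ref{rem:inf_cut_points}. At such a cut time the associated slab is crossed once and never revisited, and \emph{because its width is at least $R$ the walk cannot jump over it}; this is the substitute for the separating role of the subtree $\mathcal{T}_t$ used in Lemma~\ref{lem:sigma_alg}.

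With this in hand I would reprove the separation lemma: at a cut block the lamp configuration on all subtrees lying on the $o$-side is finalized and equals the corresponding restriction of $\Phi_\infty$, while the subsequent evolution modifies only the $\omega$-side. Building the extended ``lamplighter type'' chain $(\tilde Z_n)$ on $\tilde{\mathcal{C}}\times\Z_+$ as before, the configuration increments between two consecutive cut blocks are, conditionally on $\Phi_\infty$, mutually independent, so Kolmogorov's $0$--$1$ law gives that the tail $\sigma$-algebra of $(\tilde Z_n)$ is generated by $\Phi_\infty$, the analogue of Proposition~\ref{prop:exchangeable}. The identification of the tail boundary with the Poisson boundary then follows from the covering Markov operator argument of \cite{Kaimanovich1995} used in Proposition~\ref{prop:poisson_tail}, and one concludes that the Poisson boundary of $\Zn$ is the space of infinite limit configurations with its hitting distribution $\nu_\infty$.

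The hard part will be the conditional independence of the configuration increments between successive cut blocks. Unlike the nearest-neighbour case, a bounded-range walk can modify lamps within distance $R$ of a slab boundary from either side, so the decomposition \eqref{eq:limit_cfg_decomposition} is no longer exact at a single point; one must show that the width-$R$ cut block genuinely decouples the past and future excursions, and that the enriched induced walk $(Y_n)$ is Markovian with bounded increments so that Theorem~\ref{thm:inf_cutpoints} applies. Verifying that the entry-vertex bookkeeping keeps the state space lattice-like, hence within the scope of the James--Peres cut-point theory, and that the density of cut blocks remains positive under zero drift, is the crux on which the whole argument rests.
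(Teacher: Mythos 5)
This statement is presented in the paper as an open \emph{conjecture}: the thesis proves the zero-drift identification of the Poisson boundary (Theorem \ref{thm:poisson_fixed_end_zero_drift}) only under the nearest-neighbour hypothesis and the local condition \eqref{eq:local_cond}, explicitly remarks that ``this assumption cannot be avoided in our proof,'' and leaves the bounded-range case to future work. There is therefore no proof in the paper against which your argument can be checked; what you have written is a research programme, and it should be judged as such.

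As a programme it is sensible --- the slab/cut-block idea is the natural way to try to extend the cut-point argument --- but the three steps on which everything rests are asserted rather than established, and each is a genuine gap. First, the induced process $(Y_n)$ obtained from exit times of width-$R$ slabs is not obviously a Markov chain with a state space to which Theorem \ref{thm:inf_cutpoints} applies: the entry position in a boundary horocycle of $\mathcal{T}_k$ ranges over an infinite set, so the ``enriched'' chain is not lattice-like, and the James--Peres cut-point theorem is stated for bounded-increment walks on $\Z^d$; invoking ``its Markov-chain refinement'' without specifying which hypothesis of \cite{JamesPeres1997} is being verified leaves the existence and positive density of cut times unproved. Second, the decoupling at a cut block is exactly where the nearest-neighbour proof uses its hypothesis: in Lemma \ref{lem:sigma_alg} the configuration on the cut subtree $\mathcal{T}_t$ is \emph{never touched}, which is what makes the decomposition \eqref{eq:limit_cfg_decomposition} exact and lets the event $C_n$ be read off from $\Phi_\infty$ alone. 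With range $R$ the lamps inside the cut block can be modified both before and after the cut time, so $\Phi_n$ is no longer a deterministic function of $\Phi_\infty$ and the cut location, and the conditional independence needed for the Kolmogorov $0$--$1$ law in the analogue of Proposition \ref{prop:exchangeable} does not follow. Third, the paper already warns (in the remark following Proposition \ref{prop:poisson_tail}) that, unlike in \cite{JamesPeres1997}, equal limit configurations do not force equal cut-time sequences, and a ``special argument'' was needed even in the nearest-neighbour case; your sketch does not address how that argument survives when the block contents are not finalized. You have correctly located the crux, but locating it is not the same as resolving it: as written, this remains a conjecture.
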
 

\section{Return Probability Asymptotics of LRW}

Let $\mathsf{G}$ be an infinite graph, $\Z_2$ the finite set of lamp states,
and $\lgr$ the associated lamplighter graph.
Consider the lamplighter random walk $\Zn$ on $\lgr$. Recall first a known result 
due to {\sc Varopoulos}~\cite{Varopoulos1983}
and {\sc Pittet and Saloff-Coste}~\cite{Pittet_Saloff2001}
on the return probabilities of random walks on $\lgr$.

\begin{theorem}
If $\Zn$, with $Z_n=(\eta_n,X_n)$, is the Switch-Walk-Switch random walk
on $\lgr$ and the transition
matrix on $\Z_2$ is uniform, i.e., $p(\cdot,\cdot)=1/2$, then the
$n$-step return probabilities are
\begin{equation*}
 q^{(n)}\big((\eta,x),(\eta,x)\big)=\mathbb{E}_x\Big(2^{-R_n}\mathbf{1}_{\{X_n=x\}}\Big),
\end{equation*}
where $\mathbb{E}_x$ is the expectation on the trajectory space of $\Xn$
starting at $x$, and $R_n$ represents the \index{random walk!range}\textit{range} 
of the random walk $\Xn$ on $\mathsf{G}$. 
\end{theorem}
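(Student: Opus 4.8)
The plan is to unfold the Switch-Walk-Switch dynamics and condition on the trajectory of the base walk $\Xn$. Recall that $Q=\bar{P}_{\mathbb{Z}_{2}}\cdot\bar{P}_{\mathsf{G}}\cdot\bar{P}_{\mathbb{Z}_{2}}$, so one step of $\Zn$ from $(\eta,x)$ consists of: randomizing the lamp at $x$, performing a $\mathsf{G}$-move $x\to x'$ according to $p_{\mathsf{G}}(x,\cdot)$, and randomizing the lamp at $x'$. Since the transition matrix on $\mathbb{Z}_{2}$ is uniform, each \emph{switch} replaces the current lamp value by an independent fair coin, regardless of the previous state. In particular the projection of $Q$ onto $\mathsf{G}$ is exactly $P_{\mathsf{G}}$, so the projected process is the base random walk $\Xn$ whose law underlies $\mathbb{E}_{x}$.

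First I would fix a base trajectory $(X_{0}=x,X_{1},\dots,X_{n})$ and describe the conditional law of the final configuration $\eta_{n}$. The set of vertices whose lamp is touched by some switch along this trajectory is precisely the set of visited sites $\{X_{0},\dots,X_{n}\}$, whose cardinality is the range $R_{n}$. Lamps at vertices outside this set are never modified, hence retain their initial values. For a visited vertex $v$, the value of its lamp at time $n$ equals the outcome of the \emph{last} switch applied to $v$; by uniformity and independence of the switch coins, this is a fair bit, and the bits attached to distinct vertices arise from distinct coins and are therefore mutually independent and independent of the trajectory. Consequently, conditioned on the trajectory, $\eta_{n}$ restricted to $\{X_{0},\dots,X_{n}\}$ is uniformly distributed on $\{0,1\}^{R_{n}}$, while off this set it coincides with $\eta$.

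Assembling these facts gives the claim. A return to $(\eta,x)$ requires $X_{n}=x$, and, on the event $\{X_{n}=x\}$, the final configuration equals $\eta$ if and only if each of the $R_{n}$ independent fair bits matches the prescribed value of $\eta$ at the corresponding visited vertex; this happens with conditional probability $2^{-R_{n}}$, independently of the specific target $\eta$. Averaging over trajectories yields
\begin{equation*}
q^{(n)}\big((\eta,x),(\eta,x)\big)=\mathbb{E}_{x}\big(2^{-R_{n}}\mathbf{1}_{\{X_{n}=x\}}\big),
\end{equation*}
as desired. The main obstacle is the careful bookkeeping in the conditional step: one must verify that, given the $\mathsf{G}$-trajectory, the terminal lamp values are genuinely independent fair bits indexed by the range. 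The cleanest way to make this rigorous is to realize $\Zn$ on a product probability space in which the $\mathsf{G}$-moves and the switch coins are independent, and then to identify, for each visited site, the unique last switch that fixes its terminal value; the double switching at departure and arrival does not affect the argument, since it changes only how many times a lamp is randomized, not the set of randomized sites nor the uniformity of the final outcome.
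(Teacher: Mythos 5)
Your argument is correct and is essentially the standard (and, as far as one can tell, the intended) proof: the thesis states this identity without proof, citing Varopoulos and Pittet--Saloff-Coste, and the underlying derivation is exactly your conditioning on the base trajectory together with the observation that, for the uniform switch distribution, the terminal state of each visited lamp is an independent fair bit determined by the last switch at that site. One cosmetic remark: the thesis defines $R_n=\lvert\{X_1,\dots,X_n\}\rvert$ (omitting $X_0$) while you count the lamps touched, namely $\{X_0,\dots,X_n\}$; on the event $\{X_n=x\}$ with $X_0=x$ these two sets coincide, so the discrepancy is immaterial for the stated formula.
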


The \index{random walk!range of a~}\textit{range} $R_n$ of a random walk $\Xn$
is defined as the number of distinct
visited points up to time $n$ by the random walk, that is
\begin{equation*}
R_n=|\{X_1,X_2,\ldots ,X_n\}|.
\end{equation*}
So, in order to derive asymptotics for the return probabilites $q^{(n)}$
of $\Zn$ on $\lgr$, as $n$ goes to infinity, 
it is enough to study the asymptotics for the 
range $R_n$ of its underlying walk $\Xn$ on $\mathsf{G}$. 

\paragraph*{Asymptotics for $\mathsf{G}=\Z^d$.}
There are several results on asymptotics of return probabilites of LRW
over base groups which have polynomial growth, for instance on $\Z^d$, for all $d\geq 1$.
We state here some of them.

For random walks on $\Z^{d}$, 
{\sc Donsker and Varadhan}~\cite{DonskerVaradhan1979} studied
the asymptotic behaviour
of the Laplace transform of the range $R_n$
\begin{equation*}
\mathbb{E}[\exp\{-t R_n\}], \text{ as }n\to\infty
\end{equation*}
for $t>0$. This behaviour depends on what is assumed
about the one-step transition probabilities. 
They proved the following important theorem.
\begin{theorem}\label{thm:varadhan}
For simple random walks on $\Z^{d}$,
\begin{equation*}
-\log\mathbb{E}[\exp\{-t R_n\}]\sim c(d)\cdot t^{d/(d+2)}\cdot n^{d/(d+2)}, \text{ as } n\to\infty,
\end{equation*}
where 
\begin{equation*}
c(d)=2^{-1}(d+2)\omega_d^{2/(d+2)}(\lambda_d/d)^{d/(d+2)} 
\end{equation*}
and $\lambda_d$ is the lowest eigenvalue of the Laplacian with Dirichlet boundary condition
in the Euclidean ball of radius $1$, and $\omega_d=\pi^{d/2}/\Gamma(d/2+1)$ its volume.
\end{theorem}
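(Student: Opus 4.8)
The statement is the classical Donsker--Varadhan asymptotic for the range of simple random walk, and I would follow their large-deviation strategy, whose guiding heuristic is that the expectation $\E[\exp\{-tR_n\}]$ is dominated by atypical trajectories that \emph{confine} the walk to a small region: remaining inside a set $U$ forces $R_n\le |U|$ (so that the factor $\exp\{-t|U|\}$ becomes deterministic), while the probability of staying in $U$ up to time $n$ decays like $\exp\{-n\,\lambda_1(U)\}$, where $\lambda_1(U)$ is the principal eigenvalue of $I-P$ on $U$ with Dirichlet (killing) boundary conditions. The entire proof is a quantitative balancing of these two competing effects, and the appearance of $\omega_d$ and $\lambda_d$ reflects that the optimal confinement region turns out to be a Euclidean ball.

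For the lower bound on $\E[\exp\{-tR_n\}]$ (equivalently the upper bound on $-\log\E[\cdots]$) I would restrict the expectation to the event that the walk never leaves the ball $B(o,\rho_n)$ of radius $\rho_n=\rho\,n^{1/(d+2)}$. On this event $R_n\le |B(o,\rho_n)|\sim\omega_d\rho_n^{d}$, and by the spectral theory of the killed walk the confinement probability is $\exp\{-n\,\lambda_1(B(o,\rho_n))(1+o(1))\}$, with the diffusive scaling $\lambda_1(B(o,\rho_n))\sim c_0\,\lambda_d\,\rho_n^{-2}$ obtained from the Brownian limit of the rescaled walk, where $\lambda_d$ is the lowest Dirichlet eigenvalue of the unit ball. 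This gives
\[
-\log\E[\exp\{-tR_n\}]\le \bigl(t\,\omega_d\,\rho^{d}+c_0\,\lambda_d\,\rho^{-2}\bigr)\,n^{d/(d+2)}(1+o(1)),
\]
and minimizing the bracket over $\rho>0$ produces a bound of the correct order $n^{d/(d+2)}$ with the constant $c(d)$ assembled from $\omega_d$ and $\lambda_d$.

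For the matching upper bound I would invoke the Donsker--Varadhan large-deviation principle for the rescaled occupation measure $L_n=\tfrac1n\sum_{k<n}\delta_{X_k}$, whose rate function is the Dirichlet form of the limiting Laplacian. Through a periodization of the walk onto a torus together with a Feynman--Kac / largest-eigenvalue estimate, the logarithmic asymptotics of $\E[\exp\{-tR_n\}]$ reduce to the variational problem of minimizing $t\,|U|+n\,\lambda_1(U)$ over confinement regions $U$. The decisive analytic input is the \emph{Faber--Krahn inequality}: among all domains of prescribed volume the ball minimizes the principal Dirichlet eigenvalue. This forces the optimal $U$ to be a ball, fixes the geometric constants to exactly $\omega_d$ (its volume) and $\lambda_d$ (its eigenvalue), and after optimizing the radius reproduces $c(d)=2^{-1}(d+2)\,\omega_d^{2/(d+2)}(\lambda_d/d)^{d/(d+2)}$.

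The main obstacle is the upper bound, and within it two delicate points. First, $R_n$ counts \emph{distinct} visited sites and is not a continuous functional of the occupation measure $L_n$, so transferring the large-deviation principle for $L_n$ into a genuine estimate for $\E[\exp\{-tR_n\}]$ requires the periodization (or coarse-graining into boxes) device rather than a naive contraction. Second, identifying the minimizing region as a ball — and thereby recovering the \emph{exact} constant $c(d)$ rather than only the correct power of $n$ — rests on Faber--Krahn and on the precise lattice-to-continuum scaling $\lambda_1(B(o,\rho))\sim c_0\lambda_d\rho^{-2}$; verifying these scaling constants for the discrete killed operator, and matching them in the two bounds, is the most technical part of the argument.
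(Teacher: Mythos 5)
The paper does not prove this statement at all: it is quoted verbatim as a known result of Donsker and Varadhan \cite{DonskerVaradhan1979}, so there is no internal proof to compare your argument against. Your sketch is a faithful outline of the actual Donsker--Varadhan strategy --- lower bound on $\E[\exp\{-tR_n\}]$ by confining the walk to a ball of radius $\rho\,n^{1/(d+2)}$ and balancing $t\cdot(\text{volume})$ against $n\cdot(\text{principal Dirichlet eigenvalue})$, upper bound via the large-deviation principle for occupation measures, periodization onto a torus, and Faber--Krahn to identify the ball as the optimal confinement region. You also correctly isolate the two genuinely hard points (the range is not a continuous functional of $L_n$, and the exact constant requires the precise lattice-to-continuum eigenvalue scaling). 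As a blind reconstruction of a deep classical theorem this is about as much as can be asked, though of course every technical ingredient (the LDP itself, the coarse-graining, the spectral asymptotics of the killed discrete walk) is invoked rather than established.

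One concrete point you should check: carry out the optimization you set up. Minimizing $t\,\omega_d\,\rho^{d}+c_0\lambda_d\,\rho^{-2}$ over $\rho$ gives a minimum proportional to $t^{2/(d+2)}$, not $t^{d/(d+2)}$ --- the exponents $2/(d+2)$ and $d/(d+2)$ attach to the Laplace parameter and to $n$ respectively in the classical Donsker--Varadhan statement. The theorem as printed in the thesis has $t^{d/(d+2)}$, which your own variational computation would contradict; this appears to be a transcription error in the thesis rather than a flaw in your approach, but you should be aware that completing your Step 1 honestly forces the exponent $2/(d+2)$ on $t$.
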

Here, for sequences $(a_n)$ and $(b_n)$ of real numbers, we indicate by $a_n\sim b_n$
that their quotient tends to $1$. We say that $a_n\preccurlyeq b_n$ if there are 
$C\geq c > 0$, such that for all sufficiently large $n$,
\begin{equation*}
a_n\leq C \sup\{ b_k:cn\leq k\leq Cn\}. 
\end{equation*}
If also $b_n \preccurlyeq a_n$,
then we write $a_n \approx b_n$. An equivalence class of sequences under 
this relation is called an \textit{asymptotic type}. Note that the asymptotic
type of a sequence is not as sharp as asymptotic equivalence.
For example, sequences of the form $(e^{-\lambda n}Q(n))$ 
(where $\lambda>0$ and $Q$ is a polynomial) are all of asymptotic type $(e^{-n})$.

Using the previous Theorem, one gets asymptotic equivalence 
for the return probabilities $q^{(n)}$ of the LRW on $\Z_2\wr\Z^d$. 

{\sc Revelle}~\cite{Revelle2003}, computed precise asymptotics for 
Switch-Walk-Switch lamplighter walks on $\Z_2\wr\Z$. He obtained that
\begin{equation*}
q^{(n)} \sim c_1 n^{1/6} \cdot \exp\{-c_2n^{1/3}\},
\end{equation*}
using the relation with the one-dimensional trapping problem.

\paragraph*{Asymptotics for $\mathsf{G}=\mathcal{T}$.}
Consider now the underlying graph as being an oriented tree
with a fixed end $\omega$ like in Figure \ref{fig:tree_zero_drift}
and $\Xn$ the random walk with transition probabilities $P=(p(x,y))$ 
given in \eqref{eq:tr_pb_fixed_end}, which has zero modular drift $\delta(P)$.

We are interested in asymptotics (asymptotic type or asymptotic equivalence) 
of  the return probabilities $q^{(n)}$ for SWS
lamplighter random walks $\Zn$ on $\Z_2\wr\mathcal{T}$, such that the base random walk
$\Xn$ has zero drift on $\mathcal{T}$. Such a situation was not considered until now.
The precise asymptotics for the range $R_n$ are hard to determine.

An easy upper estimate can be obtained if we look 
at the horocyclic projection $h(X_n)$ of $X_n$ on the integers, which is
a simple random walk on $\Z$. Moreover, the range $\tilde{R}_n$ of the 
horocyclic projection is much smaller than the range of $\Xn$ on $\mathcal{T}$. 
Using the large deviation estimate 
of Theorem \ref{thm:varadhan} for $\tilde{R}_n$ on $\Z$, one has
\begin{equation*}
-\log\mathbb{E}[\exp\{-t \tilde{R}_n\}]\sim c\cdot t^{2/3}\cdot n^{1/3}, \text{ as } n\to\infty.
\end{equation*}
Since $\tilde{R}_n\leq R_n$, we get the following upper estimate
for the return probabilities of LRW on $\Z_2\wr\mathcal{T}$:
\begin{equation*}
 q^{(n)}\big((\mathbf{0},x),(\mathbf{0},x)\big)\leq C \cdot\rho(Q)^n\cdot \exp\{-cn^{1/3}\},
\end{equation*}
where $Q$ is the transition matrix of the Switch-Walk-Switch random walk on 
$\Z_2\wr\mathcal{T}$, and $\rho(Q)$ its spectral radius. Recall that $\mathbf{0}$
is the trivial configuration, where all lamps are off.

\textbf{Question 1:} How can one find a good lower bound for 
$q^{(n)}\big((\mathbf{0},x),(\mathbf{0},x)\big)$?
If a lower estimate of the same order $\exp\{-cn^{1/3}\}$ can be found, then one
would have the asymptotic type for the return probabilities, which is 
weaker than the precise asymptotics.

\textbf{Question 2:} How can one find asymptotics for the range $R_n$
of random walks on graphs with exponential growth, say trees? This 
problem was not considered up to now. Such estimates are well studied
for random walks on groups and graphs with polynomial growth, where
gaussian estimates are available.

\part{Entropy-Sensitivity of Languages via Markov Chains}
\chapter{Languages on Labelled Graphs}\label{sec:entropy}

This part of the thesis is based on the paper by {\sc Huss, Sava and Woess} 
\cite{HussSavaWoess2010}.

A language $L$ over a finite alphabet $\Si$ is called \emph{growth sensitive} 
(or \emph{entropy sensitive}) if 
forbidding any finite set of factors $F$ yields a sublanguage $L^F$ whose 
exponential growth rate (entropy) is smaller than that of $L$. Let 
$(X, E, \ell)$ be an infinite, oriented, edge-labelled graph with label 
alphabet $\Si$. Considering the edge-labelled graph as an (infinite) automaton,  
we associate with any pair of vertices $x,y \in X$ the language $L_{x,y}$ 
consisting of all words that can be read as labels along some path 
from $x$ to $y$. Under suitable general assumptions we prove that these 
languages are growth sensitive. This is based on using Markov chains with 
forbidden transitions. 

\section{Introduction}\label{sec:intro}

Let $\Si$ be a finite \textit{alphabet} and $\Si^{*}$ the set of all finite 
words over $\Si$, including the empty word $\epsilon$. 
A \index{language}\textit{language} $L$ over $\Si$ is a subset of $\Si^{*}$. 
All our languages will be infinite. 
We denote by $\abs{w}$ the length of the word $w$. 
A \textit{factor} of a word $w=a_1a_2\ldots a_n$ is
a word of the form $a_ia_{i+1}\ldots a_j$, with $1\leq i\leq j\leq n$. 
The \index{language!growth}\textit{growth} or 
\index{language!entropy}\textit{entropy} of $L$ is 
\begin{equation*}
\entr(L) 
= \limsup_{n\to\infty}
\frac{1}{n} \log \bigl|\{w \in L:\: \abs{w} = n\}\bigr|.
\end{equation*}
For a finite, non-empty set 
$F\subset\Si^+ = \Si^*\setminus \{\epsilon\}$ consisting of factors 
of elements of $L$, we let 
\begin{equation*}
 L^{F} 
 = \{w\in L:\:\text{no}\; v\in F\; \text{is a factor of}\; w\}.
\end{equation*}
The issue addressed here is to provide conditions under which, for a
class of languages associated with infinite graphs,
$\entr(L^{F})<\entr(L)$. If this holds for \textit{any}
set $F$ of \textit{forbidden factors}, then the language $L$ is called
\textit{growth sensitive} (or \textit{entropy sensitive}). 

Questions related to growth sensitivity have been considered in different
contexts.

In \textit{group theory} in relation to regular normal forms of finitely 
generated groups, the study of growth-sensitivity has been proposed by 
{\sc Grigorchuk and de la Harpe}~\cite{Grigorchuk_Harpe1997} as a tool for proving 
the Hopfianity of a given group or class of groups; see also
{\sc Arzhantseva and Lysenok}~\cite{ArzhantsevaLysenok2002} and
{\sc Ceccherini-Silberstein and Scarabotti}~\cite{CeccheriniScarabotti2004}. 
A group is called \textit{Hopfian} if it is not isomorphic with a proper quotient of
itself. The basic example were this tool applies is the free group. 

In \textit{symbolic dynamics}, the number $\entr(L)$ associated with a regular
language accepted by a finite automaton with suitable properties
appears as the \textit{topological entropy} of a \textit{sofic system}; see
{\sc Lind and Marcus}~\cite[Chapters 3 \& 4]{LindMarcus1995}. 
Entropy sensitivity appears as the strict inequality between the entropies
of an irreducible sofic shift and a proper subshift~\cite[Cor. 4.4.9]{LindMarcus1995}.

Motivated by these bodies of work, {\sc Ceccherini-Silberstein and 
Woess} \cite{Ceccherini_Woess2002}, \cite{Ceccherini2007} have elaborated
practicable criteria that guarantee the  growth sensitivity of \textit{context-free}
languages. 

The main result of this chapter can be seen as a direct extension of 
\cite[Cor. 4.4.9]{LindMarcus1995} to the entropies of infinite sofic systems; see below 
for further comments and references.  This will be done using a 
probabilistic approach, namely considering Markov chains with forbidden
transitions.

Our basic object is an \index{graph!labelled graph}\textit{infinite directed 
graph} $(X,E, \ell)$ 
whose edges are labelled by elements of a finite alphabet $\Si$. Each edge 
has the form $e=(x,a,y)$, where $e^-=x$ and $e^+= y \in X$ are the initial 
and the terminal vertices of $e$, and $\ell(e) = a \in \Si$ is its label. 
We will also write $x \xrightarrow{a} y$ for the edge $e=(x,a,y)$, or just 
$x\rightarrow y$ in situations where we do not care about the label.
Multiple edges and loops are allowed, but two edges with the same end 
vertices must have distinct labels. 

A \textit{path} of length $n$ in $(X,E, \ell)$ is a sequence $\pi=e_{1}e_{2}\ldots e_{n}$ 
of edges such that $e_{i}^{+}=e_{i+1}^{-}$, for $i=1,2,\ldots n-1$. 
We say that it is a path from $x$ to $y$,	 if $e_1^-=x$ and $e_n^+ =y$.
The label $l(\pi)$ of $\pi$ is the word
$\ell(\pi)=\ell(e_{1})\ell(e_{2})\ldots \ell(e_{n})\in \Si^{*}$ that we read 
along the path. We also allow the empty path from $x$ to $x$, whose label is 
the \textit{empty word} $\epsilon \in \Si^*$. 
For $x,y\in X$, denote by $\Pi_{x,y}$ the set of all paths $\pi$ from $x$ to $y$ in $(X,E, \ell)$. 

The languages which we consider here are
\begin{equation*} 
L_{x,y}
=\{\ell(\pi)\in\Si^{*}: \pi\in \Pi_{x,y}\}, 
\text{ where } x,y\in X. 
\end{equation*}
That is, we consider the edge-labelled graph $(X,E, \ell)$ as an infinite automaton (labelled 
digraph) with initial
state $x$ and terminal state $y$, so that $L_{x,y}$ is the language
accepted by the automaton. 

\begin{minipage}[b]{0.55\linewidth}
The languages under study will be infinite. 
Also, we shall require that the growth is very fast (exponential),
since in the subexponential growth case the entropy $\entr$
is zero.
\begin{example}
Let us consider the following finite labelled graph $(X,E, \ell)$ given in the Figure.
For $x,y\in X$, the language
$L_{x,y}$ is the set of all labeles of paths from $x$ to $y$, 
that is
\begin{equation*}
 L_{x,y}=\{a(a)^*bb,(ba)^*bb,bb,\ldots\}
\end{equation*}

\end{example} 
\end{minipage}
\hspace{0.5cm}
\begin{minipage}[b]{0.45\linewidth}
\begin{tikzpicture}[scale=0.5]\label{fig:det_graph}
\begin{scope}
[yshift=-3.65cm,vertex/.style={circle, draw=black, thick, inner sep=2pt, minimum size=7mm},
 pre/.style={->, shorten >=1pt, >=stealth', semithick},
 post/.style={<-, shorten <=1pt, >=stealth', semithick},
 node distance = sqrt(2.5^2/2)
]

\node (origin) {};
\node[vertex] (a_2) [left=of origin] {$x$};
\node[vertex] (b_1) [right=of origin] {$y$};
\node[vertex] (a_1) [above=of origin] {$z$}
   edge [post] node[auto, swap] {$a$} (a_2)
   edge [post] node[auto] {$a$} (b_1)
   edge [post, loop above] node[auto] {$a$} ();
\node[vertex] (b_2) [below=of origin] {$t$}
   edge [post] node[auto] {$b$} (a_1)
   edge [pre, bend left=20] node[auto] {$a$} (a_2)
   edge [post, bend right=20] node[auto, swap] {$b$} (a_2)
   edge [pre, bend left=20] node[auto] {$b$} (b_1)
   edge [post, bend right=20] node[auto, swap] {$b$} (b_1);
\end{scope}	
\end{tikzpicture}
\end{minipage}

\begin{definition}
We say that $(X,E,\ell)$ is 
\index{graph!labelled graph!deterministic}\emph{deterministic} if, for every vertex
$x$ and every $a \in \Sigma$, there is at most one edge with initial point $x$
and label $a$. 
\end{definition}
Any automaton (finite or infinite) can be transformed into a 
deterministic one that accepts the same language, by the well-known powerset construction.
See, for example \cite[Prop. 1.4.1]{BerstelPerrinaReutenauer2010}.

As in the finite case, we need an irreducibility assumption. 
The graph $(X,E, \ell)$ is called 
\index{graph!labelled graph!strongly connected}\textit{strongly connected} if, for every
pair of vertices $x$, $y$, there is an (oriented) path from $x$ to $y$. 

\begin{definition}
The graph $(X,E, \ell)$ is called  \index{graph!labelled graph!uniformly connected}\emph{uniformly connected}
if it is strongly connected and the following holds:
there is a constant $K$, such that for every edge $x\rightarrow y$ there is
a path from $y$ to $x$ with length at most $K$.
\end{definition}
In the finite case, the two notions coincide as one can take $K=|X|$.
The \textit{forward distance}
$d^+(x,y)$ of $x,y \in X$ is the minimum length of a path from $x$ to $y$.
We write 
\begin{equation*}
\entr(X) = \entr(X,E,\ell) = 
\sup_{x,y\in X} \entr(L_{x,y})
\end{equation*}
and call this the \index{graph!labelled graph!entropy}\textit{entropy} of our oriented, labelled graph.
It is a well-known and easy to prove fact, that for a strongly connected graph,
\begin{equation*}
\entr(L_{x,y}) = \entr(X), \text{ for all } x, y \in X. 
\end{equation*}
We also need a reasonable assumption on the set of forbidden factors.

\begin{definition}
We say that a finite set $F \subset \Si^+$ is 
\index{language!relatively dense subset}\emph{relatively dense}
in the graph $(X,E, \ell)$ if there is a constant $D$ such that, for every
$x \in X$, there are $y \in X$ and $w \in F$, such that $d^+(x,y) \le D$
and there is a path starting at $y$ which has label $w$. 
\end{definition}

Note that the assumptions of uniformly connectedness and relatively denseness
cannot be avoided, since they play an important role in the proof of
the main result. This fails withous these assumptions.

\begin{theorem}\label{thm:A}
Suppose that $(X,E,\ell)$ is uniformly connected and deterministic with
label alphabet $\Si$. Let $F \subset \Si^+$ be a finite, non-empty set
which is relatively dense in $(X,E, \ell)$. Then 
\begin{equation*}
\sup_{x,y\in X}\entr(L_{x,y}^F) < \entr(X).
\end{equation*}
\end{theorem}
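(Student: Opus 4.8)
The plan is to turn the statement about languages into a statement about spectral radii of Markov chains and then prove that forbidding a \emph{relatively dense} family of transitions drops the spectral radius by a definite, uniform amount. The chain of reductions is: (i) by determinism, counting words reduces to counting paths; (ii) forbidding factors in $F$ is realized by a product automaton, so that $\entr(L^F)$ becomes the entropy of a new deterministic, bounded-degree labelled graph $X^F$; (iii) an $h$-transform turns the adjacency operators of $X$ and $X^F$ into an irreducible Markov chain $P$ with $\rho(P)=1$ and a substochastic $P^F$ with $\entr(X^F)=\log\bigl(\lambda\,\rho(P^F)\bigr)$, where $\lambda=e^{\entr(X)}$; (iv) the theorem becomes the single strict inequality $\rho(P^F)<1$.

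\textbf{Reduction to path counting and the product automaton.} Since $(X,E,\ell)$ is deterministic, every word $w\in L_{x,y}$ with $\abs{w}=n$ is the label of exactly one path of length $n$ starting at $x$, which must end at $y$; hence $\bigl|\{w\in L_{x,y}:\abs{w}=n\}\bigr|=\bigl|\Pi_{x,y}^{(n)}\bigr|$, the number of length-$n$ paths from $x$ to $y$. Let $m=\max_{v\in F}\abs{v}$ and let $\mathcal B$ be the deterministic finite pattern-matching (Aho--Corasick) automaton over $\Si$ whose ``dead'' states are exactly those reached after reading a suffix containing some $v\in F$. Form $X^F=X\times\mathcal B'$, with $\mathcal B'$ the live part of $\mathcal B$: put an edge $(x,s)\xrightarrow{a}(x',s')$ whenever $x\xrightarrow{a}x'$ in $X$ and $s'=\delta_{\mathcal B}(s,a)$ is live. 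Using determinism twice, length-$n$ paths in $X^F$ from a state $(x,s_0)$ are in bijection with the $F$-avoiding labelled paths of length $n$ from $x$ in $X$; moreover $X^F$ is again deterministic with out-degrees bounded by those of $X$. Thus $\entr(X^F)=\sup_{x,y}\entr(L^F_{x,y})$, and it suffices to prove $\entr(X^F)<\entr(X)$.

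\textbf{Normalization and the role of uniform connectivity.} Let $A=(a(x,y))$ be the adjacency matrix, so $a^{(n)}(x,y)=\bigl|\Pi_{x,y}^{(n)}\bigr|$; by strong connectivity and the Harnack-type inequalities used for the radius of convergence $\mathfrak r(P)$, $\lambda:=\limsup_n a^{(n)}(x,y)^{1/n}$ is independent of $x,y$ and $\entr(X)=\log\lambda>0$. Choose a positive $\lambda$-superharmonic function $h$ (e.g.\ built from the Green function of $A/\lambda$ at its radius of convergence; see {\sc Woess}~\cite{WoessBook}) and set $p(x,y)=a(x,y)h(y)/(\lambda h(x))$. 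Here uniform connectivity is decisive: every edge $x\to y$ admits a return path $y\to x$ of length $\le K$, whence $\lambda^{-K}\le h(y)/h(x)\le\lambda$, so $P$ is a uniformly irreducible (sub)stochastic chain with $p(x,y)\ge\varepsilon_0:=\lambda^{-(K+1)}>0$ on edges, and $\rho(P)=1$ because the $h$-ratios are subexponential. Pulling $h$ back by $(x,s)\mapsto h(x)$ along the projection $X^F\to X$ and keeping only live transitions yields a substochastic $P^F\le P$ with $p^{F,(n)}\bigl((x,s),(y,t)\bigr)=a^{(n)}_F\,h(y)/(\lambda^n h(x))$, where $a^{(n)}_F$ counts $F$-avoiding paths; consequently $\entr(X^F)=\log\bigl(\lambda\,\rho(P^F)\bigr)$, and the theorem is equivalent to $\rho(P^F)<1$.

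\textbf{The strict drop and the main obstacle.} View $P^F$ as the chain $P$ killed the instant the tracked word acquires a factor in $F$ (i.e.\ $\mathcal B$ enters a dead state). The crux is the uniform estimate: \emph{from any live state $(x,s)$ the walk dies within $D''=D+m$ steps with probability at least $c:=\varepsilon_0^{D''}>0$.} Indeed, relative density furnishes $y$ with $d^+(x,y)\le D$ and $w\in F$ readable from $y$; following a shortest path $x\to y$ and then the (unique, by determinism) $w$-labelled path from $y$ has $P$-probability $\ge\varepsilon_0^{D+m}$ and ends having read $w$, hence kills the walk. Since $c$ is independent of the state, the Markov property gives survival probability $\sum_{(y,t)}p^{F,(n)}\bigl((x,s),(y,t)\bigr)\le(1-c)^{\lfloor n/D''\rfloor}$, whence $\rho(P^F)\le(1-c)^{1/D''}<1$, uniformly in $(x,s)$. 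The main obstacle is precisely the \emph{uniformity} of the constants $\varepsilon_0$ and $c$ over the infinite graph: the two-sided Harnack bound (hence $\varepsilon_0$) must not degenerate at infinity, and a killing opportunity must be reachable within a fixed distance from \emph{every} vertex. These are exactly what bounded degree, uniform connectivity, and relative density supply; without them the per-block death probability could tend to $0$ and the spectral radius need not drop. Taking the supremum over $x,y$ then yields $\sup_{x,y}\entr(L^F_{x,y})=\entr(X^F)=\log\bigl(\lambda\,\rho(P^F)\bigr)<\log\lambda=\entr(X)$.
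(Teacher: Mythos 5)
Your proof is correct and follows the same overall strategy as the paper: pass from word counts to path counts via determinism, normalize by an $h$-transform so that the unrestricted chain has spectral radius $1$, use uniform connectedness to get a uniform lower bound on the transformed edge probabilities (your $\lambda^{-(K+1)}$ is exactly the paper's $\bigl(\alpha/\rho(P)\bigr)^{K+1}$), and use relative denseness to exhibit from every vertex a path of bounded length realizing a forbidden word, giving a uniform per-block killing probability that is then iterated. The one genuine difference is your Aho--Corasick product automaton $X^F$: it makes the $F$-restricted walk an honest (substochastic) Markov chain on $X\times\mathcal B'$, so the iteration $\le(1-c)^{\lfloor n/D''\rfloor}$ is a direct application of the Markov property. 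The paper instead works with the non-Markovian quantities $p_F^{(n)}(x,y)$ (explicitly noting they are \emph{not} powers of a substochastic matrix) and handles the iteration by a domination trick: the $k$-step matrix $Q=\bigl(p_F^{(k)}(x,y)\bigr)$, which only forbids factors inside each block of length $k$, is shown to be strictly substochastic, and $p_F^{(nk+i)}(x,y)\le\sum_z q^{(n)}(x,z)\,p_F^{(i)}(z,y)$ then gives the same $(1-\eps_0)^n$ bound. Your route buys a cleaner probabilistic argument at the cost of enlarging the state space; the paper's route avoids the automaton construction but must be slightly careful about what $Q^n$ actually dominates. Two minor points: your parenthetical construction of $h$ from the Green function of $A/\lambda$ at its radius of convergence is imprecise in the $\lambda$-recurrent case (the series diverges there); the paper instead invokes Pruitt's lemmas, which produce a strictly positive $\rho$-harmonic function whenever rows have finitely many nonzero entries, and this is what you should cite. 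Also $\entr(X)=\log\lambda>0$ is not needed and need not hold; only $\lambda\ge 1$ (which uniform connectedness guarantees) enters your Harnack bounds, and the final strict inequality is unaffected.
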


We say that $(X,E, \ell)$ is \textit{fully deterministic} if, for every
$x \in X$ and $a \in \Sigma$, there is precisely one edge with initial point $x$
and label $a$. We remark that, in automata theory, the classical terminology is deterministic and
complete, instead of fully deterministic. Since in graph theory 
a complete graph is one in which every pair a distinct vertices is connected 
by an unique edge, we shall use the notion of fully deterministic
throughout this work.

As a consequence of Theorem \ref{thm:A} one can easily prove the
following.

\begin{corollary}\label{cor:B}
If $(X,E,\ell)$ is uniformly connected and fully deterministic then
$L_{x,y}$ is growth sensitive for all $x,y \in X$.
\end{corollary}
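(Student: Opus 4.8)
The plan is to deduce the corollary from Theorem \ref{thm:A} by reducing the universally quantified statement of growth sensitivity to a single forbidden word, and by exploiting that full determinism makes relative denseness automatic. Recall that to prove growth sensitivity of $L_{x,y}$ one must show $\entr(L_{x,y}^F) < \entr(L_{x,y})$ for \emph{every} finite non-empty $F \subset \Si^+$ whose elements are factors of words in $L_{x,y}$, whereas Theorem \ref{thm:A} only applies to sets $F$ that are relatively dense. Bridging this gap between an arbitrary $F$ and the relative-denseness hypothesis is the whole content of the argument.

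First I would record two elementary reductions. \textbf{Monotonicity:} if $F_0 \subseteq F$, then every word avoiding all factors in $F$ also avoids those in $F_0$, so $L_{x,y}^F \subseteq L_{x,y}^{F_0}$ and hence $\entr(L_{x,y}^F) \le \entr(L_{x,y}^{F_0})$. Consequently it suffices to exhibit, for the given $F$, a single subset $F_0 \subseteq F$ to which Theorem \ref{thm:A} applies; I would take $F_0 = \{w\}$ for an arbitrary $w \in F$, which is possible since $F \neq \emptyset$. Moreover, uniform connectedness implies strong connectedness, so the stated fact $\entr(L_{x,y}) = \entr(X)$ holds and the target inequality becomes $\entr(L_{x,y}^{\{w\}}) < \entr(X)$.

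The key step is to verify that every singleton $\{w\}$ is relatively dense in a fully deterministic graph. By full determinism, for each vertex $z$ and each $a \in \Si$ there is exactly one edge leaving $z$ with label $a$; an immediate induction on $\abs{w}$ then shows that from \emph{every} vertex $z$ there is a path starting at $z$ whose label is $w$. Thus relative denseness holds with constant $D=0$: for every $x \in X$ we may take $y = x$, so that $d^+(x,y) = 0 \le D$ and a path labelled $w$ starts at $y$. Hence $\{w\}$ is finite, non-empty and relatively dense, and Theorem \ref{thm:A} yields $\sup_{x,y} \entr(L_{x,y}^{\{w\}}) < \entr(X)$. Combining this with the two reductions gives, for all $x,y \in X$,
\[
\entr(L_{x,y}^F) \le \entr(L_{x,y}^{\{w\}}) \le \sup_{x',y'} \entr(L_{x',y'}^{\{w\}}) < \entr(X) = \entr(L_{x,y}),
\]
which is exactly growth sensitivity.

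The only point where I would be careful is the direction of the inclusion: the reduction works precisely because forbidding \emph{fewer} factors can only weakly increase the entropy, so it is legitimate to replace $F$ by the single word $w$ — this runs against the naive intuition that one should enlarge $F$, and getting this inclusion right is the sole place for bookkeeping errors. All the genuine difficulty, namely the quantitative strict drop of the entropy, is already packaged inside Theorem \ref{thm:A}; here full determinism is exactly the hypothesis that trivialises the relative-denseness requirement, so no further obstacle remains.
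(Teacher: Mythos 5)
Your proof is correct and follows essentially the same route as the paper, whose entire argument is the observation that in a fully deterministic graph every word $w\in\Si^*$ can be read along a (unique) path starting at every vertex, so that relative denseness holds with $D=0$ and Theorem \ref{thm:A} applies directly. Your extra monotonicity reduction to a singleton $\{w\}$ is harmless but unnecessary: the definition of relative denseness only asks that \emph{some} $w\in F$ be readable near each vertex, so any finite non-empty $F$ is already relatively dense under full determinism.
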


Indeed, in this case, for every $x \in X$ and every $w \in \Si^*$, there
is precisely one path with label $w$ starting at $x$.

With our edge-labelled graph $(X,E,\ell)$, we can consider the \textit{full shift space} 
which consists of all bi-infinite words over $\Si$ that can be read along the edges of 
some bi-infinite path in $(X,E, \ell)$. When $(X,E,\ell)$ is strongly connected,
the entropy $\entr(L_{x,y})$ is independent of $x$ and $y$ and 
equals the topological entropy of the full shift space of the graph.
See, for example,  {\sc Gurevi\v c}~\cite{Gurevic1969}, 
{\sc Petersen}~\cite{Petersen1986} or {\sc Boyle, Guzzi and G\'omez}~\cite{Boyle_Buzzi_Gomez2006}
for a selection of related work and references, and also
the discussion in \cite[\S 13.9]{LindMarcus1995}.  

If we consider the shift space consisting of all those
bi-infinite words as above that do not contain any factor in $F$,
then the interpretation of Corollary \ref{cor:B} is that
the associated entropy is
strictly smaller than $\entr(X)$.

Theorem \ref{thm:A}, once approached in the right way, is not hard to prove.
It is based on a classical tool, a version of the Perron-Frobenius 
theorem for infinite
non-negative matrices; see, for example, {\sc Seneta}~\cite{Seneta2006}. We shall first
reformulate things in terms of Markov chains (random walks)
and forbidden transitions.

\section{Markov Chains and Forbidden Transitions}\label{sec:Markov}

We now equip the oriented, edge-labelled graph $(X,E,\ell)$ with additional data:
with each edge $e=(x,a,y)$, we associate a probability 
\begin{equation*}
p(e) = p(x,a,y) \ge \alpha > 0, 
\end{equation*}
where $\alpha$ is a fixed constant, such that
\begin{equation}\label{eq:substoch}
\sum_{e \in E \,:\,e^-=x} p(e) \le 1 \quad\text{for every}\; x \in X\,.
\end{equation}
Our assumption to have the uniform lower bound $p(e) \ge \alpha$ for each 
edge implies that the outdegree (number of outgoing edges) of each vertex
is bounded by $1/\alpha$.
We interpret $p(e)$ as the probability that a particle with current position
$x=e^-$ moves in one (discrete) time unit along $e$ to its end vertex $y = e^+$.
Observing the successive random positions of the particle at the time 
instants $0,1,2,\dots$, we obtain a Markov chain with state space $X$ whose
one-step transition probabilities are 
\begin{equation*}
p(x,y) = \sum_{a \in \Si : (x,a,y) \in E} p(x,a,y).
\end{equation*}
We shall also want to record the edges and their labeles used in each step,
which means considering a Markov chain on a somewhat larger 
state space, but we will not need to formalize it in detail.
In \eqref{eq:substoch}, we admit the possibility that $1 - \sum_y p(x,y) > 0$
for some $x$. This number is then interpreted as the probability that a
particle positioned at $x$ dies at the next step.

We write $p^{(n)}(x,y)$ for the probability that the particle starting at
$x$ is at position $y$ after $n$ steps. This is the $(x,y)$-element of
the $n$-power $P^n$ of the transition matrix 
$P = \bigl( p(x,y) \bigr)_{x,y \in X}\,$. If $(X,E, \ell)$ is strongly connected,
then $P$ is irreducible, and it is well-known that the number
\begin{equation*}
\rho(P) = \limsup_{n \to \infty} p^{(n)}(x,y)^{1/n}
\end{equation*}
is independent of $x$ and $y$. See once more \cite{Seneta2006}.
The quantity $\rho(P)$ is called the spectral radius of $P$. It is the parameter
of exponential decay of the transition probabilities.

Let once more $F \subset \Si^+$ be finite. We interpret the elements of
$F$ as sequences of \textit{forbidden transitions}. That is, we restrict 
the motion of the particle: at no time is it allowed to traverse any path  
$\pi$ with $\ell(\pi) \in F$ in $k$ successive steps, where $k$ is
the length of $\pi$. The words in $F$ are forbidden for the Markov chain.
We write $p^{(n)}_F(x,y)$ for the probability that the particle starting at
$x$ is at position $y$ after $n$ steps, without having made any such sequence
of forbidden transitions. Let
\begin{equation*}
\rho_{x,y}(P_F) = \limsup_{n \to \infty} p_F^{(n)}(x,y)^{1/n}, \quad
x,y \in X\,.
\end{equation*}
These numbers are not necessarily independent of $x$ and $y$, and \textit{they are not}
the elements of the $n$-matrix power of some substochastic matrix. 

\begin{definition}
A transition matrix $Q = \big(q(x,y)\big)_{x,y\in X}$ on the state space $X$
is called \textit{substochastic} if there exists a constant
$\varepsilon > 0$ such that, for all $x\in X$,
\begin{equation*}
\sum_{y\in X} q(x,y) \leq 1-\varepsilon.
\end{equation*}
That is, all row sums are bounded by $1-\varepsilon$.
\end{definition}

The restricted matrix $P_F$ does not represent the transition matrix
of a Markov chain.
In order to give an upper bound for the restricted transition 
probabilities $p^{(n)}_F(x,y)$, we first show the following.  

\begin{theorem}\label{lem:q_str_substoch}
Suppose that $(X,E,l)$ is strongly 
connected with label alphabet $\Si$ and equipped with transition 
probabilities $p(e) \ge \alpha > 0$, $e \in E$. Let $F \subset \Si^+$ be a 
finite, non-empty set which is relatively dense in $(X,E, \ell)$. Then there are
$k\in\mathbb{N}$ and $\eps_0>0$ such that  
$$
\sum_{y \in X} p_F^{(k)}(x,y) \le 1-\eps_0 \quad \text{for all}\; x \in X\,.
$$
In other words, the transition matrix 
$Q = \bigl(p_F^{(k)}(x,y)\bigr)_{x,y \in X}$ is strictly substochastic, with all row sums bounded by $1-\eps_0\,$.
\end{theorem}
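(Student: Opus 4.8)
The plan is to compare the restricted mass $\sum_{y\in X} p_F^{(k)}(x,y)$ with the unrestricted one. Since the transition matrix $P$ is substochastic by \eqref{eq:substoch}, the all-ones vector satisfies $P\mathbf 1\le \mathbf 1$ entrywise, hence $P^k\mathbf 1\le\mathbf 1$ and $\sum_{y}p^{(k)}(x,y)\le 1$ for every $x$ and every $k$. Now $p^{(k)}(x,y)$ is the total weight $\sum_\pi\prod_{e\in\pi}p(e)$ of all length-$k$ paths $\pi$ from $x$ to $y$, while $p_F^{(k)}(x,y)$ carries the same sum but restricted to those $\pi$ whose label $\ell(\pi)$ contains no factor in $F$. (A path is killed precisely when it traverses, in successive steps, a subpath whose label lies in $F$, i.e.\ when $\ell(\pi)$ has a factor in $F$.) Therefore, for each fixed $x$,
\[
\sum_{y\in X}p_F^{(k)}(x,y)=\sum_{y\in X}p^{(k)}(x,y)-\beta_k(x)\le 1-\beta_k(x),
\]
where $\beta_k(x)$ denotes the total weight of length-$k$ paths from $x$ whose label \emph{does} contain a factor in $F$. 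It thus suffices to exhibit a single $k$ and a uniform lower bound $\beta_k(x)\ge\eps_0>0$ for all $x$, which I will do by producing one explicit killed path from each $x$ with weight bounded below independently of $x$.

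To build this path I would use relative density. Set $m=\max_{w\in F}\abs{w}$ and $k=D+m$, where $D$ is the constant from the definition of relative denseness. Given $x$, relative density yields a vertex $y$ with $d^+(x,y)\le D$ and a word $w\in F$ that is the label of some path $\sigma$ starting at $y$, with $\abs{w}\le m$. Choosing a geodesic path $\pi_1$ from $x$ to $y$ of length $d^+(x,y)$ and concatenating it with $\pi_2=\sigma$, I obtain a path $\pi=\pi_1\pi_2$ starting at $x$ whose label contains $w\in F$ as a factor (read along the contiguous subpath $\pi_2$). Its length is $\ell_x=d^+(x,y)+\abs{w}\le D+m=k$, and since every edge weight satisfies $p(e)\ge\alpha$ (with $\alpha\le 1$), its weight is $\prod_{e\in\pi}p(e)\ge\alpha^{\ell_x}\ge\alpha^{k}$.

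It remains to upgrade $\pi$ to a path of length exactly $k$: because $(X,E,\ell)$ is strongly connected and infinite, every vertex has out-degree at least $1$, so I can prolong $\pi$ by $k-\ell_x$ further edges to a path $\pi'$ of length $k$ starting at $x$. Any extension of a label containing a factor in $F$ still contains that factor, so $\ell(\pi')$ is killed, and its weight is at least $\alpha^{\ell_x}\cdot\alpha^{\,k-\ell_x}=\alpha^{k}$. Hence $\beta_k(x)\ge\alpha^{k}$ for every $x$, and taking $\eps_0=\alpha^{k}$ gives the claimed uniform substochasticity $\sum_{y}p_F^{(k)}(x,y)\le 1-\eps_0$. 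The main point requiring care is \emph{uniformity}: the natural forbidden path has a length $\ell_x$ that depends on $x$, so one must pad all of them to a common length $k=D+m$ and verify, via the uniform lower bound $p(e)\ge\alpha$ together with the fact that extension is always possible (out-degree $\ge 1$ from strong connectivity), that the resulting weight bound $\alpha^{k}$ does not degrade as $x$ ranges over the infinite vertex set.
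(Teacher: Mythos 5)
Your proof is correct and follows essentially the same route as the paper's: the same choice $k=D+\max_{w\in F}|w|$, the same construction of a forbidden path (approach a witness vertex within distance $D$, read off $w\in F$, pad to length exactly $k$ by strong connectedness), and the same uniform weight bound $\alpha^k=\eps_0$ subtracted from the total mass. The only cosmetic difference is that you package the subtracted mass as $\beta_k(x)$ over all killed paths before bounding it below by a single path, where the paper subtracts the single path's probability directly.
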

\begin{proof}
Let $R = \max_{w \in F} |w|$, and let $D \in \N$ be the constant from the
definition of relative denseness of $F$. Set 
\begin{equation*}
k=D+R.
\end{equation*}
For each $x \in X$, we can find a path $\pi_1$ from $x$ to some 
$y \in X$ with length $d \le D$ and a path $\pi_2$ starting at $y$ which 
has label $w\in \Si^*$. Let $z$ be the endpoint of $\pi_2$, and choose any 
path $\pi_3$ that starts at $z$ and has length $k - d - |w|$. 
Such a path exists by strong connectedness. Then let $\pi$ be the path 
obtained by concatenating $\pi_1$, $\pi_2$ and $\pi_3$.

The probability that the Markov chain starting at $x$ makes its first $k$
steps along the edges of $\pi$ is
\begin{equation*}
\mathbb{P}(\pi)\geq \alpha ^{k}=\eps_0 > 0. 
\end{equation*}
Hence
\begin{equation*}
\sum_{y\in X} p^{(k)}_F(x,y)\leq \sum_{y\in X} p^{(k)}(x,y) -\mathbb{P}(\pi)
\leq 1-\eps_0, 
\end{equation*}
and this upper bound holds for every $x$.
\end{proof}
Given that the transition matrix $Q$ is substochastic, it is 
an easy exercise to prove that also its $n$-matrix power $Q^n$ is
also substochastic and the row sums of $Q^n$ are bounded from above
by $(1-\eps_0)^n$.

The matrix $P$ acts on functions $h: X \to \R$ by 
$Ph(x) = \sum_y p(x,y) h(y)$.
Next, we state two key results due to  {\sc Pruitt}~\cite[Lemma 1]{pruitt_1964} and 
\cite[Corollary to Theorem 2]{pruitt_1964}, which will be used in
the proof of the main result.

\begin{lemma}\label{lem:pruitt1}
If the transition matrix $P$ is irreducible and $Ph \leq s h$ for some 
$s > 0$ and $h \not= 0$, then $h > 0$.
\end{lemma}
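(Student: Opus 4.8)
The plan is to derive strict positivity of $h$ from a zero-propagation argument driven by irreducibility, exploiting that $h$ is a \emph{non-negative}, non-trivial function and that $Ph \le sh$ makes $h$ an $s$-superharmonic function. (Non-negativity is the hypothesis that does the work here: without it a constant negative function satisfies $Ph \le sh$ for suitable $s$, so one really must use $h \ge 0$ to upgrade $h \not\equiv 0$ to $h > 0$.) The idea is to show that if $h$ vanishes at a single vertex, then it vanishes everywhere, contradicting $h \not\equiv 0$; hence $h$ is nowhere zero and, being non-negative, strictly positive.

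First I would isolate the key local step. Suppose toward a contradiction that $h(x_0) = 0$ for some vertex $x_0$. Since $s > 0$ we get $0 = s\,h(x_0) \ge (Ph)(x_0) = \sum_{y} p(x_0,y)\,h(y) \ge 0$, where the last inequality uses $p(x_0,y) \ge 0$ and $h(y) \ge 0$. Therefore the sum is exactly $0$. The elementary but crucial observation is that a sum of non-negative terms can vanish only if every term vanishes, so $p(x_0,y)\,h(y) = 0$ for all $y$, i.e.\ $h(y) = 0$ for every $y$ with $p(x_0,y) > 0$. This is precisely the place where non-negativity of $h$ is indispensable.

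Next I would propagate this along paths and invoke irreducibility. Fix any $y \in X$. By irreducibility there is $n$ with $p^{(n)}(x_0,y) > 0$, and positivity of $p^{(n)}(x_0,y) = \sum_{\text{paths}} \prod_i p(v_i,v_{i+1})$ is witnessed by at least one concrete path $x_0 = v_0 \to v_1 \to \dots \to v_n = y$ all of whose one-step factors are positive. Applying the local step of the previous paragraph successively along this path — $h(v_0)=0$ forces $h(v_1)=0$, which forces $h(v_2)=0$, and so on — yields $h(y) = 0$. As $y$ was arbitrary, $h \equiv 0$, contradicting $h \not\equiv 0$. Hence $h$ has no zero, and since $h \ge 0$ this gives $h > 0$, as claimed.

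The main obstacle is purely a matter of making the infinite-state-space bookkeeping airtight: one must check that the series $\sum_y p(x_0,y)\,h(y)$ is a well-defined (possibly infinite) sum of non-negative terms, which it is, being bounded above by $s\,h(x_0) = 0 < \infty$, so it is genuinely finite and equal to zero; and one must use that positivity of the $n$-step probability is realized by an actual edge-path so that the one-step vanishing argument can be chained. Once these two points are in place, the conclusion follows directly from irreducibility with no further estimates.
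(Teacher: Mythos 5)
Your argument is correct and is the standard proof of this fact; the thesis itself gives no proof here, simply citing Pruitt's Lemma~1, and your zero-propagation argument (a zero of $h$ forces $\sum_y p(x_0,y)h(y)=0$, hence $h$ vanishes on all one-step successors, hence everywhere by irreducibility) is exactly how that cited result is established. You were also right to flag that the hypothesis $h\ge 0$ is indispensable and is silently omitted from the statement as printed: without it the lemma is false (take $P$ stochastic, $h\equiv -1$, $s=1$), and Pruitt's original formulation does assume a non-negative vector, so your reading restores the intended hypothesis rather than introducing a new one. The two bookkeeping points you raise --- finiteness of the non-negative series $\sum_y p(x_0,y)h(y)$ because it is dominated by $s\,h(x_0)=0$, and witnessing $p^{(n)}(x_0,y)>0$ by an actual path of positive one-step probabilities --- are handled correctly, so there is nothing to add.
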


\begin{lemma}\label{lem:pruitt2}
If the transition matrix $P = \big( p(x,y)\big)_{x,y\in X}$ is such that 
for every $x\in X$ the entries $p(x,y) = 0$ for all $y\in X$
except finitely many, then the equation
\begin{equation*}
P h = s h
\end{equation*}
has a solution for all $s \geq \rho(P)$.
\end{lemma}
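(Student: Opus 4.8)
The plan is to realize $h$ as a pointwise limit of normalized Green functions, the whole point of the finiteness hypothesis being that it lets $P$ commute with such limits. Indeed, if $h_n \to h$ pointwise and $(h_n)$ is bounded at each vertex, then for each fixed $x$ the sum $(Ph_n)(x) = \sum_y p(x,y)h_n(y)$ is a \emph{finite} sum, since only finitely many $y$ have $p(x,y)>0$; hence $(Ph_n)(x) \to (Ph)(x)$, and any pointwise limit of approximate eigenfunctions is again an eigenfunction. As in the companion Lemma~\ref{lem:pruitt1}, I may assume $P$ irreducible, restricting if necessary to a communicating class whose spectral radius equals $\rho(P)$ and extending the resulting eigenfunction by $0$.

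First I would record the resolvent identity. Writing $G_t(x,y) = G(x,y \,|\, 1/t) = \sum_{n\ge 0} t^{-n}p^{(n)}(x,y)$, which converges for every $t > \rho(P)$ by the Lemma computing the radius of convergence, a one-step decomposition gives, for a fixed reference vertex $w$,
\begin{equation*}
\bigl(P\,G_t(\cdot,w)\bigr)(x) = t\,G_t(x,w) - t\,\mathbf{1}_{\{x=w\}}.
\end{equation*}
Thus $G_t(\cdot,w)$ is a $t$-eigenfunction of $P$ except for a single ``defect'' localized at $w$, and the strategy is to remove this defect in the limit.

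For the generic case $G_s(x,y) < \infty$ (that is, $s>\rho(P)$, or $s = \rho(P)$ in the $\rho$-transient situation) I would push the defect off to infinity. Fix a base vertex $b$ and a sequence $w_k \to \infty$ leaving every finite set, and set $h_k = G_s(\cdot,w_k)/G_s(b,w_k)$, so that $h_k \ge 0$ and $h_k(b)=1$. A Harnack bound, immediate from irreducibility—choosing $m$ with $p^{(m)}(b,x)>0$ yields $G_s(b,w_k) \ge s^{-m}p^{(m)}(b,x)\,G_s(x,w_k)$, whence $h_k(x) \le s^{m}/p^{(m)}(b,x)$ uniformly in $k$—shows $(h_k)$ is bounded at every vertex. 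Since $X$ is countable, a diagonal argument extracts a subsequence converging pointwise to some $h \ge 0$ with $h(b)=1$, so $h \neq 0$. The resolvent identity makes $Ph_k(x) = s\,h_k(x)$ for every $x \neq w_k$, and since $w_k$ eventually leaves any finite set, passing to the limit (legitimate by the finiteness remark above) gives $Ph = sh$ on all of $X$.

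The remaining case is $s = \rho(P)$ with $G_\rho(w,w) = \infty$ ($\rho$-recurrence). Here I would instead normalize at $w$ and let $t \downarrow \rho(P)$: set $\tilde h_t = G_t(\cdot,w)/G_t(w,w)$, so $\tilde h_t \ge 0$, $\tilde h_t(w)=1$, and the resolvent identity reads $P\tilde h_t = t\,\tilde h_t - \bigl(t/G_t(w,w)\bigr)\mathbf{1}_{\{\cdot = w\}}$. As $G_t(w,w)\to\infty$ when $t\downarrow\rho(P)$, the defect coefficient $t/G_t(w,w)\to 0$; the same Harnack bound gives local boundedness, a diagonal subsequence produces a pointwise limit $h\ge 0$ with $h(w)=1$, and passing to the limit yields $Ph = \rho(P)\,h$. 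The main obstacle in both cases is precisely the control needed to extract a nontrivial limit: the normalization at a fixed vertex prevents $h$ from collapsing to $0$, while the Harnack estimate—the only place where irreducibility is genuinely used—supplies the uniform local bound that powers the compactness argument and guarantees that no mass is lost when interchanging $P$ with the limit.
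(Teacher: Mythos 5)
Your proof is correct, but there is nothing in the paper to compare it to: the thesis does not prove this lemma at all, it simply quotes it from Pruitt's 1964 paper (Corollary to Theorem 2). What you have supplied is a correct, self-contained proof by the standard Green-kernel construction of $s$-harmonic functions. The resolvent identity $PG_t(\cdot,w)=t\,G_t(\cdot,w)-t\,\mathbf{1}_{\{\cdot=w\}}$ is right; the Harnack estimate $G_s(b,w_k)\ge s^{-m}p^{(m)}(b,x)G_s(x,w_k)$ (the same system of inequalities the paper uses to show all Green functions have a common radius of convergence) gives the uniform local bounds; the normalization $h(b)=1$, respectively $h(w)=1$, rules out the zero limit; and the row-finiteness hypothesis is invoked exactly where it is indispensable, namely to commute $P$ with the pointwise limit. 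Your two cases — pole pushed to infinity along $w_k$ when $G_s<\infty$, and $t\downarrow\rho(P)$ with defect coefficient $t/G_t(w,w)\to 0$ in the $\rho$-recurrent case — do cover all $s\ge\rho(P)$, since finiteness of $G_{\rho}(x,y)$ at the radius of convergence is an all-or-nothing dichotomy by the same Harnack inequalities.

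Two small blemishes, neither fatal. First, the opening reduction "restrict to a communicating class and extend the eigenfunction by $0$" does not literally work: at a state outside the class that leads into it, the extended function violates $Ph=sh$. This is moot, because the lemma is only meaningful for irreducible $P$ (the quantity $\rho(P)$ is independent of $x,y$ only then, and both Pruitt and the application in Theorem~\ref{thm:C} assume irreducibility), so you should simply keep irreducibility as a hypothesis rather than try to dispense with it. Second, sending $w_k\to\infty$ tacitly requires $X$ to be infinite; this too is forced by the statement itself, since for finite irreducible $P$ and $s>\rho(P)$ the equation has no nonzero solution, and it is consistent with the paper's standing assumption that $X$ is infinite.
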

Based on these lemmatas, we prove the following result on sensitivity
of the Markov chain with respect to forbidding the transitions in $F$.

\begin{theorem}\label{thm:C}
Suppose that $(X,E,\ell)$ is uniformly 
connected with
label alphabet $\Si$ and equipped with transition probabilities
$p(e) \ge \alpha > 0$, $e \in E$. Let $F \subset \Si^+$ be a 
finite, non-empty set which is relatively dense in $(X,E, \ell)$.
Then 
\begin{equation*}
\sup_{x,y \in X} \rho_{x,y}(P_F) < \rho(P) \quad\text{strictly.}
\end{equation*}
\end{theorem}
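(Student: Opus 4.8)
The plan is to reduce the statement to the substochasticity result of Theorem~\ref{lem:q_str_substoch} by passing to a suitable Doob ($h$-)transform of $P$; the crucial point will be that \emph{uniform} connectedness (as opposed to mere strong connectedness) is exactly what is needed to keep that transform under control.

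First I would produce a strictly positive $\rho$-harmonic function. Since the lower bound $p(e)\ge\alpha$ forces every vertex to have outdegree at most $1/\alpha$, each row of $P$ has only finitely many nonzero entries, so Lemma~\ref{lem:pruitt2} applied with $s=\rho(P)=:\rho$ yields a function $h\neq 0$ with $Ph=\rho h$, and Lemma~\ref{lem:pruitt1} then forces $h>0$. Define the transformed probabilities
\[
\hat{p}(x,y)=\frac{p(x,y)\,h(y)}{\rho\,h(x)},\qquad\text{equivalently}\qquad \hat{p}(e)=\frac{p(e)\,h(e^+)}{\rho\,h(e^-)}.
\]
Then $\hat{P}=\bigl(\hat{p}(x,y)\bigr)$ is \emph{stochastic} (its row sums equal $(Ph)(x)/(\rho h(x))=1$), it is supported on the same labelled graph $(X,E,\ell)$ since $h>0$, and the $F$-restricted $n$-step probabilities obey the exact identity
\[
\hat{p}^{(n)}_F(x,y)=\frac{h(y)}{\rho^{\,n}\,h(x)}\,p^{(n)}_F(x,y),
\]
because the $h$-factors telescope along any path and whether a path is forbidden depends only on its label, not on the weights.

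The heart of the argument is to bound the transition probabilities of $\hat{P}$ below by a positive constant. For any edge $x\to y$, harmonicity gives $\rho\,h(x)=\sum_z p(x,z)h(z)\ge p(x,y)h(y)\ge \alpha\,h(y)$, hence $h(z)\le(\rho/\alpha)\,h(u)$ along every edge $u\to z$. Here uniform connectedness enters: given an edge $x\to y$, choose a path from $y$ back to $x$ of length $m\le K$ and iterate this estimate along it to bound $h(x)/h(y)$ from above by a constant depending only on $\rho/\alpha$ and $K$; equivalently $h(y)/h(x)\ge c$ for some $c=c(\alpha,\rho,K)>0$. Consequently $\hat{p}(e)\ge(\alpha/\rho)\,c=:\hat{\alpha}>0$ for every edge. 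This is the step I expect to be the main obstacle to carry out cleanly, and it is precisely where \emph{uniform} connectedness (the uniform bound $K$ on return lengths) is indispensable and cannot be weakened to strong connectedness.

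Finally, $\hat{P}$ is a stochastic transition matrix on the strongly connected $(X,E,\ell)$ with $\hat{p}(e)\ge\hat{\alpha}$, so Theorem~\ref{lem:q_str_substoch} applies verbatim to $\hat{P}$ and the relatively dense set $F$: there are $k\in\N$ and $\hat{\eps}_0>0$ with $\sum_{y}\hat{p}^{(k)}_F(x,y)\le 1-\hat{\eps}_0$ for all $x$. The restricted probabilities are submultiplicative, $\hat{p}^{(n+m)}_F(x,y)\le\sum_z\hat{p}^{(n)}_F(x,z)\,\hat{p}^{(m)}_F(z,y)$, since a length-$(n+m)$ path avoiding $F$ avoids $F$ in particular on its first $n$ and its last $m$ steps; splitting $n=\lfloor n/k\rfloor k+r$ and summing over $y$ gives $\sum_y\hat{p}^{(n)}_F(x,y)\le(1-\hat{\eps}_0)^{\lfloor n/k\rfloor}$, whence each entry satisfies $\hat{p}^{(n)}_F(x,y)\le(1-\hat{\eps}_0)^{\lfloor n/k\rfloor}$. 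Undoing the transform,
\[
p^{(n)}_F(x,y)=\rho^{\,n}\,\frac{h(x)}{h(y)}\,\hat{p}^{(n)}_F(x,y)\le\rho^{\,n}\,\frac{h(x)}{h(y)}\,(1-\hat{\eps}_0)^{\lfloor n/k\rfloor},
\]
and taking $n$-th roots as $n\to\infty$ annihilates the fixed factor $h(x)/h(y)$, yielding $\rho_{x,y}(P_F)\le\rho\,(1-\hat{\eps}_0)^{1/k}$. As $k$ and $\hat{\eps}_0$ are independent of $x,y$, this bound is uniform and strictly below $\rho=\rho(P)$, which is the claim.
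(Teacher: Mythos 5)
Your proposal is correct and follows essentially the same route as the paper: a Doob $h$-transform built from Pruitt's two lemmas, a lower bound $\hat p(e)\ge\hat\alpha$ obtained from uniform connectedness, an application of the substochasticity result for the transformed chain, and the telescoping identity relating $\hat p^{(n)}_F$ to $p^{(n)}_F$. The only (immaterial) difference is the order of operations — you transform first and then invoke the substochastic bound, whereas the paper proves the normalized case $\rho(P)=1$ first and then reduces to it — and your edge-by-edge iteration of the harmonicity estimate along the return path is just a slight variant of the paper's one-shot $k$-step bound $\rho(P)^k h(y)\ge\alpha^k h(x)$.
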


\begin{proof} We shall proceed in two steps.

\emph{Step 1. We assume that $P = \bigl( p(x,y) \bigr)_{x,y \in X}$ 
is stochastic and that $\rho(P)=1$.} 

Consider the matrix $Q$ of Lemma \ref{lem:q_str_substoch}. Let
\begin{equation*}
Q^n = \bigl( q^{(n)}(x,y) \bigr)_{x,y \in X} 
\end{equation*}
be its $n$-th matrix power. The quantity $q^{(n)}(x,y)$ is 
the probability that the Markov chain starting at $x$
is in $y$ at time $nk$ and does not make any forbidden sequence of transitions
in each of the discrete time intervals
\begin{equation*}
[(j-1)k,jk] \text{ for } j \in \{1,\dots, n\}. 
\end{equation*}
Therefore
\begin{equation*}
p^{(nk)}_F(x,y) \le q^{(n)}(x,y)\,,
\end{equation*}
and also, by the same reasoning,  for $i=0, \dots, k-1$,
\begin{align*}
p^{(nk+i)}_F(x,y)& = \sum_{z\in X}p^{(nk)}_F(x,z)p^{(i)}_F(z,y) \\
& \le \sum_{z \in X} q^{(n)}(x,z)p^{(i)}_F(z,y)\,,\quad
i=0\, \dots, k-1.
\end{align*}
Therefore, for every $x \in X$ and $i=0, \dots, k-1$,
\begin{equation*}
\sum_{y \in X} p^{(nk+i)}_F(x,y) 
\le \sum_{z \in X} q^{(n)}(x,z) 
\underbrace{\sum_{y \in X}p^{(i)}_F(z,y)}_{\displaystyle\le 1}
\le (1-\eps_0)^n\,,
\end{equation*}
since Lemma \ref{lem:q_str_substoch} implies that the row sums of the
matrix power $Q^n$ are bounded above by $(1-\eps_0)^n$.
We conclude that  
\begin{equation*}
\limsup_{n\to\infty} p_F^{(nk+i)}(x,y)^{1/(nk+i)} \leq (1-\eps_0)^{1/k}\,,
\end{equation*}
so $\rho_{x,y}(P_F) \le (1-\eps_0)^{1/k} = 1-\eps$,
where $\eps > 0$. 

\smallskip\noindent
\emph{Step 2. General case. } We reduce this case to the previous one. 

Since $P$ is irreducible and
every row of $P$ has only finitely many non-zero entries,
Lemma \ref{lem:pruitt1} and Lemma \ref{lem:pruitt2}
guarantee the existence of a strictly
positive solution $h:X\to\R$ for the equation 
\begin{equation*}
P h = \rho(P) \cdot h,
\end{equation*}
that is, $h$ is \emph{$\rho(P)$-harmonic.}
Consider now the $h$-transform of the transition probabilities 
$p(e)$ of $P$, $e = (x,a,y) \in E$, given by
\begin{equation*}
p^h(e) = p^h(x,a,y) = \frac{p(x,a,y) h(y)}{\rho(P) h(x)},
\end{equation*}
and the associated transition matrix $P^h$ with entries 
\begin{equation*}
p^h(x,y) = \sum_{a\,:\, (x,a,y) \in E} p^h(x,a,y)\,.
\end{equation*}

The Markov chain associated with $P^h$ is called the \emph{$h$-process.}

Then $\rho(P^h)=1$. Using uniform connectedness, we show that there is a 
constant $\bar \alpha > 0$ such that $p^h(e) \ge \bar\alpha$ for each
$e =(x,a,y)\in E$. Indeed, for such an edge, there is $k \le K$ such
that $d^+(y,x)=k$, whence
\begin{equation*}
\rho(P)^k h(y)  = \sum_{z \in X} p^{(k)}(y,z) h(z) \ge \alpha^k h(x),
\end{equation*}
so
\begin{equation*}
p^h(x,a,y)\ge \bigl(\alpha/\rho(P)\bigr)^{k+1}.
\end{equation*}
Recall that $K$ is the constant used in the definition of the uniform connectedness.
We can now choose 
\begin{equation*}
\bar \alpha = \bigl(\alpha/\rho(P)\bigr)^{K+1}. 
\end{equation*}
We see that with $P^h$ we are now in the situation of \textit{Step 1}. Thus, forbidding
the transitions of $F$ for the Markov chain with transition matrix $P^h$, 
we get 
\begin{equation*}
\rho_{x,y}(P^h_F) \le 1 - \eps, \text{ for all  } x,y \in X, 
\end{equation*}
where $\eps > 0$. We now show that
\begin{equation*}
\rho_{x,y}(P^h_F) = \rho_{x,y}(P_F)/\rho(P), 
\end{equation*}
which will conclude the proof.

For a path $\pi = e_1 \dots e_n$ from $x$ to $y$, let (as above) 
$\mathbb{P}(\pi)$ be the probability that the original Markov chain traverses
the edges of $\pi$ in $n$ successive steps, and let $\mathbb{P}^h(\pi)$
be the analogous probability with respect to the $h$-process.
Then
$$
\mathbb{P}^h(\pi) = \frac{\mathbb{P}(\pi)h(y)}{\rho(P)^nh(x)}\,.
$$
Let us write $\Pi_{x,y}^n(\neg F)$ for the set of all paths $\pi$ from
$x$ to $y$ with length $n$ for which $\ell(\pi)$ does not
contain a factor in $F$. Then the $n$-step transition probabilities
of the $h$-process with the transitions in $F$ forbidden are
\begin{equation*}
{p^h}^{(n)}_F(x,y) = \sum_{\pi \in \Pi_{x,y}^n(\neg F)} \mathbb{P}^h(\pi)
= \sum_{\pi \in \Pi_{x,y}^n(\neg F)}\frac{\mathbb{P}(\pi)h(y)}{\rho(P)^nh(x)}
= \frac{p^{(n)}_F(x,y)h(y)}{\rho(P)^nh(x)}.
\end{equation*}
Taking $n$-th roots and passing to the upper limit, 
we obtain the required identity.
\end{proof}

With this result, it is now easy to deduce Theorem \ref{thm:A}.

\begin{proof}[Proof of Theorem \ref{thm:A}]
Since $(X,E,l)$ is deterministic with label alphabet $\Si$, 
the outdegree of every $x\in X$ is at most $|\Si|$. Equip the edges of $(X,E, \ell)$ 
with the transition probabilities 
\begin{equation*}
p(x,a,y) = \frac{1}{|\Si|}, \text{ when } (x,a,y)\in E.
\end{equation*}
Then the $n$-step transition probabilities of the resulting Markov chain are given by
\begin{equation*}
p^{(n)}(x,y)= \dfrac{\bigl|\{w \in L_{x,y}\,:\, \abs{w} = n\}\bigr|}{|\Si|^n}.
\end{equation*}
Therefore 
(because $(X,E, \ell)$ is uniformly connected)
\begin{align*}
\entr(X)=\entr(L_{x,y}) & =\limsup_{n\to\infty} \frac{1}{n} \log \bigl|\{w \in L_{x,y}:\: \abs{w} = n\}\bigr|\\
& =\limsup_{n\to\infty}\frac{1}{n} \log \bigl(p^n(x,y)|\Si|^n\bigr)
=\log \bigl(\rho(P)\cdot|\Si|\bigr).
\end{align*}
Analogously,
\begin{equation*}
\entr(L^F_{x,y})=\log\bigl(\rho_{x,y}(P_F)\cdot |\Si|\bigr).
\end{equation*}
By Theorem \ref{thm:C}
\begin{equation*}
\sup_{x,y \in X} \rho_{x,y}(P_F) < \rho(P),
\end{equation*}
and this implies that 
\begin{equation*}
\sup_{x,y\in X}\entr(L_{x,y}^F) < \entr(X)
\end{equation*}
strictly.
\end{proof}

\section{Application to Schreier Graphs}

Let $G$ be a finitely generated group and $K$ a (not necessary finitely 
generated) subgroup.  Also, let $\Si$ be a finite alphabet and let
$\psi:\Si\rightarrow G$ be such that the set $\psi(\Si)$ generates $G$ as 
a semigroup. We extend $\psi$ to a monoid homomorphism from $\Si^*$
to $G$ by $\psi(w) = \psi(a_1)\cdots \psi(a_n)$ if $w = a_1 \dots a_n$
with $a_i \in \Si$ (and $\psi(\epsilon) = 1_G$). The mapping $\psi$ is
called a \emph{semigroup presentation} of $G$ in \cite{Ceccherini_Woess}.  

The \index{graph!Schreier graph}\textit{Schreier graph}  $X=X(G,K,\psi)$ has vertex set
\begin{equation*}
X=\{Kg: g\in G\}, 
\end{equation*}
the set of all right $K$-cosets in $G$, and the set of all labelled, 
directed edges $E$ is given by
\begin{equation*}
 E=\{e=(x,a,y): x=Kg, y=Kg \psi(a)\,,\; \text{where}\; g\in G\,,\;a\in\Si\}.
\end{equation*}
Note that the graph $X$ is fully deterministic and uniformly connected. 

The \textit{word problem} of $(G,K)$ with respect to $\psi$ is the language 
\begin{equation*}
 L(G,K,\psi)=\{w\in\Si^*: \psi(w)\in K\}.
\end{equation*}
The \index{group!word problem}\textit{word problem} for a 
recursively presented  group $G$ is the algorithmic problem 
of deciding whether two words represent the same element. Also, this
terminology is used in the context of formal language theory and goes back
at least to the seminal paper of {\sc Muller and Schupp}~\cite{MullerSchupp1983}.
For additional information, see also {\sc Muller and Schupp}~\cite{MullerSchupp1985}.
In their work, for a finitely generated group $G$ the \textit{word problem}
$W(G)$ is the set of all words on the generators and their inverses which
represent the identity element of $G$.

If we consider the ``root'' vertex $o=K$ of the Schreier graph, then
in the notation of 
the introduction, 
we have $L(G,K,\psi)=L_{o,o}$; compare with
\cite[Lemma 2.4]{Ceccherini_Woess}.

We can therefore apply Theorem \ref{thm:A} and Corollary \ref{cor:B} to the 
graph $X(G,K,\psi)$ in order to deduce the following.

\begin{corollary}
The word problem of the pair $(G,K)$ with respect to any semigroup 
presentation $\psi$ is growth sensitive (with respect to forbidding an 
arbitrary non-empty finite subset $F\subset\Si^*$). 
\end{corollary}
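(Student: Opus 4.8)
The plan is to realize the word problem as a path language of the Schreier graph $X = X(G,K,\psi)$ and then to appeal directly to Corollary \ref{cor:B}. Fixing the root vertex $o = K$, a word $w \in \Si^*$ lies in $L(G,K,\psi)$ iff $\psi(w) \in K$, which happens exactly when the unique path starting at $o$ and carrying the label $w$ returns to $o$; this is the identification $L(G,K,\psi) = L_{o,o}$ recorded in \cite[Lemma 2.4]{Ceccherini_Woess}. Thus everything reduces to checking that $X$ satisfies the two hypotheses of Corollary \ref{cor:B}: that it is fully deterministic and uniformly connected.

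Full determinism is immediate from the construction of the edge set: for a coset $x = Kg$ and a letter $a \in \Si$, there is exactly one outgoing edge with label $a$, namely $(Kg, a, Kg\psi(a))$. I would then turn to uniform connectedness, which is the only point requiring an argument. Strong connectedness follows from the hypothesis that $\psi(\Si)$ generates $G$ as a semigroup: for cosets $Kg$ and $Kg'$ one writes $g^{-1}g' = \psi(v)$ for some $v \in \Si^*$, and reading $v$ from $Kg$ yields a path to $Kg\psi(v) = Kg'$. For the uniform bound on return paths, the essential observation is that each inverse generator $\psi(a)^{-1}$ is itself a \emph{positive} word in the generators: since $\psi(\Si)$ generates $G$ as a semigroup, there is $w_a \in \Si^*$ with $\psi(w_a) = \psi(a)^{-1}$. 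Setting $N = \max_{a \in \Si} |w_a| < \infty$, for any edge $(Kg, a, Kg\psi(a))$ the path labelled $w_a$ that starts at $Kg\psi(a)$ terminates at $Kg\psi(a)\psi(w_a) = Kg$ and has length at most $N$. Hence $X$ is uniformly connected.

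With both properties verified, Corollary \ref{cor:B} applies and gives that $L_{o,o} = L(G,K,\psi)$ is growth sensitive; as full determinism forces every nonempty finite $F \subset \Si^*$ to be relatively dense (take the constant $D = 0$, since from any vertex one may read a fixed word of $F$), this holds for every such $F$ and every semigroup presentation $\psi$, which is the assertion. The one place to be careful is precisely the step producing the bounded return words $w_a$: this relies on inverses of generators being expressible as positive words, which is exactly what the semigroup—rather than merely group—generation hypothesis supplies, and it would fail if $\psi(\Si)$ were only assumed to generate $G$ as a group.
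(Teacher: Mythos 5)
Your proof is correct and follows the same route as the paper: identify $L(G,K,\psi)$ with $L_{o,o}$ on the Schreier graph and apply Corollary \ref{cor:B}. The paper merely asserts that $X(G,K,\psi)$ is fully deterministic and uniformly connected, whereas you supply the verification — in particular the observation that semigroup generation lets each $\psi(a)^{-1}$ be written as a positive word $w_a$, giving the uniform bound $K=\max_a|w_a|$ on return paths — which is a worthwhile filling-in of the omitted details.
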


\begin{example}
Let $G=\mathbb{Z}_2=\{1,t\}$ be the group of order two and $K=\{1\}$
the trivial subgroup. Let $\Si=\{a\}$ and consider the presentations
$\psi:\Si\to G$ such that $\psi(a)=t$. Then
\begin{equation*}
 L(G,K,\psi)=\{a^{2n}: n\geq 0 \}.
\end{equation*}
\end{example}
\begin{tikzpicture}
[yshift=-3.65cm,vertex/.style={circle, draw=black, thick, inner sep=2pt, minimum size=7mm},
 pre/.style={->, shorten >=1pt, >=stealth', semithick},
 post/.style={<-, shorten <=1pt, >=stealth', semithick},
 node distance = sqrt(2.5^2/2)
]
\node[vertex] (1) {$1$};
\node[vertex] (t) [right=of 1] {$t$};
\path[->] (1) edge [bend left] node [above] {$a$} (t);
\path[->] (t) edge [bend left] node [above] {$a$} (1);
\end{tikzpicture}
\hspace{1cm}
\begin{tikzpicture}
[yshift=-3.65cm,vertex/.style={circle, draw=black, thick, inner sep=2pt, minimum size=7mm},
 pre/.style={->, shorten >=1pt, >=stealth', semithick},
 post/.style={<-, shorten <=1pt, >=stealth', semithick},
 node distance = sqrt(2.5^2/2)
]
\node[vertex] (o) {$o$};
\node[vertex] (t) [right=of o] {};
\path[->] (o) edge [bend left] node [above] {$a$} (t);
\path[->] (t) edge [bend left] node [above] {$a$} (o);
\end{tikzpicture}
\hspace{1cm}
\begin{tikzpicture}[scale=0.05]
\begin{scope}
[yshift=-3.65cm,vertex/.style={circle, draw=black, thick, inner sep=2pt, minimum size=7mm},
 pre/.style={->, shorten >=1pt, >=stealth', semithick},
 post/.style={<-, shorten <=1pt, >=stealth', semithick},
 node distance = sqrt(1.5/2)
]

\node (origin) {};
\node[vertex] (o) [left=of origin] {$o$};
\node[vertex] (f) [right=of origin] {$f$};
\node[vertex] (b) [above=of origin] {}
   edge [pre] node[auto] {$a$} (f);
\node[vertex] (c) [below=of origin] {}
   edge [pre] node[auto] {$a$} (o);
\path[->] (o) edge [pre] node [auto] {$a$} (b);
\path[->] (f) edge [pre] node [auto] {$a$} (c);
\end{scope}	
\end{tikzpicture}

In the figure above are represented in order, the Schreier graph
$X(G,K,\psi)$ (which is the Cayley graph of $G$ with respect to
$\psi$), and two automata $\mathcal{A}_1$ and $\mathcal{A}_2$.
As usual $o$ denoted the origin, while the set of final states
are $F_1=\{o\}$ and $F_2=\{o,f\}$ respectively. We have
\begin{equation*}
 L(\mathcal{A}_1)=L(\mathcal{A}_2)=L(G,K,\psi).
\end{equation*}
For detailed information on properties of pairs of groups
and their Schreier graphs, the reader may have a look
at {\sc Ceccherini and Woess}~\cite{Ceccherini_Woess}.

\cleardoublepage
\phantomsection
\addcontentsline{toc}{chapter}{Acknowledgements}
\chapter*{Acknowledgements}

This work would not have been possible without the support
of my advisor, Wolfgang Woess. I am very grateful to him for
the numerous fruitful disscusions on topics of this thesis
and on mathematics in general. Acknowledgements go also to 
Wilfried Huss and all collegues from the Department of Mathematical Structure
Theory, at Graz University of Technology for several disscusions
on different mathematical problems and not only.  

My thanks go also to Vadim Kaimanovich, who gave me a lot of hints
and insights on this work. The main result and the idea of the proof 
in Section \ref{sec:zero_drift} would have not been possible without his help.

I also thank to NAWI Graz research program for supporting 
me with a Graduate Research Fellowship during much of the time when
this work was carried out. I want to thank to Prof. Bernd Thaller
for accepting to act as a co-advisor of my thesis.

Finally, I would like to thank to my family and my friends 
for supporting me and believing in me through the best and the worst of time.

\cleardoublepage
\phantomsection
\bibliography{mybib}{}

\providecommand{\bysame}{\leavevmode\hbox to3em{\hrulefill}\thinspace}
\providecommand{\MR}{\relax\ifhmode\unskip\space\fi MR }
\providecommand{\MRhref}[2]{%
  \href{http://www.ams.org/mathscinet-getitem?mr=#1}{#2}
}
\providecommand{\href}[2]{#2}
\begin{thebibliography}{GDLH90}

\bibitem[AL02]{ArzhantsevaLysenok2002}
G.~N. Arzhantseva and I.~G. Lysenok, \emph{Growth tightness for word hyperbolic
  groups}, Math. Z. \textbf{241} (2002), 597--611.

\bibitem[BBG06]{Boyle_Buzzi_Gomez2006}
Mike~M. Boyle, J.~Buzzi, and R.~G\'omez, \emph{Almost isomorphism for countable
  state {M}arkov shifts}, J. Reine Angew. Math. \textbf{592} (2006), 23--47.

\bibitem[BL94]{BallmannLedrappier}
W.~Ballmann and F.~Ledrappier, \emph{The {P}oisson boundary for rank one
  manifolds and their cocompact lattices}, Forum Math. \textbf{6} (1994),
  no.~3, 301--313.

\bibitem[BPR10]{BerstelPerrinaReutenauer2010}
J.~Berstel, D.~Perrin, and C.~Reutenauer, \emph{Codes and automata}, Cambridge
  University Press, Cambridge, 2010.

\bibitem[Bro04]{Brofferio2004}
S.~Brofferio, \emph{Renewal theory on the affine group of an oriented tree},
  Journal of Theor. Probab. \textbf{17} (2004), no.~4, 819--859.

\bibitem[CDP90]{CoornaertDelzantPapadopoulos1990}
M.~Coornaert, T.~Delzant, and A.~Papadopoulos, \emph{G\'eom\'etrie et th\'eorie
  des groupes}, Lecture Notes in Mathematics, vol. 1441, Springer-Verlag,
  Berlin, 1990.

\bibitem[CKW94]{Cartwright_Kaimanovich_Woess_1994}
D.~I. Cartwright, V.~A. Kaimanovich, and W.~Woess, \emph{Random walks on the
  affine group of local fields and of homogeneous trees}, Ann. Inst. Fourier
  (Grenoble) \textbf{44} (1994), 1243--1288.

\bibitem[CS07]{Ceccherini2007}
T.~Ceccherini-Silberstein, \emph{Growth and ergodicity of context-free
  languages $\mathbf{II}$. {T}he linear case}, Trans. Amer. Math. Soc.
  \textbf{359} (2007), 605--618.

\bibitem[CSS04]{CeccheriniScarabotti2004}
T.~Ceccherini-Silberstein and F.~Scarabotti, \emph{Random walks, entropy and
  hopfianity of free groups}, Random Walks and Geometry (V. A. Kaimanovich,
  ed.) \textbf{44} (2004), 413--419.

\bibitem[CSW]{Ceccherini_Woess}
T.~Ceccherini-Silberstein and W.~Woess, \emph{Context-free pairs of groups
  $\mathbf{I}$ - context-free pairs and graphs}.

\bibitem[CSW02]{Ceccherini_Woess2002}
\bysame, \emph{Growth and ergodicity of context-free languages}, Trans. Amer.
  Math. Soc. \textbf{354} (2002), 4597--4625.

\bibitem[DD89]{DicksDunwoody1989}
W.~Dicks and M.~J. Dunwoody, \emph{Groups acting on graphs}, Cambridge
  University Press, 1989.

\bibitem[Dun82]{Dunwoody82}
M.~J. Dunwoody, \emph{Cutting up graphs}, Combinatorica \textbf{2} (1982),
  no.~1, 15--23.

\bibitem[DV79]{DonskerVaradhan1979}
M.~D. Donsker and S.~R.~S. Varadhan, \emph{On the number of distinct sites
  visited by a random walk}, Comm. on Pure and Appl. Math. \textbf{32} (1979),
  721--747.

\bibitem[Ers03]{ErschlerDrift}
A.~Erschler, \emph{On drift and entropy growth for random walks on groups},
  Ann. Probab. \textbf{31} (2003), 1193--1204.

\bibitem[Ers06]{Erschler2006}
\bysame, \emph{Generalized wreath products}, International mathematics research
  notices \textbf{2006} (2006), 1--14.

\bibitem[Ers10]{Erschler2010}
\bysame, \emph{Poisson-{F}urstenberg boundary of random walks on wreath
  products and free metabelian groups}, 2010.

\bibitem[Fre44]{Freudenthal1944}
H.~Freudenthal, \emph{\"{U}ber die {E}nden diskreter {R}\"{a}ume und
  {G}ruppen}, Comment. Math. Helv. \textbf{17} (1944), 1--38.

\bibitem[GDLH90]{GhysHarpe1990}
E.~Ghys and P.~De~La~Harpe, \emph{Sur les groupes hyperboliques d'apr\'{e}s
  mikhael gromov}, Progress in Mathematics, 83. Boston, MA: Birkh\"{a}user.
  vii, 285 p. sFr. 68.00, 1990 (French).

\bibitem[GdlH97]{Grigorchuk_Harpe1997}
R.~I. Grigorchuk and P.~de~la Harpe, \emph{On problems related to growth,
  entropy, and spectrum in group theory}, J. Dynam. Control Systems \textbf{3}
  (1997), 51--89.

\bibitem[Gro]{Gromov1987}
M.~Gromov, \emph{Hyperbolic groups}, Essays in group theory, Publ., Math. Sci.
  Res. Inst. 8, 75-263 (1987).

\bibitem[Gur69]{Gurevic1969}
B.~M. Gurevi\v{c}, \emph{Topological entropy of a countable {M}arkov chain},
  Dokl. Akad. Nauk SSSR \textbf{187} (1969), 715--718 (Russian).

\bibitem[GZ01]{GrigorchukZuk}
R.~I. Grigorchuk and A.~Zuk, \emph{The lamplighter group as a group generated
  by a 2-state automaton, and its spectrum}, Geom. Dedicata \textbf{87} (2001),
  209--244.

\bibitem[Hal64]{Halin1964}
R.~Halin, \emph{\"{U}ber unendliche \text{W}ege in \text{G}raphen}, Math. Ann.
  \textbf{157} (1964), 125--137.

\bibitem[HR63]{HewittRoss1963}
E.~Hewitt and K.~A. Ross, \emph{Abstract harmonic analysis}, Springer-Verlag,
  Berlin-Heidelberg-New York, 1963.

\bibitem[HSW10]{HussSavaWoess2010}
W.~Huss, E.~Sava, and W.~Woess, \emph{Entropy sensitivity of languages defined
  by infinite automata, via \text{M}arkov chains with forbidden transitions},
  Theor. Comp. Sci. \textbf{411} (2010), 3917--3922.

\bibitem[JP97]{JamesPeres1997}
N.~James and Y.~Peres, \emph{Cutpoints and exchangeable events for random
  walks}, Theory Probab. Appl. \textbf{41} (1997), 666--677.

\bibitem[Kai91]{Kaimanovich1991}
V.~A. Kaimanovich, \emph{Poisson boundaries of random walks on discrete
  solvable groups}, Probability measures on groups X (1991), 205--238.

\bibitem[Kai92]{Kaimanovich1992}
\bysame, \emph{Measure-theoretic boundaries of \text{M}arkov chains, 0-2 laws
  and entropy}, Proceedings of the Conference on Harmonic Analysis and Discrete
  Potential Theory (M. A. Picardello, ed.) (1992), 145--180.

\bibitem[Kai95]{Kaimanovich1995}
\bysame, \emph{The {P}oisson boundary of covering {M}arkov operators}, Israel
  J. of Math. \textbf{89} (1995), 77--134.

\bibitem[Kai00]{Kaimanovich2000}
\bysame, \emph{The \text{P}oisson formula for groups with hyperbolic
  properties}, Annals of Math. \textbf{152} (2000), 659--692.

\bibitem[Kai01]{Kaimanovich2001}
\bysame, \emph{Poisson boundary of discrete groups}, unpublished manuscript,
  2001.

\bibitem[Kin68]{Kingman1968}
J.~Kingman, \emph{The ergodic theory of subadditive processes}, J. Royal Stat.
  Soc., Ser. B \textbf{30} (1968), 499--510.

\bibitem[KL07]{KarlssonLedrappier2007}
A.~Karlsson and F.~Ledrappier, \emph{Linear drift and {P}oisson boundary for
  random walks}, Pure Appl.Math.Q. \textbf{3} (2007), 1027--1036.

\bibitem[KV83]{KaimanovichVershik1983}
V.~A. Kaimanovich and A.~M Vershik, \emph{Random walks on discrete groups:
  boundary and entropy}, Annals of Probab. \textbf{11} (1983), 457--490.

\bibitem[KW02]{KaimanovichWoess2002}
V.~A. Kaimanovich and W.~Woess, \emph{Boundary and entropy of space homogeneous
  \text{M}arkov chains}, Ann. Probab. \textbf{30} (2002), 323--363.

\bibitem[KW07]{KarlssonWoess2007}
A.~Karlsson and W.~Woess, \emph{The \text{P}oisson boundary of lamplighter
  random walks on trees}, Geom. Dedicata \textbf{124} (2007), 95--107.

\bibitem[Led85]{Ledrappier1985}
F.~Ledrappier, \emph{Poisson boundaries of discrete groups of matrices}, Israel
  J. Math. \textbf{50} (1985), no.~4, 319--336.

\bibitem[Led01]{Ledrappier2001}
\bysame, \emph{Some asymptotic properties of random walks on free groups}, CRM
  Proc. Lecture Notes \textbf{28} (2001), 117--152.

\bibitem[LM95]{LindMarcus1995}
D.~Lind and B.~Marcus, \emph{An introduction to symbolic dynamics and coding},
  Cambridge University Press, Cambridge, 1995.

\bibitem[LPP96]{LyonsPemantlePeres}
R.~Lyons, R.~Pemantle, and Y.~Peres, \emph{Random walks on the lamplighter
  groups}, Ann. Probab \textbf{24} (1996), 1993--2006.

\bibitem[MS83]{MullerSchupp1983}
D.~E. Muller and P.~E. Schupp, \emph{Groups, the theory of ends, and
  context-free languages}, J. Comput. System Sc. \textbf{26} (1983), 295--310.

\bibitem[MS85]{MullerSchupp1985}
\bysame, \emph{The theory of ends, pushdown automata, and second-order logic},
  Theoret. Comput. Sci \textbf{37} (1985), 51--57.

\bibitem[Mö92]{Moeller1992}
R.~G. Möller, \emph{Ends of graphs $\mathbf{II}$}, Math. Proc. Cambridge
  Philos. Soc. \textbf{111} (1992), 455--460.

\bibitem[Pav89]{Pavone1989}
M.~Pavone, \emph{Boundaries of discrete groups, {T}oeplitz operators and
  extensions of the reduced {C}*-algebra}, Ph.D. thesis, 1989.

\bibitem[Pet86]{Petersen1986}
K.~Petersen, \emph{Chains, entropy, coding}, Ergodic Theory Dynam. Systems
  \textbf{6} (1986), 415--448.

\bibitem[Pru64]{pruitt_1964}
W.~E. Pruitt, \emph{Eigenvalues of non-negative matrices}, Ann. Math. Statist.
  \textbf{35} (1964), no.~4, 1797--1800.

\bibitem[PSC01]{Pittet_Saloff2001}
C.~Pittet and L.~Saloff-Coste, \emph{On random walks on wreath products}, Ann.
  Probab \textbf{30} (2001), 948--977.

\bibitem[Rev03]{Revelle2003}
D.~Revelle, \emph{Heat kernel asymptotics on the lamplighter group}, Elec.
  Comm. in Probab. \textbf{8} (2003), 142--154.

\bibitem[Sav10]{SavaPoisson2010}
E.~Sava, \emph{A note on the \text{P}oisson boundary of lamplighter random
  walks}, Monatshefte f\"{u}r Mathematik \textbf{159} (2010), 329--344.

\bibitem[Sen06]{Seneta2006}
E.~Seneta, \emph{Non-negative matrices and {M}arkov chains}, Springer, New
  York, 2006.

\bibitem[SW90]{SoardiWoess1989}
P.~M. Soardi and W.~Woess, \emph{Amenability, unimodularity, and the spectral
  radius of random walks on infinite graphs}, Math. Zeitschrift \textbf{205}
  (1990), 471--486.

\bibitem[TW93]{ThomassenWoess1993}
C.~Thomassen and W.~Woess, \emph{Vertex-transitive graphs and accessibility},
  J. Comb. Theory, Ser. B \textbf{58} (1993), no.~2, 248--268.

\bibitem[Var83]{Varopoulos1983}
N.~Varopoulos, \emph{Random walks on solvable groups}, Bull. {S}c. math.,
  $2$eme serie \textbf{107} (1983), 337--344.

\bibitem[Woe89a]{Woess_Amenable1989}
W.~Woess, \emph{Amenable group actions on infinite graphs}, Math. Ann
  \textbf{284} (1989), 251--265.

\bibitem[Woe89b]{Woess1989}
\bysame, \emph{Boundaries of random walks on graphs and groups with infinitely
  many ends}, Israel J. of Mathematics \textbf{68} (1989), no.~3, 271--301.

\bibitem[Woe93]{Woess1993}
\bysame, \emph{Fixed sets and free subgroups of groups acting on metric
  spaces}, Math. Z \textbf{214} (1993), 425--440.

\bibitem[Woe00]{WoessBook}
\bysame, \emph{Random walks on infinite graphs and groups}, Cambridge Tracts in
  Mathematics. 138 Cambridge University Press. xi, 334 p., 2000.

\bibitem[Woe05]{WoessNote2005}
\bysame, \emph{A note on the norms of transition operators on lamplighter
  graphs and groups}, Int. J. Algebra and Computation \textbf{15} (2005),
  no.~5-6, 1261--1272.

\end{thebibliography}
\bibliographystyle{amsalpha}

\cleardoublepage
\phantomsection
\addcontentsline{toc}{chapter}{Index}
\printindex

\addcontentsline{toc}{chapter}{Index on Notation}
\cleardoublepage
\phantomsection
\chapter*{Index of Notation}

\subsubsection{Part I}

\begin{tabular}{ l  l }
  $\mathsf{G}$              &    locally finite, connected, infinite transitive graph \\[1pt]
  $d(\cdot,\cdot)$ &    graph metric on $\mathsf{G}$ \\[1pt]
  $\partial \mathsf{G}$     &    geometric boundary of $\mathsf{G}$\\[1pt]
  $\widehat{\mathsf{G}}=\mathsf{G}\cup\partial \mathsf{G}$  & compactification of $\mathsf{G}$\\[1pt]
  $\Omega=\mathsf{G}^{\Z_+}$&    the trajectory space of $\mathsf{G}$\\[1pt]
  $P_{\mathsf{G}}$              &    transition matrix on $\mathsf{G}$\\[1pt]
  $AUT(\mathsf{G})$         &    the set of automorphisms (or isometries) of $\mathsf{G}$\\[1pt]
  $\Gamma$         &    subgroup of $\AUT(\mathsf{G})$ which acts transitively on $\mathsf{G}$\\[1pt]
  $\mu$            &    probability measure on $\Gamma$\\[1pt]
  $(\mathsf{G},P_{\mathsf{G}})$          &    Markov chain with state space $\mathsf{G}$ and transition $P_{\mathsf{G}}$\\[1pt]
  $\Xn$            &    random walk with transition matrix $P_{\mathsf{G}}$ on $\mathsf{G}$\\[1pt]
  $\mu_{\infty}$   &    limit distribution of $\Xn$ on $\partial \mathsf{G}$\\[1pt]
  $\rho(P_{\mathsf{G}})$        &    spectral radius of $(\mathsf{G},P_{\mathsf{G}})$\\[1pt]
  $\delta(P_{\mathsf{G}})$      &    modular drift of $P_{\mathsf{G}}$\\[1pt]
  $l(P_{\mathsf{G}})$           &    rate of escape (drift) of $P_{\mathsf{G}}$\\[1pt]
  $\Z_2$           &    cyclic group with two elements (or a set with two elements)\\[1pt]
  $\lgr$           &    lamplighter graph with base $\mathsf{G}$\\[1pt]
  $P$            &    transition matrix on $\lgr$\\[1pt]
  $\partial(\lgr)$ &    geometric boundary of $\lgr$\\[1pt]
  $\lgrp$          &    lamplighter group, subgroup of $\AUT(\lgr)$\\[1pt]
  $\nu$            &    probability measure on $\lgrp$\\[1pt]
  $\eta_n$         &    random configuration of lamps at time $n$, with finite support\\[1pt]
  $\mathcal{C}$    &    set of finitely supported configurations over $\mathsf{G}$\\[1pt]
  $\C_{\mathfrak{u}},\mathfrak{u}\in\partial \mathsf{G}$ & set of lamps configurations accumulating at $\mathfrak{u}$\\[1pt]
  $\Zn$            &    lamplighter random walk (LRW) over $\lgr$ with transition matrix $P$, with\\
                   &    $Z_n=(\eta_n,X_n)$ the position at time $n$, and $\Xn$ the base random walk on $\mathsf{G}$\\[1pt]
  $\Pi$            &    dense subset of $\partial(\lgr)$ toward $\Zn$ converges\\[1pt]
  $\nu_{\infty}$   &    limit distribution of $\Zn$ on $\Pi$\\[1pt]
  $\mathcal{T}_q$  &    homogeneous tree of degree $q$\\[1pt]
\end{tabular} 

\subsubsection{Part II}

\begin{tabular}{ l  l  }
  $\Si$            & a finite alphabet\\[1pt]
  $\Si^*$          & the set of all finite words over $\Si$ \\[1pt]
  $L$              & a language over $\Si$\\[1pt]
  $(X,E,\ell)$     & infinite, oriented, edge-labelled graph with label alphabet $\Si$\\[1pt]
  $\entr(L)$       & the entropy of $L$\\[1pt]
  $L_{x,y}$        & the set of all labeles of paths from $x$ to $y$\\[1pt]
  $p(e)$           & the probability of the edge $e\in E$\\[1pt]
  $P$              & transition matrix over $(X,E,\ell)$\\[1pt]
  $P^h$            & the $h$-transform of the matrix $P$\\[1pt]
  $P^F$            & restricted matrix on $(X,E,\ell)$, where $F\subset \Si^*$\\[1pt]

\end{tabular}
\end{document}